\def\R{\mathbb R}
\def\E{\mathcal{E}}
\def\D{\mathcal{D}}
\def\be{\begin{equation}}
\def\ee{\end{equation}}
\def\bea{\begin{eqnarray}}
\def\eea{\end{eqnarray}}
\def\beas{\begin{eqnarray*}}
\def\eeas{\end{eqnarray*}}
\def\g{\partial}
\def\l{\lambda}
\def\pa{\partial }
\def\Energy{\mathcal E}
\def\S{\mathcal S}
\newtheorem{theorem}{Theorem}[section]
\newtheorem{definition}[theorem]{Definition}
\newtheorem{proposition}[theorem]{Proposition}
\newtheorem{lemma}[theorem]{Lemma}
\newtheorem{remark}[theorem]{Remark}
\title{Nonlinear stability of 
expanding star solutions in the radially-symmetric mass-critical Euler-Poisson system}
\author{Mahir Had\v zi\'c\thanks{Department of Mathematics, King's College London, London, UK. Email: mahir.hadzic@kcl.ac.uk.} \ and Juhi Jang\thanks{Department of Mathematics, University of Southern California, Los Angeles, CA 90089, USA. Email: juhijang@usc.edu.}}
\date{}
\begin{document}

\maketitle

\abstract{We prove nonlinear stability of 
compactly supported expanding star-solutions of the mass-critical gravitational Euler-Poisson system. These special solutions were discovered by Goldreich and Weber in 1980. The expanding rate of such solutions can be either self-similar or non-self-similar (linear), and we treat both types. 
An important outcome of our stability results is the existence of a new class of global-in-time radially symmetric solutions, which are not homogeneous and therefore not encompassed by the existing works.
Using Lagrangian coordinates we reformulate the associated free-boundary problem as a degenerate quasilinear wave equation on a compact spatial domain.
The problem is mass-critical with respect to an invariant rescaling and the analysis is carried out 
in similarity variables. 
}

\tableofcontents
\section{Introduction}
One of the most fundamental models of a Newtonian star is given by the compressible Euler-Poisson system.
Here the star is idealized as a self-gravitating fluid/gas, kept together by the self-consistent gravitational force-field.
We assume that the fluid density function $\rho:\R^3\to\R$ is initially supported on a compact domain $\Omega_0\subset \R^3$.
As the system evolves the fluid support $\Omega(t)$ changes in time and we are naturally led to a moving (free) boundary problem.
The remaining unknowns in the problem are the fluid pressure $p:\R^3\to\R,$ the fluid velocity $\mathbf{u}:\R^3\to\R^3,$ the gravitational potential $\Phi:\R^3\to\R,$
and the fluid support $\Omega(t).$
Coupling the fluid evolution to the Newton's gravitational theory we
arrive at the Euler-Poisson system given by 

\begin{subequations}
\label{E:EULERPOISSON}
\begin{alignat}{2}
\g_t\rho + \text{div}\, (\rho \mathbf{u})& = 0 &&\ \text{ in } \ \Omega(t)\,;\label{E:CONTINUITYEP}\\
\rho\left(\g_t  \mathbf{u}+ ( \mathbf{u}\cdot\nabla) \mathbf{u}\right) +\nabla p &=-\rho \nabla\Phi&&\ \text{ in } \ \Omega(t)\,;\label{E:VELOCITYEP}\\
\Delta \Phi  = 4\pi\rho, \ \lim_{|x|\to\infty}\Phi(t,x) & = 0&& \ \text{ in } \ \R^3 \,; \label{E:POISSON}\\
%\lim_{|x|\to\infty}\Phi(t,x) & = 0; && \label{E:POISSONBOUNDARY}\\
p&=0&& \ \text{ on } \ \partial\Omega(t) \,;\label{E:VACUUMEP} \\
\mathcal{V}_{\partial\Omega(t)}&= \mathbf{u}\cdot \mathbf{n}(t)  && \ \text{ on } \ \partial\Omega(t)\,;\label{E:VELOCITYBDRYEP}\\
(\rho(0,\cdot),  \mathbf{u}(0,\cdot))=(\rho_0,  \mathbf{u}_0)\,, & \ \Omega (0)=\Omega _0&&\,.\label{E:INITIALEP}
\end{alignat}
\end{subequations}
Here $\partial \Omega(t)$ denotes the boundary of the moving domain, $\mathcal{V}_{\partial\Omega(t)}$ denotes the normal velocity of $\partial\Omega(t)$, 
and $\mathbf{n}(t) $ denotes the outward unit normal vector to $\partial\Omega(t)$. 
We refer to the system of equation~\eqref{E:EULERPOISSON} as the EP-system.

Equation~\eqref{E:CONTINUITYEP} is the well-known continuity equation, while~\eqref{E:VELOCITYEP} expresses the conservation of momentum.
Equation~\eqref{E:POISSON} is the Poisson equation equipped with a suitable asymptotic boundary condition. 
Boundary condition~\eqref{E:VACUUMEP} is the vacuum boundary condition, while~\eqref{E:VELOCITYBDRYEP} is the kinematic boundary condition stating that the boundary
movement is tangential to the fluid particles.

In this article we shall only consider ideal barotropic fluids where the pressure is a function of the fluid density, expressed through the following equation of state
\be\label{E:EQUATIONOFSTATE}
p = \rho^{\gamma}, \ \ 1<\gamma<2.
\ee
%where we have set the entropy constant to be 1. 
The EP-system~\eqref{E:EULERPOISSON} with the polytropic equation of state~\eqref{E:EQUATIONOFSTATE} will be referred to as the EP$_{\gamma}$-system.

The most famous class of special solutions of the EP$_\gamma$-system are the Lane-Emden steady states.
They are spherically symmetric steady states of the form 
\[
(\rho,{\bf u}) \equiv (\rho_*,{\bf 0}).
\]
Plugging this  ansatz in~\eqref{E:EULERPOISSON} the problem reduces to a single ordinary differential equation (ODE) for the enthalpy $w:=\rho_*^{\gamma-1}$: \begin{align}\label{E:LANEEMDEN}
\pa_{rr}w+\frac2r\pa_rw+\pi w^{\frac{1}{\gamma-1}} = 0.
\end{align}
It is well-known~\cite{Ch, ZeNo, BiTr, Jang2014} that 
for any $\gamma\in(\frac65,2)$ there exits a compactly supported steady solution to~\eqref{E:LANEEMDEN} with the boundary conditions $w'(0)=w(1)=0.$
The associated steady state density $\rho_*$ has finite mass  and compact support and therefore represents a steady star. Moreover, classical linear stability arguments \cite{lin}
give the following dichotomy in the stability behavior of the above family of steady stars:
\begin{align}
&\text{if} \ \ \frac65<\gamma<\frac43, \ \text{ steady state $(\rho_*,{\bf 0})$ is linearly unstable}; \notag \\
&\text{if} \ \ \frac43\le\gamma<2 , \ \text{ steady state $(\rho_*,{\bf 0})$ is linearly stable}.  \notag 
\end{align}
The case of $\gamma=\frac43$ admits 0 as the first eigenvalue and it is often referred to as neutrally stable. 
Under the assumption that a global-in-time solution exists, nonlinear stability of Lane-Emden steady stars in the range $\frac43<\gamma<2$ has been shown by variation arguments in an energy-based topology, see~\cite{Rein, LuSm}.
Since the result is conditional upon the existence of a solution, it remains an important open problem to prove or disprove the full nonlinear stability.  
Nonlinear instability in the range $\frac65\le\gamma<\frac43$ has been rigorously established by the second author~\cite{J0,Jang2014}. In this case, the instability is induced by the existence of 
a growing mode in the linearized operator, while there are no such modes when $\frac43\le\gamma<2$. As we shall see in Section~\ref{S:SPECIALSOLUTIONS},  Lane-Emden stars in the case $\gamma=\frac43$ are in fact
nonlinearly unstable despite the absence of growing modes in the linearized operator.

The special significance of the values $\gamma=\frac65$ and $\gamma=\frac43$ is better illustrated by the following scaling analysis.
%To any $1<\gamma<2$ in the power-law~\eqref{E:EQUATIONOFSTATE}, we can associate a self-similar rescaling.
If $(\rho,\mathbf{u})$ is a solution of the EP$_\gamma$-system, so is the pair $(\tilde\rho,\tilde {\mathbf{u}})$ defined by
\begin{align}
\rho(t,x) & = \lambda^{-\frac{2}{2-\gamma}} \tilde{\rho}(\frac{t}{\lambda^{\frac{1}{2-\gamma}}}, \frac{x}{\lambda}) , \label{E:SSDENSITY}\\
\mathbf{u}(t,x) & = \lambda^{-\frac{\gamma-1}{2-\gamma}} \tilde{\mathbf{u}}(\frac{t}{\lambda^{\frac{1}{2-\gamma}}}, \frac{x}{\lambda}) , \label{E:SSVELOCITY}
\end{align}
for some $\l>0.$
The associated pressure $\tilde p$ and the gravitational potential $\tilde\Phi$ relate to $p$ and $\Phi$ via:
\begin{align*}
p(t,x) & = \lambda^{-\frac{2\gamma}{2-\gamma}} \tilde{p}(\frac{t}{\lambda^{\frac{1}{2-\gamma}}}, \frac{x}{\lambda}), \\
\Phi(t,x) & = \lambda^{-\frac{2\gamma-2}{2-\gamma}} \tilde{\Phi}(\frac{t}{\lambda^{\frac{1}{2-\gamma}}}, \frac{x}{\lambda}).
\end{align*}

The self-similar rescaling~\eqref{E:SSDENSITY}--\eqref{E:SSVELOCITY} is at the heart of our analysis. Two fundamental conserved quantities associated 
with the EP$_\gamma$-system are the total mass
\be\label{E:MASS}
M(\rho) := \int_{\mathbb R^3} \rho \, dx,
\ee
and energy
\be\label{E:ENERGY}
E(\rho,\mathbf{u}) :=  \int_{\mathbb R^3} \left[ \frac12 \rho |\mathbf{u}|^2+ \frac12 \rho \Phi + \frac1{\gamma-1}\rho^{\gamma}\right]\,dx. 
\ee
It is easy to examine their behavior with respect to the rescaling~\eqref{E:SSDENSITY}--\eqref{E:SSVELOCITY}:
\begin{align}
M(\rho) = \lambda^{\frac{4-3\gamma}{2-\gamma}}M(\tilde\rho), \ \ E(\rho,\mathbf{u}) = \lambda^{\frac{6-5\gamma}{2-\gamma}} E(\tilde\rho,\tilde {\mathbf{u}}).
\end{align}
The total mass $M$ is invariant under the self-similar rescaling~\eqref{E:SSDENSITY}--\eqref{E:SSVELOCITY} when $\gamma=\frac43$ and the energy $E$ when $\gamma=\frac65$, 
so we refer to the cases $\gamma = \frac43$ and $\gamma = \frac65$ as the {\em mass-critical} and the {\em energy-critical} case respectively.

As all the new results in this work pertain to the case of spherical symmetry, we next formulate the EP$_\gamma$-system in radial symmetry.
Let
%we shall confine ourselves to a well-posedness result for radially symmetric data only.
$r=|x|,$ $\mathbf{u}(t,x)=v(t,r)\frac{x}{r},$ and 
\[
\lambda(t) := \text{ radius of the support of fluid}.
\]
Then the system~\eqref{E:EULERPOISSON} takes the form:
\begin{subequations}
\begin{alignat}
\partial_t\rho + v\partial_r \rho + \rho\left(\partial_r v + \frac 2r v\right) &= 0, \ \ r\le\lambda(t), \label{E:CONTINUITYRADIAL}\\
\rho\left(\partial_t v+ v \partial_r v \right) + \partial_r p + \rho \partial_r\phi & = 0, \ \ r\le\lambda(t),\\
\partial_{rr}\Phi + \frac2r \partial_r \Phi  = 4\pi \rho, \ \ \lim_{r\to\infty}\Phi(t,r)& =0, \ \ r\ge0, \label{E:POISSONRADIAL}\\
\rho(t,\lambda(t)) &= 0, \label{E:VACUUM}\\
\dot\lambda(t) & = v(t,\lambda(t)),\label{E:DYNAMICRADIAL}\\
\lambda(0) = \lambda_0, \rho(0,r) = \rho_0(r), v(0,r) &= v_0(r).\label{E:INITIALRADIAL}
\end{alignat}
\end{subequations}
We note that under the assumption~\eqref{E:EQUATIONOFSTATE}, conditions~\eqref{E:VACUUM} and~\eqref{E:VACUUMEP} are equivalent.

The mass-critical regime $\gamma = \frac43$ will be the focus of this article. Exponent $\gamma = \frac43$ in the polytropic gas pressure law~\eqref{E:EQUATIONOFSTATE} is also referred to as the radiation case~\cite{Ch}. 
Combining the ideal gas law and the Stefan-Boltzmann radiation law the effective formula for the pressure takes the form $p \sim \rho \Theta + \Theta^4$, where $\Theta$ is the temperature of the gas. The two terms on the right-hand side are in balance when $\Theta$ is approximated by $\rho^{\frac13}$ and this precisely leads to the radiative case $\gamma=\frac43$.

The case $\gamma=\frac43$ is particularly appealing in the astrophysics community as it lends itself to a simple scenario of stellar collapse/expansion:
a compact fluid body can shrink or expand in a self-similar manner by cascading between different scales $t\mapsto \lambda(t)$ thereby preserving the total mass.
When $\gamma = \frac43$ we have $\frac1{2-\gamma} = \frac32$ and therefore the self-similar expansion/collapse would
heuristically correspond to the rates
\be\label{E:SSASYMPTOTICS}
\bar\lambda_{\text{expansion}}(t) \sim_{t\to\infty} k_1 t^{2/3}, \ \  \ \bar\lambda_{\text{collapse}}(t) \sim_{t\to T} k_2(T- t)^{2/3} , 
\ee
for some positive constant $k_1,k_2>0$ and a collapse time $T>0.$ 
Here $\bar\lambda_{\text{expansion}}(t)$ and $\bar\lambda_{\text{collapse}}(t)$ represent the radii of an expanding and collapsing star respectively.
A remarkable feature of the EP$_{\frac43}$-system is that special solutions exhibiting the self-similar behavior~\eqref{E:SSASYMPTOTICS} can be explicitly constructed! 
They were first discovered in the work of Goldreich and Weber~\cite{GoWe} in 1980 and their energy is exactly zero.  
A wider class of Type I collapsing and expanding solutions, although implicitly contained in~\cite{GoWe},
was discovered independently by Makino~\cite{Makino92} in 1992 and Fu \& Lin~\cite{FuLin} in 1998.
Solutions from~\cite{Makino92} and~\cite{FuLin} either expand or collapse at a 
{\em linear} rate 
\be\label{E:MAKINORATES}
\tilde\lambda _{\text{expansion}}(t) \sim_{t\to\infty} \tilde k_1 t , \ \ \ \tilde\lambda _{\text{collapse}}(t) \sim_{t\to\tilde T} \tilde k_2 (\tilde T - t),  
\ee
for some positive constant $\tilde k_1,\tilde k_2>0$ and a collapse time $\tilde T>0.$ 
Rates~\eqref{E:MAKINORATES} are not self-similar as opposed to~\eqref{E:SSASYMPTOTICS} and as is shown in Section~\ref{S:SPECIALSOLUTIONS}
their energy is always different from zero. A synthetic treatment of the two types of special homogeneous solutions using the Lagrangian coordinates is given in Section~\ref{S:SPECIALSOLUTIONS}.

%%%%%%%%%%%%%%%%%%%%%%%%%%%%%%%%%

\subsection{Special expanding solutions}

%%%%%%%%%%%%%%%%%%%%%%%%%%%%%%%%%
To describe the special homogeneous\footnote{Our use of the word "homogeneous" is only justified if we interpret these special solutions in Lagrangian coordinates, see Section~\ref{S:SPECIALSOLUTIONS}.} expanding solutions in Eulerian coordinates, we need to specify three parameters:
\[
(\lambda_0, \ \lambda_1, \ \delta) \in\mathbb R_+\times\mathbb \R\times [\delta^*,\infty),
\]
where $\delta^*<0$ is a real number defined in Section~\ref{S:SPECIALSOLUTIONS}. 
The expanding radius satisfies the ordinary differential equation:
\be\label{E:ODE}
\ddot\lambda \lambda^2 =\delta,
\ee
%hese solutions are all {\em homogeneous} in the sense that the knowledge of the radius $\lambda(t)$ of an expanding/collapsing star is sufficient to determine the star density and velocity.
with the initial conditions
\be\label{E:BClambda}
\lambda(0)=\lambda_0>0, \quad \dot{\lambda}(0) =\lambda_1.
\ee
The associated density and velocity field are given by:
\be\label{profile}
\rho(t,r) = \lambda(t)^{-3} w^3(\frac r{\lambda(t)}), \ \ v(t,r) = \frac{\dot{\lambda}(t)}{\lambda(t)} r, 
\ee
where 
$w:[0,1]\to\R_+$ is a non-negative function called {\em enthalpy} solving the generalized Lane-Emden equation:
\be\label{E:gLE0}
w'' + \frac 2z w' + \pi w^3 =-\frac34 \delta, \ \ \text{ in } \ \ [0,1], \ \ w'(0)=0, \ \  w(1)=0 .
\ee
The conserved energy $E$ associated with these solutions takes the form (see Lemma \ref{lem:E}):
\begin{align}\label{E:ENERGYSPECIAL}
E(\lambda,\dot\lambda ) = \left(\dot\lambda^2+\frac{2\delta}{\lambda}\right)  \int_0^{1}2\pi w^3z^4 dz .
\end{align}
Here we have written $E(\lambda,\dot\lambda )=E(\rho,v)$ where $\rho$ and $v$ are given in \eqref{profile}. Throughout the article, $E$ will represent the physical energy but we will abuse the notation for the arguments in order to emphasize the dependence of the corresponding dynamical quantities for different formulations. 

%%%%%%%%%%%%%%%%%%%%%%%%%%%%%%

\subsubsection{Self-similar solutions}

It can be checked  (see Section~\ref{S:SPECIALSOLUTIONS}) that for any $\delta\in[\delta^*,0)$ and any $(\lambda_0,\lambda_1)$ satisfying
\be\label{E:INITIALCONSTRAINT0}
E(\lambda_0,\lambda_1) =\left( \lambda_1^2+\frac{2\delta}{\lambda_0}\right)  \int_0^{1}2\pi w^3z^4 dz  = 0
\ee
there exists a self-similar solution $\bar\lambda(t)$ given explicitly by:
\be\label{E:SSRATE0}
\bar\lambda(t)=\left(\lambda_0^{3/2} + \frac32\lambda_0^{1/2}\lambda_1 t \right)^{2/3}
\ee
with the density $\rho$, velocity $v$, and the enthalpy $w$ given by~\eqref{profile}--\eqref{E:gLE0}.
Note that the cases $\lambda_1>0$ and $\lambda_1<0$ correspond to expansion and collapse respectively.

%%%%%%%%%%%%%%%%%%%%%%%%%%%%%%

\begin{definition}[Self-similar expanding homogeneous solutions]\label{D:SSEXPANSION}
The family of solutions given by~\eqref{E:SSRATE0} with $\bar\rho= \bar\lambda(t)^{-3} w^3(\frac r{\bar\lambda(t)})$, $v(t,r) = \frac{\dot{\bar\lambda}(t)}{\bar\lambda(t)} r$ and~\eqref{profile}--\eqref{E:gLE0} is denoted by
\[
(\bar\lambda,\bar\rho,\bar v)_{\lambda_0,\lambda_1,\delta}
\]
with $(\lambda_0,\lambda_1,\delta)\in\mathbb R_+\times\mathbb R_+\times [\delta^*,0)$. We refer to such solutions as {\em self-similar expanding solutions of the EP$_\frac43$-system.}
\end{definition}

%%%%%%%%%%%%%%%%%%%%%%%%%%%%%%

%A detailed summary of such special solutions is presented in Section~\ref{S:SPECIALSOLUTIONS}.

%%%%%%%%%%%%%%%%%%%%%%%%%%%%%%

\subsubsection{Linearly expanding solutions}

If we assume that either
\be\label{E:1}
\delta>0,
\ee 
or
\be\label{E:2}
\delta=0, \ \ \lambda_1>0,
\ee
or
\be\label{E:3}
\delta^*<\delta<0, \ \ \lambda_1>\lambda_1^*=\frac{\sqrt{2|\delta|}}{\lambda_0},
\ee 
then a solution $\tilde\lambda $ to~\eqref{E:ODE}--\eqref{E:BClambda} exists globally-in-time and expands indefinitely at a linear rate, i.e. there exists a constant $c>0$ such that 
\be\label{E:EXPANSIONRATE}
\lim_{t\to\infty}\dot{\tilde\lambda }(t)=c.
\ee
The density $\rho$, velocity $v$, and the enthalpy $w$ are given by~\eqref{profile}--\eqref{E:gLE0}. We remark that all of the solutions above have strictly positive energy, i.e.
$
E(\tilde\lambda , \dot{\tilde\lambda })>0
$ 
(see~\eqref{E:ENERGYSPECIAL}). 

%%%%%%%%%%%%%%%%%%%%%%%%%%%%%%

\begin{definition}[Linearly expanding homogeneous solutions]\label{D:LINEAREXPANSION}
The family of solutions satisfying~\eqref{E:EXPANSIONRATE} for some $c>0$ with $\tilde\rho= \tilde\lambda (t)^{-3} w^3(\frac r{\tilde\lambda (t)})$, $v(t,r) = \frac{\dot{\tilde\lambda }(t)}{\tilde\lambda (t)} r$ is denoted by
\[
(\tilde\lambda ,\tilde\rho,\tilde v)_{\lambda_0,\lambda_1,\delta}
\]
with parameters $(\lambda_0,\lambda_1,\delta)$ satisfying either~\eqref{E:1}, or~\eqref{E:2}, or~\eqref{E:3}.
We refer to such solutions as {\em linearly expanding solutions of the EP$_\frac43$-system.}
\end{definition}

%%%%%%%%%%%%%%%%%%%%%%%%%%%%%%

\subsection{Main results}

The main motivation for this article is the question of nonlinear stability of the above described family of homogeneous expanding solutions of the EP$_\frac43$-system, given by Definitions~\ref{D:SSEXPANSION} and~\ref{D:LINEAREXPANSION}.
The problem of asymptotic stability of this family is a first necessary step in justifying the qualitative behavior~\eqref{E:SSASYMPTOTICS},~\eqref{E:MAKINORATES}  as a credible scenario of stellar expansion. 

Since the family of self-similar solutions $(\bar\lambda,\bar\rho,\bar v)_{\lambda_0,\lambda_1,\delta}$ (subject to the constraint~\eqref{E:INITIALCONSTRAINT0}) embeds in the family of linearly expanding ones, it is obvious that the self-similar behavior is not stable.
Nevertheless, our first result is to show that if we limit our perturbations to a surface of zero-energy perturbations, then the self-similar behavior is indeed nonlinearly stable.

\begin{theorem}[Codimension-one stability of self-similar expanding solutions -- {\em informal statement}]\label{T:SSINFORMAL}
The self-similar expanding solutions of the EP$_\frac43$-system specified in Definition~\ref{D:SSEXPANSION} are globally-in-time nonlinearly stable with respect to spherically symmetric perturbations $(\rho_0,{v}_0)$ with vanishing energy
$
E(\rho_0,{v}_0) = 0.
$
Any such perturbation converges to a nearby self-similar expanding homogeneous solution.
\end{theorem}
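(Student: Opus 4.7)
The strategy is to pass to Lagrangian coordinates to fix the free boundary, then introduce self-similar variables adapted to the rate $\bar\lambda(t)\sim t^{2/3}$, and finally close a modulation-plus-energy argument on the codimension-one subspace of zero-energy perturbations. First I would use the Lagrangian flow map $\eta(t,y)$ to convert the free-boundary problem on $\Omega(t)$ into the degenerate quasilinear wave equation on the fixed unit ball that is advertised in the abstract. The background self-similar solution corresponds to $\bar\eta(t,y)=\bar\lambda(t)y$, so it is natural to write the full flow as $\eta(t,y)=\bar\lambda(t)(y+\zeta(t,y))$ with $\zeta$ the unknown perturbation. Substituting this ansatz and linearising around $\bar\eta$ produces a wave equation in which the enthalpy $w$ from \eqref{E:gLE0} enters as a weight vanishing simply at $y=1$ (the physical vacuum condition).

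Next I would introduce a similarity time $s$ defined by $\frac{ds}{dt}=\bar\lambda(t)^{-3/2}$ (so that $s\sim\log t$) together with the rescaled unknown obtained by factoring out the self-similar scaling laws \eqref{E:SSDENSITY}--\eqref{E:SSVELOCITY}. In these variables $\bar\eta$ becomes a stationary fixed point of the renormalised flow, the expansion-induced decay of the coefficients manifests itself as an explicit friction term, and the linearisation at the origin takes the form of a degenerate self-adjoint operator $\mathcal{L}$ on a weighted space $L^2(w^{3}z^{2}\,dz)$. The mass-criticality $\gamma=\tfrac{4}{3}$ is what makes this rescaling leave the problem invariant and thus gives $\mathcal L$ a genuine stationary limit.

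A crucial step is a two-parameter modulation ansatz absorbing the family $(\lambda_0,\lambda_1)$ of self-similar backgrounds from Definition~\ref{D:SSEXPANSION} (with $\delta$ fixed). I would promote $(\lambda_0,\lambda_1)$ to functions of $s$ and impose orthogonality of $\zeta(s,\cdot)$ against the two tangent vectors of the background family; this produces ODEs for the modulation parameters and reduces the dynamics to the symplectic orthogonal complement, on which $\mathcal L$ should be nonnegative with at most one additional neutral direction. That remaining direction is precisely the one along which $E(\lambda_0,\lambda_1)$ in \eqref{E:ENERGYSPECIAL} changes sign, so the hypothesis $E(\rho_0,v_0)=0$, together with conservation of energy and the expression \eqref{E:ENERGYSPECIAL}, pins the perturbation to the zero-level set of $E$ and eliminates this direction. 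This yields strict positivity of the linearised energy on the admissible codimension-one surface.

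The closing of the argument is then a high-order weighted energy estimate. Because of the degeneracy at $z=1$, the natural norms are Hardy-type Sobolev norms weighted by powers of $w(z)\sim 1-z$, and one commutes the equation with the standard vectorfields adapted to the physical vacuum (as in the existing well-posedness theory for free-boundary Euler). One expects an inequality of the form
\begin{equation*}
\frac{d}{ds}\mathcal{E}(s) + \mathcal{D}(s)\ \lesssim\ \mathcal{E}(s)^{3/2},
\end{equation*}
with $\mathcal E$ equivalent to the norm of $\zeta$ and $\mathcal D$ the associated dissipation produced by the friction term in similarity variables. Combined with the ODE control of $(\lambda_0(s),\lambda_1(s))$, a standard bootstrap on a small neighbourhood of the initial data will then yield global-in-time smallness of $\zeta$ and convergence of the modulation parameters to limits, thereby identifying the asymptotic self-similar profile. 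The main obstacle I would expect is the simultaneous presence of the vacuum degeneracy and the only marginal (logarithmic-in-$s$) friction provided by the $t^{2/3}$ expansion: proving coercivity of $\mathcal L$ in the Hardy-weighted topology on the zero-energy subspace, and verifying that the commutator fields are compatible with the physical vacuum so that nonlinear error terms are genuinely controlled by $\mathcal E^{3/2}$, is the technical heart of the argument and requires a sharp spectral analysis of $\mathcal L$ whose two-dimensional kernel matches exactly the tangent space of the modulation family.
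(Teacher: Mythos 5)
Your overall skeleton matches the paper's: Lagrangian coordinates on the fixed unit interval, the similarity time $s$ with $ds/dt=\bar\lambda^{-3/2}$, the expansion-induced friction term, the use of the zero-energy constraint to neutralize the bad direction, and a high-order weighted energy inequality of the form $\mathcal E(s)+\int_0^s\mathcal D\lesssim \mathcal E(0)+\int_0^s\mathcal E^{3/2}$ closed by bootstrap. One clarification on your final worry: after the rescaling the coefficients of $\phi_{ss}$, $\phi_s$ and $\phi$ are \emph{constant} in $s$ (this is exactly what mass-criticality buys), so the friction $-\tfrac12 b\phi_s$ with $b=-\sqrt{2|\delta|}$ is a genuine constant-coefficient damping and yields exponential decay in $s$; it is not marginal.

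The genuine divergence, and the place where your argument as written would go wrong, is the modulation step. First, the spectral picture is not the one you describe: the kernel of the linearized spatial operator $\mathcal L_\delta$ is one-dimensional (constants), not two-dimensional, and the second ``tangent direction'' of the family does not lie in $\ker\mathcal L_\delta$ — it appears instead as a genuine exponentially growing mode $\phi\propto e^{-bs}$ of the full second-order linear flow $\phi_{ss}-\tfrac12 b\phi_s+3\delta\phi+\mathcal L_\delta\phi=0$, produced by the interplay of $3\delta\phi$ (with $\delta<0$) and the constant mode. Second, modulation is both problematic and unnecessary here: once the support is normalized and the conserved mass fixes $\delta$, the zero-energy condition $\lambda_1^2+2\delta/\lambda_0=0$ leaves no residual parameter to modulate, so imposing two orthogonality conditions against ``tangent vectors'' of the family is not a consistent reduction. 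The paper instead fixes the attractor a priori from the conservation laws and controls the dangerous projection $(\phi,1)_\delta$ directly by linearizing the conserved-energy identity $E(1+\phi,\phi_s)=0$, which gives $\tfrac32 b(\phi,1)_\delta-(\phi_s,1)_\delta=\mathcal J[\phi]$ with $\mathcal J$ quadratic; feeding this into the coercivity estimate for $\mathcal L_\delta+3\delta$ on $\{(\varphi,1)_\delta=0\}$ (the spectral-gap lemma) restores full positivity of the energy functional. If you replace your modulation ansatz by this constraint-based control of the constant mode — and add the elliptic step recovering spatial derivatives from $\partial_s$-derivatives via the equation, which you omit — your argument aligns with the paper's proof.
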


A rigorous version of Theorem~\ref{T:SSINFORMAL} is stated in Theorem~\ref{T:MAIN} using the Lagrangian coordinates.

%%%%%%%%%%%%%%%%%%%%%%%%%%%%%%%%%%

\begin{remark}
The asymptotic attractor $(\bar\lambda,\bar\rho,\bar {v})_{\lambda_0,\lambda_1,\delta}$ is characterized by the requirements
\begin{align}
E(\rho_0,{ v}_0) = E(\bar\rho,\bar {v}) = 0, \ \ M(\rho_0) = M(\bar\rho), \ \ \text{supp}(\rho_0) = [0,\lambda_0].
\end{align}
In particular, our stability result is a codimension-one stability result, as we must restrict our perturbations to the zero-energy surface of admissible stellar configurations. 
It shows that only the directions transversal to the zero-energy surface are responsible for the loss of a self-similar behavior.
\end{remark}

%%%%%%%%%%%%%%%%%%%%%%%%%%%%%%%%%%

Our second main result concerns the stability of linearly expanding homogeneous solutions. Unlike self-similar solutions, we do not limit our perturbations to the same energy level as the background solution.

%%%%%%%%%%%%%%%%%%%%%%%%%%%%%%%%%%

\begin{theorem}[Nonlinear stability of the linearly expanding homogeneous solutions -- {\em informal statement}]\label{T:LINEARINFORMAL}
There exists an $\tilde\varepsilon>0$ such that for any $\delta>-\tilde\varepsilon$ the linearly expanding homogeneous solutions  $(\tilde\lambda ,\tilde\rho,\tilde v)$ from Definition~\ref{D:LINEAREXPANSION} are nonlinearly stable with respect to small spherically symmetric perturbations.  
\end{theorem}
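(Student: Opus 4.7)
The plan is to pass to Lagrangian coordinates on the reference interval $[0,1]$, so that the free boundary $r=\lambda(t)$ becomes the fixed endpoint $y=1$, and to reformulate the radial EP$_{4/3}$-system as a single degenerate quasilinear wave equation for the flow map $\eta(t,y)$. The background $\tilde\lambda$ corresponds to the affine flow $\eta(t,y)=\tilde\lambda(t)\,y$; I would decompose the true flow as $\eta(t,y)=\tilde\lambda(t)\bigl(y+\zeta(t,y)\bigr)$ and derive the perturbation equation for $\zeta$. Since $\dot{\tilde\lambda}(t)\to c>0$ under hypotheses \eqref{E:1}--\eqref{E:3}, one has $\tilde\lambda(t)\sim ct$ and $\ddot{\tilde\lambda}(t)=\delta/\tilde\lambda^2\to 0$, so in the natural similarity time $s=\log\tilde\lambda(t)$ the perturbation equation takes the schematic form
\begin{equation*}
\zeta_{ss}+A(s)\,\zeta_s+L_\delta[\zeta]=N(\zeta),
\end{equation*}
where $L_\delta$ is a self-adjoint spatial operator on $[0,1]$ built from the enthalpy $w$ solving \eqref{E:gLE0}, $A(s)$ is a damping coefficient that remains strictly positive and bounded below as $s\to\infty$ thanks to the linear expansion, and $N(\zeta)$ is a remainder vanishing quadratically at $\zeta=0$.

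Next, I would construct a high-order weighted energy $\Energy_N(s)$ adapted to the physical vacuum degeneracy. Near $y=1$ the enthalpy $w$ vanishes linearly, so the equation is degenerate; one must use weights built from a suitable power of $w$ together with radial Hardy inequalities to obtain coercivity for $L_\delta$ and to commute cleanly with $\pa_y$ and $\frac{1}{y}$. Coercivity is precisely where the smallness hypothesis $\delta>-\tilde\varepsilon$ enters: $L_\delta$ is a perturbation of the mass-critical Lane–Emden-type operator (corresponding to $\delta=0$) by a $\delta$-multiple of a bounded, possibly indefinite, operator, and smallness of $|\delta|$ ensures strict positivity modulo a finite-dimensional kernel. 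That kernel should correspond exactly to the modulation parameters $(\lambda_0,\lambda_1)$ of Definition~\ref{D:LINEAREXPANSION}; I would project $\zeta$ off this kernel via a modulation ansatz in which the true expansion parameters are allowed to adjust dynamically in response to the initial data, thereby absorbing the zero modes into the choice of the attracting background solution.

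Finally, I would close a bootstrap: differentiating $\Energy_N$ in $s$, the damping term $A(s)\zeta_s$ contributes a nonnegative boundary piece after integration by parts, the operator $L_\delta$ yields the coercive quadratic form, and commutator and nonlinear terms are controlled by weighted Moser--Sobolev inequalities using that $\Energy_N$ bounds the relevant $L^\infty$ norms. The main obstacle, in my view, is to handle simultaneously the vacuum degeneracy at $y=1$ and the effect of a nonzero $\delta$ on the coercivity of $L_\delta$: unlike the self-similar case, where the identity $E=0$ eliminates several troublesome terms, here $E>0$ and $\delta\neq 0$, so every Lane–Emden identity used in the coercivity estimate must be replaced by a $\delta$-dependent analogue with error terms controlled by the smallness of $|\delta|$. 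Once these ingredients are in place, the linear growth $\tilde\lambda(t)\sim ct$ provides borderline but sufficient time-integrable control on the non-autonomous coefficients, the bootstrap closes for $s\in[0,\infty)$, and translating back to the original coordinates gives global-in-time nonlinear stability of $(\tilde\lambda,\tilde\rho,\tilde v)_{\lambda_0,\lambda_1,\delta}$.
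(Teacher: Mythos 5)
Your overall framework (Lagrangian reformulation on $[0,1]$, rescaling by $\tilde\lambda$, a weighted high-order energy adapted to the physical vacuum, smallness of $|\delta|$ for coercivity) matches the paper's, but your schematic normal form hides the central difficulty of this case, and the bootstrap as you describe it would not close. Because the linear rate of expansion does not honor the mass-critical scaling invariance, one cannot arrange for both the time-derivative terms and the spatial operator to have $O(1)$ coefficients: in the paper's time variable $\tau$ (with $d\tau/dt=1/\tilde\lambda$) the perturbation equation is $\tilde\lambda\phi_{\tau\tau}+\tilde\lambda_\tau\phi_\tau+3\delta\phi+\mathcal L_\delta\phi=N_\delta[\phi]$ with $\tilde\lambda\sim e^{\beta\tau}$; equivalently, after normalizing the damping to be $O(1)$ the spatial operator carries an exponentially decaying factor. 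Consequently the dissipation only controls $\tilde\lambda_\tau\|\phi_\tau\|^2$, it does not dominate the full energy, the perturbation $\phi$ does not decay, and the attractor $\theta_\infty$ need not be a steady state (Remark~\ref{rem0}); what closes the estimate is that the quadratic and cubic error terms come with integrable factors $e^{-\beta_2\tau}$, not the coercive damping-plus-spectral-gap mechanism you invoke. Your proposed form $\zeta_{ss}+A(s)\zeta_s+L_\delta[\zeta]=N(\zeta)$ with $A$ bounded below and $L_\delta$ of unit strength is the structure of the \emph{self-similar} case, not the linear one.

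Two further gaps. First, because of the growing coefficients $\tilde\lambda,\tilde\lambda_\tau$, commuting the equation with time derivatives to build the high-order energy fails (the commutators carry weights that grow faster than the energy); the paper replaces this by a purely spatial hierarchy built from $\mathcal S=\partial_z^2+\frac4z\partial_z$ and $\mathcal L_{\delta,k}$, together with the changing weights $w_\delta^{3+2j}$, and some substitute for this is unavoidable in your scheme. Second, you propose a dynamical modulation to remove the kernel of the linearized operator; the paper instead fixes the background once and for all by matching the conserved mass and energy of the data and then controls the single zero mode $(\phi,1)_\delta$ through the conservation-of-energy identity \eqref{E:ENERGYCONS} (see Lemma~\ref{L:POSDEF}). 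A modulation approach is not obviously wrong, but you would have to derive and close the modulation ODEs in the presence of the growing coefficients, and note that the kernel of the spatial operator consists only of constants (one dilation direction), not the two-parameter family $(\lambda_0,\lambda_1)$ you describe; the velocity degree of freedom is not a zero mode of $\mathcal L_\delta$.
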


A rigorous version of Theorem~\ref{T:LINEARINFORMAL} is stated in Theorem~\ref{T:LINEAREXPANSION} using the Lagrangian coordinates.
%%%%%%%%%%%%%%%%%%%%%%%%%%%%%%%%%%

\begin{remark}
Theorem~\ref{T:LINEARINFORMAL} states that the behavior in the light blue shaded region in Figure~\ref{F:BIFURCATION} is dynamically stable, as long as $\delta>-\tilde\varepsilon$.  In contrast to Theorem~\ref{T:SSINFORMAL}, Theorem \ref{T:LINEARINFORMAL} does not specify the asymptotic attractor for the perturbation nor is it expected to behave like a nearby member of the family of homogeneous solutions.
Instead, we show that the perturbed solution remains nearby in a suitable sense and that its support grows linearly. 
Question of asymptotic behavior is more subtle, see Remark \ref{rem0}. 
\end{remark}

Theorems~\ref{T:SSINFORMAL} and~\ref{T:LINEARINFORMAL}  are to the best of our knowledge the first results producing a family of global-in-time non-homogeneous classical solutions to the 
EP$_{\frac43}$-system. In particular, we show that the expansion of the background homogeneous solution counteracts the possibility of shock formation and produces a certain damping effect.
This effect is visible only in suitably rescaled similarity variables, and the role of the critical scaling invariance is a fundamental ingredient of our proof.

An important feature of expanding solutions is that the boundary of the domain (fluid support) is moving with the fluid. A simple comparison between perturbed solutions and the asymptotic attractor in Eulerian coordinates is not feasible. The problem is better understood in material coordinates by following the fluid trajectories. We will therefore formulate the problem in suitably 
rescaled Lagrangian coordinates, which will allow us to pull the problem back onto a fixed domain.

%%%%%%%%%%%%%%%%%%%%%%%%%%%%%%%%%%

\section{Lagrangian formulation}

\subsection{Physical vacuum and local-in-time well-posedness}

One of the fundamental  difficulties associated with the well-posedness theory for~\eqref{E:EULERPOISSON} is tied to the concept of a {\em physical vacuum boundary condition}.
It is clear that the  homogeneous solutions  described above satisfy  $\dot\lambda(t)\neq 0$ and therefore the support of the fluid is  moving. 
On the other hand, the background enthalpy profile $w$ satisfying \eqref{E:gLE0} vanishes at the boundary $z=1$ while it is strictly positive on $[0,1).$ A simple calculation based on~\eqref{E:gLE0} reveals that the strict inequality
$w'(1)<0$ has to hold. This in turn implies that $\rho(t,\lambda(t)) =0$ and $\g_r\rho^{1/3}(t,\lambda(t))<0$. 
Such a boundary behavior is a typical feature of compactly supported stellar configurations: 

\begin{definition}[Physical vacuum condition]\label{D:PHYSICALVACUUM}
We say that the system~\eqref{E:EULERPOISSON} satisfies the physical vacuum boundary condition if 
\be\label{physical_v}
-\infty < \frac{\g c^2}{\g {n}} <0 \quad \text{on } \Gamma(t)
\ee
where 
\[
 c^2={\frac{dp}{d\rho}} = \gamma \rho^{\gamma-1}  \ \ (\text{$c$ is known as the speed of sound} )
\] 
and $\frac{\g}{\g n}$ denotes the outward normal derivative. 
\end{definition}

We note that the physical vacuum condition is analogous to Rayleigh-Taylor sign condition arising in the free boundary problems of the incompressible fluid dynamics (see~\cite{CoSh2007} for a detailed discussion and references therein). 
The problem of moving vacuum boundaries characterized by \eqref{physical_v} has for a long time created various analytical and conceptual difficulties in addressing the question of well-posedness due to the degeneracy caused by 
the physical vacuum requirement. Note that by Definition~\ref{D:PHYSICALVACUUM}, an enthalpy profile $c^2\sim\rho^{\gamma-1}$ satisfying the physical vacuum condition is {\em not} smooth across the moving boundary and it is a priori not even clear 
what is the suitable functional analytic framework to address the question of local-in-time well-posedness.

In 2010 the basic advance was achieved by Coutand \& Shkoller~\cite{CoSh2011,CoSh2012} (see \cite{CLS} for the a priori estimates) and independently by Jang \& Masmoudi~\cite{JaMa2009,JaMa2015} where the authors developed a well-posedness theory in high-order weighted Sobolev spaces 
for the compressible Euler equations satisfying the physical vacuum condition. By a straightforward extension, the same framework can be applied to the free boundary Euler-Poisson system, 
as the effects of the added gravitational field are of lower order from the point-of-view of well-posedness theory~\cite{Jang2014, Jang2015, LXZ}. We refer to \cite{JM1,JM2012} for more detailed discussion on the physical vacuum and other vacuum states.

The crux of the approach in~\cite{CoSh2012, JaMa2015, Jang2014} is the use of Lagrangian coordinates. They allow us to pull-back the  EP$_\frac43$-system onto a fixed compact domain. More importantly, 
in spherical symmetry the Lagrangian formulation of the EP$_\frac43$-system reduces to a degenerate quasilinear wave equation for the Lagrangian flow-map, wherein the initial density profile $\rho_0$
features as a coefficient inside the nonlinear wave operator. To see this  we introduce a Lagrangian flow map $\eta:\Omega_0\to\Omega(t)$ as a solution of: 
\begin{align}
\dot\eta(t,x) &= \mathbf{u}(t,\eta(t,x)), \label{E:FLOWMAP} \\
\eta(0,x) &= \eta_0(x), \label{E:ETAINITIAL}
%\eta_0(x).
\end{align}
where $\eta_0:\Omega_0\rightarrow \Omega(0)$ is a diffeomorphism with positive Jacobian determinant.  
Since the problem is radially symmetric, we make the ansatz: 
\be\label{E:ETARADIAL}
\eta(t,x) = \chi(t,z) x, \ \ z=|x|
\ee
which leads to the following evolution equation for $\chi$~\cite{Jang2014} : 
\be\label{E:XIEQUATION0}
 \chi_{tt} + F_{w}[\chi] = 0
\ee
with the initial conditions:
\be\label{E:INITIALCONDITIONS}
\chi(0) = \chi_0, \ \ \dot\chi(0) = \chi_1.
\ee
Here, the nonlinear operator $F_{w}$ is given by 
\be\label{E:NONLINEARITY0}
F_{w}[\chi] : = \frac{\chi^2}{ w^3 z}\g_z\left(w^{4}\left[\chi^2\left(\chi+z\g_z\chi\right)\right]^{-\frac{4}{3}}\right) 
+\frac{1}{\chi^2z^3}\int_0^z4\pi w^{3} s^2\,ds
\ee
where 
\be\label{wrho_0}
w^3:= \rho_0 \chi_0^2(\chi_0+ z\g_z\chi_0). 
\ee
A remarkable feature of the formulation~\eqref{E:XIEQUATION0}--\eqref{E:INITIALCONDITIONS} is that the EP system~\eqref{E:EULERPOISSON} in the new variables takes the form of a \textit{quasi-linear
degenerate wave equation} in a bounded domain.

To formulate a well-posedness result we need  weighted Sobolev spaces.

\begin{definition}[Weighted spaces and energy]\label{Def:H^j}
For any $k\in\mathbb N\cup\{0\}$ and a function $w:[0,1]\to\mathbb R_+$, we define weighted spaces $L^2_{w,k}$ as a completion of the space $C_c^\infty([0,1])$ with respect to the norm $\|\cdot\|_{w,k}$ 
generated by the inner product 
\be\label{E:WEIGHTEDINNERPRODUCT}
(\chi_1,\chi_2)_{w,k} : = \int_0^1 \chi_1\chi_2 w^{3+k}z^4\,dz.
\ee
%and denote the associated norm by $\|\cdot\|_{w,k}.$ 
We will set $\|\cdot\|_{w,0}:=\|\cdot\|_{w}$. We also define the following Hilbert space
\be
{H}^j_w:= \{ \varphi \in L^2_{w,0}: \g_z^k \varphi \in L^2_{w,k} \  \text{ for all } \ 0\leq k\leq j  \}. 
\ee
The function space for initial data is defined by 
\begin{align}\label{E:INITIALDATANORM}
\mathcal H_{w}: = \left\{(\chi_0,\chi_1) \big| \, \|(\chi_0,\chi_1)\|_{\mathcal H_{w}} < \infty\right\},
\end{align}
where
\[
\|(\chi_0,\chi_1)\|_{\mathcal H_{w}}^2 : = \sum_{k=0}^{8}\|\pa_z^k\chi\|_{w,k}^2 + \sum_{k=0}^{7}\|\pa_z^{k+1}\chi_1\|_{w,k+1}^2 . 
\]
We also introduce the following weighted space-time norm: %that will be used crucially later on:
\begin{align}\label{E:INITIALDATASPACE}
\mathcal E(t) : = \sum_{j=0}^7\sum_{k=0}^j   \left(\|\g_s^{j-k+1}\g_z^k \chi\|_{w,k}^2 +  \|\g_s^{j-k}\g_z^{k+1} \chi\|_{w,k+1}^2  +\|\g_s^{j-k}\g_z^k \chi\|_{w,k}^2\right).
\end{align} 
\end{definition}
%To formulate the basic well-posedness result,  that will be used crucially later on 

We are now ready to state the following local-in-time well-posedness theorem~\cite{CoSh2012,Jang2014,JaMa2015,LXZ}: 

%%%%%%%%%%%%%%%%%%%%%%%%%%%%%%%%

\begin{theorem}\label{T:LOCAL}
Let the initial density $\rho_0$ satisfy the physical vacuum boundary condition~\eqref{physical_v}. Assume additionally that 
$
(\chi_0,\chi_1)\in \mathcal H_{w}
$. 
Then there exists a constant $C>0$ and a unique solution to the initial value problem~\eqref{E:XIEQUATION0}--\eqref{E:INITIALCONDITIONS} on some finite time interval $[0,T], T>0,$ such that the map 
\[
[0,T]\ni t \to \E(t)
\]
is continuous, and the solution $(\chi,\chi_t)$ satisfies the energy bound
\[
\sup_{0\le t \le T} \E(t) \le C \|(\chi_0,\chi_1)\|_{\mathcal H_{w}}^2.
\]
If $\mathcal T=\sup\{0\le t\le\infty :\,\text{solution exists on $[0,t)$ and }\, \E(t)<\infty\}$ is the maximal time of existence, then $\mathcal T<\infty$ implies
$
\lim_{t\to\mathcal T^-}\E(t) = \infty.
$
\end{theorem}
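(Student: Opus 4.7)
The plan is to adapt the local well-posedness theory for the free-boundary compressible Euler equations with physical vacuum developed in \cite{CoSh2012,JaMa2015,Jang2014} to the present Euler-Poisson setting. The nonlocal gravitational contribution $G[\chi] := \frac{1}{\chi^2 z^3}\int_0^z 4\pi w^3 s^2\, ds$ appearing in \eqref{E:NONLINEARITY0} is a strictly lower-order, regular perturbation of the degenerate quasilinear wave operator
\[
F_w^{\mathrm{hyp}}[\chi] := \frac{\chi^2}{w^3 z}\,\partial_z\!\bigl(w^{4}[\chi^2(\chi+z\partial_z\chi)]^{-4/3}\bigr),
\]
and I would treat it perturbatively: the kernel $\int_0^z 4\pi w^3 s^2\, ds$ vanishes like $z^3$ at the origin, so $G[\chi]$ is smooth in $\chi$ and regular on $[0,1]$ and never interacts with the vacuum degeneracy at $z=1$. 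The physical vacuum condition \eqref{physical_v} forces $w(z)\sim (1-z)$ near $z=1$, so the natural energy for the linearization of $F_w^{\mathrm{hyp}}$ about $\chi\equiv 1$ is $\int_0^1 w^{4}(\partial_z\chi)^2 z^4\,dz$, which is precisely the $k=1$ instance of \eqref{E:WEIGHTEDINNERPRODUCT}; this dictates the higher-order weighted scale $H^j_w$ and the definition of $\mathcal E(t)$ in \eqref{E:INITIALDATASPACE}.

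To construct a solution, I would regularize $w$ by a strictly positive $w_\varepsilon>0$ on $[0,1]$ so that the resulting wave equation becomes uniformly hyperbolic, and then run a fixed-point iteration $\chi^{(n+1)}_{tt} + L_{\chi^{(n)}}\chi^{(n+1)} = \mathcal R[\chi^{(n)}]$, where $L_\chi$ is the linearization of $F_w$ about the current iterate and $\mathcal R$ absorbs semilinear and nonlocal terms, including $G$. At the regularized level each linear problem is solved by a Galerkin projection onto the eigenfunctions of the associated weighted elliptic operator, producing smooth approximants. The crux is to derive \emph{uniform-in-$\varepsilon$} estimates $\sup_{[0,T]}\mathcal E(t)\le C\|(\chi_0,\chi_1)\|_{\mathcal H_w}^2$ on a time interval depending only on the initial norm; once in hand, weak compactness together with lower semicontinuity of $\mathcal E$ yield a solution of the original degenerate problem, and uniqueness follows by running a low-order energy estimate on the difference of two hypothetical solutions.

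The main obstacle, and essentially the whole technical content, is closing the high-order estimates for $\mathcal E(t)$. After differentiating \eqref{E:XIEQUATION0} $j-k$ times in $t$ and $k$ times in $z$ and pairing with the corresponding derivative of $\chi_t$ in the inner product \eqref{E:WEIGHTEDINNERPRODUCT}, one confronts commutators $[\partial_z^k, F_w]$ whose coefficients contain inverse powers of $w$ and are formally singular at the boundary. The only way to control them is the precise matching between the weight $w^{3+k}z^4$ at level $k$ and the commutator structure, supplemented by weighted Hardy inequalities
\[
\int_0^1 w^{k-1} f^2\, dz \le C\int_0^1 w^{k+1}(\partial_z f)^2\, dz + C\|f\|_{w,k+1}^2,
\]
which hinge on the physical vacuum behavior $w\sim(1-z)$. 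Working upward from $k=0$ to $k=8$, at each stage I would isolate the leading positive-definite quadratic form arising from the linearization, use the degenerate ellipticity of $F_w$ (reflecting the fact that the Lane-Emden background is a critical point of the potential energy) to convert it into a piece of $\mathcal E$, and absorb all remainders by Young's inequality and Gronwall. The gravitational term $G[\chi]$ is benign at every order because its mixed $t$- and $z$-derivatives involve only $\chi$ and $\partial_z\chi$ multiplied by coefficients that are bounded on $[0,1]$. The blow-up criterion at $\mathcal T<\infty$ follows from the standard Picard-type continuation argument applied to this a priori bound.
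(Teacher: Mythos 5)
The paper does not prove Theorem~\ref{T:LOCAL} at all: it is quoted as a known result from \cite{CoSh2012,Jang2014,JaMa2015,LXZ}, and your outline (regularization plus Galerkin, uniform-in-$\varepsilon$ weighted energy estimates with the weight $w^{3+k}z^4$ matched to the number of spatial derivatives, Hardy inequalities exploiting $w\sim 1-z$, the gravitational term treated as a regular lower-order perturbation, and continuation via the a priori bound) is precisely the strategy of those cited works. Your sketch is therefore consistent with the paper's (implicit) proof; the only slight mischaracterization is attributing the coercivity of the principal part to the Lane--Emden background being a critical point of the potential energy, whereas for local well-posedness the relevant positivity comes simply from propagating $\chi,\,\chi+z\partial_z\chi>0$ for short time.
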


%%%%%%%%%%%%%%%%%%%%%%%%%%%%%%%%

\begin{remark}\label{R:ALTERNATIVE}
The norm used to state our well-posedness theorem suggests an interesting scaling structure in the problem. As the number of spatial derivatives increases
the weights get more singular as it is clear from~\eqref{E:INITIALDATASPACE}.
An alternative to the norm $\E$ is a norm based on purely spatial derivatives. In fact, the methodology developed in~\cite{JaMa2015} ensures the local-in-time well-posendess of solutions in weighted Sobolev spaces generated by spatial derivatives in which suitable a priori estimates are established. This principle applies to our problem as well: based on the a priori estimates (for instance, see \eqref{E:MAINBOUND2}), we have a local existence theorem analogous to Theorem \ref{T:LOCAL} by using the norm $\tilde \E$ defined in \eqref{E:ENERGYLINEAR} and the corresponding function space introduced in Definition \ref{D:DELTAJSPACES}. 
\end{remark}

%%%%%%%%%%%%%%%%%%%%%%%%%%%%%%%%

%%%%%%%%%%%%%%%%%%%%%%%%%%%%%%%%

The conservation of mass~\eqref{E:MASS} is embedded in the new formulation of the problem, as the continuity equation~\eqref{E:CONTINUITYEP} is used fundamentally in the derivation of~\eqref{E:XIEQUATION0}, see~\cite{Jang2014}.
The only surviving non-trivial conservation law is the energy conservation law. In the new variable $\chi$, the energy~\eqref{E:ENERGY} takes the form:
\begin{align}\label{E:ENERGYLAGRANGIAN}
E(\chi,\partial_t\chi)(t)  =& 2\pi \|\partial_t\chi\|_w^2 +12\pi \| \left( z^6\chi^2(\chi+z\partial_z\chi)\right)^{-1/6}\|_w^2  - 4\pi \Big\| \left( \chi^{-1} z^{-3}\int_0^z 4\pi w^3 s^2 ds \right)^{1/2}\Big\|_w^2. 
\end{align}
It can be checked by a direct computation that along the smooth solutions of~\eqref{E:XIEQUATION0} the energy $E(\chi,\partial_t\chi)(t)$ remains constant. In particular, the solutions obtained by Theorem \ref{T:LOCAL} satisfy $E(\chi,\partial_t\chi)(t)=E(\chi_0,\chi_1)$ for $0\leq t\leq T$. 

\subsection{Homogeneous solutions of the EP$_{\frac43}$-system} \label{S:SPECIALSOLUTIONS}

In this section, we unify the treatment of the existence of self-similar expanding/collapsing solutions and show that the special solutions discovered by Goldreich and Weber~\cite{GoWe}, Makino~\cite{Makino92}, Fu and Lin~\cite{FuLin} 
naturally correspond to {\em homogeneous} solutions expressed in the Lagrangian coordinates introduced above. 
For the more general non-isentropic flows in arbitrary dimensions the existence of such solutions was shown by Deng, Xiang, and Yang in~\cite{DengXiangYang}.
%This realization is a starting point for our analysis, as such homogeneous expanding stars correspond to 
%steady states in self-similar coordinates, thus ideally suited for a rigorous non-linear stability analysis.

A homogeneous solution of the EP$_{\frac43}$-system possesses {\em by definition} a flow map $\eta$ of the form
\[
\eta(t,x) = \lambda(t) x.
\]
Equivalently, we look for the solutions to~\eqref{E:XIEQUATION} of the form 
$
\chi(t,z) = \lambda(t),
$
thus justifying the use of the word ``homogeneous".
In the physics literature~\cite{GoWe,RB, Yahil1983} one also refers to such solutions as {\em homologous} solutions.
The nonlinear operator $F_w$ is easily checked to satisfy the homogeneity property $F_w[\l] = \lambda^{-2} F_w[1]$ and therefore equation~\eqref{E:XIEQUATION} reduces to
%In order for $\chi=1$ to be a solution to \eqref{E:XIEQUATION}, $\lambda_{tt} + F_w[\l]=0$. Due to the nonlinear structure given in \eqref{E:NONLINEARITY}, we have the homogeneity for $F_w$: $F_w[\l] = \lambda^{-2} F_w[1]$.  Hence we are led to 
\be
 \lambda^2 \ddot \lambda  + F_w[1]=0. 
\ee 
Since $\l$ is a function of $t$ only and $w$ depends only on $z$, both terms must be constant: 
\be\label{E:LAMBDAODE}
 \lambda^2 \ddot \lambda = \delta
 \ee
 \be\label{D:w}
 F_w[1] =\frac{4 w'}{ z} 
+\frac{1}{ z^3}\int_0^z4\pi w^{3} s^2\,ds= -\delta
\ee
for some $\delta \in \R$. The existence of the steady state solutions of \eqref{E:XIEQUATION} amounts to the existence of $\lambda(t)$ (the radius of the star), $w(z)$ (the star configuration) satisfying \eqref{E:LAMBDAODE}--\eqref{D:w}. In fact, the above two equations are essentially the ones obtained by Goldreich and Weber \cite{GoWe}, Makino \cite{Makino92}, Fu and Lin  \cite{FuLin} starting with the ansatz $r=\lambda(t)z$, $\rho(t,r)=\lambda^{-3}(t)w^3(z)$ and $v(t,r)=\dot\lambda (t)z$ in  Eulerian coordinates. 
Note that a true self-similar ansatz would have the relationship $v(t,r)=\dot\lambda (t)z$ replaced by $v(t,r)=\lambda^{-\frac12}(t)z$ thus honoring
the self-similar rescaling~\eqref{E:SSDENSITY}--\eqref{E:SSVELOCITY} when $\gamma=\frac43$. The solvability of  \eqref{E:LAMBDAODE} and \eqref{D:w} with suitable initial, boundary conditions and the behavior of the solutions are discussed in detail in \cite{Makino92, FuLin}. In what follows, we shall summarize some of these results, and state the corresponding regularity properties necessary for 
the nonlinear analysis.

To describe the solutions to~\eqref{D:w} it is convenient to work with the second order formulation
\be\label{E:LE}
w''+ \frac{2}{z} w' + \pi w^3 =- \frac{3\delta}{4},
\ee
which follows easily from~\eqref{D:w}.
Note that $\delta=0$ corresponds to the classical Lane-Emden equation with index $3$~\cite{Ch}. 
We refer to~\eqref{E:LE} as the {\em generalized Lane-Emden equation}. 
The following result guarantees the existence of solutions with suitable boundary conditions, thereby showing that the value of $\delta$ cannot be arbitrary. 
 
\begin{proposition}[\cite{FuLin}]\label{FuLin} There exists a negative constant $\delta^\ast <0$ such that for any $\delta\geq \delta^\ast$ there exists  
 $w=w(z)$ satisfying \eqref{E:LE} such that $0<w<\infty$ in $[0,1)$ and $w$ satisfies the boundary conditions  
\be\label{BCy}
w'(0)=0, \quad w(1)=0. 
\ee
\end{proposition}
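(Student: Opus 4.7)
The plan is a shooting argument in the initial height $\alpha:=w(0)$ for the singular IVP
\[
w''+\tfrac{2}{z}w'+\pi w^3=-\tfrac{3\delta}{4},\qquad w(0)=\alpha,\quad w'(0)=0,
\]
combined with the two-parameter scaling invariance $w(z)\mapsto a\,w(az)$, $\delta\mapsto a^3\delta$ that the equation enjoys. As preparation I would first establish local well-posedness and continuous dependence of a $C^2$ solution $w(\cdot;\alpha,\delta)$ on the parameters $(\alpha,\delta)$ by recasting the singular IVP as the integral fixed-point equation
\[
w(z)=\alpha-\int_0^z\frac{1}{s^2}\int_0^s \tau^2\left(\pi w^3(\tau)+\tfrac{3\delta}{4}\right)d\tau\,ds
\]
and applying a contraction on $[0,z_0]$ for sufficiently small $z_0$. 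The solution extends maximally to a first zero $Z(\alpha,\delta)\in(0,\infty]$, and $(\alpha,\delta)\mapsto Z(\alpha,\delta)$ is continuous on the (open) set where it is finite.

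For $\delta=0$ the problem reduces to the classical Lane--Emden equation of polytropic index $3$, for which a positive compactly supported solution with $w'(0)=0$ and first zero $\xi_1>0$ is well known to exist (Chandrasekhar); rescaling by $a=\xi_1$ yields a solution on $[0,1]$ with $\delta=0$ and $w(1)=0$. For $\delta>0$ both the nonlinearity and the forcing have the same (negative) sign, so $(z^2w')'<0$ throughout and every positive solution must hit zero at some finite $Z(\alpha,\delta)$. A monotonicity analysis in $\alpha$, or equivalently the scaling symmetry applied to renormalize $Z$ to $1$ at the price of adjusting $\delta$ by a factor of $Z^3$, then produces a positive solution on $[0,1]$ with $w(1)=0$ for every $\delta\in[0,\infty)$.

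The delicate regime is $\delta<0$, where the source $-\tfrac{3\delta}{4}>0$ competes with the attractive term $-\pi w^3$ and a positive solution may fail to reach zero by $z=1$. I would continue the $\delta=0$ solution into $\delta<0$ via the implicit function theorem applied to the shooting map $\alpha\mapsto w(1;\alpha,\delta)$, after verifying a nondegeneracy property at the reference solution (the linearization of $w(1;\cdot,0)$ in $\alpha$ is nonzero). Defining
\[
\delta^*:=\inf\{\delta<0: \text{a positive solution of \eqref{E:LE}--\eqref{BCy} exists}\},
\]
the main obstacle is to prove $\delta^*>-\infty$: for $\delta$ sufficiently negative one must rule out any first zero of $w(\cdot;\alpha,\delta)$ within $[0,1]$, uniformly in $\alpha>0$. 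A quantitative barrier via a Pohozaev-type identity, obtained by multiplying the equation by $z^2w$ (or $z^3 w'$) and integrating over $[0,1]$, together with the energy relation of the radial ODE, provides the required a priori lower bound on $\delta$. A compactness/continuation argument as $\delta\downarrow\delta^*$ then delivers a solution for every $\delta\in[\delta^*,\infty)$, completing the proof.
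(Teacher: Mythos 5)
First, note that the paper does not actually prove this proposition: its ``proof'' consists of citing \cite{FuLin} together with the observation that the similarity transformation $w_\beta(z)=\beta w(\beta z)$ converts the customary normalization ($w(0)=1$, first zero at $\bar z$) into the one used here (first zero at $z=1$), at the cost of replacing the parameter $\delta$ by $\delta\bar z^3$. Your self-contained attempt is therefore necessarily a different route, and several of its ingredients are sound: the contraction-mapping treatment of the singular IVP, the $\delta=0$ case via the classical Lane--Emden solution, and the implicit-function-theorem continuation into $\delta\neq 0$. The nondegeneracy you flag is indeed available: by the scaling symmetry, $\partial_\alpha w(\cdot;\alpha_0,0)$ is a multiple of $w+zw'$, whose value at $z=1$ is $w'(1)<0$. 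That IFT step alone already produces solutions for all $\delta$ in a neighborhood of $0$.

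The genuine gap is the claim that your argument ``produces a positive solution on $[0,1]$ with $w(1)=0$ for every $\delta\in[0,\infty)$,'' and more broadly the mechanism by which you sweep out the full range of $\delta$. The rescaling does not let you prescribe $\delta$: normalizing the first zero of the solution $W_\sigma$ (with $W_\sigma(0)=1$, $W_\sigma'(0)=0$ and parameter $\sigma$) to lie at $z=1$ changes the parameter to $g(\sigma):=\sigma\,\bar z(\sigma)^3$, so the achievable $\delta$'s are exactly the range of $g$, and surjectivity must be proved, not assumed. The elementary two-sided bound $1-\tfrac{z^2}{6}\bigl(\pi+\tfrac{3\sigma}{4}\bigr)\le W_\sigma(z)\le 1-\tfrac{\sigma z^2}{8}$ gives $\bar z(\sigma)\sim\sqrt{8/\sigma}$ as $\sigma\to\infty$, so $g(\sigma)\to 0$ at both ends of $[0,\infty)$ and $g$ is bounded there; equivalently, for fixed $\delta>0$ the shooting map $\alpha\mapsto Z(\alpha,\delta)=\bigl(g(\delta\alpha^{-3})/\delta\bigr)^{1/3}$ tends to $0$ both as $\alpha\to 0^{+}$ and as $\alpha\to\infty$. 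Hence it is not monotone, and an intermediate-value argument reaching $Z=1$ is possible only for $\delta\le\sup g$ --- your ``monotonicity analysis in $\alpha$'' does not close, and this is exactly where the quantitative study of $\bar z(\cdot)$ from \cite{FuLin} is needed. Separately, your treatment of the negative range inverts the logic: proving $\delta^*>-\infty$ is a \emph{nonexistence} statement that the proposition does not require, whereas what you do need --- that the admissible set of $\delta$'s is a full interval containing $[\delta^*,0)$ with the endpoint attained --- is not supplied by defining $\delta^*$ as an infimum plus compactness. It follows most cleanly from the scaling reduction itself, since the admissible set is the continuous image $g([\sigma_*,\infty))$ of an interval, which is how \cite{FuLin} proceed.
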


Proposition \ref{FuLin} is essentially contained in~\cite{FuLin}. We point out only one difference. Equation \eqref{E:LE} is customarily solved with the boundary conditions $w(0)=1$, $w'(0)=0$ by a shooting method. The first zero denoted by $\bar z=\bar z(\delta)$ is then the radius of a star configuration. This solution is transformed to our desired solution $w$ in
Proposition \ref{FuLin}  through a similarity transformation. Namely, for any $\beta>0$ we note that $w_\beta(z):= \beta w(\beta z)$ solves \eqref{E:LE} with $\delta$ replaced by $\delta\beta^3$ and it satisfies $w_\beta(0)=\beta,$ $w_\beta'(0)=0$, and $\bar z_\beta= \beta^{-1} \bar z(\delta)$. 

The following lemma concerns the regularity of $w$. 

\begin{lemma}\label{lem:w} Let $w$ be a solution obtained in Proposition \ref{FuLin}. Then $w\in C^\infty(0,1)$  and $w$ is analytic near $z=0$ as well as $z=1.$ Moreover 
\begin{enumerate}
\item[(i)] $w(z)=A_1+ A_2 z^2 +O(z^4)$, $z\sim 0$ for some constants $A_1$ and $A_2$ and $w^{(2k+1)}(0)=0$ for any nonnegative integer $k\geq 0$; 
\item[(ii)] $w$ satisfies the physical vacuum condition, i.e. strict inequalities $-\infty<w'(1)<0$ hold.
\end{enumerate}
\end{lemma}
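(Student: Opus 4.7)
On $(0,1)$ the ODE~\eqref{E:LE} has smooth coefficients and its nonlinearity is entire, so a standard bootstrap by repeated differentiation of~\eqref{E:LE} yields $w\in C^\infty(0,1)$ from the a priori bounds $0<w<\infty$. The substantive content is at the two endpoints.

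\emph{Behaviour at $z=0$.} The plan is to substitute the formal power series $w(z)=\sum_{k\ge 0}a_kz^k$ into~\eqref{E:LE}. Using the identity $z^2\bigl(w''+\tfrac{2}{z}w'\bigr)=\sum_{k\ge 0}k(k+1)a_kz^k$, matching the $z^{-1}$-coefficient forces $a_1=0$, and matching $z^0$ gives $6a_2+\pi a_0^3+\tfrac{3\delta}{4}=0$, so that $A_1:=a_0$ and $A_2:=a_2=-\tfrac{\pi}{6}A_1^3-\tfrac{\delta}{8}$. Vanishing of all odd coefficients then follows by strong induction: assuming $a_1=a_3=\cdots=a_{2k-1}=0$, every triple $(i,j,\ell)$ with $i+j+\ell=2k-1$ contains at least one odd index in $\{1,3,\dots,2k-1\}$, so the $z^{2k-1}$-coefficient of $w^3$ vanishes and matching coefficients in~\eqref{E:LE} yields $(2k+1)(2k+2)a_{2k+1}=0$. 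A positive radius of convergence for the remaining even series follows from the analytic Briot--Bouquet theorem applied to the regular singular point $z=0$ of~\eqref{E:LE} (the indicial exponents are $0$ and $-1$, and $w'(0)=0$ selects the regular branch), or equivalently by a direct majorant estimate for the recursion on $a_{2k}$. Uniqueness of the smooth solution with $w(0)=A_1$, $w'(0)=0$ identifies the analytic series with $w$ near the origin, proving (i) together with analyticity at $z=0$.

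\emph{Behaviour at $z=1$.} Since every coefficient of~\eqref{E:LE} is real analytic in a neighbourhood of $z=1$ and the nonlinearity is entire, I would pick $z_0<1$ close to $1$ and solve the Cauchy problem for~\eqref{E:LE} with data $(w(z_0),w'(z_0))$ in the analytic category; the Cauchy--Kovalevskaya theorem for analytic ODEs produces an analytic extension through $z=1$, and by $C^\infty$-uniqueness it coincides with $w$. This yields analyticity at $z=1$ and, in particular, $|w'(1)|<\infty$. To locate the sign I multiply~\eqref{E:LE} by $z^2$, integrate on $[0,1]$ and use $z^2w'(z)\to 0$ as $z\to 0^+$ (ensured by $a_1=0$) to arrive at
\begin{equation*}
w'(1)=-\pi\int_0^1 w(s)^3 s^2\,ds-\frac{\delta}{4}.
\end{equation*}
For $\delta\ge 0$ this is strictly negative since $w>0$ on $[0,1)$; for $\delta=0$ the alternative $w'(1)=0$ together with $w(1)=0$ would force, via~\eqref{E:LE} and its iterated derivatives at $z=1$, all derivatives of $w$ at $z=1$ to vanish, and hence $w\equiv 0$ by the analyticity just established, contradicting $w>0$ on $[0,1)$. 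For $\delta\in[\delta^\ast,0)$ a purely local analysis is insufficient, and the strict inequality is extracted from the shooting construction underlying Proposition~\ref{FuLin}: the value $\delta^\ast$ is characterised precisely so that the first zero of $w$ is simple for every admissible $\delta$.

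\emph{Main obstacle.} The delicate step is (ii) in the regime $\delta<0$. For $\delta\ge 0$ both the integral identity and a pointwise Taylor analysis at $z=1$ (through $w''(1)=-\tfrac{3\delta}{4}$ and the $C^\infty$ extension) deliver the strict sign; however, for $\delta<0$ the local Taylor data $w(1)=w'(1)=0$, $w''(1)=\tfrac{3|\delta|}{8}\cdot 2>0$ correspond to a tangential behaviour $w(z)\sim\tfrac{3|\delta|}{8}(z-1)^2$ that is compatible with $w>0$ on a left-neighbourhood of $z=1$, so ruling it out requires invoking the global shooting picture of~\cite{FuLin} together with the very definition of $\delta^\ast$.
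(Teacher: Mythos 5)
The paper itself offers no argument here beyond the one-line citation of Lemma 3.3 in \cite{Jang2014}, so your detailed sketch is genuinely more informative than the printed proof, and the route you take (formal power series and the indicial analysis at the regular singular point $z=0$, analytic continuation through $z=1$, and the integrated identity $w'(1)=-\pi\int_0^1w^3s^2\,ds-\tfrac{\delta}{4}$ obtained from $(z^2w')'=-z^2(\pi w^3+\tfrac{3\delta}{4})$) is exactly the standard one underlying the cited lemma. Two points should be tightened. First, the continuation through $z=1$: invoking Cauchy--Kovalevskaya from a nearby $z_0<1$ only carries you past $z=1$ if the Cauchy data $(w(z_0),w'(z_0))$ are under control as $z_0\to1^-$; otherwise the guaranteed existence interval could shrink. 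This is easily repaired -- the first-order form \eqref{D:w} gives $w'(z)=-\tfrac{\delta z}{4}-\tfrac{\pi}{z^2}\int_0^z w^3s^2\,ds$, so $(w,w')$ stays bounded up to $z=1$ and the standard continuation criterion applies -- but you should state it, since you currently deduce $|w'(1)|<\infty$ from the extension rather than the other way around.

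Second, and more substantively, part (ii) for $\delta<0$ is not actually proved: you correctly observe that the local Taylor data $w(1)=w'(1)=0$, $w''(1)=\tfrac{3|\delta|}{4}>0$ are compatible with positivity on the left, and the Pohozaev-type identities one can derive (e.g.\ $w'(1)^2=\tfrac{\pi}{2}\int_0^1z^2w^4\,dz+\tfrac{15\delta}{4}\int_0^1z^2w\,dz$) do not settle the sign either. Deferring to the shooting analysis of \cite{FuLin} is defensible -- the paper defers the entire lemma -- but your stated characterization of $\delta^\ast$ (``the first zero is simple for every admissible $\delta$'') is precisely what is in doubt at the endpoint: the natural picture is that at $\delta=\delta^\ast$ the shooting solution touches zero tangentially, in which case $w'(1)=0$ there and the strict inequality can only hold for $\delta>\delta^\ast$. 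You should either restrict to $\delta>\delta^\ast$ or verify the endpoint claim in \cite{FuLin} rather than assert it. For the regime the paper actually uses ($-\varepsilon<\delta<0$ with $\varepsilon$ small), there is a cleaner argument you could substitute: $w_\delta$ depends continuously on $\delta$ up to the boundary, and $w_0'(1)<0$ strictly by your own integral identity for the Lane--Emden case, so $w_\delta'(1)<0$ for all $|\delta|$ sufficiently small. Everything else -- the vanishing of the odd Taylor coefficients by parity induction, the majorant/Briot--Bouquet argument for convergence, and the unique-continuation argument ruling out $w'(1)=0$ when $\delta=0$ -- is correct.
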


\begin{proof} It follows from a minor modification of Lemma 3.3 in \cite{Jang2014}. We omit the details. 
\end{proof}

Self-similar solutions of~\eqref{E:LAMBDAODE} are by definition the ones satisfying
\be\label{E:BDEFINITION}
b: = -\frac{\lambda_s}{\lambda} = \text{ \ const.}
\ee
where $\frac{ds}{dt} = \lambda(t)^{-\frac32}.$ This translates in the assumption $\dot\lambda(t) \lambda^{\frac12}(t)=\text{const.}$
Upon specifying the initial conditions
\be\label{BClambda}
\lambda(0)=\lambda_0>0, \quad \dot\lambda(0) =\lambda_1,
\ee
it is straightforward to check that for any $\delta\in(\delta^\ast,0)$
there exists a self-similar solution $\lambda(t)$ of the form:
\be\label{E:SSRATE}
\lambda(t)=\left(\lambda_0^{3/2} + \frac32\lambda_0^{1/2}\lambda_1 t \right)^{2/3}
\ee
satisfying~\eqref{E:LAMBDAODE} and~\eqref{E:BDEFINITION} with the additional initial data constraint
\be\label{E:INITIALCONSTRAINT}
\lambda_1^2 + \frac{2\delta}{\lambda_0}=0.
\ee

%%%%%%%%%%%%%%%%%%%%%%%%%%%%%%%%%%%%%%%%%%%

%

Of course there exist other non-self-similar solutions of \eqref{E:LAMBDAODE} with more general initial data. The following result discusses the behavior of the solutions of \eqref{E:LAMBDAODE} and \eqref{BClambda}. 

\begin{proposition}[\cite{Makino92,FuLin}]\label{prop1} Let $\lambda(t)$ be the solution of \eqref{E:LAMBDAODE} and \eqref{BClambda}. 
\begin{enumerate}
\item[(1)] If $\delta>0$, then $\lambda(t)>0$ for all $t>0$, $\lim_{t\to\infty}\lambda(t)=\infty$ and moreover, there exist $c_1,c_2>0$ such that 
\be\label{E:LINEARBEHAVIOR}
\lambda(t)\sim_{t\to\infty} c_1(1+c_2 t).
\ee
\item [(2)] If $\delta=0$, then $\lambda(t)=\lambda_0+\lambda_1 t$. 
\item [(3)] If $\delta<0$ let 
\be\label{E:SSCONDITION}
\lambda_1^\ast=\sqrt{\frac{2|\delta| }{\lambda_0}}. 
\ee
\begin{enumerate}
\item
If $\lambda_1= \lambda_1^\ast$, then  $\lambda(t)>0$ for all $t>0$ and it is explicitly given by the formula: 
\be\label{E:SSRATE}
\lambda(t)=\left(\lambda_0^{3/2} + \frac32\lambda_0^{1/2}\lambda_1 t \right)^{2/3}, \ \ t\ge0.
\ee
\item
If $\lambda_1>\lambda_1^*$  then  $\lambda(t)>0$ for all $t>0$, $\lim_{t\to\infty}\lambda(t)=\infty$, and there exist $c_1,c_2>0$ such that $\lambda(t)$ satisfies the asymptotic relation~\eqref{E:LINEARBEHAVIOR}.
\item
If $\lambda_1<\lambda_1^\ast,$ then there is a time $0<T<\infty$ such that $\lambda(t)>0$ in $(0,T)$ and $\lambda(t)\rightarrow 0$ as $t\rightarrow T^-$. Moreover, there exist constants $k_1,k_2>0$ such that 
\be\label{E:COLLAPSERATE}
\lambda(t)\sim_{t\to T^-} k_1(T-k_2t)^{\frac23}.
\ee
\end{enumerate}
\end{enumerate}
\end{proposition}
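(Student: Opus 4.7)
The plan is to reduce the second-order ODE $\lambda^2\ddot\lambda=\delta$ to a first-order phase-plane problem via an energy integral, and then carry out a case-by-case analysis of the resulting one-dimensional motion. Multiplying \eqref{E:LAMBDAODE} by $\dot\lambda$ and integrating gives the conservation law
\begin{equation}\label{E:FIRSTINTEGRAL}
\dot\lambda(t)^2 + \frac{2\delta}{\lambda(t)} \;=\; \lambda_1^2 + \frac{2\delta}{\lambda_0} \;=:\; C,
\end{equation}
which will drive every subsequent estimate. This is consistent with the energy formula \eqref{E:ENERGYSPECIAL}, up to a normalization. Along any solution that stays in $(0,\infty)$, \eqref{E:FIRSTINTEGRAL} determines $|\dot\lambda|$ as a function of $\lambda$ alone, and the sign of $\dot\lambda$ is fixed by continuity from the initial data, flipping only at turning points where $\dot\lambda=0$.

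Part (2) is immediate since $\delta=0$ gives $\ddot\lambda\equiv0$. For Part (1), when $\delta>0$ the constant $C$ is strictly positive, and the ``effective potential'' $V(\lambda)=2\delta/\lambda$ is positive and decreases from $+\infty$ at $\lambda=0$ to $0$ at $\lambda=\infty$. Since $\ddot\lambda>0$, $\dot\lambda$ is strictly increasing; if $\lambda_1<0$ there is a single turning time at which $\lambda$ reaches its minimum $\lambda_{\min}=2\delta/C>0$, after which $\lambda$ strictly increases, while if $\lambda_1\ge0$ it increases from the outset. In either case $\lambda(t)\to\infty$, and \eqref{E:FIRSTINTEGRAL} then forces $\dot\lambda(t)\to\sqrt{C}$, giving \eqref{E:LINEARBEHAVIOR} with $c_1 c_2=\sqrt{C}$.

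Part (3) splits into three subcases governed by the sign of $C=\lambda_1^2-(\lambda_1^\ast)^2$. If $\lambda_1=\lambda_1^\ast$ then $C=0$, \eqref{E:FIRSTINTEGRAL} becomes $\dot\lambda^2=2|\delta|/\lambda$, and since $\lambda_1>0$ we take the positive root; separation of variables gives $\tfrac{2}{3}\lambda^{3/2}=\tfrac{2}{3}\lambda_0^{3/2}+\sqrt{2|\delta|}\,t$, which upon using $\sqrt{2|\delta|}=\lambda_0^{1/2}\lambda_1^\ast=\lambda_0^{1/2}\lambda_1$ reproduces \eqref{E:SSRATE} exactly. If $\lambda_1>\lambda_1^\ast$ then $C>0$ and \eqref{E:FIRSTINTEGRAL} yields $\dot\lambda^2=C+2|\delta|/\lambda\ge C$, so since $\dot\lambda(0)>0$ the velocity remains $\ge\sqrt{C}$ for all time, hence $\lambda(t)\ge\lambda_0+\sqrt{C}\,t\to\infty$, and then \eqref{E:FIRSTINTEGRAL} gives $\dot\lambda\to\sqrt{C}$ as before, producing \eqref{E:LINEARBEHAVIOR}. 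If $\lambda_1<\lambda_1^\ast$ then $C<0$, and \eqref{E:FIRSTINTEGRAL} constrains $\lambda(t)\le 2|\delta|/|C|$; concavity of $\lambda$ ($\ddot\lambda=\delta/\lambda^2<0$) combined with the boundedness of $\lambda$ and the fact that $\dot\lambda$ strictly decreases forces $\dot\lambda$ to become negative and then drives $\lambda$ to $0$ at some finite $T>0$.

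The main technical point is extracting the precise collapse rate \eqref{E:COLLAPSERATE} in this last subcase. Near $\lambda=0$ the conserved quantity becomes $\dot\lambda^2\sim 2|\delta|/\lambda$, so after passing to the negative root $\dot\lambda\sim-\sqrt{2|\delta|/\lambda}$ one separates variables to find $\tfrac{2}{3}\lambda^{3/2}(t)\sim\sqrt{2|\delta|}\,(T-t)$, hence $\lambda(t)\sim\bigl(\tfrac{3}{2}\sqrt{2|\delta|}\bigr)^{2/3}(T-t)^{2/3}$, which matches \eqref{E:COLLAPSERATE}. Quantifying the ``$\sim$'' here requires showing that the lower-order term $C$ in $\dot\lambda^2=C+2|\delta|/\lambda$ really is subdominant as $t\to T^-$, which follows because $|C|$ is fixed while $2|\delta|/\lambda\to\infty$; this is the only slightly delicate estimate, and it is a routine Grönwall-type argument once one passes to the variable $\mu=\lambda^{3/2}$. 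The remaining claims about monotonicity and continuation up to $T$ are standard local existence for smooth ODEs away from $\lambda=0$ combined with the a priori bound $\lambda\le 2|\delta|/|C|$.
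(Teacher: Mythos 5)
Your reduction to the first integral $\dot\lambda^2+\tfrac{2\delta}{\lambda}=\lambda_1^2+\tfrac{2\delta}{\lambda_0}=:C$ and the ensuing one-dimensional phase-plane analysis is precisely the kind of ``classical ODE argument'' the paper alludes to: the paper itself delegates everything except the explicit formula~\eqref{E:SSRATE} to \cite{Makino92,FuLin}, so your write-up supplies details the paper omits. Parts (1), (2), (3a) and (3b) are handled correctly, as is the extraction of the rates~\eqref{E:LINEARBEHAVIOR} (from $\dot\lambda\to\sqrt{C}$ once $\lambda\to\infty$) and~\eqref{E:COLLAPSERATE} (from the dominance of $2|\delta|/\lambda$ over $C$ near $\lambda=0$).

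There is, however, a concrete error in your treatment of case (3c). You assert that $\lambda_1<\lambda_1^\ast$ forces $C<0$; since $C=\lambda_1^2-(\lambda_1^\ast)^2$, this is true only when $|\lambda_1|<\lambda_1^\ast$. The sub-case $\lambda_1\le-\lambda_1^\ast$ is part of (3c) (and of the collapsing region in Figure~\ref{F:BIFURCATION}), yet there $C\ge0$, the a priori bound $\lambda\le 2|\delta|/|C|$ on which your argument rests is unavailable (for $C>0$ the first integral imposes no upper bound on $\lambda$ whatsoever), and the collapse mechanism is different: one instead observes that $\dot\lambda(0)=\lambda_1<0$ together with $\ddot\lambda=\delta/\lambda^2<0$ gives $\dot\lambda(t)\le\lambda_1<0$ for all $t$, hence $\lambda(t)\le\lambda_0+\lambda_1 t$ reaches zero by some finite time $T\le\lambda_0/|\lambda_1|$. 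Once this sub-case is added, your asymptotic computation near $\lambda=0$ applies verbatim, since $2|\delta|/\lambda$ dominates the fixed constant $C$ regardless of its sign, and~\eqref{E:COLLAPSERATE} follows. The gap is thus easily repaired, but as written your case analysis does not cover all of (3c).
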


%%%%%%%%%%%%%%%%%%%%%%%%%%%%%%%%%%%%%%%%%%%

%%%%%%%%%%%%%%%%%%%%%%%%%%%%%%%%%%%%%%%%%%%

\begin{proof}
The only statements not discussed in~\cite{Makino92,FuLin} are the precise rates of expansion stated above. Rate~\eqref{E:SSRATE} is easily obtained by explicitly solving~\eqref{E:LAMBDAODE} with 
the initial condition~\eqref{E:SSCONDITION},
whereas rates~\eqref{E:LINEARBEHAVIOR} and~\eqref{E:COLLAPSERATE} require a little more work, but follow from classical ODE arguments.
\end{proof}

Propositions~\ref{FuLin} and~\ref{prop1}  in particular imply that for any given $(\delta,\lambda_0,\lambda_1)\in [\delta^\ast,\infty)\times (0,\infty)\times (-\infty,\infty)$ satisfying~\eqref{E:INITIALCONSTRAINT},  $\chi\equiv \lambda(t)$ is a 
solution of~ \eqref{E:XIEQUATION0} with $\l$ 
given by~\eqref{E:SSRATE} and $w$ a solution of~\eqref{D:w} and~\eqref{BCy}. 
In Eulerian coordinates, these solutions are given by
\be\label{profile1}
\rho(t,r) = \lambda(t)^{-3} w^3(\frac r{\lambda(t)}), \ \ v(t,r) = \frac{\dot\lambda(t)}{\lambda(t)} r. 
\ee

We next examine the physical energy and total mass for the homogeneous solutions given by \eqref{profile1}. 

\begin{lemma}\label{lem:E} The energy $E$ of the homogeneous solutions is given by 
\be
E= \left(\lambda_1^2+\frac{2\delta}{\lambda_0} \right) \int_0^{1}2\pi w^3z^4 \,dz . 
\ee
\end{lemma}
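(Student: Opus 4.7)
\noindent\textbf{Plan for the proof of Lemma \ref{lem:E}.}

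The plan is to substitute the homogeneous profiles \eqref{profile1} into the definition \eqref{E:ENERGY} of $E$, reduce the energy to a simple function of $\lambda$ and $\dot\lambda$ of the form $A\dot\lambda^2 + B/\lambda$ with $A,B$ depending only on the profile $w$, and then use conservation of $E$ along the ODE \eqref{E:LAMBDAODE} to identify $B$ in terms of $A$ without having to verify a messy algebraic identity.

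\emph{Step 1 (reduction).} Plug $\rho(t,r)=\lambda^{-3}w^3(r/\lambda)$ and $v(t,r)=\dot\lambda\, r/\lambda$ into \eqref{E:ENERGY} and pass to the variable $z=r/\lambda$. The kinetic piece collapses to $2\pi\dot\lambda^2\int_0^1 w^3 z^4\,dz$ and the internal-energy piece (with $\gamma=4/3$) to $12\pi\lambda^{-1}\int_0^1 w^4 z^2\,dz$. For the gravitational piece, solve $\pa_{rr}\Phi + \tfrac{2}{r}\pa_r\Phi = 4\pi\rho$ radially to obtain $\Phi'(r)=M(r)/r^2$ with $M(r)=\int_0^r 4\pi\rho s^2\,ds$, integrate by parts, and use that the change of variable turns $M(r)$ into the $\lambda$-independent function $m(z):=\int_0^z 4\pi w^3(s)s^2\,ds$ (a manifestation of conservation of mass). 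The outcome is
\begin{equation*}
E(\lambda,\dot\lambda)= A\dot\lambda^2 + \frac{B}{\lambda}, \qquad
A:=2\pi\!\int_0^1\! w^3 z^4\,dz,\qquad
B:=12\pi\!\int_0^1\! w^4 z^2\,dz -\tfrac12 m(1)^2 - \tfrac12\!\int_0^1\!\frac{m(z)^2}{z^2}\,dz.
\end{equation*}

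\emph{Step 2 (time-invariance pins down $B$).} Along any classical solution of \eqref{E:EULERPOISSON} the total energy \eqref{E:ENERGY} is conserved; the homogeneous ansatz with $\lambda$ solving \eqref{E:LAMBDAODE} and $w$ solving \eqref{D:w} is such a solution, so $E(\lambda(t),\dot\lambda(t))$ is constant in $t$. Differentiating the expression from Step 1 and using $\lambda^2\ddot\lambda=\delta$,
\begin{equation*}
0=\frac{d}{dt}E = \dot\lambda\Bigl(2A\ddot\lambda-\frac{B}{\lambda^2}\Bigr)=\frac{\dot\lambda}{\lambda^2}\bigl(2A\delta - B\bigr).
\end{equation*}
Choosing initial data with $\lambda_1\neq 0$ (so that $\dot\lambda\not\equiv 0$) forces $B=2A\delta$. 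Evaluating $E=A(\dot\lambda^2+2\delta/\lambda)$ at $t=0$ gives the claim; the degenerate case $\lambda_1=0$ (which requires $\delta\le 0$ for a well-defined expanding branch) follows from continuity of both sides in the initial data.

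\emph{Main obstacle.} The substantive work is purely bookkeeping in Step 1 — in particular handling the nonlocal gravitational integral and verifying that the $\lambda$-dependence factors out as $1/\lambda$. The cleanest shortcut, which I would rely on, is Step 2: without it, matching $B$ to $2A\delta$ directly requires a non-obvious integration-by-parts identity
\begin{equation*}
12\pi\!\int_0^1\! w^4 z^2\,dz - \tfrac12 m(1)^2-\tfrac12\!\int_0^1\!\frac{m(z)^2}{z^2}\,dz = 4\pi\delta\!\int_0^1\! w^3 z^4\,dz,
\end{equation*}
which, using the first-order form $m(z)=-\delta z^3-4z^2 w'$ of \eqref{D:w} together with the second-order equation \eqref{E:LE}, boundary conditions $w'(0)=0=w(1)$, and an auxiliary virial identity obtained by multiplying \eqref{E:LE} by $z^3 w'$ and integrating, does indeed hold. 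Invoking energy conservation bypasses this computation entirely.
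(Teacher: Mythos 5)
Your proof is correct, but it takes a genuinely different route from the paper at the decisive step. The paper evaluates the gravitational term head-on: it uses the first-order form \eqref{D:w} of the generalized Lane--Emden equation to write the enclosed mass as $m(z)=-4z^2w'(z)-\delta z^3$, substitutes this into the potential energy $\int_0^{\lambda}4\pi\rho\, r\, M(r)\,dr$, and after one integration by parts (using $w(1)=0$) finds that the resulting $\tfrac{12\pi}{\lambda^4}\int_0^\lambda w^4r^2\,dr$ term cancels exactly against the internal energy, leaving only $\tfrac{4\pi\delta}{\lambda^6}\int_0^\lambda w^3r^4\,dr$; the final formula then follows from the first integral $\dot\lambda^2+2\delta/\lambda=\mathrm{const}$ of \eqref{E:LAMBDAODE}. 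You instead stop after the scaling reduction $E=A\dot\lambda^2+B/\lambda$ and determine $B$ by differentiating in time and invoking conservation of $E$ together with $\lambda^2\ddot\lambda=\delta$. This is legitimate (energy conservation for smooth solutions of the free-boundary system is asserted and used elsewhere in the paper, and the homogeneous ansatz is a genuine solution precisely because $w$ solves \eqref{D:w}), and it buys you exactly what you advertise: you never need the integration-by-parts identity relating $\int w^4z^2\,dz$, the gravitational self-energy of $w^3$, and $\delta\int w^3z^4\,dz$, which is the only real algebra in the paper's proof. Two minor remarks: since $A$, $B$ and $\delta$ depend only on $w$ and not on $(\lambda_0,\lambda_1)$, establishing $B=2A\delta$ for a single solution with $\dot\lambda\not\equiv0$ already settles all cases, so your continuity argument for $\lambda_1=0$ is unnecessary; and your soft argument trades the paper's self-contained computation for a dependence on the (standard, but not reproved here) conservation law for the free-boundary Euler--Poisson system.
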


\begin{proof}
Recall the total energy: 
\be
E= \int_0^{\lambda(t)}\left( \frac{1}{2}\rho v^2 + 3 \rho^{\frac43} \right)4\pi  r^2 \, dr -  \int_0^{\lambda(t)}  4\pi \rho r \left(\int_0^r 4\pi \rho s^2 ds\right) \, dr. 
\ee 
We first compute the mass in the ball of radius $r$: 
\[
\begin{split}
\int_0^r 4\pi \rho s^2 ds& = 4\pi \int_0^r \frac{1}{\lambda^3} w^3(\frac{s}{\lambda}) s^2 \, ds =
4\pi \int_0^{\frac{r}{\lambda}} w^3(z) z^2 \, dz\\
 &=4 \left(- (\frac{r}{\lambda})^2 w'( \frac{r}{\lambda})- (\frac{r}{\lambda})^3 \frac{\delta}{4} \right)
\end{split}
\]
and hence the potential energy can be written as 
\[
\begin{split}
\int_0^{\lambda(t)}  4\pi \rho r \left(\int_0^r 4\pi\rho s^2 ds\right) \,dr &= \int_0^{\lambda} \frac{16\pi }{\lambda^3} w^3  r  \left(- (\frac{r}{\lambda})^2 w'( \frac{r}{\lambda})- (\frac{r}{\lambda})^3 \frac{\delta}{4} \right)  \,dr \\
&=\frac{12\pi}{\lambda^4}\int_0^{\lambda } w^4 r^2 \,dr -\frac{4\pi\delta}{\lambda^6}\int_0^{\lambda } w^3 r^4 \,dr . 
\end{split}
\]
Therefore, the total energy is written as 
\[
\begin{split}
E&=  \frac{2\pi \dot\lambda ^2}{ \lambda^5}  \int_0^{\lambda}w^3 r^4 \,dr +\frac{12\pi}{ \lambda^4} \int_0^{\lambda}  w^4 r^2  \,dr 
-\left( \frac{12\pi}{\lambda^4}\int_0^{\lambda } w^4 r^2 \,dr -\frac{4\pi\delta}{\lambda^6}\int_0^{\lambda } w^3 r^4 \,dr\right).\\
\end{split}
\]
Now we use the equation for $\l$ to obtain the total energy 
\[
E= \frac{\lambda_1^2+\frac{2\delta}{\lambda_0} }{ 2\lambda^5}  \int_0^{\lambda}4\pi w^3r^4 \,dr =
 \frac{\lambda_1^2+\frac{2\delta}{\lambda_0} }{2 }  \int_0^{1}4\pi w^3z^4 dz  . 
\]
\end{proof}

Lemma \ref{lem:E} and Proposition \ref{prop1} reveal an interesting relationship between the energy of self-similar expansions and the energy of non self-similar expansions.
The self-similar stellar expansion is associated with zero-energy stellar configurations, i.e.
\be\label{E:ZEROENERGY}
E(\bar\lambda(t),\dot{\bar\lambda}(t)) =0.
\ee
%See also the condition~\eqref{E:INITIALCONSTRAINT}. 
For all other linearly expanding homogeneous solutions, the associated physical energy is strictly positive.  On the other hand, the energy of the collapsing solutions can be positive, negative, or zero.

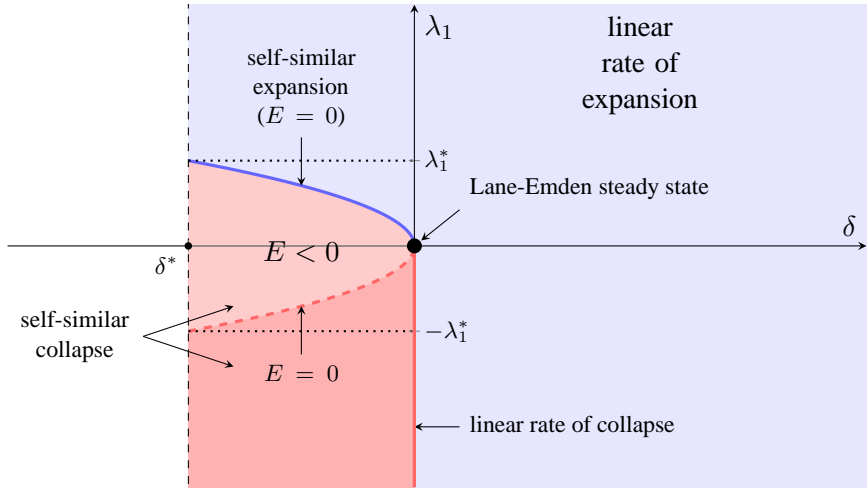
\begin{figure}[htb]
\centering
\begin{tikzpicture}[>=stealth]
\begin{axis}[width=13cm, height=8cm,
  xmin=-1.8, xmax=2, ymin=-4, ymax=4,
  axis lines=center, axis on top=true, xlabel=$\delta$, ylabel=$\lambda_1$,
  xtick={-1}, xticklabels={},
  ytick={-1.414,1.414}, yticklabels={},
  after end axis/.code={
    \draw[black!30] (axis cs: 0, 0) -- (axis cs: -1, 0);
    \draw[very thick, red!60] (axis cs: 0, 0) -- (axis cs: 0, -4);
    \node[circle, fill=black, inner sep=2pt] at (axis cs: 0, 0) {};
    \node[circle, fill=black, inner sep=1pt] at (axis cs: -1, 0) {};
    \node at (axis cs: -0.5, -0.1) {$E < 0$};
  }]

\draw[fill=blue!10, draw=none] (axis cs: -1, -5) rectangle (axis cs: 2, 5);
\draw[fill=red!30, draw=none] (axis cs: -1, 0) rectangle (axis cs: 0, -5);
\addplot [domain=-1:0, samples=1000, fill=red!20, draw=none] {sqrt(-2*x)} \closedcycle;
\addplot [domain=-1:0, samples=1000, fill=red!20, draw=none] {-sqrt(-2*x)} \closedcycle;

\addplot [domain=-1:0, samples=1000, very thick, draw=blue!60] {sqrt(-2*x)};
\addplot [domain=-1:0, samples=1000, very thick, draw=red!60, dashed] {-sqrt(-2*x)};

\draw[dashed] (axis cs: -1, -5) -- (axis cs: -1, 5);
\draw[dotted, thick] (axis cs: -1, 1.414) -- (axis cs: 0, 1.414);
\draw[dotted, thick] (axis cs: -1, -1.414) -- (axis cs: 0, -1.414);

\node[anchor=west, font=\footnotesize] at (axis cs: 0, 1.414) {$\lambda_1^*$};
\node[anchor=west, font=\footnotesize] at (axis cs: 0, -1.414) {$-\lambda_1^*$};
\node[anchor=north east, font=\footnotesize] at (axis cs: -1, 0) {$\delta^*$};

\draw[<-] (axis cs: 0, -3) -- (axis cs: 0.2, -3) node [anchor=west, font=\footnotesize] {linear rate of collapse};
\draw[<-] (axis cs: 0.03, 0.09) -- (axis cs: 0.2, 0.6) node [anchor=south west, font=\footnotesize] {Lane-Emden steady state};

\coordinate (P) at (axis cs: -1.2, -1.5);
\node [anchor=east, text width=1.8cm, align=center, inner sep=0pt, font=\footnotesize] at (P) {self-similar collapse};
\draw[->] (P) -- (axis cs: -0.8, -1);
\draw[->] (P) -- (axis cs: -0.8, -2);

\draw[<-] (axis cs: -0.5, 1) -- (axis cs: -0.5, 1.8) node [anchor=south, text width=2cm, align=center, font=\footnotesize] {self-similar expansion ($E=0$)};
\draw[<-] (axis cs: -0.5, -1) -- (axis cs: -0.5, -1.8) node [anchor=north, text width=2cm, align=center, font=\footnotesize] {$E=0$};
\node [text width=2cm, align=center] at (axis cs: 1, 3) {linear rate of expansion};
\end{axis}
\end{tikzpicture}

\caption{Bifurcation diagram, $\lambda_0=1$ \label{F:BIFURCATION}}
\end{figure}

%%%%%%%%%%%%%%%%%%%%%%%%%%%%%%%%%%%%%%%%%%%

%%%%%%%%%%%%%%%%%%%%%%%%%%%%%%%%%%%%%%%%%%%

A simple computation based on the generalized Lane-Emden equation~\eqref{E:LE} shows that the total mass of a homogeneous solution is given by 
\be\label{E:MASSHOMO}
M=M(\delta)=\int_0^{\lambda (t)} 4\pi \rho r^2 \, dr = -4 w'(1) -\delta 
\ee
which obviously depends only on $\delta$. Since $\delta\in[\delta_\ast,\infty)$ and $-\infty<w'(1)<0$, $M(\delta)$ cannot assume arbitrary positive values. Theorem 3.3 in \cite{FuLin} asserts that for any $\delta\in (\delta_\ast, \infty)$ 
\[
\frac{d M(\delta)}{d\delta}<0, 
\]
which implies that the admissible total mass lies in $(0,M(\delta_\ast)]$. This fact supports the following rather appealing interpretation of Propositions~\ref{prop1} and~\ref{FuLin}: 
if $M< M(0)$, then the star will expand and the density will eventually go to zero. If $M\geq M(0)$ and if the initial velocity is below the escape velocity, the star will collapse toward the center in  finite time. 
We note that $M(0)$ is the mass of the steady Lane-Emden configuration.
The notion of  critical mass separating homogeneous collapse from expansion is discussed for the non-isentropic Euler-Poisson in general dimensions  in~\cite{DengXiangYang}.

%%%%%%%%%%%%%%%%%%%%%%%%%%%%%%%%%%%%%%%%%%%

%%%%%%%%%%%%%%%%%%%%%%%%%%%%%%%%%%%%

\begin{remark}
It is obvious from Figure~\ref{F:BIFURCATION} that the Lane-Emden steady state ($\gamma=\frac43$) at the origin of the graph is dynamically unstable.
This is clear from~\cite{GoWe,Makino92,FuLin,DengXiangYang} and was specifically pointed out by Rein~\cite{Rein}, who remarked that  positive energy configuration can lead to an indefinite expansion of the fluid support \cite{DLYY}. 
Note that Figure~\ref{F:BIFURCATION} shows that we can perturb the steady state even with negative energy configuration leading to an eventual collapse of the star, or with positive energy configurations that 
also lead to a collapse! The phase space around the steady state is intricate and the nature of instability (expansion vs. collapse) cannot be decided based solely on the
sign of the energy of the perturbation. 
\end{remark}

\subsection{Nonlinear stability in similarity variables}\label{S:SSGLOBAL}

Our goal is to study the nonlinear stability of the expanding star families $(\bar\lambda,\bar\rho,\bar v)$ and $(\tilde\lambda ,\tilde\rho,\tilde v)$ given by Definitions~\ref{D:SSEXPANSION} and~\ref{D:LINEAREXPANSION} respectively.
The general strategy will be to pass to suitable similarity coordinates,
where the time-dependent expanding homogeneous solution transforms into a steady state of the new system of equations. Because of different rates of expansion, our stability results are presented in two separate subsections. We start with self-similar expansion.

\subsubsection{Nonlinear stability for self-similarly expanding homogeneous solutions}

Let $(\bar\lambda_*,\bar\rho_*,\bar v_*)_{\lambda_0^*,\lambda_1^*,\delta^*}$ be a given self-similar expanding homogeneous solution of the EP$_{\frac43}$-system given by Definition~\ref{D:LINEAREXPANSION}.
We consider a perturbation of $(\bar\lambda_*,\bar\rho_*,\bar v_*)_{\lambda_0^*,\lambda_1^*,\delta^*}$ 
%We prescribe the initial conditions for the  EP$_{\frac43}$-system:
\[
\rho(0,r)=\rho_0(r), \ \ {v}(0,r) = {v}_0(r).
\]
Without loss of generality we shall assume that $\rho_0:[0,1]\to\mathbb R$ satisfies
\be\label{E:DENSITYINITIAL}
\rho_0(r)>0, \ \ r\in[0,1), \ \ \rho_0(1)=0, \ \ \overline{\text{supp}(\rho_0)}=[0,1].
\ee
We further assume that 
\be\label{E:MASSENERGYSS}
\left|M(\rho_0) - M(\bar\rho_*)\right| \ll 1, \ \ E(\rho_0,v_0) = E(\bar\rho^*,\bar v^*)=0.
\ee
The second condition means that we will restrict our stability analysis to a surface of zero-energy initial data.

In order to describe the asymptotic behavior of the solutions generated by $(\rho_0,v_0)$ 
%the initial data set $(\chi_0,\chi_1)$ satisfying~\eqref{E:ENERGYISZERO}, 
we will use the conservation-of-mass law to identify
the background expanding homogeneous self-similar solution that we expect the perturbed solution to converge to.
We accomplish this by finding a $\delta$ such that 
\be
M(\delta) = M(\rho_0) = \int_{\mathbb R^3} \rho_0(x)\,dx.
\ee
In particular, this uniquely fixes a background solution
$
(\bar\lambda,\bar\rho,\bar{v})_{\lambda_0,\lambda_1,\delta}
$
with 
\[
\lambda_0 = 1, \ \ e(\bar\lambda,\bar\lambda_t):= \lambda_1^2 + \frac{2\delta}{\lambda_0} =0
%\bar\lambda_1,\delta) = \lambda_1^2 + \frac{2\delta}{\lambda_0} = 0.
\]
where $e$ is the coefficient appearing in $E$ \eqref{E:ENERGYSPECIAL} and we call it the effective energy.

We now recall  the Lagrangian formulation of the problem~\eqref{E:FLOWMAP}--\eqref{E:ETARADIAL} 
and assume that there exists a $\chi_0:[0,1]\to[0,1]$ such that
% = \xi_0 \bar\lambda_0 = \xi_0$ in such a way that 
\begin{align}\label{E:XI0CHOICE}
\det D\eta_0 = \chi_0^2(\chi_0+z\pa_z\chi_0) = \frac{\rho_0\circ\chi_0}{\bar\rho(0,\cdot)}.
\end{align} 
For initial density $\rho_0$ exhibiting the same boundary behavior as $\bar\rho$ so that $\rho_0/\bar\rho$ is a smooth positive function, the existence of such a function $\chi_0$ follows from the Dacorogna-Moser theorem~\cite{DM}.
Since the energy is a dynamically conserved quantity, as long as smooth solutions to~\eqref{E:XIEQUATION0}--\eqref{E:INITIALCONDITIONS} exist, we have the identity
\begin{align*}
E(\chi,\chi_t)(t) = E(\chi_0,\chi_1). %= 2\pi \|\chi_1\|_\delta^2 + 4\pi\delta\|1\|_\delta^2.
\end{align*}
By the second assumption in~\eqref{E:MASSENERGYSS} we note that
\be\label{E:ENERGYISZERO}
E(\chi_0,\chi_1) =0.
\ee

To understand the nonlinear asymptotics, we
introduce a new unknown $\xi$ by setting 
\[
\xi(t,z) : = \frac{\chi(t,z)}{\bar\lambda(t)}.
\]
As a consequence of~\eqref{E:XIEQUATION0} we obtain an evolution equation for $\xi$ : 
\be\label{E:XIEQUATION}
\bar\lambda^2(\bar\lambda \xi)_{tt} + F_{w_\delta}[\xi] = 0,
\ee
equipped with the initial conditions
\be\label{E:XIINITIAL}
\xi(0,z) = \chi_0(z), \ \ \xi_t(0,z) = \xi_1(z), \ \ z\in[0,1],
\ee
where 
$
\xi_1 = \chi_1 - \chi_0\bar\lambda_1
$
and  the nonlinear operator $F_{w_\delta}$ is given by~\eqref{E:NONLINEARITY0}.
With respect to the new variable $\xi$ the energy $E$ takes the form:
\begin{align}\label{E:ENERGYXI0}
E(\xi,\xi_t) & = 2\pi \|\xi_t \bar\lambda + \xi\bar\lambda_t\|_\delta^2 + \frac{4\pi\delta}{\bar\lambda} \|\xi^{-1/2}\|_\delta^2 \notag \\
& \ \ \ \ +\frac{12\pi}{\bar\lambda} \left(\int_0^1 (\xi^2(\xi+z\xi_z))^{-1/3}w_\delta^4 z^2\,dz + \frac13 \int_0^1 \xi^{-1}\pa_z(w_\delta^4) z^3\,dz\right).
\end{align}

Motivated by the self-similar rescaling~\eqref{E:SSDENSITY}--\eqref{E:SSVELOCITY} we introduce a self-similar time coordinate $s$ by setting
\be\label{E:SDEFINITION}
s =s(t) =  \int_0^t \frac{1}{\bar\lambda(\tau)^{\frac32}} \, d\tau, \ \ 
\ee
and the unknown $\psi$ in new coordinates $(s,z)$: 
\be\label{E:PSIDEFINITION}
 \psi(s,z): = \xi(t,z).
\ee

Using~\eqref{E:XIEQUATION},~\eqref{E:SDEFINITION}, and~\eqref{E:PSIDEFINITION} a straightforward calculation gives a
quasilinear wave equation for $\psi$
\be\label{E:PSIEQUATION}
\psi_{ss} - \frac12 b\psi_s + \delta \psi + F_{w_\delta}[\psi] = 0,
\ee
where 
\begin{align}
b = -\frac{\bar\lambda_s}{\bar\lambda} = -\sqrt{2|\delta|}<0.
\end{align} 
We supply~\eqref{E:PSIEQUATION} with initial conditions:
\be\label{E:PSIINITIALCONDITIONS}
\psi(0) = \psi_0=\xi_0, \ \ \psi_s(0) = \psi_1 = \xi_1,
\ee
where we have used $\bar\lambda_0=1$. 
Observe that conditions~\eqref{E:PSIINITIALCONDITIONS} are naturally inherited from~\eqref{E:XIINITIAL}.
Equation~\eqref{E:PSIEQUATION} admits a steady state
\[
\bar\psi\equiv 1
\]
which is a self-similar realization of the  background expanding solution $(\bar\lambda,\bar\rho,\bar{v})_{\lambda_0,\lambda_1,\delta}$
given by~\eqref{E:SSRATE}. We note that $\bar\lambda$ in the new time coordinate $s$ takes the following form
\be\label{E:BARLAMBDA}
\bar\lambda(s)= e^{-bs},
\ee
and thus grows exponentially in $s$ (recall that $b<0$ corresponds to the expansion case).
The energy~\eqref{E:ENERGYXI0} in the new variables takes the form
\begin{align}\label{E:ENERGYXI1}
E(\psi,\psi_s) & = \frac{2\pi}{\bar\lambda} \|\psi_s + \frac{\bar\lambda_s}{\bar\lambda}\psi\|_\delta^2 + \frac{4\pi\delta}{\bar\lambda} \|\psi^{-1/2}\|_\delta^2 \notag \\
& \ \ \ \ +\frac{12\pi}{\bar\lambda} \left(\int_0^1 (\psi^2(\psi+z\psi_z))^{-1/3}w_\delta^4 z^2\,dz + \frac13 \int_0^1 \psi^{-1}\pa_z(w_\delta^4) z^3\,dz\right).
\end{align}

To stress the dependence of the background profile $w_\delta$ on the parameter $\delta$ we shall from now on use the notation
\[
(\cdot, \cdot)_{\delta,k}:=(\cdot, \cdot)_{w_\delta,k}, \ \ (\cdot, \cdot)_{\delta}:=(\cdot, \cdot)_{w_\delta,0}, \ \  
\]
\[
\|\cdot\|_{\delta,k} : = \|\cdot\|_{w_\delta,k}, \ \ \|\cdot\|_\delta: = \|\cdot\|_{w_\delta}, \ \ \mathcal H_\delta: = \mathcal H_{w_\delta},
\]
where the inner product $(\cdot, \cdot)_{\delta,k}$ and the norms $\|\cdot\|_{w,k}$ have been defined in~\eqref{E:WEIGHTEDINNERPRODUCT} and the high-order weighted space $\mathcal H_{w}$
is defined by~\eqref{E:INITIALDATASPACE}. 
For any perturbation $\phi=\psi-1$ from the steady state we define a high-order energy norm
\be\label{energy1}
\Energy(\phi,\phi_s)=\Energy(s) := \sum_{j=0}^7\sum_{k=0}^j   \|\g_s^{j-k+1}\g_z^k \phi\|_{\delta,k}^2 +  \|\g_s^{j-k}\g_z^{k+1} \phi\|_{\delta,k+1}^2  +\|\g_s^{j-k}\g_z^k \phi\|_{\delta,k}^2.
\ee

%%%%%%%%%%%%%%%%%%%%%%%%%%%%%%%%%%%%%

\begin{theorem}[Nonlinear stability of selfsimilar expanding stars]\label{T:MAIN}
There exist $\varepsilon,\epsilon>0$ such that for any $-\varepsilon<\delta<0$ and any initial datum $(\psi(0),\psi_s(0))=(\psi_0,\psi_1)$ with vanishing
initial energy
\begin{align}
E(\psi_0,\psi_1)  = 0,
\end{align}
satisfying the smallness condition
\begin{align}
\E(\psi_0-1,\psi_1)\le \epsilon
%\|(\psi_0,\psi_1)-(1,0)\|_{\mathcal H_\delta} \le \epsilon,
\end{align}
there exists a unique global solution to the initial value problem~\eqref{E:PSIEQUATION}--\eqref{E:PSIINITIALCONDITIONS}. Moreover, there
exists a constant $c=c(\delta)$ such that 
\begin{align}
\Energy(s)  \lesssim  \epsilon e^{-c s}, \ \ s\ge0.
\end{align}
\end{theorem}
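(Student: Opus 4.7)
The plan is to linearize~\eqref{E:PSIEQUATION} around the steady state $\bar\psi\equiv 1$ and exploit the genuine friction coefficient $-\tfrac{1}{2}b = \tfrac{1}{2}\sqrt{2|\delta|}>0$ together with the vanishing-energy hypothesis to close a high-order weighted energy estimate that produces exponential decay. Setting $\phi:=\psi-1$ and using that $\bar\psi\equiv 1$ is a steady state (so $F_{w_\delta}[1]=-\delta$), I would derive
\begin{equation}
\phi_{ss} + \tfrac{1}{2}|b|\phi_s + \mathcal L_\delta\phi = \mathcal N(\phi),
\end{equation}
where $\mathcal L_\delta\phi := \delta\phi + DF_{w_\delta}[1]\phi$ is the linearization and $\mathcal N$ collects quadratic-and-higher-order terms in $\phi$ and $\partial_z\phi$. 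A direct computation, together with Lemma~\ref{lem:w}, shows that $\mathcal L_\delta$ is symmetric in $(\cdot,\cdot)_\delta$ and that its quadratic form agrees, modulo $O(\delta)$ corrections, with the second variation of the potential part of the energy~\eqref{E:ENERGYXI1} at $\bar\psi=1$.

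I would then run a weighted energy estimate on $\mathcal E(s)$ from~\eqref{energy1}. For each $0\le k\le j\le 7$, apply $\partial_s^{j-k}\partial_z^k$ to the equation, pair against the corresponding weighted time derivative, and integrate against $w_\delta^{3+k}z^4$. The damping contributes the coercive sink
\begin{equation}
\tfrac{|b|}{2}\sum_{j,k}\|\partial_s^{j-k+1}\partial_z^k\phi\|_{\delta,k}^2,
\end{equation}
while the spatial pairings, combined with Hardy-type inequalities in the degenerate weighted spaces developed in~\cite{CoSh2012,JaMa2015,Jang2014}, control the $\partial_z$ components of $\mathcal E$. After absorbing quasilinear commutator and Poisson-integral errors using the smallness of $\mathcal E^{1/2}$, this would yield a differential inequality of the form
\begin{equation}
\frac{d}{ds}\mathcal E(s) + 2c(\delta)\,\mathcal E(s) \le C\sqrt{\mathcal E(s)}\,\mathcal E(s),
\end{equation}
from which the global bound $\mathcal E(s)\lesssim \epsilon e^{-c(\delta)s}$ follows by a standard continuation argument coupled with Theorem~\ref{T:LOCAL} in similarity variables (cf.\ Remark~\ref{R:ALTERNATIVE}).

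The main obstacle I anticipate is the neutral direction in the spectrum of $\mathcal L_\delta$: the one-parameter rescaling family of self-similar expanders with $E=0$ (parametrized by $\lambda_0$, see Definition~\ref{D:SSEXPANSION}) produces a zero eigenfunction of $\mathcal L_\delta$, so $\mathcal L_\delta$ cannot be strictly coercive on the full weighted $L^2_\delta$. The hypothesis $E(\psi_0,\psi_1)=0$ together with the mass-matching choice of $\delta$ is precisely what freezes this direction at the nonlinear level. Taylor expanding the conserved energy $E(\psi,\psi_s)$ around $(1,0)$ shows that $E\equiv 0$ constrains the projection of $(\phi,\phi_s)$ onto the zero mode quadratically by the projection onto the coercive complement; incorporating this relation either as a modulation of the background or as a Lyapunov correction in $\mathcal E$ restores the missing coercivity and pins down the strict rate $c(\delta)>0$. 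The requirement $-\varepsilon<\delta<0$ is then a smallness condition ensuring that $\mathcal L_\delta$ stays in a regime where it is a controlled perturbation of its Lane-Emden limit at $\delta=0$, and that the degeneracy of $|b|\to 0$ as $\delta\to 0^-$ can be tracked quantitatively through the bootstrap.
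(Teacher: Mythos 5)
Your proposal follows essentially the same route as the paper: linearize about $\bar\psi\equiv 1$, exploit the friction term $-\tfrac12 b\phi_s$ produced by the self-similar rescaling, use the spectral gap of $\mathcal L_\delta$ on the orthogonal complement of the constants (Lemma~\ref{lem:core}), and close the coercivity by Taylor-expanding the conserved energy $E(1+\phi,\phi_s)=0$ to pin the zero-mode projection $(\phi,1)_\delta$ in terms of $(\phi_s,1)_\delta$ plus quadratic errors, exactly as in~\eqref{E:ENERGYCONDITION} and Lemma~\ref{L:COERCIVITY}. Two minor points of divergence: the paper commutes the equation only with $\partial_s^j$ and recovers the mixed and spatial derivatives of $\E$ through elliptic estimates (Proposition~\ref{P:EQUIVALENCE}) rather than by direct commutation with $\partial_s^{j-k}\partial_z^k$, and the offending direction is not merely neutral but a genuine linear growing mode $e^{-bs}$ (since $3\delta<0$ beats the damping on constants), though the energy-constraint remedy you propose is precisely the correct one.
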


\begin{remark}[Initial data and compatibility conditions]
Since our energy norm contains higher $s$-derivatives of $\psi$ at time $s=0$, we use the equation~\eqref{E:PSIEQUATION} to define $\g_s^j\psi\Big|_{s=0}$, $j\ge2$. 
Note that we do not have to impose any compatibility conditions at the vacuum boundary $z=1$.
\end{remark}

\begin{remark} \label{R:MAIN}
Since Theorem~\ref{T:MAIN} shows that $\eta(t,r)=\chi(t,r)r$ remains close to $\bar\eta(t,r)=\bar\lambda(t)r$ for all $t>0$, from the formula
\[
\rho(t,\eta(t,r)) = \bar\rho(0,r)\big[\det D\eta_0\big]^{-1}
\]
we may conclude that the Eulerian density $\rho(t,r)$ of the perturbed solution remains ``close" to the self-similarly rescaled Eulerian density
$
\bar\lambda^{-3}w_\delta^3(\frac{r}{\bar\lambda})
$
of the background Eulerian density.
A similar comparison holds between the Eulerian velocity $v$ of the perturbation and the rescaled background velocity $\bar\lambda^{-\frac12} \bar{ v}(\frac{r}{\bar\lambda})$.
\end{remark}

%%%%%%%%%%%%%%%%%%%%%%%%%%%%%%%%%%%%%

%%%%%%%%%%%%%%%%%%%%%%%%%%%%%%%%%%%%%

%%%%%%%%%%%%%%%%%%%%%%%%%%%%%%%%%%%%%

\subsubsection{Nonlinear stability of linearly expanding homogeneous solutions}\label{S:STABILITYLINEAR}

%%%%%%%%%%%%%%%%%%%%%%%%%%%%%%%%%%%%%

Let $(\tilde\lambda _*,\tilde\rho_*,\tilde v_*)_{\lambda_0^*,\lambda_1^*,\delta^*}$ be a given linearly expanding homogeneous solution of the EP$_{\frac43}$-system given by Definition~\ref{D:LINEAREXPANSION}.
We consider a perturbation of $(\tilde\lambda _*,\tilde\rho_*,\tilde v_*)_{\lambda_0^*,\lambda_1^*,\delta^*}$ 
\[
\rho(0,r)=\rho_0(r), \ \ {v}(0,r) = {v}_0(r).
\]
Without loss of generality we shall assume that $\tilde\lambda _0^*=1$ and that $\rho_0:[0,1]\to\mathbb R$ satisfies~\eqref{E:DENSITYINITIAL}.
We further assume that 
\[
\left|M(\rho_0) - M(\tilde\rho_*)\right| \ll 1, \ \ \left|E(\rho_0,v_0) - E(\tilde\rho^*,\tilde v^*)\right|\ll1.
\]
We may also assume that $E(\rho_0,v_0)>0$ since by definition $E(\tilde\rho^*,\tilde v^*)>0$. 
In order to describe the asymptotic behavior of the solutions generated by $(\rho_0,v_0)$, just like in previous section, 
we will use the conservation-of-mass and conservation-of-energy laws to uniquely fix a linearly expanding background solution
$
(\tilde\lambda ,\tilde\rho,\tilde{v})_{\lambda_0,\lambda_1,\delta}
$
satisfying
\[
\lambda_0 = 1, \ \ M(\tilde\rho)=M(\rho_0), \ \ E(\tilde\rho,\tilde v)= E(\rho_0,v_0). 
%\bar\lambda_1,\delta) = \lambda_1^2 + \frac{2\delta}{\lambda_0} = 0.
\]

Recalling the Lagrangian formulation of the problem~\eqref{E:FLOWMAP}--\eqref{E:ETARADIAL} 
we assume that there exists a $\chi_0:[0,1]\to[0,1]$ such that~\eqref{E:XI0CHOICE} holds with $\tilde\rho$ instead of $\bar\rho$.
To understand the nonlinear asymptotics, we
introduce a new unknown $\zeta$ by setting 
\[
\zeta(t,z) : = \frac{\chi(t,z)}{\tilde\lambda (t)}.
\]
The unknown $\zeta$ solves~\eqref{E:XIEQUATION} with $\zeta$ instead of $\xi$ and $\tilde\lambda $ instead of $\bar\lambda$, 
%\be\label{E:XIEQUATION}
%\bar\lambda^2(\bar\lambda \xi)_{tt} + F_{w_\delta}[\xi] = 0,
%\ee
while the associated initial conditions read
\be\label{E:ZETAINITIAL}
\zeta(0,z) = \chi_0(z), \ \ \zeta_t(0,z) = \zeta_1(z), \ \ z\in[0,1],
\ee
where 
$
\zeta_1 = \chi_1 - \chi_0\lambda_1.
$
To reflect the expected linear growth rate of the perturbed solution we introduce a
new time variable $\tau=\tau(t)$:
\be\label{E:TAUDEFINITION}
\tau =\tau(t)= \int_0^t \frac{1}{\tilde\lambda (\sigma)}\,d\sigma.
\ee
Analogously to~\eqref{E:PSIDEFINITION} we introduce an unknown $\theta$ in new coordinates $(\tau,z)$: 
\be\label{E:THETADEFINITION}
 \theta(\tau,z): = \zeta(t,z).
\ee

Using~\eqref{E:XIEQUATION} and \eqref{E:TAUDEFINITION} we 
derive an equation for the unknown $\theta(s,z)=\zeta(t,z)$
\be\label{E:THETAEQUATION}
\tilde\lambda \theta_{\tau\tau} +\tilde\lambda _\tau \theta_\tau + \delta \theta + F_{w_\delta}[\theta] = 0,
\ee%\be\label{E:BASIC}
%\tilde\lambda  \phi_{\tau\tau} + \tilde\lambda _\tau \phi_{\tau}+ 3\delta \phi + \mathcal L_\delta \phi= N_\delta[\phi]
%\ee
with the initial data:
\be\label{E:LININITIAL}
\theta(0) = \theta_0 =\zeta_0, \ \ \theta_\tau(0) = \theta_1=\zeta_1.
\ee
In the $\tau$-coordinate, $\tilde\lambda $ satisfies the following ODE:
\[
\tilde\lambda _\tau = \tilde\lambda \sqrt{\tilde e-\frac{2\delta}{\tilde\lambda }},
\] 
where $\tilde e = \lambda_1^2+\frac{2\delta}{\lambda_0}$ is the effective energy of the linearly expanding homogeneous solution $(\tilde\rho,\tilde u, \tilde \l)$. 
It follows that for large values of  $\tau$ the asymptotic behavior of $\tilde\lambda $ is given by 
\be\label{E:LARGES}
\tilde\lambda (\tau)\sim_{\tau\to\infty} e^{\tilde\beta \tau},
\ee
where $\tilde\beta:=\sqrt{\tilde e}$. 
If $\delta\ge0$ it follows that 
\begin{align}\label{E:DELTA1}
\tilde\beta\ge\big|\frac{\tilde\lambda _\tau}{\tilde\lambda }\big| \ge \beta'
\end{align}
where 
\[
\beta': = \sqrt{\tilde e - 2\frac{\delta}{\lambda_0}}.
\]
Similarly, 
\be\label{E:DELTA2}
\beta'\ge\big|\frac{\tilde\lambda _\tau}{\tilde\lambda }\big| \ge \tilde\beta
\ee
if $\delta<0$. Let $\beta_1=\max\{\tilde\beta,\beta'\}$ and $\beta_2=\min\{\tilde\beta,\beta'\}$.
%  to equal $\tilde\beta$ and $\beta_2=\beta'$ in the case $\delta\ge0$. In the case $\delta<0$ reverse the definition of $\beta_1$ and $\beta_2$. 
Then for all linearly expanding homogeneous solutions 
\begin{align}\label{E:TILDELAMBDABOUND}
e^{\beta_2\tau}\lesssim |\tilde\lambda (\tau)|+|\tilde\lambda _\tau(\tau)| \lesssim e^{\beta_1\tau}, \ \ \tau\ge0.
\end{align}
%Therefore, for large values of $\tau$ $\tilde\lambda $ behaves analogously to~\eqref{E:BARLAMBDA}

Equation~\eqref{E:THETAEQUATION} admits a steady state
\[
\tilde\theta\equiv 1
\]
which corresponds to the  background expanding solution $(\tilde\lambda ,\tilde\rho,\tilde{v})_{\lambda_0,\lambda_1,\delta}$ encompassed by Definition~\ref{D:LINEAREXPANSION} (i.e. belonging
to the light-blue region in Figure~\ref{F:BIFURCATION}.)
%The original self similar solution $(\bar\lambda,\bar\rho,\bar v)$ can be obtained from $\bar\psi$ by unwinding the above
%change of variables. 
The total energy in the new variable takes the form
\begin{align}\label{E:ENERGYTHETA}
E(\theta,\theta_\tau) & = 2\pi \|\theta_\tau + \frac{\tilde\lambda _\tau}{\tilde\lambda }\theta\|_\delta^2 + \frac{4\pi\delta}{\tilde\lambda } \|\theta^{-1/2}\|_\delta^2 \notag \\
& \ \ \ \ +\frac{12\pi}{\tilde\lambda } \left(\int_0^1 (\theta^2(\theta+z\theta_z))^{-1/3}w_\delta^4 z^2\,dz + \frac13 \int_0^1 \theta^{-1}\pa_z(w_\delta^4) z^3\,dz\right).
\end{align}

%%%%%%%%%%%%%%%%%%%%%%%%%%%%%%%%%%%%

A key differential operator in our analysis is given by
\be\label{E:SOPERATOR}
\mathcal{S}:= \g_z^2 +\frac{4}{z}\g_z.
\ee

It enters crucially in the definition of our high-order energy space. 
\begin{definition}[The space $\mathfrak{H}_{\delta,k}^j$]\label{D:DELTAJSPACES}
For any $j\in\mathbb N$ and $k\in \mathbb N$, we say that a function $\varphi$ belongs to the space $\mathfrak{H}_{\delta,k}^j$ if
$\S^\ell \varphi \in L^2_{\delta,2k+2\ell}([0,1])$ for all $0\le \ell \le j$. 
\end{definition}

For any perturbation $\phi=\theta-1$ from the steady state we define a high-order energy norm
\begin{align}\label{E:ENERGYLINEAR}
\tilde\E(\phi,\phi_\tau)= \tilde\E(\tau)=\sum_{j=0}^4\left(\tilde\lambda  \|\S^j\phi_\tau\|_{\delta,2j}^2 + \|\S^j\phi\|_{\delta,2j}^2 +\|\g_z\mathcal{S}^j\phi\|_{\delta,2j+1}^2 \right)
\end{align}

%%%%%%%%%%%%%%%%%%%%%%%%%%%%%%%%%%%%

\begin{theorem}[Nonlinear stability of linearly expanding stars]\label{T:LINEAREXPANSION}
%There exist $\varepsilon,\epsilon>0$ such that for any $-\varepsilon<\delta<0$
There exists $\epsilon, \varepsilon_0>0$ such that for any $-\epsilon<\delta<\infty$, any $0<\varepsilon\le\varepsilon_0$ and any initial data $(\theta_0,\theta_1)$ satisfying 
\[
\tilde\E(\theta_0-1,\theta_1) \le \varepsilon
\]
there exists a unique global-in-time solution to the initial value problem~\eqref{E:PHIEQUATION2}--\eqref{E:PHIINITIAL2}. Moreover, there exists a constant $C>0$ such that
\[
\sup_{\tau\ge0}\tilde\E(\tau)\le C \varepsilon
\]
and a $\tau$-independent function $\theta_\infty$ satisfying $\tilde{\mathcal E}(\theta_\infty,0)\le C\varepsilon$
such that 
\begin{align}\label{E:ESTIMATESFINAL}
\lim_{\tau\to\infty}\|\theta(\tau,\cdot) - \theta_\infty(\cdot)\|_{\mathfrak{H}_{\delta,0}^4}=0, \ \ \|\theta_\tau(\tau,\cdot)\|_{\mathfrak{H}_{\delta,0}^4} \lesssim \varepsilon {\tilde\lambda  (\tau)}^{-\frac12} \, . 
\end{align}
\end{theorem}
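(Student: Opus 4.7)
Setting $\phi := \theta - 1$ and using $F_{w_\delta}[1] = -\delta$ from~\eqref{D:w}, equation~\eqref{E:THETAEQUATION} rewrites as
\[
\tilde\lambda\, \phi_{\tau\tau} + \tilde\lambda_\tau\, \phi_\tau + \mathcal{L}_\delta \phi = \mathcal{N}_\delta(\phi),
\]
where $\mathcal{L}_\delta := \delta + DF_{w_\delta}[1]$ is the linearized operator at the steady state and $\mathcal{N}_\delta$ is a quasilinear expression that is at least quadratic in $\phi$ and $\partial_z\phi$. The strategy is a high-order weighted energy method adapted to the critical Lagrangian scaling. The operator $\mathcal{S} = \partial_z^2 + (4/z)\partial_z$ from~\eqref{E:SOPERATOR} is designed so that the weight $w_\delta^{3+2j}z^4$ of the inner product $(\cdot,\cdot)_{\delta,2j}$ of Definition~\ref{Def:H^j} absorbs the Jacobian generated in integration by parts, which makes $[\mathcal{S}, \mathcal{L}_\delta]$ strictly lower order and forces all boundary contributions at $z=0,1$ to vanish by Lemma~\ref{lem:w}.

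The plan is to apply $\mathcal{S}^j$ for $j=0,1,\ldots,4$ to the $\phi$-equation and test against $\mathcal{S}^j \phi_\tau$ in $(\cdot,\cdot)_{\delta,2j}$. The crucial algebraic identity is
\[
\tilde\lambda\, \partial_\tau \|\mathcal{S}^j\phi_\tau\|_{\delta,2j}^2 = \partial_\tau\bigl[\tilde\lambda \|\mathcal{S}^j\phi_\tau\|_{\delta,2j}^2\bigr] - \tilde\lambda_\tau \|\mathcal{S}^j\phi_\tau\|_{\delta,2j}^2,
\]
so that the damping term $\tilde\lambda_\tau \phi_\tau$ contributes an extra, coercive factor of $2\tilde\lambda_\tau \|\mathcal{S}^j\phi_\tau\|_{\delta,2j}^2$ after the $\partial_\tau$-term is accounted for. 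By~\eqref{E:TILDELAMBDABOUND}, $\tilde\lambda_\tau / \tilde\lambda \ge \beta_2 > 0$ uniformly in $\tau$, providing genuine velocity dissipation. Simultaneously, integration by parts in the principal part of $\mathcal{L}_\delta$ produces the coercive quadratic form $\|\partial_z\mathcal{S}^j\phi\|_{\delta,2j+1}^2 + \|\mathcal{S}^j\phi\|_{\delta,2j}^2$, modulo $O(|\delta|)$ corrections from the $\delta\phi$ term that are absorbable once $|\delta|<\epsilon$ is chosen sufficiently small.

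Summing the $j$-identities and controlling the commutator $[\mathcal{S}^j,\mathcal{L}_\delta]$ and the nonlinearity $\mathcal{N}_\delta$ by weighted Sobolev embeddings for the spaces $\mathfrak{H}^j_{\delta,k}$ in the spirit of~\cite{JaMa2015, Jang2014}, I expect to close the differential inequality
\[
\frac{d}{d\tau}\tilde{\mathcal{E}}(\tau) + c_\delta\, \tilde{\mathcal{E}}(\tau) \lesssim \tilde{\mathcal{E}}(\tau)^{3/2},
\]
for some $c_\delta > 0$ uniform over $\delta \in (-\epsilon, \infty)$. Combined with the local existence statement in the $\tilde{\mathcal{E}}$-topology alluded to in Remark~\ref{R:ALTERNATIVE}, a continuity/bootstrap argument then yields $\sup_{\tau\ge 0} \tilde{\mathcal{E}}(\tau) \le C\varepsilon$ globally. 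The decay bound $\|\theta_\tau\|_{\mathfrak{H}^4_{\delta,0}} \lesssim \varepsilon\, \tilde\lambda(\tau)^{-1/2}$ in~\eqref{E:ESTIMATESFINAL} is immediate from the presence of the factor $\tilde\lambda$ in the definition~\eqref{E:ENERGYLINEAR} of $\tilde{\mathcal{E}}$. Since~\eqref{E:TILDELAMBDABOUND} makes $\tilde\lambda^{-1/2}$ integrable on $[0,\infty)$, the limit $\theta_\infty(z) := \theta_0(z) + \int_0^\infty \theta_\tau(\tau,z)\, d\tau$ exists in $\mathfrak{H}^4_{\delta,0}$, and $\theta(\tau,\cdot)\to \theta_\infty$ in that space.

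The main obstacle I anticipate is the rigorous commutator analysis at top order: the singular $1/z$ in $\mathcal{S}$, the degeneracy of $w_\delta$ (vanishing linearly at $z=1$ by physical vacuum), and the quasilinear coupling inside $F_{w_\delta}[\theta]$ must all be reconciled so that no derivative is lost and every weighted boundary contribution vanishes. A secondary, genuinely analytical difficulty occurs for $\delta<0$: the zeroth-order term $\delta \phi$ in $\mathcal{L}_\delta$ is destabilizing, and only the combination of the coercive spatial part of $\mathcal{L}_\delta$ with a Hardy-type inequality tied to the vacuum weight can dominate it; this is precisely what forces the smallness restriction $\delta>-\epsilon$ appearing in the statement.
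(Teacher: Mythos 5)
Your overall framework (commuting the $\phi$-equation with $\mathcal{S}^j$, testing against $\mathcal{S}^j\phi_\tau$ in $(\cdot,\cdot)_{\delta,2j}$, and extracting velocity damping from the term $\tilde\lambda_\tau\phi_\tau$) is indeed the one the paper uses, but the differential inequality you propose to close, $\frac{d}{d\tau}\tilde{\mathcal{E}}+c_\delta\tilde{\mathcal{E}}\lesssim\tilde{\mathcal{E}}^{3/2}$, cannot hold: it would force $\tilde{\mathcal{E}}(\tau)\to0$ and hence $\theta\to1$, whereas the theorem itself (and Remark~\ref{rem0}) asserts convergence to a generic $\theta_\infty$ that need not be a steady state. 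Structurally, the positional part $3\delta\|\mathcal{S}^j\phi\|_{\delta,2j}^2+\tfrac43\|\partial_z\mathcal{S}^j\phi\|_{\delta,2j+1}^2$ sits entirely under a total $\tau$-derivative in the energy identity~\eqref{E:ENERGYIDENTITYPART2}; the only genuine dissipation produced is $\tfrac{\tilde\lambda_\tau}{2}\|\mathcal{S}^j\phi_\tau\|_{\delta,2j}^2$ (note also the coefficient is $\tilde\lambda_\tau/2$, not $2\tilde\lambda_\tau$). The cross-term device that generates dissipation of the spatial part in the self-similar case (Section~\ref{S:SS}) is unavailable here: the coefficients $\tilde\lambda,\tilde\lambda_\tau$ grow exponentially and the resulting boundary term $\tilde\lambda(\phi_\tau,\phi)_{\delta}$ is not controlled by $\tilde{\mathcal{E}}$. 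The estimate that actually closes is~\eqref{E:MAINBOUND2}: boundedness of $\tilde{\mathcal{E}}$ plus an \emph{integrated} velocity dissipation, with all quadratic error terms carrying the integrable prefactor $e^{-\beta_2\sigma}$ (obtained by paying a factor $1/\tilde\lambda_\tau$ in Young's inequality against the dissipation), after which a bootstrap starting from the local existence time $T\sim1/\varepsilon$ gives $\sup_\tau\tilde{\mathcal{E}}\le C\varepsilon$ without any decay of $\phi$.

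The second gap is the coercivity for $\delta<0$. You claim the spatial part of the linearized operator together with a Hardy-type inequality dominates the destabilizing zeroth-order term, but this fails on the kernel of the gradient: for $\phi\equiv\mathrm{const}$ the quadratic form equals $3\delta\|\phi\|_\delta^2<0$ no matter how small $|\delta|$ is, and no weighted Hardy or Poincar\'e inequality restores positivity there. The missing ingredient is the conservation of the physical energy: identity~\eqref{E:ENERGYCONS} expresses $(\phi,1)_\delta$ in terms of $(\phi_\tau,1)_\delta$, the initial energy deviation, and the quadratic remainder $\tilde{\mathcal{J}}[\phi]$, and this is precisely what Lemma~\ref{L:POSDEF} uses to control the zero mode (for $j\ge1$ the means $(\mathcal{S}^j\phi,1)_{\delta,2j}$ are instead reduced to lower order by integration by parts and the physical vacuum condition). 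Without invoking energy conservation your scheme does not close for $\delta<0$. Your final step — integrability of $\|\theta_\tau\|_{\mathfrak{H}_{\delta,0}^4}\lesssim\varepsilon\tilde\lambda^{-1/2}$ and hence existence of the limit $\theta_\infty$ — is correct and matches the paper's conclusion.
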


%%%%%%%%%%%%%%%%%%%%%%%%%%%%%%%%%%%%%

\begin{remark}\label{rem0}
The asymptotic attractor $\theta_\infty$ is not necessarily a steady state solution of~\eqref{E:THETAEQUATION}. 
Unlike the self-similar case (Theorem~\ref{T:MAIN}) we do not claim that the perturbed solution converges to a 
nearby member of the family of linearly expanding homogeneous solutions. %This is in fact not expected to be true. 
 In fact we should not expect to get a decay estimate for the unknown $\phi$ and its spatial derivatives. A much simpler toy model ODE 
$$e^\tau \partial_{\tau\tau}x + e^\tau \partial_\tau x + x = 0$$ has a continuum of possible asymptotic states, although $x\equiv0$ is the only steady state. Due to the fast growing factors in front of $\partial_{\tau\tau}x$ and $\partial_{\tau}x$, the leading order decay of $\partial_{\tau}x$ is in general given by $\partial_{\tau}x= c \tau e^{-\tau}+O(e^{-\tau})$, where $c$ is a constant depending on the initial conditions.  
It can be checked that a nonzero $c$ leads to a nonzero asymptotic state for $x$. 
\end{remark}

%%%%%%%%%%%%%%%%%%%%%%%%%%%%%%%%%%%%%

\begin{remark} 
An important consequence of Theorem~\ref{T:LINEAREXPANSION} and the precise understanding of the asymptotic-in-time behavior of the Lagrangian map $\eta(t,r)=\chi(t,r)r=\tilde\lambda (t)\theta(t,r)r$
is that we can precisely track the growth of the support of the Eulerian density $\rho$ of the perturbation.
If $\lambda(t)$ denotes the radius of the support of the perturbed solution, then Theorem~\ref{T:LINEAREXPANSION} implies that there exists a constant $C\ge1$ such that 
\[
C^{-1}\tilde\lambda (t) \le \lambda(t) \le C\tilde\lambda (t), \ \ t\ge0,
\] 
where $\tilde\lambda (t)$ is the radius of support of the background Eulerian density $\tilde\rho$.
Just like in Remark~\ref{R:MAIN} we can also show that the Eulerian density and velocity remain ``close" to the suitably rescaled Eulerian density and velocity of the background
homogeneous solution $(\tilde\lambda ,\tilde\rho,\tilde{v})$.
\end{remark}

%%%%%%%%%%%%%%%%%%%%%%%%%%%%%%%%%%

%%%%%%%%%%%%%%%%%%%%%%%%%%%%%%%%%%

\subsection{Comments and methodology}\label{S:COMMENTS}

Questions of singularity formation in various critical problems have received a lot of attention in the past decades, especially in the context of nonlinear wave, Yang-Mills, nonlinear Schr\"odinger equations, generalized
Korteweg- deVries equation, corotational wave maps and various other hyperbolic problems. An exhaustive overview with many references can be found in~\cite{MaMeRa}. The possible blow-up phenomenology is rich, with rates often deviating from
the predicted self-similar rates, and such non self-similar regimes can be both stable and unstable. In the context of the gravitational Euler-Poisson problem, the catalogue of potential dynamical scenarios appears to be very rich and far from being understood.

The free-boundary gravitational Euler-Poisson system is a classical example of a system of hyperbolic balance laws where  we may expect shock singularities to occur in finite time. 
A global-in-time existence and uniqueness theory is not available even in the context of small initial data. The manifestly hyperbolic nature of the equations
is clearly seen in Lagrangian coordinates where the EP$_{\gamma}$-system takes a form of a degenerate quasilinear wave equation on a compact spatial domain. 
There are however two well-known classes of exact solutions: the Lane-Emden steady states when $\frac65<\gamma<2$ and a special class of homogeneous (also known as homologous) 
solutions in the case $\gamma=\frac43$ discovered and analyzed in~\cite{GoWe,Makino92,FuLin} (see Section~\ref{S:SPECIALSOLUTIONS}). Both classes of solutions share one important common feature - they  satisfy the so-called physical vacuum boundary condition at the star-vacuum interface.

In the mass-critical case $\gamma=\frac43$ Theorems~\ref{T:MAIN} and~\ref{T:LINEAREXPANSION} confirm that the two modes of expansion (marked by light-blue and dark-blue in Figure~\ref{F:BIFURCATION}) are nonlinearly dynamically stable.
It remains open to understand the asymptotic behavior in the vicinity of collapsing solutions (marked light- and dark-red in Figure~\ref{F:BIFURCATION}). Moreover, in the supercritical regime $\frac65<\gamma<\frac43$ it is not known whether there exist {\em any} examples of compactly supported collapsing solutions and this remains an important open question. Finally, despite the conditional stability results~\cite{Rein, LuSm} in the subcritical regime $\frac43<\gamma<2$, the question of nonlinear stability of Lane-Emden stars remains open, as it is not clear whether fluid shock singularities might develop from small perturbations.

When $\gamma\ge\frac43$ it was shown in~\cite{MaPe1990, DLYY} that {\em if} there exists a global-in-time strictly positive energy solution to the spherically-symmetric EP$_{\gamma}$-system, then the star support grows {\em at least} linearly-in-time. 
Theorem~\ref{T:LINEAREXPANSION} shows  that in the vicinity of a linearly expanding homogeneous solutions this result is optimal, i.e.  the star support grows {\em at most} linearly-in-time.
Notice that the sub linear growth  of the fluid support  stated in Theorem~\ref{T:MAIN} is associated only with the zero-energy configurations and therefore does not contradict~\cite{MaPe1990}.

Besides the work of Goldreich \& Weber~\cite{GoWe}, there exist numerous other physics works devoted to various notions of self-similar collapsing/expanding star solutions of the Euler-Poisson system, see~\cite{La,Pe,Yahil1983,BlBoCh,BoFeFiMu,RB} and references therein. The polytropic index $\gamma$ is often allowed to vary in the full range $1\le\gamma<2$. All of these other solutions (some of which lead to physically very interesting scenarios of stellar collapse~\cite{Yahil1983}) have an infinite support of the star density and therefore do not possess a star-vacuum free boundary. 

As an important consequence of our results, we show the existence of {\textit{global-in-time}}  \textit{large data solutions} to the EP$_\gamma$-system. To the best of our knowledge, this is the first work proving existence and uniqueness of {{global-in-time}} classical solutions of the free-boundary EP$_{\gamma}$-system that are neither homogeneous nor stationary. 
Our results provide a set of initial data and an energy space that lead to global-in-time solutions in a functional framework that allows us to track the regularity and behavior of the \textit{free boundary}. A key mechanism that precludes 
finite time singularity formation is  the expansion of the background star solution. The fluid density decreases sufficiently rapidly to counteract a possible shock formation. 
A related phenomenon, albeit in the absence of free boundaries, appears in the context of the Euler-Einstein system with positive cosmological constant~\cite{iRjS2012,jS2012,HaSp,cLjVK}, wherein the role of the expanding homogeneous solutions is replaced by the well-known Friedman-Lema\^itre-Robertson-Walker expanding spacetimes.

The presence of the physical vacuum condition, while natural from the physics point of view, causes fundamental difficulties in the analysis, as the first derivative of the fluid enthalpy is in general discontinuous across the fluid-vacuum interface.
The first step in handling both the free boundary and isolating the difficulties coming from the above mentioned degeneracy is to use the Lagrangian coordinates. In these coordinates, a natural functional-analytic framework of weighted Sobolev spaces emerges as the right setting for the well-posedness analysis. The initial fluid density is a natural weight in the problem. These tools have been first designed and successfully used to overcome the above mentioned difficulties in the context of compressible Euler equations in vacuum~\cite{CoSh2012, JaMa2015} and in the context of the Euler-Poisson system in~\cite{Jang2014, Jang2015}. A similar methodology was used to prove local-in-time existence for the Euler-Poisson system~\cite{LXZ}. In \cite{Makino2015}, Nash-Moser theory was used to establish the existence of smooth local solutions of the Euler-Poisson system approximating time periodic (linearized) profiles. 

The fluid densities associated with the expanding homogeneous stars also satisfy the physical vacuum condition, as they solve the generalized Lane-Emden equation.
To prove nonlinear stability of such solutions, we adapt the Lagrangian flow map to the expanding background, and control the deviation by suitably rescaling the time variable. 
% which deviates from the background configuration given by the generalized Lane-Emden equation rather than the initial density profile. 
In our approach, the initial density of the perturbation enters in the choice of the initial flow map. 

To prove Theorem~\ref{T:SSINFORMAL}
we introduce the self-similar time coordinate $s$ defined by
\be\label{E:SSTIME}
\frac{ds}{dt} = \frac 1{\bar{\lambda}(t)^{3/2}},  \ \ \text{i.e.} \ \ s\sim\log t.
\ee
The self-similar expanding homogeneous solutions transform into steady states of the newly rescaled equation given by
\begin{align}\label{E:PHIEQUATIONINTRO}
\phi_{ss} -\frac12 b\phi_s +3\delta\phi + \mathcal L_\delta\phi = N_\delta[\phi],
\end{align}
where $\mathcal L_\delta$ is a suitable second order self-adjoint differential operator, $N_\delta$ a nonlinearity and $b<0$ is a constant. 
% Moreover, the  equation enjoys the homogeneity of degree 0 in $\bar\l$. 
Due to the scaling invariance of the problem, coefficients in front of the $\partial_s$-derivatives in~\eqref{E:PHIEQUATIONINTRO} are {\em constant}. 
%In the new coordinates it is easy to linearize the about the steady state and the large-time analysis becomes accessible despite the presence of physical vacuum. 
A key mechanism for our stability result is a  new \textit{damping effect} induced by the term $ -\frac12 b\phi_s$ (recall that $b<0$) in~\eqref{E:PHIEQUATIONINTRO}. 
It exists as a consequence of the expansion of the background self-similar homogeneous solution.
However, due to a stretch effect coming from $3\delta\phi$ (note that $\delta<0$), the linear part admits a growing mode $\phi=e^{-bs}$, which makes the stability nontrivial and interesting. 
It is not a priori clear that this growing mode will not persist at the nonlinear level. However, the unstable mode is in fact a reflection of the conservation-of-energy law and the associated eigenvector is tangential to the manifold of constant energy solutions. 
Therefore to deal with such an instability we crucially use a nonlinear constraint inherited from the conservation of the physical energy, thereby reducing our stability result to a codimension-one set of initial data.

The stability of the homogeneous self-similar expanding solutions relies on proving an exponential decay-in-$s$ of small amplitude perturbations with zero energy.
Such a decay result follows from  the energy-dissipation inequalities for suitably defined high-order energy functionals $\mathcal E,\mathcal D$ taking the form 
\[
\mathcal{E}(s) +\int_0^s \mathcal{D}(\tau)d\tau  \lesssim \mathcal{E}(0)+  \int_0^s \mathcal{E}^{3/2}(\tau)d\tau , \quad s\geq 0 
\] 
where we have the full coercivity of the dissipation
\[
\mathcal{D} \gtrsim \mathcal{E}. 
\]
Our nonlinear estimates naturally split into two parts. First 
we prove the energy-dissipation inequalities for pure $\g_s$-derivatives of $\phi$ using the standard Hardy inequalities and embeddings between weighted Sobolev spaces. 
In our second step we provide the bounds for the full energy (involving purely spatial and mixed space-time derivatives) and this is accomplished by using~\eqref{E:PHIEQUATIONINTRO} and
a bootstrap procedure allowing us to express higher spatial derivatives in terms of $\g_s$-derivatives. We refer to this part of the analysis as the {\em elliptic estimates}. 
%The dissipation of $\phi_s$ and its higher time derivatives directly follows from the energy estimates. 
A crucial tool in the proof of the energy-dissipation bounds is the 
%For $\phi$ and $\phi_z$, the 
spectral gap of the linearized operator $\mathcal L_\delta$. Specifically $\mathcal L_\delta$ is a strictly positive operator as long as it acts on a space orthogonal to its null-space, i.e. 
as long as $(\phi,1)_\delta=0$. We therefore need to control $|(\phi,1)_\delta|$ separately. 
This is accomplished by exploiting the nonlinear constraint $E(\phi,\phi_s)=0$ which restricts the dynamics to the surface of zero-energy solutions. 
Specifically we show that the linearization of this condition yields a relationship of the schematic form
\[
(\phi,1)_\delta= \text{ const.}\times \, (\phi_s,1)_\delta + \text{ quadratic nonlinearity }
\]
which will allow us to show $|(\phi,1)_\delta|\lesssim \E$.

To prove Theorem~\ref{T:LINEARINFORMAL} we also rescale time. Note however that the linearly expanding homogeneous solutions $(\tilde\lambda ,\tilde\rho,\tilde v)$ do not expand at a self-similar rate. Therefore, we introduce a new time coordinate
$\tau$ by setting 
\be\label{E:LINEARTIME}
\frac{d\tau}{dt} = \frac 1{\tilde{\lambda}(t)},  \ \ \text{i.e.} \ \ \tau\sim\log t.
\ee
%Note that both $s$ and $\tau$ defined in~\eqref{E:SSTIME} and~\eqref{E:LINEARTIME} respectively, behave like $\log t$ for large values of $t$.
Like above, the homogeneous background solution $(\tilde\lambda ,\tilde\rho,\tilde v)$ transforms into a steady state, but unlike the self-similar case, the new equation for a perturbation $\phi$ in Lagrangian coordinates has $\tau$-dependent coefficients
in front of $\partial_\tau\phi$ and $\partial_{\tau\tau}\phi$:
\be\label{E:THETAEQUATIONINTRO}
\tilde\lambda \phi_{\tau\tau} +\tilde\lambda _\tau \phi_\tau + 3\delta \phi + \mathcal L_\delta\phi = N_\delta[\phi]. 
\ee
In the new coordinates $\tilde\lambda \sim e^{\beta\tau}$ for large values of $\tau$ where $\beta>0$ is some constant. Although \eqref{E:THETAEQUATIONINTRO} and \eqref{E:PHIEQUATIONINTRO} look structurally similar as they appear to be damped wave equations, they are fundamentally different. By contrast to~\eqref{E:PHIEQUATIONINTRO}, 
%while the choice of the self-similar coordinates leads to a damped wave equation with constant coefficients for perturbations around self-similar ones due to the scaling invariant property of the EP$_\frac43$ 
%system, 
%the time dependent coefficients for the perturbed equation around linearly expanding profiles are inevitable since linear expansions do not honor the invariance of the system. The 
the exponentially growing coefficients $\tilde\lambda $ and $\tilde\lambda _\tau$ in \eqref{E:THETAEQUATIONINTRO} are inevitable since the linear-in-time expansion does not honor the invariance of the system and it creates severe analytical difficulties. As a consequence the high-order energy method developed for the self-similar case does not work since the operator $\g_\tau$ does not commute 
well with \eqref{E:THETAEQUATIONINTRO} any longer. Namely, some of the commutators contain $\tau$-dependent weights that grow faster than the natural energy and we cannot close the energy estimates. 
Instead we design a different high-order energy approach, based on differential operators containing only spatial derivatives. 
This idea goes back to the work by Jang \& Masmoudi \cite{JaMa2015} on free-boundary compressible Euler equations 
where the number of spatial derivatives changes the weight  in the energy space. 
%that handled normal derivatives directly by exploiting different weights depending on the number of normal derivatives to the vacuum boundary. 
In our problem a key elliptic operator 
\[
\mathcal L_{\delta,k}\psi = - \frac{4}{3w_\delta^{3+k}z^4}\g_z (w_\delta^{4+k} z^4\g_z\psi)
\]
captures precisely the degeneracy caused by the physical vacuum. In addition to this we have to deal with the coordinate singularity at $z=0$  because naive successive applications of the $\g_z$-operators may result in unfavorable negative powers of $z$ in our energy terms. To avoid this problem we commute the equation with high-order powers of the elliptic operator 
\[
 \mathcal{S}:= \g_z^2 +\frac{4}{z}\g_z
\]
to derive the energy estimates. It turns out that $\mathcal{S}$ commutes well with \eqref{E:THETAEQUATIONINTRO} at both boundaries. Together with $\mathcal L_{\delta,k}\psi $ it enables us to construct energy spaces without commuting the equation with $\g_\tau$ derivatives and close the energy estimates. The elliptic operators $\mathcal L_{\delta,k} $ and $\mathcal{S}$ have not explicitly appeared in the previous works and a careful and detailed analysis is carried out in Section~\ref{S:LINEAR}.

The paper proceeds as follows. Section~\ref{S:SS} is devoted to  the proof of Theorem~\ref{T:MAIN}: the nonlinear stability of self-similar expanding homogeneous solutions. A crucial coercivity of the energy, leading to the exponential decay of perturbations, is shown in Lemma \ref{L:COERCIVITY}. Nonlinear energy estimates (Theorem~\ref{thm1}) will be presented in detail. In Section~\ref{S:LINEAR}, we prove Theorem~\ref{T:LINEAREXPANSION}: the nonlinear stability of linearly expanding homogeneous solutions. The $\S$-energy method (Theorem~\ref{T:HIGHORDER}) will be developed based on various commutator estimates between $\S$ on one side and linear operator and nonlinear terms (Lemmas \ref{L:SLCOMMUTATOR},  \ref{L:QJ}, \ref{L:CJ}) on the other. We shall also prove comparison estimate 
between the $\S$ energy and the norm (Lemma \ref{L:NORMENERGY}). In the Appendix, we present the spectral theory of our key linearized operators, Hardy inequalities and weighted Sobolev spaces embedding results.

\section{Nonlinear stability of self-similar expanding homogeneous solutions}\label{S:SS}

To understand the stability properties of $\bar\psi\equiv1$, we linearize around this steady state.
Letting 
\[
\psi = \bar\psi + \phi= 1+\phi,
\]
using~\eqref{E:LE} we obtain
\[
\begin{split}
F_{w_\delta}[1+\phi]&= \frac{(1+\phi)^2}{ w_\delta^3 z}\g_z\left(w_\delta^{4}\left[ 1+\frac{1}{z^2}\g_z\left(z^3\left[\phi+\phi^2+\frac{\phi^3}{3}\right]\right) \right]^{-\frac{4}{3}}\right) \\
&\quad+\frac{1}{(1+\phi)^2}\left(-\delta - \frac{4 w_\delta'}{z} \right)\\
\end{split}
\]
Since 
$(1+y)^{-\frac43}= 1-\frac43 y + \frac{28}{9}\left(\int_0^1 (1-\theta)(1+\theta y)^{-\frac{10}{3}} d\theta\right) y^2$, by rearranging terms, we obtain the expansion
\be\label{F_wb}
\begin{split}
&F_{w_\delta}[1+\phi]= -\delta +2 \delta\phi - \frac{4}{3w_\delta^3 z^4}\g_z(w_\delta^4 z^4 \g_z \phi ) \\
&\quad +\frac{b^2}{2}\left(\frac{1}{(1+\phi)^2} - 1 +2\phi \right) + \frac{\g_zw_\delta^4 }{w^3 z} \left((1+\phi)^2 -\frac{1}{(1+\phi)^2} - 4\phi \right)\\
&\quad- \frac43\frac{(1+\phi)^2-1}{w_\delta^3 z}\g_z \left( w_\delta^4 \frac{1}{z^2}\g_z (z^3\phi) \right) \\
&\quad -\frac43 \frac{(1+\phi)^2}{w_\delta^3 z}\g_z \left( w_\delta^4 \frac{1}{z^2}\g_z (z^3(\phi^2+\frac{\phi^3}{3})) \right) \\
&\quad+\frac{28}{9}  \frac{(1+\phi)^2}{ w_\delta^3 z}\g_z\Big[(w_\delta^{4}  \left(\frac{1}{z^2}\g_z\left(z^3\left[\phi+\phi^2+\frac{\phi^3}{3}\right]\right) \right)^2 \\
&\qquad\qquad  \int_0^1 (1-\theta) \left[1+\theta  \left(\frac{1}{z^2}\g_z\left(z^3\left[\phi+\phi^2+\frac{\phi^3}{3}\right]\right) \right) \right]^{-\frac{10}{3}} d\theta   \Big].
\end{split}
\ee

It follows that the linearization of $F_{w_\delta}[\cdot]$ about $\bar\psi\equiv1$ is given by the operator
\be\label{E:LDELTA}
DF_{w_\delta}(\bar\psi)[\phi] = \mathcal L_\delta\phi +2\delta\phi, \ \ \mathcal L_\delta\phi : = -\frac{4}{3w_\delta^3z^4}\partial_z\left(w_\delta^4z^4\partial_z\phi\right).
\ee
Therefore,  the perturbation $\phi$ satisfies the following equation:
\begin{align}\label{E:PHIEQUATION}
\phi_{ss} -\frac12 b\phi_s +3\delta\phi + \mathcal L_\delta\phi = N_\delta[\phi],
\end{align}
equipped with the following initial conditions:
\begin{align}\label{E:PHIINITIAL}
\phi(0,z) =\phi_0(z) = \xi_0(z) - 1, \ \ \phi_s(0,z) = \phi_1(z) = \xi_1.
\end{align}
Here $N_\delta[\phi]$ is the nonlinear remainder defined by the relationship
\be\label{E:NONLINEARITY}
F_{w_\delta}[1+\phi] = -\delta + \mathcal L_\delta\phi + 2\delta\phi - N_\delta[\phi]. 
\ee
The explicit form of $N_\delta$ is obtained from the second through sixth lines of the expansion~\eqref{F_wb}. For the nonlinear estimates however, it will only be important that 
$N_\delta$ has the following structure:
\begin{align}
N_\delta[\phi] = &p_0(\phi) + \frac{\pa_zw_\delta}{z} p_1(\phi) + \frac{1}{w_\delta^3z}p_2(\phi)\pa_z\left(\frac{w_\delta^4}{z^2}\pa_z\left(z^3\phi\right)\right) \notag \\
& + \frac{1}{w_\delta^3z}p_3(\phi)\pa_z\left(\frac{w_\delta^4}{z^2}\pa_z\left(z^3p_4(\phi)\right)\right) \notag \\ 
& + \frac{1}{w_\delta^3z}p_5(\phi)\pa_z\left[w_\delta^4\left(\frac1{z^2}\pa_z\left(z^3p_6(\phi)\right)\right)^2 
\int_0^1(1-\theta)\left(1 +\frac\theta{z^2}\pa_z\left(z^3p_6(\phi)\right)\right)^{\frac{10}3}\,d\theta\right] , \label{E:NBSTRUCTURE}
\end{align}
where $p_i,$ $i=0,\dots,6$ are rational polynomials such that $p_i:[-c,c]\to\mathbb R$ is a $C^\infty$ function for some $c>0$ and $i=0,\dots, 7.$ 
Furthermore, polynomials $p_0,p_1,p_4$ are at least quadratic in $\phi$, and polynomials $p_2,p_6$ are at least linear in $\phi$ when expanded about zero.
In other words
\be\label{E:POLYNOMIALS}
p_i(0)\ \ i=0,1,2, 4,6,  \ \ p_i'(0)=0, \ \ i= 0,1,4.
\ee

It is easy to see that the operator $\mathcal L_\delta$  defined in~\eqref{E:LDELTA} is non-negative and symmetric with respect to $(\cdot,\cdot)_\delta:$
\[
(\mathcal L_\delta\phi,\phi)_\delta = (\phi,\mathcal L_\delta\phi)_\delta = \frac43\|\pa_z\phi\|_{\delta,1}^2
\]
for all $\phi\in C_c^\infty([0,1]) $. Moreover, following the same argument in \cite{Be1995, Makino2015}, we deduce that $\mathcal L_\delta$ has the Friedrichs extension, which is a self-adjoint operator in the weighted space $L^2_{\delta,0}$ whose spectrum is purely discrete. See Appendix \ref{A:A} for more detail on the analysis of the linearized operator. 
%From now on, we will regard the operator $\mathcal L_\delta$ as  the Friedrichs extension in $H^2_{\delta}$. 
The following coercivity properties result from the spectral theory of $\mathcal L_\delta$, which will be crucial for the energy estimates in Section \ref{sec:EE}.

\begin{lemma}[Spectral gap for the linearized operator]\label{lem:core}  
There exists a sufficiently small $\varepsilon>0$ such that for any $-\varepsilon<\delta<0$ the following statements hold:
\begin{enumerate}
\item 
There exists a constant $\mu_1>0$ such that  for any $\varphi\in H^1_{\delta}$ satisfying $(\varphi,1)_\delta=0$
the following bound holds:
\be\label{E:SPECTRALGAP}
(\mathcal L_\delta\varphi,\varphi)_\delta \ge \mu_1 \|\varphi\|_\delta^2 %\ \ \text{ for any } \varphi \ \text{ satisfying } \ (\varphi,1)_\delta=0.
\ee
\item There exists $\mu_2>0$ such that for any $\varphi\in H^1_{\delta}$ satisfying $(\varphi,1)_\delta=0$
the following bound holds:
\be \label{E:SPECTRALGAP2}
\left((\mathcal L_\delta +3\delta)\varphi,\varphi\right)_\delta \geq \mu_2\left( \|\partial_z\varphi\|_{\delta,1}^2+\|\varphi\|_\delta^2 \right) %\ \  \frac{3}{2}b^2 <\mu_1.
\ee
\end{enumerate}
Here $\mu_1$ and $\mu_2$ can be chosen uniformly in $\delta$. 
\end{lemma}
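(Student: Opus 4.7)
The plan is to exploit the fact that $\mathcal L_\delta$ is a self-adjoint nonnegative operator with purely discrete spectrum, and then to understand the first eigenvalue and its uniformity in $\delta$. Concretely, I first invoke the spectral theory developed in Appendix~\ref{A:A}: the Friedrichs extension of $\mathcal L_\delta$ is a nonnegative self-adjoint operator on $L^2_{\delta,0}$ with compact resolvent (compactness stemming from the weighted Sobolev embedding $H^1_\delta \hookrightarrow L^2_\delta$ for the degenerate weight $w_\delta^4 z^4$), so its spectrum is purely discrete, $0 \le \lambda_0(\delta) \le \lambda_1(\delta) \le \cdots \to \infty$. Integrating by parts yields the Green identity $(\mathcal L_\delta\varphi,\varphi)_\delta = \tfrac{4}{3}\|\pa_z\varphi\|_{\delta,1}^2$, so any element of $\ker \mathcal L_\delta \cap H^1_\delta$ must have $\pa_z\varphi\equiv 0$ a.e. in the weighted sense, i.e.\ must be a constant. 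Hence $\lambda_0(\delta)=0$ is simple with eigenspace $\mathrm{span}\{1\}$.

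For Part (1), the min--max principle gives, at fixed $\delta$,
\[
\lambda_1(\delta) \;=\; \inf\left\{ \tfrac{(\mathcal L_\delta\varphi,\varphi)_\delta}{\|\varphi\|_\delta^2} :\; \varphi\in H^1_\delta,\ (\varphi,1)_\delta=0,\ \varphi\not\equiv 0\right\} \;>\; 0,
\]
which delivers \eqref{E:SPECTRALGAP} with $\mu_1=\lambda_1(\delta)$. The nontrivial point is uniformity as $\delta\to 0$. I would argue by contradiction: assume $\delta_n\to 0$ and normalized $\varphi_n \in H^1_{\delta_n}$ with $(\varphi_n,1)_{\delta_n}=0$, $\|\varphi_n\|_{\delta_n}=1$, and $(\mathcal L_{\delta_n}\varphi_n,\varphi_n)_{\delta_n}\to 0$. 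Using the continuous dependence of solutions of \eqref{E:LE}--\eqref{BCy} on $\delta$, the profile $w_\delta$ converges to $w_0$ in $C^1_{\mathrm{loc}}([0,1))$ and, crucially, vanishes linearly at $z=1$ at a rate uniform in $\delta$ near $0$ by Lemma~\ref{lem:w}(ii). A compactness argument based on the uniform Hardy/weighted-Rellich embedding then extracts a subsequential limit $\varphi_\infty$ with $\|\varphi_\infty\|_0=1$, $(\varphi_\infty,1)_0=0$, and $\|\pa_z\varphi_\infty\|_{0,1}=0$; the last equality forces $\varphi_\infty$ to be a nonzero constant, contradicting orthogonality to $1$. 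Thus $\lambda_1(\delta)$ stays bounded below by some $\mu_1>0$ for all $\delta$ in a neighborhood of $0$.

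Part (2) is a short consequence of Part (1) combined with the Green identity. Writing
\[
\bigl((\mathcal L_\delta+3\delta)\varphi,\varphi\bigr)_\delta \;=\; \tfrac{1}{2}(\mathcal L_\delta\varphi,\varphi)_\delta \;+\; \tfrac{2}{3}\|\pa_z\varphi\|_{\delta,1}^2 \;+\; 3\delta\|\varphi\|_\delta^2,
\]
applying \eqref{E:SPECTRALGAP} to the first summand yields
\[
\bigl((\mathcal L_\delta+3\delta)\varphi,\varphi\bigr)_\delta \;\ge\; \tfrac{2}{3}\|\pa_z\varphi\|_{\delta,1}^2 + \Bigl(\tfrac{\mu_1}{2}+3\delta\Bigr)\|\varphi\|_\delta^2.
\]
Shrinking $\varepsilon$ so that $\varepsilon<\mu_1/12$ forces $\tfrac{\mu_1}{2}+3\delta\ge \mu_1/4$ for all $-\varepsilon<\delta<0$, and \eqref{E:SPECTRALGAP2} follows with $\mu_2:=\min\{2/3,\mu_1/4\}$.

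\textbf{Main obstacle.} The substantive difficulty lies in the uniformity of the spectral gap in $\delta$ in Part (1). Since the ambient Hilbert space $L^2_{\delta,0}$ itself varies with $\delta$ (the inner product weight is $\delta$-dependent), standard Kato perturbation theory does not apply directly. Making the compactness/limiting argument rigorous requires a quantitative handle on how $w_\delta$ degenerates at the vacuum boundary $z=1$ uniformly in $\delta$, which is provided by the analyticity/expansion statements of Lemma~\ref{lem:w}.
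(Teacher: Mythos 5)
Your proof is correct and follows essentially the same route as the paper: Part (1) rests on the discrete spectral theory of $\mathcal L_\delta$ (kernel spanned by constants, min--max characterization of the first nonzero eigenvalue), and Part (2) on splitting the Green identity $(\mathcal L_\delta\varphi,\varphi)_\delta=\tfrac43\|\pa_z\varphi\|_{\delta,1}^2$ and absorbing $3\delta\|\varphi\|_\delta^2$ using the smallness of $|\delta|$, exactly as in the paper's Appendix~\ref{A:A}. The only place you go beyond the paper is the compactness/contradiction sketch for the uniformity of $\mu_1$ in $\delta$, which the paper asserts without written justification; your outline of that step is reasonable and, if anything, makes the argument more complete.
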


The proof of Lemma \ref{lem:core} will be given in Appendix \ref{A:A}.

The conserved energy expressed in terms of the perturbation $\phi$ takes the form
\begin{align}\label{E:ENERGYXI}
E(1+\phi,\phi_s) & = \frac{2\pi}{\bar\lambda} \|\phi_s + \frac{\bar\lambda_s}{\bar\lambda}(1+\phi)\|_\delta^2 + \frac{4\pi\delta}{\bar\lambda} \|(1+\phi)^{-1/2}\|_\delta^2 \notag \\
& \ \ \ \ +\frac{12\pi}{\bar\lambda} \big(\int_0^1 ((1+\phi)^2(1+\phi+z\phi_z))^{-1/3}w_\delta^4 z^2\,dz  \notag \\
& \ \ \ \ + \frac13 \int_0^1 (1+\phi)^{-1}\pa_z(w_\delta^4) z^3\,dz\big).
\end{align}
Linearizing the relation
\be\label{E:VANISHINGENERGY}
E(1+\phi,\phi_s) = E(1,0)=0,
\ee
we conclude that 
\begin{align}\label{E:ENERGYCONDITION}
\frac32 b(\phi,1)_\delta - (\phi_s,1)_\delta = \mathcal J[\phi],
\end{align}
where $\mathcal J[\phi]$ is a quadratic nonlinear expression given by 
\begin{align}\label{E:JDEFINITION}
\mathcal J[\phi] &= \frac{\bar\lambda}{4\pi b}\left(E(1+\phi,\phi_s) - E(1,0) - \partial_\phi E(1,0) \phi\right)\notag \\
=&\frac{1}{2b} \Big(  \|\phi_s -b \phi\|_\delta^2 + 2 \int_0^1 \left(\delta + \frac{4w_\delta'}{z}\right) \frac{\phi^2}{1+\phi}  w^3_\delta z^4dz\notag \\
&+ 6\int_0^1  ( ((1+\phi)^2(1+\phi+z\phi_z))^{-1/3} - 1 +\frac {3\phi + z\phi_z}{3} ) w^4_\delta z^2 dz \Big). 
\end{align}

In addition to the total energy $\E$ defined in~\eqref{energy1} we also introduce an
%\be\label{energy1}
%\Energy:=\Energy(s)= \sum_{j=0}^7\sum_{k=0}^j   \|\g_s^{j-k+1}\g_z^k \phi\|_{\delta,k}^2 +  \|\g_s^{j-k}\g_z^{k+1} \phi\|_{\delta,k+1}^2  +\|\g_s^{j-k}\g_z^k \phi\|_{\delta,k}^2, 
%\ee
auxiliary energy $\mathscr E(s):$
\be\label{E:ENERGY2}
\mathscr E(s) : = \| \phi\|_{\delta}^2+ \sum_{j=0}^7 \left[\|\g_s^{j+1}\phi\|_{\delta}^2 +  \|\g_s^{j}\g_z \phi\|_{\delta,1}^2\right]. 
\ee

\begin{theorem}[High-order energy estimate]
\label{thm1} Let $\delta<0$ be given and let $|\delta|$ be sufficiently small. 
Let $\phi$ solve the degenerate wave equation \eqref{E:PHIEQUATION}--\eqref{E:PHIINITIAL} on a time interval $[0,S]$ Assume that for a small but fixed $M>0$ the following a priori bound holds:
\be\label{E:APRIORI}
\sup_{s\in[0,S]}\Energy(s) \le M. 
\ee
There exists a constant $\varepsilon$ such that for any initial data $(\phi(0),\phi_s(0))=(\phi_0,\phi_1)$ 
satisfying the initial bound:
\be\label{E:INITIALPHI}
\Energy(0) \le \varepsilon, 
\ee
and satisfying the vanishing energy assumption
\[
E(1+\phi_0,\phi_1)=0,
\]
%where $0<\varepsilon\ll b^2$
% and $0<\kappa\ll |b|$ are 
%is a sufficiently small but fixed constant. 
there exist  constants $C_1,C_2,C_3>0$ 
%and a sufficiently small constant $\epsilon_0>0$   such that for any initial data $(\sigma_0,\phi_0)$ satisfying 
%$|\sigma(0)|+ \sqrt{\Energy(0)}\leq \epsilon_0 $, 
such that the following energy bound holds: 
\be\label{E:MAINBOUND}
\sup_{s\in[0,S]} \Energy(s) + C_1\int_0^s\Energy(\sigma)\,d\sigma \le C_2\Energy(0) + C_3\int_0^s \Energy(\sigma)^{\frac32}\,d\sigma, \ \ s\in[0,S]. 
%\sup_{s\geq 0}\left[e^{2\kappa s} \left(\|\sigma(s)\|_\ast + \sqrt{  \Energy(s)}\right) \right] \le 
%C\left(|\sigma(0)|+ \sqrt{\Energy(0)}\right)
\ee
%for some  positive constant $\kappa>0$ which depends only on $b$.  
\end{theorem}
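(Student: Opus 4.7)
The plan is to derive \eqref{E:MAINBOUND} via the standard energy method adapted to the constant-coefficient damped wave equation \eqref{E:PHIEQUATION}, with two crucial ingredients specific to the problem: the spectral gap \eqref{E:SPECTRALGAP2} of $\mathcal L_\delta$ on the orthogonal complement of constants, and the nonlinear constraint \eqref{E:ENERGYCONDITION} that controls the zero-mode direction along which the linearization admits a growing eigenvector $e^{-bs}$. First I would commute \eqref{E:PHIEQUATION} with $\partial_s^j$ for $0\le j\le 7$. Because of the mass-critical rescaling, the coefficients $-\tfrac12 b$, $3\delta$ and the operator $\mathcal L_\delta$ are \emph{$s$-independent}, so the commutator produces only $\partial_s^j N_\delta[\phi]$. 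Testing the resulting equation against $w_\delta^3z^4\,\partial_s^{j+1}\phi$ and integrating by parts in $z$ using the self-adjointness of $\mathcal L_\delta$ produces an identity whose leading terms are
\[
\tfrac{d}{ds}\!\left(\tfrac12\|\partial_s^{j+1}\phi\|_\delta^2+\tfrac23\|\partial_s^j\partial_z\phi\|_{\delta,1}^2+\tfrac{3\delta}2\|\partial_s^j\phi\|_\delta^2\right)+\bigl(-\tfrac b2\bigr)\|\partial_s^{j+1}\phi\|_\delta^2=\bigl(\partial_s^j N_\delta[\phi],\partial_s^{j+1}\phi\bigr)_\delta,
\]
in which the damping term is strictly positive since $b<0$.

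For this identity to be coercive, the quadratic form $\mathcal L_\delta+3\delta$ has to be nonnegative, which Lemma~\ref{lem:core} provides only on $\{1\}^\perp$. I would therefore decompose $\partial_s^j\phi=\psi_j^\perp+c_j$ with $c_j=(\partial_s^j\phi,1)_\delta/\|1\|_\delta^2$; on $\psi_j^\perp$ the gap gives the needed lower bound $\mu_2(\|\partial_z\psi_j^\perp\|_{\delta,1}^2+\|\psi_j^\perp\|_\delta^2)$. For the scalars $c_j$ I would apply $\partial_s^j$ to \eqref{E:ENERGYCONDITION}, which yields for every $j\ge 0$
\[
\tfrac32 b\,(\partial_s^j\phi,1)_\delta-(\partial_s^{j+1}\phi,1)_\delta=\partial_s^j\mathcal J[\phi].
\]
Since $b\neq 0$ this gives $|c_j|\lesssim|(\partial_s^{j+1}\phi,1)_\delta|+|\partial_s^j\mathcal J[\phi]|$. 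The first factor is already dissipated by the damping term in Step 1, and $\mathcal J$ is purely quadratic by \eqref{E:JDEFINITION}, so its $\partial_s^j$-derivative is bounded by $\mathcal E^{3/2}$. This is precisely where the codimension-one restriction to zero-energy data is used: it converts the otherwise linearly growing zero mode into a quantity controlled by the dissipation plus cubic remainders, and after summing over $j=0,\dots,7$ delivers an energy-dissipation bound for the auxiliary norm $\mathscr E$ defined in \eqref{E:ENERGY2}.

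The target norm $\mathcal E$ includes mixed derivatives $\partial_s^{j-k}\partial_z^k\phi$ with up to eight $z$-derivatives and correspondingly heavier weights $w_\delta^{3+k}z^4$. To close the gap I would run an elliptic bootstrap: rewriting \eqref{E:PHIEQUATION} as $\mathcal L_\delta\phi=-\phi_{ss}+\tfrac12 b\phi_s-3\delta\phi+N_\delta[\phi]$ and using $\mathcal L_\delta\phi=-\tfrac{4}{3w_\delta^3z^4}\partial_z(w_\delta^4z^4\partial_z\phi)$, one solves for $\partial_z(w_\delta^4z^4\partial_z\phi)$ and then applies $\partial_z^{k-1}$ and tests against $w_\delta^{k-1}\,z^{-4}(\cdot)$ in the space $L^2_{\delta,k+1}$. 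The weight adjustment is exactly what Definition~\ref{Def:H^j} is designed to accommodate, and iterating this procedure $k$ times, together with the analogous identities for $\partial_s^{j-k}\partial_z^k$, bounds the remaining entries of $\mathcal E$ by $\mathscr E$ plus nonlinear contributions.

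The last and technically heaviest step is to show that each nonlinear term arising above — $(\partial_s^j N_\delta[\phi],\partial_s^{j+1}\phi)_\delta$, the spatial analogues produced by the elliptic bootstrap, and $\partial_s^j\mathcal J[\phi]$ — is bounded by $\mathcal E^{3/2}$. Here one uses the explicit polynomial structure \eqref{E:NBSTRUCTURE}--\eqref{E:POLYNOMIALS}: every summand in $N_\delta$ carries at least a quadratic factor in $\phi$, so the vacuum weight $w_\delta$ in the inner product absorbs the apparent singularities of the factors $\tfrac{1}{w_\delta^3 z}\partial_z(w_\delta^4\,\cdot)$, and Hardy inequalities together with the weighted Sobolev embeddings of the appendix provide the needed $L^\infty$ control. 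The \textbf{main obstacle} is the last line of \eqref{F_wb}: the integral factor $\int_0^1(1-\theta)\bigl(1+\theta\,z^{-2}\partial_z(z^3p_6(\phi))\bigr)^{-10/3}d\theta$ must remain bounded, which is guaranteed only by the a priori assumption \eqref{E:APRIORI} with $M$ chosen small enough that $\|z^{-2}\partial_z(z^3\phi)\|_{L^\infty}\ll 1$; verifying this uniformly in $s\in[0,S]$ and propagating it through the $\partial_s^j$-differentiations is the genuinely hardest bookkeeping in the argument. Once every nonlinear term is controlled by $\mathcal E^{3/2}$, integrating the combined energy-dissipation identity on $[0,s]$ produces \eqref{E:MAINBOUND}, with $C_1>0$ coming from the damping coefficient $-\tfrac12 b$, $C_2>0$ from the linear energy, and $C_3$ absorbing the nonlinear remainders.
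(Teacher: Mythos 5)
Your overall architecture (commuting with $\g_s^j$, spectral gap on $\{1\}^\perp$, control of the zero mode through the differentiated energy constraint \eqref{E:ENERGYCONDITIONJ}, elliptic bootstrap to recover the full norm, Hardy/weighted-Sobolev estimates for $N_\delta$) matches the paper's. But there is one genuine gap that prevents your argument from delivering the conclusion \eqref{E:MAINBOUND} as stated. The energy identity you write down dissipates \emph{only} $\|\g_s^{j+1}\phi\|_\delta^2$: after integration in $s$ it yields
\[
\mathscr E(s)+c\int_0^s\sum_{j}\|\g_s^{j+1}\phi\|_\delta^2\,d\sigma\lesssim\mathscr E(0)+\text{nonlinear},
\]
and no amount of summing over $j$ produces the terms $\int_0^s\|\phi\|_\delta^2\,d\sigma$ or $\int_0^s\|\g_s^{j}\g_z\phi\|_{\delta,1}^2\,d\sigma$ that are required for the coercive term $C_1\int_0^s\Energy(\sigma)\,d\sigma$ on the left of \eqref{E:MAINBOUND} (and this full dissipation is precisely what drives the exponential decay in Theorem~\ref{T:MAIN}). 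Your assertion that summing over $j$ ``delivers an energy-dissipation bound for $\mathscr E$'' does not follow from the identity you derived. The paper closes this by additionally testing the $\g_s^j$-differentiated equation against $\g_s^j\phi$ (not only against $\g_s^{j+1}\phi$), i.e.\ by adding the cross term $c_0(\g_s^j\phi_s,\g_s^j\phi)_\delta$ to the energy functional, which contributes $\tfrac{c_0}{2}(\mathcal L_\delta\g_s^j\phi+3\delta\g_s^j\phi,\g_s^j\phi)_\delta$ to the dissipation; combined with the spectral gap and the zero-mode control this makes the dissipation itself coercive, see \eqref{E1}, Lemma~\ref{L:COERCIVITY} and in particular the bound \eqref{E:COERCIVITY2}. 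Without this multiplier your scheme proves boundedness of the energy but not the integrated coercivity.

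A secondary, more technical point: you claim every nonlinear pairing $(\g_s^jN_\delta[\phi],\g_s^{j+1}\phi)_\delta$ is bounded by $\Energy^{3/2}$ pointwise in $s$. For the terms of $N_\delta$ carrying two $z$-derivatives (the third through fifth terms of \eqref{E:NBSTRUCTURE}) this is not possible without losing a derivative; one must integrate by parts in $z$ and then in $s$, which produces the boundary contributions responsible for the $C_2\Energy(0)$ term and the $\Energy^{1/2}(\mathscr E(0)+\mathscr E(s))$-type bounds in \eqref{E:R2FINAL}. This is why the right-hand side of \eqref{E:MAINBOUND} contains $C_2\Energy(0)$ and not merely the cubic time integral.
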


Before we prove Theorem \ref{thm1} we collect the following key estimates.

\subsection{Nonlinear energy estimates}\label{sec:EE}

\begin{proposition}[Energy equivalence] \label{P:EQUIVALENCE}
Assume the same as in Theorem \ref{thm1}. 
Then the total energy ${\Energy (s)}$ is bounded by the instant energy $\mathscr E(s).$ 
\be\label{elliptic}
\Energy(s) \lesssim \mathscr E(s), \ \ s\ge0.
\ee
\end{proposition}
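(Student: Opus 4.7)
The goal is to show that every summand appearing in the high-order energy $\Energy(s)$ can be bounded, up to a nonlinear remainder of size $M^{1/2}\Energy$, by the auxiliary energy $\mathscr E(s)$, which controls only pure $\partial_s$-derivatives of $\phi$ together with $\|\partial_s^j\partial_z\phi\|_{\delta,1}$. The essential mechanism is to use the degenerate wave equation~\eqref{E:PHIEQUATION} in the form
\begin{equation*}
\mathcal L_\delta \phi \;=\; -\phi_{ss} + \tfrac12 b \phi_s - 3\delta\phi + N_\delta[\phi]
\end{equation*}
as an elliptic identity at each fixed time, trading purely spatial derivatives of $\phi$ for time derivatives plus a harmless nonlinear tail. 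Because $\mathcal L_\delta$ has the divergence form $\mathcal L_\delta = -\tfrac{4}{3w_\delta^3 z^4}\partial_z(w_\delta^4 z^4\partial_z\cdot)$, the weights appearing in $\|\cdot\|_{\delta,k}$ are precisely those that allow an $L^2$-based elliptic estimate.

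My plan is an outer induction on the number of spatial derivatives $k$. For the base case $k=0$, and for the single-$\partial_z$ terms $\|\partial_s^j\partial_z\phi\|_{\delta,1}$ and $\|\partial_s^j\phi\|_\delta$, all the relevant summands of $\Energy$ already appear in $\mathscr E$ by definition. For the inductive step, assuming control of all summands with at most $k$ spatial derivatives, I apply $\partial_s^{j-k-1}\partial_z^{k-1}$ (for appropriate index ranges) to the identity above. Testing the resulting equation against $w_\delta^{4+k}z^4\,\partial_s^{j-k-1}\partial_z^{k+1}\phi$ and integrating by parts in $z$, one obtains the coercive bound
\begin{equation*}
\|\partial_s^{j-k-1}\partial_z^{k+1}\phi\|_{\delta,k+1}^2 \;\lesssim\; \|\partial_s^{j-k+1}\partial_z^{k-1}\phi\|_{\delta,k-1}^2 + \|\partial_s^{j-k-1}\partial_z^{k-1}\phi\|_{\delta,k-1}^2 + \|\partial_s^{j-k-1}\partial_z^{k-1}N_\delta[\phi]\|_{\delta,k-1}^2 + \text{(lower order)}.
\end{equation*}
The first two terms on the right are controlled by $\mathscr E$ via the inductive hypothesis; the mixed spacetime term $\|\partial_s^{j-k}\partial_z^{k}\phi\|_{\delta,k}$ is handled analogously by taking one fewer spatial derivative and one more $\partial_s$.

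For the nonlinear contribution, I would exploit the explicit structure~\eqref{E:NBSTRUCTURE} together with the vanishing properties~\eqref{E:POLYNOMIALS} of the polynomials $p_i$: each term in $N_\delta[\phi]$ is at least quadratic in $\phi$ (or contains a factor of $\phi$ multiplying a spatial derivative of the same order as $\mathcal L_\delta\phi$). Distributing $\partial_s^{j-k-1}\partial_z^{k-1}$ by Leibniz and using the embeddings from Definition~\ref{Def:H^j} together with the weighted Hardy inequalities recorded in the Appendix, each term is estimated by $M^{1/2}\,\Energy(s)$; under the smallness hypothesis~\eqref{E:APRIORI} these contributions are absorbed into the left-hand side.

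The main technical obstacle is bookkeeping of the singular weights when $\partial_z^{k-1}$ is commuted past $\mathcal L_\delta$. Each spatial derivative lands either on $\phi$ or on the coefficient $w_\delta^4 z^4$, producing factors such as $w_\delta^{-1}\partial_z w_\delta$ or $z^{-1}$; the former is singular at $z=1$ due to the physical vacuum condition of Lemma~\ref{lem:w}(ii), and the latter is singular at the center $z=0$. Verifying that the weight $w_\delta^{3+k}z^4$ built into $\|\cdot\|_{\delta,k+1}$ exactly balances these singularities, via the weighted Hardy inequalities $\|\varphi/z\|_{\delta,k}\lesssim\|\partial_z\varphi\|_{\delta,k+1}$ near $z=0$ and $\|\varphi/(1-z)\|_{\delta,k}\lesssim \|\partial_z\varphi\|_{\delta,k+1}$ near $z=1$, is the crux of the argument and must be done carefully at each induction step.
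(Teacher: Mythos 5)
Your proposal is correct and follows essentially the same route as the paper: both use equation~\eqref{E:PHIEQUATION} as a fixed-time elliptic identity to trade spatial for temporal derivatives, proceed by induction on the number of $\partial_z$-derivatives, and absorb the nonlinear remainder via the Hardy/embedding inequalities together with the a priori smallness~\eqref{E:APRIORI}. The only cosmetic difference is that the paper extracts the top-order spatial norm by squaring the (suitably $z$-multiplied and differentiated) equation and integrating against the weight, rather than testing against the weighted top derivative, but these are equivalent weighted elliptic estimates.
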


%%%%%%%%%

\begin{proof}
The equivalence relation~\eqref{elliptic} is a non-trivial consequence of the fact that $\phi$ solves the equation~\eqref{E:PHIEQUATION}:
\be\label{elliptic1}
-\mathcal L_\delta \phi =  \phi_{ss} -\frac12 b\phi_s +3 \delta \phi -  N_\delta[\phi].
\ee
%As a consequence one can prove the estimate
By exploiting the elliptic structure of $\mathcal L_\delta$, we will establish the following estimates: 
\be\label{E:ELLIPTIC2}
\Energy(s) \lesssim \mathscr E(s) + \Energy(s)^{2}, \ \ s\ge0.
\ee
Then the claim follows from~\eqref{E:ELLIPTIC2} for sufficiently small $\Energy(s).$ The proof of~\eqref{E:ELLIPTIC2} is analogous to the proof of Proposition 4.1 from~\cite{Jang2014} and it proceeds by induction.  Here we briefly discuss how to obtain \eqref{E:ELLIPTIC2}. We start with $\|\g_z^2\phi\|^2_{\delta,2}$. In order to see that $\|\g_z^2\phi\|^2_{\delta,2}$ is bounded by the right-hand side of \eqref{E:ELLIPTIC2}, square \eqref{elliptic1} and take the inner product with 1: 
\be\label{elliptic2}
(1,(\mathcal L_\delta \phi)^2)_\delta \lesssim \|\phi_{ss}\|_\delta^2 + \|\phi_s\|_\delta^2 +\| \phi \|_\delta^2+ \| N_\delta[\phi]\|_\delta^2. 
\ee
By using the $L^\infty$ estimates and Hardy inequalities, one can deduce that 
\[
 \| N_\delta[\phi]\|_\delta^2 \lesssim  \mathcal{E} \left(\|\phi\|_\delta^2 + \|\g_z\phi\|^2_{\delta,1}+ \|\g_z^2\phi\|^2_{\delta,2}\right). 
\] 
The left-hand-side of \eqref{elliptic2} can be rewritten as 
\begin{align*}
(1,(\mathcal L_\delta \phi)^2)_\delta&=\frac{16}{9}\int_0^1 (w_\delta \phi_{zz} + \frac{4w_\delta\phi_z}{z}+ 4w_\delta' \phi_z )^2 w_\delta^3 z^4 dz\\
&= \frac{16}{9}\Big[ \int_0^1(\phi_{zz})^2 w_\delta^{5}z^4 dz + 4\int_0^1 (\phi_z)^2 w_\delta^5 z^2dz  \\
&\quad\quad\quad - 4\int_0^1 w_\delta'' (\phi_z)^2 w_\delta^4z^4 dz -4 \int_0^1 (\phi_z)^2 w_\delta' w_\delta^4z^3 dz \Big]
\end{align*}
where we have integrated by parts. Since $w_\delta'=O(z)$, the last two terms are bounded by $\|\g_z\phi\|_{\delta,1}^2$. Hence we deduce that 
\be\label{elliptic3}
\|\g_z^2\phi\|^2_{\delta,2}  \lesssim  \mathscr E(s) + \Energy(s)^{2}
\ee
The bound for $\|\g_s^k\g_z^2\phi\|^2_{\delta,2}$, $1\leq k\leq 6$ can be obtained from the time differentiated equation of \eqref{elliptic1} in the same fashion. 

To illustrate the induction procedure, we now discuss the case of $\|\g_z^3\phi\|^2_{\delta,3}$. To this end, we multiply \eqref{elliptic1} by $z$ and differentiate with respect to $z$: 
\begin{align*}
\frac43(w_\delta z\phi_{zzz} + 5w_\delta'z\phi_{zz}+5w_\delta\phi_{zz}) = -\frac43(4w_\delta''z\phi_z + 8w_\delta' \phi_z) 
+z\phi_{ssz}+\phi_{ss} \\-\frac12 bz\phi_{sz}-\frac12b\phi_s +3 \delta z\phi_z+3\delta\phi - z \g_zN_\delta[\phi] -N_\delta[\phi].
\end{align*}
Dividing this equation by $z$, squaring it, and then taking $(,)_{\delta,1}$ inner product, we obtain the inequality similar to \eqref{elliptic2}. Notice that the right-hand side consists of quartic terms and quadratic terms involving the instant energy $\mathscr E$ and $\mathcal{E}$ containing two spatial derivatives only, which have been estimated at the previous step. The quartic terms can be estimated by the embedding inequalities and Hardy inequalities. The left-hand side gives rise to $\|\g_z^3\phi\|_{\delta,3}^2$ and remaining terms. The remainder is bounded by $\|\g_z^2\phi\|_{\delta,2}^2$. Therefore, by \eqref{elliptic3}, we have the desired bound for $\|\g_z^3\phi\|_{\delta,3}^2$. The time derivatives of $\g_z^3\phi$ can be estimated in the same way. The estimation of higher order derivatives follows inductively. 
\end{proof}

We next state a technical coercivity lemma, that will be needed in the derivation of our high-order energy identity.
\begin{lemma}\label{L:COERCIVITY}
Let $(\phi,\phi_s)$ be a solution to the nonlinear degenerate wave equation~\eqref{E:PHIEQUATION}. Then there exists $c_0=c_0(|b|),|\delta|$ sufficiently small and constants $C_1,C_2>0$ so that 
the following coercivity bounds hold:
\begin{align}
&\sum_{j=0}^7\left(\|\g_s^j\phi_s\|_\delta^2  +( \mathcal L_\delta\g_s^j\phi +3\delta \g_s^j\phi,\g_s^j\phi )_\delta+c_0 (\g_s^j\phi_s,\g_s^j\phi)_\delta - \frac{c_0b}{4}  \| \g_s^j\phi\|_\delta^2 \right) \notag \\
& \ \ \ \ \ge C_1\mathscr E(s) -  C_2\sum_{j=0}^7\left(\pa_s^j\mathcal J[\phi]\right)^2,\label{E:COERCIVITY1}
\end{align}
\begin{align}
&\sum_{j=0}^7\left( -\frac12 (b +c_0)\|\g_s^j\phi_s\|_\delta^2 + \frac{c_0}{2} ( \mathcal L_\delta\g_s^j\phi +3\delta \g_s^j\phi,\g_s^j\phi  )_\delta\right) 
 \ge C_1 |b|\mathscr E(s) -  C_2|b|\sum_{j=0}^7\left(\pa_s^j\mathcal J[\phi]\right)^2,\label{E:COERCIVITY2}
\end{align}
where $\mathscr E$ and $\mathcal J$ are defined by~\eqref{E:ENERGY2} and~\eqref{E:JDEFINITION} respectively.
Here $b=-\sqrt{2|\delta|}$ and $C_1$ depends on $|\delta|$. 
\end{lemma}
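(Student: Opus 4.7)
The plan is to exploit the spectral gap estimate~\eqref{E:SPECTRALGAP2} for the operator $\mathcal L_\delta + 3\delta$ restricted to the orthogonal complement of $\ker\mathcal L_\delta = \mathrm{span}\{1\}$, together with the linearized energy constraint~\eqref{E:ENERGYCONDITION}. Differentiating the latter $j$ times in $s$ along solutions yields the family of identities
$$
\tfrac{3}{2}b\,(\g_s^j\phi,1)_\delta - (\g_s^{j+1}\phi,1)_\delta = \g_s^j \mathcal J[\phi], \qquad j=0,1,\dots,7,
$$
which link the $L^2_\delta$-projections of successive $s$-derivatives of $\phi$ onto $\ker\mathcal L_\delta$, modulo a quadratic remainder.

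For each $j$, I would decompose $\phi_j:=\g_s^j\phi = a_j + \tilde\phi_j$, where $a_j:=(\phi_j,1)_\delta/\|1\|_\delta^2$ is a constant and $(\tilde\phi_j,1)_\delta=0$. Since $\mathcal L_\delta a_j=0$ and $\g_z a_j=0$, every summand on the LHS of~\eqref{E:COERCIVITY1}--\eqref{E:COERCIVITY2} splits cleanly into a transversal part involving $(\tilde\phi_j,\tilde\phi_{j+1})$ and a scalar longitudinal part involving $(a_j,a_{j+1})$. On the transversal side, the spectral gap yields $((\mathcal L_\delta+3\delta)\tilde\phi_j,\tilde\phi_j)_\delta\ge\mu_2(\|\g_z\tilde\phi_j\|_{\delta,1}^2+\|\tilde\phi_j\|_\delta^2)$; the cross term $c_0(\tilde\phi_{j+1},\tilde\phi_j)_\delta$ in~\eqref{E:COERCIVITY1}, and the combination $-\tfrac12(b+c_0)\|\tilde\phi_{j+1}\|_\delta^2+\tfrac{c_0}{2}((\mathcal L_\delta+3\delta)\tilde\phi_j,\tilde\phi_j)_\delta$ in~\eqref{E:COERCIVITY2}, are controlled by Young's inequality for $c_0$ sufficiently small, producing the desired transversal lower bound of order $\mathscr E$ (respectively $|b|\mathscr E$).

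The core difficulty is the longitudinal scalar analysis, where the spectral gap is useless and the term $3\delta a_j^2\|1\|_\delta^2$ is strictly negative. Here I invoke the constraint rewritten as $a_j = \tfrac{2}{3b}a_{j+1} + r_j$ with $r_j:=\tfrac{2}{3b\|1\|_\delta^2}\g_s^j\mathcal J[\phi]$, and substitute into the scalar quadratic form. Using $b^2=2|\delta|$, a direct computation gives
$$
1 + 3\delta\,\bigl(\tfrac{2}{3b}\bigr)^2 + c_0\cdot\tfrac{2}{3b} + \tfrac{c_0|b|}{4}\bigl(\tfrac{2}{3b}\bigr)^2 \;=\; \tfrac{1}{3} - \tfrac{5c_0}{9|b|},
$$
so the coefficient of $a_{j+1}^2$ in~\eqref{E:COERCIVITY1} is strictly positive whenever $c_0<\tfrac{3}{5}|b|$; the analogous coefficient in~\eqref{E:COERCIVITY2} reduces to a positive multiple of $|b|$ under the same restriction. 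The remainders involving $r_j$ are absorbed by Young's inequality into the $C_2(\g_s^j\mathcal J)^2$ terms on the RHS. Finally, the $\|\phi\|_\delta^2$ summand appearing in $\mathscr E$ but not directly controlled at index $j=0$ on the transversal side is recovered by applying the constraint once more to bound $a_0^2 \lesssim |\delta|^{-1} a_1^2 + |\delta|^{-1}\mathcal J[\phi]^2$, which is then absorbed into the already established positive lower bound in $a_1^2$, at the cost of letting $C_1$ depend on $|\delta|$, as allowed by the statement.

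The main obstacle is the longitudinal part: the constraint forces $|a_j|\sim|b|^{-1}|a_{j+1}|$, so $a_j$ is a priori much larger than $a_{j+1}$ as $|\delta|\to 0$, and upon substitution the cross term $c_0 a_j a_{j+1}$ acquires the definite sign $-\tfrac{2c_0}{3|b|}a_{j+1}^2$ amplified by the factor $|b|^{-1}$. The crucial cancellation $1 - 3|\delta|\cdot\tfrac{4}{9b^2}=\tfrac{1}{3}$ is tight, and retaining a positive remainder after the inclusion of this stretched cross term is possible only under the scaling $c_0\sim|b|$; this is precisely why $c_0$ must depend on $|b|$ as asserted.
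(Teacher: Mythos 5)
Your proposal is correct and follows essentially the same route as the paper: decompose each $\partial_s^j\phi$ into its mean and mean-zero parts, apply the spectral gap~\eqref{E:SPECTRALGAP2} to the mean-zero part, and use the differentiated constraint~\eqref{E:ENERGYCONDITIONJ} with $3\delta=-\tfrac32 b^2$ to turn the negative longitudinal contribution into a positive one for $c_0\sim|b|$. The only (harmless) difference is the direction of elimination in the scalar quadratic form — you solve for $a_j$ in terms of $a_{j+1}$ and then recover $a_0^2$ at the end, whereas the paper eliminates $(g^{j+1},1)_\delta$ in favor of $(g^j,1)_\delta$ while reserving a fraction of $(g^{j+1},1)_\delta^2$ — and your arithmetic ($\tfrac13-\tfrac{5c_0}{9|b|}$, hence $c_0<\tfrac35|b|$) checks out.
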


%%%%%%

\begin{proof}
Letting $g^j=\pa_s^j\phi,$ $j=0,1,\dots,7,$ we now want to prove the coercivity of the energy functional
\be\label{E:ENERGYPOSDEF1}
\mathcal F(g^j) = \|g^j_s\|_\delta^2 +( \mathcal L_\delta g^j + 3\delta g^j,g^j )_\delta+c_0 (g^j_s,g^j)_\delta- \frac{c_0b}{4}  \| g^j\|_\delta^2,
\ee
where $(\phi,\phi_s)$ is a local-in-time solution to~\eqref{E:PHIEQUATION} and $c_0$ is to be determined. 

We first decompose
\[
\|g^j\|_\delta^2  = \frac1{\|1\|_\delta^2}(g^j,1)_\delta^2 + \|\tilde g^j\|_\delta^2,  \ \ \tilde g^j = g^j - \frac1{\|1\|_\delta}(g^j,1)_\delta.
%\ \ \|g^j_s\|_\delta^2  = \frac1{\|1\|_\delta^2}(g^j_s,1)_\delta^2 + \|\tilde g^j_s\|_\delta^2
\] 
Since
\[
\mathcal L_\delta (1) = 0,
\]
and $\mathcal L_\delta$ is a self-adjoint operator with respect to $(\cdot,\cdot)_\delta,$
it is easy to check that 
\[
( \mathcal L_\delta g^j ,g^j )_\delta = ( \mathcal L_\delta \tilde g^j ,\tilde g^j )_\delta. 
\]
Therefore from~\eqref{E:ENERGYPOSDEF1} it follows that
\begin{align*}
\mathcal F(g^j)  & =  \|\tilde g^{j+1}\|_\delta^2 + \frac1{\|1\|_\delta^2}( g^{j+1},1)_\delta^2 + ( \mathcal L_\delta \tilde g^j + 3\delta \tilde g^j,\tilde g^j )_\delta
+ \frac{3\delta}{\|1\|_\delta^2}(g^j,1)_\delta^2  \\
& \ \ \ \ + c_0 (\tilde g^{j+1},\tilde g^j)_\delta  + \frac{c_0}{\|1\|_\delta^2}(g^{j+1},1)_\delta(g^j,1)_\delta -\frac{c_0b}{4}  \| g^j\|_\delta^2. 
\end{align*}

On the other hand, applying $\g_s^j$ to~\eqref{E:ENERGYCONDITION} we obtain the identity
\be\label{E:ENERGYCONDITIONJ}
\frac{3}{2}b(g^j,1)_\delta - (g^{j+1},1)_\delta =  \g_s^j \mathcal J[\phi], \ \ j = 0,\dots,7.
\ee
from which we infer that 
\begin{align*}
& \frac8{9\|1\|_\delta^2}( g^{j+1},1)_\delta^2 + \frac{3\delta}{\|1\|_\delta^2}(g^j,1)_\delta^2 \\
& = \frac1{\|1\|_\delta^2} \left(\frac89 \left(\frac32 b (g^j,1)_\delta- \pa_s^j\mathcal J[\phi]\right)^2+ 3\delta(g^j,1)_\delta^2\right) \\
& = \frac{b^2}{2\|1\|_\delta^2} (g^j,1)_\delta^2 - \frac{8}{3\|1\|_\delta^2}b(g^j,1)_\delta \pa_s^j\mathcal J[\phi] + \frac8{9\|1\|_\delta^2}\left(\pa_s^j\mathcal J[\phi]\right)^2,
%& \ge \frac{|\delta|}{\|1\|_\delta^2} (g^j,1)_\delta^2-  \frac2{\|1\|_\delta^2}(1-\frac\delta{b^2})(g^j,1)_\delta\mathcal J^j[\phi] + \frac1{b^2\|1\|_\delta^2}\left(\mathcal J^j[\phi]\right)^2,
\end{align*}
where we have used $\delta=-\frac12 b^2.$
By the spectral gap property~\eqref{E:SPECTRALGAP2},  
the previous two equalities, and the fact that $ g^j =\tilde g^j + \frac1{\|1\|_\delta}(g^j,1)_\delta$, we conclude that for sufficiently small $|\delta|$ and for any $k_1, k_2, k_3>0$
\begin{align}
\mathcal F(g^j) \ge & \|\tilde g^{j+1}\|_\delta^2 +\frac1{9\|1\|_\delta^2}( g^{j+1},1)_\delta^2+ \mu_2\left(\|\pa_z g^j\|_{\delta,1}^2 + \|\tilde g^j\|_\delta^2\right) \notag\\
&+ \frac{c_0|b|}{4}  \|\tilde g^j\|_\delta^2 +(  \frac{c_0|b|}{4}  +\frac{b^2}{2}) \frac{1}{\|1\|_\delta^2} (g^j,1)_\delta^2\notag \\
&  -\frac{    2b^2k_1}{3 \|1\|_\delta^2} (g^j,1)_\delta^2 - \frac8{3 k_1\|1\|_\delta^2}\left(\pa_s^j\mathcal J[\phi]\right)^2+ \frac8{9\|1\|_\delta^2}\left(\pa_s^j\mathcal J[\phi]\right)^2 \notag \\
&- k_2\|\tilde g^{j+1}\|_\delta^2- \frac{c_0^2}{4k_2} \|\tilde g^{j}\|_\delta^2 - \frac{k_3}{\|1\|_\delta^2}( g^{j+1},1)_\delta^2 -\frac{c_0^2}{4k_3\|1\|_\delta^2}( g^{j},1)_\delta^2 . 
\end{align}
%Since $\|g^j\|_\delta^2 = \|\tilde g\|_\delta^2 + \frac{1}{\|1\|_\delta^2} (g^j,1)_\delta^2$, 
We can choose $c_0, k_1,k_2,k_3>0$ such that %$c_0+k\le \delta$ to be small, but fixed multiples of $\delta$ so that
the following inequality holds 
\begin{align}
\mathcal F(g^j) \ge c_1\left(\|g^{j+1}\|_\delta^2+\|\pa_z g^j\|_{\delta,1}^2 + \| \tilde g^j\|_\delta^2 \right) + c_2  \frac{1}{\|1\|_\delta^2} (g^j,1)_\delta^2  - \frac C{\|1\|_\delta^2}\left(\pa_s^j\mathcal J[\phi]\right)^2
\end{align}
where $c_1=c_1(\mu_2)$, $c_2=c_2(|\delta|)$. For instance, $k_1=k_2=1/2$, $k_3=1/18$, $c_0=|b|/6$ will work. 
Summing the above bound over all $j\in\{0,\dots,7\}$,  %with $c_0$ and $|\delta|$ sufficiently small
we obtain the estimate 
\begin{align}\label{E:COERCIVE1}
\sum_{j=0}^7\mathcal F(g^j) \ge C_1\sum_{j=0}^7\left(\|\pa_s^{j+1}\phi\|_\delta^2+\|\pa_s^{j}\phi\|_\delta^2+\|\pa_s^{j}\pa_z\phi\|_{\delta,1}^2\right)
- C\sum_{j=0}^7\left(\pa_s^j\mathcal J[\phi]\right)^2
\end{align}
where $C_1=C_1(|\delta|)$. This concludes the proof of estimate~\eqref{E:COERCIVITY1}. 
Bound~\eqref{E:COERCIVITY2} follows in an analogous way.
\end{proof}

%%%%%%%

\subsection{Proof of Theorem~\ref{thm1}}

Evaluating the inner product of~\eqref{E:PHIEQUATION} and $\phi_s$ we obtain the fundamental energy dissipation law:
\be\label{E0}
\frac{1}{2}\frac d{ds}\left(\|\phi_s\|_\delta^2 +( \mathcal L_\delta\phi +3\delta \phi,\phi )_\delta\right) - \frac12 b\|\phi_s\|_\delta^2 = \left(N_\delta[\phi], \phi_s\right)_\delta. 
\ee
%where we note that due to the spectral gap~\eqref{E:SPECTRALGAP}
We can also evaluate the inner product of ~\eqref{E:PHIEQUATION} and $\phi$ to obtain
\[
(\phi_{ss},\phi )_\delta - \frac{b}4 \frac d{ds}  \|\phi\|_\delta^2   + ( \mathcal L_\delta\phi + 3\delta \phi,\phi  )_\delta = \left(N_\delta[\phi], \phi\right)_\delta. 
\]
Now we rewrite the first term as 
\[
(\phi_{ss},\phi )_\delta = \frac d{ds} (\phi_s,\phi)_\delta - \|\phi_s\|_\delta ^2. 
\]
By adding a small constant multiple of this identity to \eqref{E0} and integrating with respect to $s$ we obtain the following key energy identity: 
\begin{align}
& \frac{1}{2}\left(\|\phi_s\|_\delta^2 +( \mathcal L_\delta\phi + 3\delta \phi,\phi )_\delta+c_0 (\phi_s,\phi)_\delta- \frac{c_0b}{4}  \| \phi\|_\delta^2 \right)\Big|^s_0 \notag \\
& +\int_0^s \left( -\frac12 (b +c_0)\|\phi_s\|_\delta^2 + \frac{c_0}{2} ( \mathcal L_\delta\phi +3\delta \phi,\phi  )_\delta\right)\,d\sigma \notag \\
& \ \ 
= \int_0^s \, \left[\left(N_\delta[\phi], \phi_s\right)_\delta + \frac{c_0}{2} \left(N_\delta[\phi], \phi\right)_\delta \right]\, d\sigma. \label{E1}
\end{align}
Using the same calculation for the time differentiated problem we obtain 
\begin{align}
&\left(\|\g_s^j\phi_s\|_\delta^2 +( \mathcal L_\delta\g_s^j\phi +3\delta \g_s^j\phi,\g_s^j\phi )_\delta+c_0 (\g_s^j\phi_s,\g_s^j\phi)_\delta - \frac{c_0b}{4}  \| \g_s^j\phi\|_\delta^2 \right)\Big|^s_0 \notag \\
&+\int_0^s\left( -\frac12 (b +c_0)\|\g_s^j\phi_s\|_\delta^2 + \frac{c_0}{2} ( \mathcal L_\delta\g_s^j\phi +\delta \g_s^j\phi,\g_s^j\phi  )_\delta\right)\,d\sigma\notag \\
&=\int_0^s \, \Big[\left(\g_s^jN_\delta[\phi], \g_s^j\phi_s\right)_\delta + \frac{c_0}{2} \left(\g_s^jN_\delta[\phi], \g_s^j\phi\right)_\delta \Big] d\sigma,  \label{Ej}
\end{align}
for $0\leq j\leq 7$. 

Summing over $j\in\{0,\dots,7\}$ and using Lemma~\ref{L:COERCIVITY}
 we conclude that there exists some positive numbers $\kappa_0',\kappa_1'$ such that 
\begin{align}
\mathscr E(s) +  \kappa_0'\int_0^s\mathscr E(\tau)\,d\sigma
 \le & \, \kappa_1' \mathscr E(0) 
  + C\sum_{j=0}^7 \int_0^s\Big[\left(\g_s^jN_\delta[\phi], \g_s^j\phi_s\right)_\delta + \frac{c_0}{2} \left(\g_s^jN_\delta[\phi], \g_s^j\phi\right)_\delta   + C \left(\pa_s^j\mathcal J[\phi]\right)^2 \Big] \,d\sigma \notag \\
  & + C \left(\pa_s^j\mathcal J[\phi]\right)^2 . \label{E:IDENTITY1}
\end{align}

\noindent
{\em Estimates for the right-hand side of~\eqref{E:IDENTITY1}.}
We first recall the expression~\eqref{E:NBSTRUCTURE}:
\begin{align}
N_\delta[\phi] = &p_0(\phi) + \frac{\pa_zw_\delta}{z} p_1(\phi) + \frac{1}{w_\delta^3z}p_2(\phi)\pa_z\left(\frac{w_\delta^4}{z^2}\pa_z\left(z^3\phi\right)\right) \notag \\
& + \frac{1}{w_\delta^3z}p_3(\phi)\pa_z\left(\frac{w_\delta^4}{z^2}\pa_z\left(z^3p_4(\phi)\right)\right) \notag \\ 
& + \frac{1}{w_\delta^3z}p_5(\phi)\pa_z\left[w_\delta^4\left(\frac1{z^2}\pa_z\left(z^3p_6(\phi)\right)\right)^2 \int_0^1\left(1-\theta +\frac\theta{z^2}\pa_z\left(z^3p_6(\phi)\right)\right)^{\frac{10}3}\,d\theta\right] , \label{E:NBSTRUCTURE1}
\end{align}
where $p_i,$ $i=0,\dots,6$ are rational polynomials such that $p_i:[-c,c]\to\mathbb R$ is a $C^\infty$ function for some $c>0$ and $i=0,\dots, 7.$ 
Furthermore, polynomials $p_0,p_1,p_4$ are at least quadratic in $\phi$, and polynomials $p_2,p_6$ are at least linear in $\phi$ when expanded about zero.
In other words
\[
p_i(0)=0, \ \ i=0,1,2, 4,6,  \ \ p_i'(0)=0, \ \ i= 0,1,4.
\]

Fix $j\in\{0,\dots,7\}.$ From~\eqref{E:NBSTRUCTURE1} it is clear that 
\begin{align}\label{E:EST1}
 & \Big|\int_0^s\left(\g_s^j\left(p_0(\phi)+\frac{\pa_zw_\delta}{z} p_1(\phi)\right) , \, \g_s^{j+1}\phi\right)_\delta\,d\sigma\Big| \lesssim \int_0^s\left[M^{1/2}\Energy(\sigma) + \Energy^{3/2}(\sigma) \right] \, d\sigma,
\end{align}
where we used the $L^\infty$-bounds of Lemma~\ref{L:LINFINITYBOUNDS}, Cuchy-Schwarz inequality, Hardy inequality, 
the quadratic structure of $p_0$ and $p_1$, and the a priori assumption \eqref{E:APRIORI}. 
The third, fourth, and the fifth term on the right-hand side of~\eqref{E:NBSTRUCTURE1} contain two $\pa_z$-derivatives and are therefore rather subtle to estimate. 
We focus first on the third term and rewrite
\be\label{E:EST1'}
\pa_s^j\left(\frac{1}{w_\delta^3z}p_2(\phi)\pa_z\left(\frac{w_\delta^4}{z^2}\pa_z\left(z^3\phi\right)\right)\right) = 
\frac{1}{w_\delta^3z}p_2(\phi)\pa_z\left(\frac{w_\delta^4}{z^2}\pa_z\left(z^3\pa_s^j\phi\right)\right)  + \mathcal R^j,
\ee
where $\mathcal R^j$ is a lower order remainder arising due to the application of the Leibniz rule and it satisfies the energy bound
\begin{align}\label{E:RFINAL}
\Big|\int_0^s (\mathcal R^j,\pa_{s}^{j+1}\phi )_\delta \,d\sigma\Big| \lesssim \int_0^s\left[M^{1/2}\Energy(\sigma) + \Energy^{3/2}(\sigma) \right] \, d\sigma . 
\end{align}
Note that 
\begin{align}
& \left(\frac{1}{w_\delta^3z}p_2(\phi)\pa_z\left(\frac{w_\delta^4}{z^2}\pa_z\left(z^3\pa_s^j\phi\right)\right), \pa_s^{j+1}\phi\right)_\delta \notag \\
& =\int_0^1p_2(\phi)\pa_z\left(\frac{w_\delta^4}{z^2}\pa_z\left(z^3\pa_s^j(\phi)\right)\right) \pa_s^{j+1}\phi z^3\,dz \notag \\
&= - \frac12\pa_s\int_0^1 \frac{w_\delta^4}{z^2}p_2(\phi)|\pa_z(z^3\pa_s^j\phi)|^2\,dz + \frac12 \int_0^1 \frac{w_\delta^4}{z^2}\pa_s(p_2(\phi)) |\pa_z(z^3\pa_s^j\phi)|^2\,dz \notag \\
& \ \ \ \ - \int_0^1\pa_z\left(p_2(\phi)\right)w_\delta^4\pa_z\left(z^3\pa_s^j\phi\right) \pa_s^{j+1}\phi z\,dz \label{E:R2},
 \end{align}
where we integrated by parts.
Note that 
\begin{align}\label{E:TOPORDERONE}
\int_0^1 \frac{w_\delta^4}{z^2}|\pa_z(z^3\pa_s^j\phi)|^2 & \lesssim \int_0^1 w_\delta^4 z^2 (\pa_s^j\phi)^2\,dz + \int_0^1 w_\delta^4 z^4 (\pa_z\pa_s^j\phi)^2\,dz  \lesssim \mathscr E,
\end{align}
where we used the Hardy inequality in the last estimate to conclude that $\int_0^1 w_\delta^4 z^2 (\pa_s^j\phi)^2\,dz \lesssim \int_0^1 w_\delta^4 z^4 (\pa_s^j\phi)^2\,dz+ \int_0^1 w_\delta^4 z^4 (\pa_z\pa_s^j\phi)^2\,dz \le \mathscr E.$
Therefore from~\eqref{E:R2} we conclude that 
\begin{align}
&\Big| \int_0^s \left(\frac{1}{w_\delta^3z}p_2(\phi)\pa_z\left(\frac{w_\delta^4}{z^2}\pa_z\left(z^3\pa_s^j\phi\right)\right), \pa_s^{j+1}\phi\right)_\delta d\sigma\Big| \notag \\
& \lesssim \|p_2(\phi)\|_{L^\infty} \mathscr E(s) + \sup_{s\in[0,S]}\|p_2(\phi)\|_{L^\infty} \mathscr E(0) + \sup_{s\in[0,S]}\|\pa_sp_2(\phi)\|_{\infty} \int_0^s \mathscr E(\sigma)\,d\sigma  \notag \\
& \ \ \ \ + \sup_{s\in[0,S]}\|\pa_zp_2(\phi)\|_{\infty} \int_0^s \|\frac1{z^3}\pa_z(z^3 \pa_s^j\phi)\|_\delta\|\pa_s^{j+1}\phi\|_\delta \,d\sigma \notag \\
& \lesssim \Energy^{1/2} (\mathscr E(0)+\mathscr E(s)) + \sup_{s\in[0,S]}\Energy^{1/2}  \int_0^s \mathscr E(\sigma)\,d\sigma. \label{E:R2FINAL}
\end{align}
The estimate for the fourth term on the right-hand side of~\eqref{E:NBSTRUCTURE1} is entirely analogous to the previous bound and we conclude that
\begin{align*}
&\Big| \int_0^s\pa_s^j\left(\frac{1}{w_\delta^3z}p_3(\phi)\pa_z\left(\frac{w_\delta^4}{z^2}\pa_z\left(z^3p_4(\phi)\right)\right), \pa_s^{j+1}\phi\right)_\delta d\sigma\Big| \\
 & \ \ \ \ \lesssim \Energy^{1/2} (\mathscr E(0)+\mathscr E(s)) + \sup_{s\in[0,S]}\Energy^{1/2}  \int_0^s \mathscr E(\sigma)\,d\sigma.
\end{align*}
The last term on the right-hand side of~\eqref{E:NBSTRUCTURE1} appears slightly different in structure, but the top order estimate relies on the same integration-by-parts idea 
as in~\eqref{E:R2}--\eqref{E:TOPORDERONE}. Namely, introducing a shorthand notation $q(\phi,\phi_z)=\int_0^1(1-\theta)\left(1 +\frac\theta{z^2}\pa_z\left(z^3p_6(\phi)\right)\right)^{\frac{10}3}\,d\theta$, the term we need to estimate takes the form
\[
\Big| \int_0^s \left(\pa_s^j\left(\frac{1}{w_\delta^3z}p_5(\phi)\pa_z\left[w_\delta^4\left(\frac1{z^2}\pa_z\left(z^3p_6(\phi)\right)\right)^2 q(\phi,\phi_z)\right]\right)\, ,\, \pa_s^{j+1}\phi\right)_\delta \,d\sigma \Big|
\]
Commuting the operator $\pa_s^j$ all the way through in the above expression and recalling the definition of $(\cdot,\cdot)_\delta$,  we see that the highest order term takes the form
\[
\Big| \int_0^s \int_0^1p_5(\phi)\pa_z\left[w_\delta^4\left(\frac1{z^2}\pa_z\left(z^3p_6(\phi)\right)\right)^2 \pa_s^jq(\phi,\phi_z)\right] \pa_s^{j+1}\phi z^3\, dz d\sigma \Big|
\]
Integrating-by-parts with respect to $z$ we can rewrite the above term in the form
\begin{align}
&\Big| \int_0^s \int_0^1p_5(\phi)\pa_z\left[w_\delta^4\left(\frac1{z^2}\pa_z\left(z^3p_6(\phi)\right)\right)^2 \pa_s^jq(\phi,\phi_z)\right] \pa_s^{j+1}\phi z^3\, dz d\sigma \Big| \notag \\
&\le\Big| \int_0^s \int_0^1p_5(\phi)w_\delta^4\left(\frac1{z^2}\pa_z\left(z^3p_6(\phi)\right)\right)^2 \pa_s^jq(\phi,\phi_z) \pa_z\left(\pa_s^{j+1}\phi z^3\right)\, dz d\sigma \Big| \notag\\
& \ \ \ \ + \Big| \int_0^s \int_0^1\pa_zp_5(\phi)w_\delta^4\left(\frac1{z^2}\pa_z\left(z^3p_6(\phi)\right)\right)^2 \pa_s^jq(\phi,\phi_z) \pa_s^{j+1}\phi z^3\, dz d\sigma \Big|.\label{E:EST2}
\end{align}
The key property of the nonlinearity $q(\phi,\phi_z)$ is that at the top order 
\[
\pa_s^jq(\phi,\phi_z)=\frac{10}3\int_0^1\theta(1-\theta)\left(1 +\frac\theta{z^2}\pa_z\left(z^3p_6(\phi)\right)\right)^{\frac{7}3}\,\frac1{z^2}\pa_z\left(z^3\pa_s^jp_6(\phi)\right)d\theta + 
\text{ lower order terms}.
\]
The top order factor $\pa_s^j\phi_z$ enters linearly in the above expression and we may integrate-by-parts with respect to $s$
by the same reasoning as in~\eqref{E:EST1'}--\eqref{E:R2FINAL}. We thereby use Hardy inequalities and weighted Sobolev space embeddings to bound~\eqref{E:EST2} by 
$C\Energy(0) + C\int_0^s \Energy(\sigma)^{\frac32}\,d\sigma.$ 
All of the remainder terms are of lower order and they are estimated again by a systematic application of H\"older inequality in
conjunction with the bounds from Appendix~\ref{A:B}. Combining~\eqref{E:EST1}--\eqref{E:EST2} we conclude that 
\be\label{E:NBOUND1}
\sum_{j=0}^7 \Big| \int_0^s \left(\g_s^jN_\delta[\phi], \g_s^{j+1}\phi\right)_\delta\,d\sigma\Big|\lesssim \Energy(0) + \int_0^s \Energy(\sigma)^{\frac32}\,d\sigma.
\ee
Analogously to~\eqref{E:NBOUND1} we obtain (an easier estimate)
\be\label{E:NBOUND2}
\sum_{j=0}^7 \Big|\int_0^s \left(\g_s^jN_\delta[\phi], \g_s^j\phi\right)_\delta\,d\sigma\Big|\lesssim  \Energy(0) + \int_0^s \Energy(\sigma)^{\frac32}\,d\sigma.
\ee
Recalling the definition~\eqref{E:JDEFINITION} of $\mathcal J$ and using the Hardy inequalities and weighted Sobolev space embeddings of Appendix~\ref{A:B}, due to the
quadratic structure of $\mathcal J$ it is straightforward to check the bound
\begin{align}\label{E:JBOUND}
\int_0^s\left(\pa_s^j\mathcal J[\phi]\right)^2 \,d\sigma +  \left(\pa_s^j\mathcal J[\phi]\right)^2\lesssim  \Energy(0) + \int_0^s \Energy(\sigma)^{\frac32}\,d\sigma.
\end{align}
Therefore, from~\eqref{E:IDENTITY1} and bounds~\eqref{E:NBOUND1}--~\eqref{E:JBOUND}
%,~\eqref{E:RFINAL},~\eqref{E:R1FINAL}, and~\eqref{E:R2FINAL} and summing over $j=0,\dots, 7,$ 
it follows that 
\begin{align*}
& \mathscr E(s) +  \kappa_0'\int_0^s\mathscr E(\sigma)\,d\sigma  \le \kappa_1' \mathscr E(0)   + C\left(M^{1/2}+ \sup_{s\ge0}\Energy^{1/2} \right) \int_0^s \mathscr E(\sigma)\,d\sigma. 
 \end{align*}
By the a priori assumption~\eqref{E:APRIORI}, we can absorb the first term on the right-hand side into the left-hand side for sufficiently small $M$ to finally obtain
\begin{align}\label{E:ENERGYCRUCIAL0}
 \mathscr E(s) +  \kappa_0''\int_0^s\mathscr E(\sigma)\,d\sigma  \le \kappa_1'' \mathscr E(0). 
\end{align}
for some constants $\kappa_0'',\kappa_1''>0.$
Repeating the same argument we deduce that for any $0\le s' \le s <\infty$ the following energy bound holds:
\begin{align}\label{E:ENERGYCRUCIAL}
 \mathscr E(s) +  \kappa_0''\int_{s'}^s\mathscr E(\sigma)\,d\sigma \le\kappa_1'' \mathscr E(s').  
\end{align}
Theorem~\ref{thm1} now follows from Proposition~\ref{P:EQUIVALENCE}.

%\subsection{Proofs of main theorems} \label{SS:PROOFS}

%%%%%%

\subsection{Proof of Theorem~\ref{T:MAIN}}

Let $\epsilon<\frac M{2\kappa_1''}$ be given, where $\kappa_1''$ is the constant appearing in~\eqref{E:ENERGYCRUCIAL0}. We define
\[
\mathscr S: = \sup_{s\ge0}\left\{\sup_{0\le\sigma\le s}\Energy(\sigma)\le \epsilon \  \text{ and the solution to~\eqref{E:PHIEQUATION} exists on the interval $[0,s].$}\right\}
\]
From the local well-posedness theorem~\ref{T:LOCAL} it follows that $\mathscr S>0.$
It follows from~\eqref{E:ENERGYCRUCIAL0} that for a sufficiently small $\E(0)$ the solution to~\eqref{E:PHIEQUATION} has to exist globally-in-time, i.e. $\mathscr S=\infty.$ 
Since $\psi=1+\phi$ this implies that the solution to~\eqref{E:PSIEQUATION}--\eqref{E:PSIINITIALCONDITIONS} also exists globally-in-time.
Letting $s\to \infty$ in~\eqref{E:ENERGYCRUCIAL}  we conclude that 
\be\label{E:AUXBOUND}
\int_{s'}^\infty \mathscr E(\sigma)\,d\sigma \le  C\mathscr E(s'), \ \ s'\ge 0.
\ee
Defining $V(s'): = \int_{s'}^\infty \mathscr E(\sigma)\,d\sigma$ we conclude from~\eqref{E:AUXBOUND}
that 
\[
V'(s') = - \mathscr E(s') \le - \kappa_2 V(s'), \ \ s'\ge 0.
\]
By an elementary integration we obtain the decay 
\[
V(s') \le CV(0) e^{-\kappa_2 s'} \le C\mathscr E(0) e^{-\kappa_2 s'}, \ \ \text{ for all }s'\geq 0. %, \ \ \text{ for some $\kappa=\frac{\kappa_2}{2}.$}
\]
Now integrate~\eqref{E:ENERGYCRUCIAL} with respect to $s'$ over $[\frac s2,s]$ to get
\[
\mathscr E(s)\frac s2 \le C V(\frac s2)
\]
which in turn gives 
\[
\mathscr E(s) \le C e^{-\kappa s}, \ s\ge0, \ \ \text{ for some $0<\kappa<\frac{\kappa_2}{2}.$}
\]
Together with Proposition~\ref{P:EQUIVALENCE} the last bound completes the proof of Theorem~\ref{T:MAIN}.

%%%%%%%%%%%%%%%%%%%%%%%%%%%%%%%%%%%
%%%%%%%%%%%%%%%%%%%%%%%%%%%%%%%%%%%

\section{Nonlinear stability of linearly expanding homogeneous solutions}\label{S:LINEAR}

Linearizing~\eqref{E:THETAEQUATION} about the steady state $\tilde\theta=1$ and writing $\theta=1+\phi$ we obtain the equation satisfied by $\phi$
\begin{align}\label{E:PHIEQUATION2}
\tilde\lambda \phi_{\tau\tau} +\tilde\lambda _\tau\phi_\tau +3\delta\phi + \mathcal L_\delta\phi = N_\delta[\phi],
\end{align}
equipped with the initial conditions
\begin{align}\label{E:PHIINITIAL2}
\phi(0,z) =\phi_0(z) = \zeta_0(z) - 1, \ \ \phi_\tau(0,z) = \phi_1(z) = \zeta_1.
\end{align}
Operators $\mathcal L_\delta$ and $N_\delta$ 
are defined by~\eqref{E:LDELTA} and~\eqref{E:NBSTRUCTURE} respectively.

With respect to the unknown $\phi$ the energy $E$ takes the form
\begin{align}
E(1+\phi,\phi_\tau) & = 2\pi \|\phi_\tau + \frac{\tilde\lambda _\tau}{\tilde\lambda }(1+\phi)\|_\delta^2 + \frac{4\pi\delta}{\tilde\lambda } \|(1+\phi)^{-1/2}\|_\delta^2 \notag \\
& \ \ \ \ +\frac{12\pi}{\tilde\lambda } \int_0^1 ((1+\phi)^2(1+\phi+z\phi_z))^{-1/3}w_\delta^4 z^2\,dz \notag \\
& \ \ \ \  +\frac{4\pi}{\tilde\lambda } \int_0^1 (1+\phi)^{-1}\pa_z(w_\delta^4) z^3\,dz.
\end{align}
Expanding $E(1+\phi,\phi_\tau)$ around the steady state, we obtain 
\[
{E(1+\phi,\phi_\tau)}={E(1,0)}+4\pi\left(\tilde e - \frac{3\delta}{\tilde\lambda }\right)(\phi,1)_{\delta} +4\pi \frac{\tilde \lambda_\tau}{\tilde\lambda }(\phi_\tau,1)_{\delta} +4\pi \tilde{\mathcal J}[\phi]
\]
where $\tilde e = \lambda_1^2+\frac{2\delta}{\lambda_0}$ and the nonlinear remainder $\tilde{\mathcal J}[\phi]$ is given by
\begin{align}
\tilde{\mathcal J}[\phi] 
%&= \frac{\tilde\lambda }{b}\left(E(1+\phi,\phi_\tau) - E(1,0) - \partial_\phi E(1,0) \phi\right)\notag \\
=&\frac12 \Big(  \|\phi_\tau + \frac{\tilde\lambda _\tau}{\tilde\lambda } \phi\|_\delta^2 + \frac2{\tilde\lambda } \int_0^1 \left(\delta + \frac{4w_\delta'}{z}\right) \frac{\phi^2}{1+\phi}  w^3_\delta z^4dz\notag \\
&+ \frac6{\tilde\lambda }\int_0^1 \left( ((1+\phi)^2(1+\phi+z\phi_z))^{-1/3} - 1 +\frac {3\phi + z\phi_z}{3} \right) w^4_\delta z^2 dz \Big). \label{E:TILDEJDEFINITION}
\end{align}
Since the energy is conserved i.e. $E(1+\phi,\phi_\tau)=E(1+\phi_0,\phi_1)$, we obtain the identity
\begin{align}\label{E:ENERGYCONS}
\left(\tilde e - \frac{3\delta}{\tilde\lambda }\right)(\phi,1)_{\delta} + \frac{\tilde \lambda_\tau}{\tilde\lambda }(\phi_\tau,1)_{\delta} = \kappa -\tilde{\mathcal J}[\phi]
\end{align}
where $\kappa$ is the physical energy deviation of the perturbation from the background state,
\be\label{kappa}
\kappa =\frac{ E(1+\phi_0,\phi_1) - E(1,0)}{4\pi} . 
\ee

%%%%%%%%%%%%%%%%%%%%%%%%%%%%%%%%%%%%

As mentioned in Section~\ref{S:COMMENTS} the exponentially growing coefficients $\tilde\lambda $ and $\tilde\lambda _\tau$ in~\eqref{E:PHIEQUATION2} force us to avoid commuting~\eqref{E:PHIEQUATION2} with higher $\partial_\tau$-derivatives as we did previously in Section~\ref{S:SS}. Instead we develop  
%the energy method based on $\tau$ derivatives will not work. As soon as $\g_\tau$ is being applied, there will be lower-order (in derivative) commutators much bigger than the allowed energy level. 
a high-order energy method based on spatial derivatives solely. Inspired by~\cite{JaMa2015} we know that the more degenerate weights are needed for higher-order spatial derivatives in order to capture the physical vacuum singularity. In addition to that, we need to handle coordinate singularities at $z=0$ that arise with repeated application of spatial derivatives. 
To handle these difficulties we already introduced the operator $\mathcal S$ in \eqref{E:SOPERATOR} and we additionally define another carefully chosen elliptic operator.

%%%%%%%%%%%%%%%%%%%%%%%%%%%%%%%%%%%%

%%%%%%%%%%%%%%%%%%%%%%%%%%%%%%%%%%%%

\begin{definition}[A high-order version of the operator $\mathcal L_\delta$]\label{D:OPERATORS}
For any $k\in\mathbb N$ we define
\begin{align}
\mathcal L_{\delta,k}\psi:&= - \frac{4}{3w_\delta^{3+k}z^4}\g_z (w_\delta^{4+k} z^4\g_z\psi).
\end{align}
\end{definition}

%%%%%%%%%%%%%%%%%%%%%%%%%%%%%%%%%%%%

Note that the operator $\mathcal L_{\delta,k}$ defined in Definition~\ref{D:OPERATORS} can be also written in the form
\begin{align}
\mathcal L_{\delta,k}\psi=-\frac43 (w_\delta \g_z^2\psi + (4+k)w'_\delta \g_z\psi +\frac{4w_\delta}{z}\g_z\psi ).
\end{align}

We first state the result for $\mathcal L_{\delta,k}$ analogous to Lemma \ref{lem:core} for $\mathcal L_\delta$. 

\begin{lemma}[Spectral gap for the linearized operator]\label{lem:core2}  
There exists a sufficiently small $\varepsilon>0$ such that for any $-\varepsilon<\delta\leq 0$ the following statements hold:
\begin{enumerate}
\item 
There exists a constant $\mu_{1,k}>0$ such that  for any $\varphi\in H^1_{\delta,k}$ satisfying $(\varphi,1)_{\delta,k}=0$
the following bound holds:
\be\label{E:SPECTRALGAP_2}
(\mathcal L_{\delta,k}\varphi,\varphi)_{\delta,k }\ge \mu_{1,k} \|\varphi\|_{\delta,k}^2 %\ \ \text{ for any } \varphi \ \text{ satisfying } \ (\varphi,1)_\delta=0.
\ee
\item There exists $\mu_{2,k}>0$ such that for any $\varphi\in H^1_{\delta,k}$ satisfying $(\varphi,1)_{\delta,k}=0$
the following bound holds:
\be \label{E:SPECTRALGAP2_2}
\left((\mathcal L_{\delta,k} +3\delta)\varphi,\varphi\right)_{\delta,k} \geq \mu_{2,k}\left( \|\partial_z\varphi\|_{\delta,k+1}^2+\|\varphi\|_{\delta,k}^2 \right) %\ \  \frac{3}{2}b^2 <\mu_1.
\ee
\end{enumerate}
\end{lemma}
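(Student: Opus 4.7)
The plan is to mirror the proof of Lemma \ref{lem:core} (carried out in Appendix \ref{A:A}), exploiting that $\mathcal L_{\delta,k}$ differs from $\mathcal L_\delta = \mathcal L_{\delta,0}$ only by the extra factor $w_\delta^k$ in the weights; the qualitative structure (same degenerate endpoints, same boundary behaviour of $w_\delta$) is preserved. First I would identify the associated Dirichlet form. For smooth $\varphi$, integrating by parts and using that $z^4$ kills the boundary contribution at $z=0$ and that $w_\delta^{4+k}$ vanishes at $z=1$ (since $w_\delta(1)=0$) yields
\begin{equation*}
(\mathcal L_{\delta,k}\varphi,\varphi)_{\delta,k} = \frac{4}{3}\int_0^1 w_\delta^{4+k} z^4 |\partial_z\varphi|^2\, dz = \frac{4}{3}\|\partial_z \varphi\|_{\delta,k+1}^2.
\end{equation*}
Hence $\mathcal L_{\delta,k}$ is symmetric and non-negative on $C_c^\infty((0,1))$, and the Friedrichs extension produces a self-adjoint operator on $L^2_{\delta,k}$ whose kernel is precisely the constants.

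Next I would establish that the spectrum of this self-adjoint realization is purely discrete, by proving that the form domain---the completion of $C^\infty([0,1])$ under the norm $\|\varphi\|_{\delta,k}^2 + \|\partial_z\varphi\|_{\delta,k+1}^2$---embeds compactly into $L^2_{\delta,k}$. The argument is entirely parallel to the $k=0$ case treated in Appendix \ref{A:A}: on any interior subinterval $[\epsilon,1-\epsilon]$ the weights are uniformly bounded above and below, so standard Rellich--Kondrachov applies; near $z=0$ the smoothness of the $z^4$ factor combined with $w_\delta(0)=A_1>0$ (Lemma \ref{lem:w}) suffices; and near $z=1$ the tail bound is provided by the weighted Hardy inequalities of Appendix \ref{A:B}, using the physical vacuum asymptotics $w_\delta(z)\sim -w_\delta'(1)(1-z)$ (Lemma \ref{lem:w}(ii)). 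Once compactness is available the spectrum consists of eigenvalues $0=\lambda_0<\lambda_1\le \lambda_2\le\cdots\to\infty$, and setting $\mu_{1,k}:=\lambda_1$ yields \eqref{E:SPECTRALGAP_2} via the Rayleigh--Ritz characterization on $\{1\}^\perp$. Continuity of the eigenvalues in $\delta$ (since $w_\delta$ depends continuously on $\delta$) guarantees that $\mu_{1,k}$ can be chosen uniformly in a neighborhood of $\delta=0$.

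For the second assertion, I would split the identity from the first step as
\begin{equation*}
((\mathcal L_{\delta,k}+3\delta)\varphi,\varphi)_{\delta,k} = \tfrac{2}{3}\|\partial_z\varphi\|_{\delta,k+1}^2 + \tfrac{1}{2}(\mathcal L_{\delta,k}\varphi,\varphi)_{\delta,k} + 3\delta\|\varphi\|_{\delta,k}^2,
\end{equation*}
apply \eqref{E:SPECTRALGAP_2} to the middle term, and use $3\delta\ge -3|\delta|$ to obtain
\begin{equation*}
((\mathcal L_{\delta,k}+3\delta)\varphi,\varphi)_{\delta,k} \ge \tfrac{2}{3}\|\partial_z\varphi\|_{\delta,k+1}^2 + \bigl(\tfrac{\mu_{1,k}}{2}-3|\delta|\bigr)\|\varphi\|_{\delta,k}^2.
\end{equation*}
Taking $\varepsilon$ smaller than, say, $\mu_{1,k}/12$ then gives a uniform $\mu_{2,k}>0$ for all $-\varepsilon<\delta\le 0$, completing \eqref{E:SPECTRALGAP2_2}. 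The main technical obstacle I anticipate is the tail estimate near $z=1$: because the bilinear-form weight gains only one extra power of $w_\delta$ over the $L^2$ weight, the required Hardy inequality is borderline; the exact decay $w_\delta\sim (1-z)$ dictated by physical vacuum is precisely what is needed to close it, through the approximation-and-truncation scheme already used for the $k=0$ case in Appendix \ref{A:A}.
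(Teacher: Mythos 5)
Your proposal is correct and its overall architecture matches the paper's: identify the Dirichlet form $(\mathcal L_{\delta,k}\varphi,\varphi)_{\delta,k}=\tfrac43\|\partial_z\varphi\|_{\delta,k+1}^2$, deduce the gap \eqref{E:SPECTRALGAP_2} from discreteness of the spectrum of the Friedrichs extension together with the fact that the kernel consists of constants, and then obtain \eqref{E:SPECTRALGAP2_2} by splitting the form and absorbing the $3\delta\|\varphi\|_{\delta,k}^2$ term for $|\delta|$ small (the paper does exactly this, writing $3\delta=-\tfrac32 b^2$ and requiring $b^2<\tfrac23\mu_1$). The one genuine divergence is how discreteness is established. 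The paper (Appendix~\ref{A:A}) does \emph{not} use a compactness argument: it performs a Liouville transformation $y=\int_0^z(w_\delta^{3+k}/\tfrac43 w_\delta^{4+k})^{1/2}\,d\tilde z$, $\eta=z^2(\tfrac43 w_\delta^{7+2k})^{1/4}\psi$, computes the asymptotics of the resulting potential $q\sim 2/y^2$ at $y=0$ and $q\sim \tfrac{(7+2k)(5+2k)}{4}(y_+-y)^{-2}$ at $y=y_+$, concludes both endpoints are of limit point type, and invokes Sturm--Liouville/Weyl theory (Reed--Simon, Theorem X.10) to get a purely discrete simple spectrum. Your alternative --- compact embedding of the form domain into $L^2_{\delta,k}$ via interior Rellich--Kondrachov plus the weighted Hardy inequality \eqref{Hardy-gw} near $z=1$ --- is a legitimate and arguably more self-contained route; note only that your anticipated ``borderline'' difficulty near $z=1$ does not actually arise: applying \eqref{Hardy-gw} with $a=4+k>1$ controls $\int w_\delta^{2+k}|\varphi|^2$, a full power of $w_\delta$ \emph{better} than the $L^2_{\delta,k}$ weight $w_\delta^{3+k}$, which is precisely the uniform smallness of the tails needed for compactness. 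The paper's route buys, in addition, simplicity of all eigenvalues and an explicit characterization of the operator domain as $H^2_{\delta,k}$; your route avoids the change of variables and the limit-point classification.
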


We remark that $\mu_{1,k}$ and $\mu_{2,k}$ can be chosen uniformly in $\delta$. For the spectral theoretic properties of $\mathcal L_{\delta,k}$ and the proof Lemma \ref{lem:core2} 
see Appendix \ref{A:A}.

Recall the definition~\eqref{E:ENERGYLINEAR} of the high-order energy $\tilde{\mathcal E}$.
If $\delta>0$, it is clear that 
\be\label{Epos}
\sum_{j=0}^4\left(\tilde\lambda  \|\S^j\phi_\tau\|_{\delta,2j}^2 + \left((L_{\delta,2j} +3\delta) \mathcal{S}^j\phi ,\mathcal{S}^j\phi \right)_{\delta,2j} \right) \gtrsim \tilde\E. 
\ee
However, if $\delta\leq 0$, $\mathcal L_{\delta,k} +3\delta$ is not positive definite any longer. Nevertheless, we will show that if $\phi$ solves \eqref{E:PHIEQUATION2}--\eqref{E:PHIINITIAL2} and if $|\delta|$ sufficiently small, we have a desired coercivity for nonpositive $\delta$-s with a sufficiently small $|\delta|\ll1$. 

%%%%%%%%%%%%%%%%%%%%%%%%%%%%%%%%%%%%

\begin{lemma}[Energy coercivity]\label{L:POSDEF}
Let $\delta\in(\delta^*,0]$ be given and let $\delta>-\tilde\varepsilon$ for some $\tilde\varepsilon>0$.
Assume that $(\phi,\phi_\tau)$ solves~\eqref{E:PHIEQUATION2}--\eqref{E:PHIINITIAL2} on some $\tau$-interval $[0,T]$, $T>0$. 
%Assume additionally that $\tilde\E(\phi)(0)\le\varepsilon$ for some $\varepsilon>0$. 
Then there exist constants $c_j>0$, $0\leq j\leq 4$ and $\tilde C_1,\tilde C_2>0$ such that 
\begin{align}\label{E:co}
\sum_{j=0}^4c_j\left(\tilde\lambda  \|\S^j\phi_\tau\|_{\delta,2j}^2 + 3\delta\|\S^j\phi\|_{\delta,2j}^2 +\frac43 \|\g_z\mathcal{S}^j\phi\|_{\delta,2j+1}^2 \right)&\\
 \ge \tilde C_1 \tilde\E (\tau)
- \tilde C_2 \left(\tilde\E(0) +\big|\tilde{\mathcal J}[\phi]\big|^2  \right)&, \ \ \tau\in[0,T]\nonumber
\end{align}
if $\tilde\varepsilon>0$ is sufficiently small.
\end{lemma}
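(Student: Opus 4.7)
The plan is to apply the spectral gap of Lemma~\ref{lem:core2} to each $\S^j\phi$, $0\le j\le 4$, in the weighted space $L^2_{\delta,2j}$, while separately controlling the component of $\S^j\phi$ in the kernel of $\mathcal L_{\delta,2j}$ (spanned by constants). Decompose $\S^j\phi=\bar\varphi_j+\tilde\varphi_j$ with $\bar\varphi_j=(\S^j\phi,1)_{\delta,2j}/\|1\|_{\delta,2j}^2$ and $(\tilde\varphi_j,1)_{\delta,2j}=0$. Using the identity $((\mathcal L_{\delta,2j}+3\delta)\S^j\phi,\S^j\phi)_{\delta,2j}=\tfrac{4}{3}\|\g_z\S^j\phi\|_{\delta,2j+1}^2+3\delta\|\S^j\phi\|_{\delta,2j}^2$ together with \eqref{E:SPECTRALGAP2_2} applied to $\tilde\varphi_j$, I obtain
\[
\bigl((\mathcal L_{\delta,2j}+3\delta)\S^j\phi,\S^j\phi\bigr)_{\delta,2j}\ge \mu_{2,2j}\bigl(\|\S^j\phi\|_{\delta,2j}^2+\|\g_z\S^j\phi\|_{\delta,2j+1}^2\bigr)-\frac{\mu_{2,2j}-3\delta}{\|1\|_{\delta,2j}^2}\,(\S^j\phi,1)_{\delta,2j}^2.
\]
The whole proof reduces to estimating the last term for each $j$ by quantities compatible with the right-hand side of \eqref{E:co}.

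For $j\ge 1$ I will exploit the divergence form $\S=z^{-4}\g_z(z^4\g_z\cdot)$. A single integration by parts makes both boundary contributions vanish ($z^4$ at $z=0$; $w_\delta^{3+2j}$ at $z=1$, by Proposition~\ref{FuLin}), leaving
\[
(\S^j\phi,1)_{\delta,2j}=-(3+2j)\int_0^1 \g_z\S^{j-1}\phi\,w_\delta^{2+2j}\,w_\delta'\,z^4\,dz,
\]
and since $|w_\delta'|\lesssim 1$ on $[0,1]$ by Lemma~\ref{lem:w}, Cauchy--Schwarz yields $|(\S^j\phi,1)_{\delta,2j}|^2\lesssim\|\g_z\S^{j-1}\phi\|_{\delta,2(j-1)+1}^2$, a quantity already present in the $(j-1)$-st summand of $\tilde\E$. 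For $j=0$ the divergence trick is unavailable, so I will instead use the energy-conservation identity \eqref{E:ENERGYCONS}. Solving for $(\phi,1)_\delta$ and using $\tilde\lambda\ge 1$, the formula $(\tilde\lambda_\tau/\tilde\lambda)^2=\tilde e-2\delta/\tilde\lambda$, and $\tilde e-3\delta/\tilde\lambda\ge\tilde e>0$ (which uses $\delta\le 0$ and the linear-expansion condition), I obtain
\[
|(\phi,1)_\delta|^2\;\lesssim\;\kappa^2+|\tilde{\mathcal J}[\phi]|^2+\|1\|_\delta^2\,\tilde\lambda\|\phi_\tau\|_\delta^2,
\]
with implicit constant depending only on the background parameters. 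Expanding $\kappa$ from \eqref{kappa} and using the quadratic structure of $\tilde{\mathcal J}$ at $\tau=0$ together with the weighted Sobolev and Hardy estimates of Appendix~\ref{A:B} yields $\kappa^2\lesssim\tilde\E(0)$.

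To conclude I sum the spectral-gap bounds over $j=0,\dots,4$ with weights $c_j$ chosen to decrease geometrically fast (e.g.\ $c_j=\sigma^j c_0$ with $\sigma>0$ small). Each divergence-cascade error at level $j\ge 1$ is then absorbed into the coercive contribution at level $j-1$. The $j=0$ absorption splits into a harmless $c_0M_0(\kappa^2+|\tilde{\mathcal J}|^2)$ piece that matches the tolerated $\tilde C_2(\tilde\E(0)+|\tilde{\mathcal J}[\phi]|^2)$, together with a multiple of $c_0\tilde\lambda\|\phi_\tau\|_\delta^2$ that must be reabsorbed into the leading term $c_0\tilde\lambda\|\phi_\tau\|_\delta^2$ of the left-hand side. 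This last reabsorption requires the coefficient $(\mu_{2,0}-3\delta)(\tilde e-2\delta/\tilde\lambda)/(\tilde e-3\delta/\tilde\lambda)^2$, multiplied by $\|1\|_\delta^2$ and $M_0^{-1}$-factors, to be strictly below a fixed threshold; this is enforced by taking $\tilde\varepsilon>0$ small so that $|\delta|$ is small, leaving the background-dependent constants $\tilde C_1,\tilde C_2$ positive.

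The principal obstacle is precisely this $j=0$ reabsorption: unlike the higher indices, no lower level is available to absorb the kernel component of $\phi$, and the conservation law feeds the kinetic term $\tilde\lambda\|\phi_\tau\|_\delta^2$ back into the error through the factor $\tilde\lambda_\tau/\tilde\lambda$. All the other bookkeeping --- the geometric choice of the $c_j$ and the estimates on $\kappa$ and $\tilde{\mathcal J}$ --- is routine, but this single absorption must be carried out carefully, exploiting both the smallness of $\delta$ and the fact that $\tilde\lambda$ remains bounded below by $1$.
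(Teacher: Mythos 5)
Your proposal is correct and follows essentially the same route as the paper's proof: the spectral gap of Lemma~\ref{lem:core2} applied to the mean-zero part of $\S^j\phi$, the integration by parts $(\S^j\phi,1)_{\delta,2j}=-(3+2j)(\g_z\S^{j-1}\phi,w_\delta')_{\delta,2(j-1)+1}$ to cascade the kernel component down one level for $j\ge1$, the conservation identity \eqref{E:ENERGYCONS} to control $(\phi,1)_\delta$ at level $j=0$, and a weighted sum over $j$ with successively absorbed errors. You are, if anything, more explicit than the paper about the only delicate point (reabsorbing the $\|\phi_\tau\|_\delta^2$ contribution from the conservation law into the kinetic term at $j=0$), which the paper handles with the same smallness of $|\delta|$ and the freedom to take $\tilde C_1$ small.
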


%%%%%%%%%%%%%%%%%%%%%%%%%%%%%%%%%%%%

\begin{proof}
%If $\delta>0$ the statement is obvious from the definition~\eqref{E:ENERGYLINEAR} of $\E$ above. 
%When $\delta\le 0$ 
To prove \eqref{E:co}, we first take advantage of~\eqref{E:ENERGYCONS}.
Since $\tilde e>0$ and $\tilde \lambda \sim e^{\sqrt{\tilde e}\tau}$, for sufficiently small $|\delta|$, $\tilde e - \frac{3\delta}{\tilde\lambda }>0$. Together with $1\lesssim\frac{\tilde{\lambda}_\tau}{\tilde{\lambda}}\lesssim1$, it follows that 
\begin{align}\label{E:phi_av}
\left|(\phi,1)_\delta\right| \lesssim \sqrt{\tilde\E(0)} + \left|(\phi_\tau,1)_\delta\right| + \big|\tilde{\mathcal J}[\phi]\big| \lesssim \sqrt{\tilde\E(0)} + \|\phi_\tau\|_{\delta} + \big|\tilde{\mathcal J}[\phi]\big| 
\end{align}
where we have used $|\kappa| \lesssim \sqrt{\tilde\E(0)}$. 

For any $j\ge1$ it is easy to check that 
\begin{align*}
(\S^j\phi,1)_{\delta,2j} & = - (3+2j)\int_0^1\pa_z\S^{j-1}\phi w_\delta^{2+2j}w_\delta'z^4\,dz \\
& =- (3+2j)\left(\pa_z\S^{j-1}\phi,w_\delta'\right)_{\delta,2(j-1)+1}.
\end{align*}
The Cauchy-Schwarz inequality and the physical vacuum condition $-\infty<w_\delta'(1)<0$ together imply that
\be\label{E:KJ}
\frac{\big|(\S^j\phi,1)_{\delta,2j} \big|}{\|1\|_{\delta,2j}} \le k_j \|\pa_z\S^{j-1}\phi\|_{\delta,2(j-1)+1},
\ee
for some $k_j>0$, $j\in\mathbb N$.
For any $j\in\mathbb N$ we have
\begin{align*}
&3\delta\|\S^j\phi\|_{\delta,2j}^2 +\frac43 \|\g_z\mathcal{S}^j\phi\|_{\delta,2j+1}^2  \\
%= \left((L_{\delta,2j} +3\delta)\varphi,\varphi\right)_{\delta,2j}
&=\frac{4}{3} \|\g_z\mathcal{S}^j\phi\|_{\delta,2j+1}^2  + 3\delta\|\S^j\phi - \frac1{\|1\|_{\delta,2j}}(\S^j\phi,1)_{\delta,2j}\|_{\delta,2j}^2 + 3\delta \frac1{\|1\|_{\delta,2j}^2}(\S^j\phi,1)_{\delta,2j}^2.
\end{align*}
The first two terms can be rewritten as
\begin{align*}
&\frac{4}{3} \|\g_z\mathcal{S}^j\phi\|_{\delta,2j+1}^2  + 3\delta\|\S^j\phi - \frac1{\|1\|_{\delta,2j}}(\S^j\phi,1)_{\delta,2j}\|_{\delta,2j}^2  =\left((\mathcal{L}_{\delta,2j} +3\delta)\varphi_j, \varphi_j \right)_{\delta,2j}  
\end{align*}
where $\varphi_j = \S^j\phi - \frac1{\|1\|_{\delta,2j}}(\S^j\phi,1)_{\delta,2j}$. 
Since $(\varphi_j,1)_{\delta,2j}=0$, for $|\delta|$ sufficiently small, by Lemma \ref{lem:core2}, there exist $\tilde\mu>0$ such that
\begin{align*}
&\frac{4}{3} \|\g_z\mathcal{S}^j\phi\|_{\delta,2j+1}^2  + 3\delta\|\S^j\phi - \frac1{\|1\|_{\delta,2j}}(\S^j\phi,1)_{\delta,2j}\|_{\delta,2j}^2 \\
&\ge \tilde\mu\left(\|\g_z\mathcal{S}^j\phi\|_{\delta,2j+1}^2+\|\S^j\phi - \frac1{\|1\|_{\delta,2j}}(\S^j\phi,1)_{\delta,2j}\|_{\delta,2j}^2\right),  \ \ j\in\{0,1,2,3,4\}.
\end{align*}
Using~\eqref{E:KJ} it then follows that
\begin{align*}
&3\delta\|\S^j\phi\|_{\delta,2j}^2 +\frac43 \|\g_z\mathcal{S}^j\phi\|_{\delta,2j+1}^2\\
&\ge \tilde\mu\left(\|\g_z\mathcal{S}^j\phi\|_{\delta,2j+1}^2+\|\S^j\phi \|_{\delta,2j}^2\right)
-(3|\delta|+\tilde\mu )k_j^2\|\pa_z\S^{j-1}\phi\|_{\delta,2(j-1)+1}^2.
\end{align*}
Since the last negative term is indexed at $j-1$, we can select constants $c_j>0$ so that it can be absorbed into the linear combination of the left-hand side successively. Hence we deduce that 
\begin{align}
&\sum_{j=0}^4c_j\left(3\delta\|\S^j\phi\|_{\delta,2j}^2 +\frac43 \|\g_z\mathcal{S}^j\phi\|_{\delta,2j+1}^2\right) 
\ge \tilde C_1 \sum_{j=0}^4\left(\|\g_z\mathcal{S}^j\phi\|_{\delta,2j+1}^2+\|\S^j\phi \|_{\delta,2j+1}^2\right) 
- \tilde C_2 \big|(\phi,1)_\delta\big|^2.
\end{align}
By using \eqref{E:phi_av}, we obtain the desired inequality \eqref{E:co}. 
\end{proof}
%%%%%%%%%%%%%%%%%%%%%%%%%%%%%%%%%%%%

In addition to the high-order energy $\tilde{\mathcal E}$ we also define a high-order dissipation functional by 
\begin{align}\label{dissipation}
\tilde{\mathcal D }= \tilde{\mathcal D }(\tau)=\sum_{j=0}^4 \frac{\tilde\lambda _\tau}{2} \|\S^j\phi_\tau\|_{\delta,2j}^2 . 
%+ \frac{4c}{3 \l}\|\g_z\S^j\phi\|_{\delta,2j+1}^2\right)
\end{align}

%%%%%%%%%%%%%%%%%%%%%%%%%%%%%%%%%%%%

\begin{theorem}[High-order energy estimate]
\label{T:HIGHORDER} Let $-\tilde\varepsilon<\delta<\infty$ be given for a sufficiently small $\tilde\varepsilon>0$, where $\tilde\varepsilon$ appears in Lemma \ref{lem:core2}. 
Let $(\phi,\phi_\tau)$ be a solution to the degenerate wave equation \eqref{E:PHIEQUATION2}--\eqref{E:PHIINITIAL2} on a $\tau$-time interval $[0,T]$. Assume that for a small but fixed $\tilde M>0$ the following a priori bound holds:
\be\label{E:APRIORI}
\sup_{\tau\in[0,T]}\tilde\Energy(\tau) \le \tilde M. 
\ee
There exists a constant $\varepsilon>0$ such that for any initial data $(\phi(0),\phi_s(0))=(\phi_0,\phi_1)$ 
satisfying the initial bound:
\be\label{E:INITIALPHI}
\tilde\Energy(0) \le \varepsilon, 
\ee
there exist  constants $\tilde C_1,\tilde C_2, \tilde C_3>0$ 
such that the following energy bound holds: 
\be\label{E:MAINBOUND2}
\sup_{\tau'\in[0,\tau]} \tilde\Energy(\tau') + \tilde C_1\int_0^\tau\tilde\D(\sigma)\,d\sigma \le \tilde C_2\tilde\Energy(0) + \tilde C_3\int_0^\tau e^{-\beta_2\sigma}\tilde\Energy(\sigma)\,d\sigma, \ \ \tau\in[0,T]
%\sup_{s\geq 0}\left[e^{2\kappa s} \left(\|\sigma(s)\|_\ast + \sqrt{  \Energy(s)}\right) \right] \le 
%C\left(|\sigma(0)|+ \sqrt{\Energy(0)}\right)
\ee
where $\beta_2$ is defined in Section~\ref{S:STABILITYLINEAR}.
%for some  positive constant $\kappa>0$ which depends only on $b$.  
\end{theorem}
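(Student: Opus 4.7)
My plan is to use a high-order energy method based on commuting~\eqref{E:PHIEQUATION2} with powers of the spatial operator $\mathcal{S}$, avoiding $\partial_\tau$-derivatives altogether because the exponentially growing coefficients $\tilde\lambda$ and $\tilde\lambda_\tau$ do not commute harmlessly with $\partial_\tau$. For each $j\in\{0,\dots,4\}$ I apply $\mathcal{S}^j$ to the equation and use the commutator identity $\mathcal{S}^j\mathcal{L}_\delta\phi = \mathcal{L}_{\delta,2j}\mathcal{S}^j\phi + \mathcal{R}_j[\phi]$ promised by Lemma~\ref{L:SLCOMMUTATOR}, where $\mathcal{R}_j$ is of lower order and controlled in $L^2_{\delta,2j}$ by $\tilde{\mathcal E}$. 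This conversion is essential because it is $\mathcal{L}_{\delta,2j}$ (with the shifted weight $w_\delta^{4+2j}$) that matches the energy norm on $\mathcal{S}^j\phi$.

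Next I pair the $j$-th commuted equation against $\mathcal{S}^j\phi_\tau$ in the inner product $(\cdot,\cdot)_{\delta,2j}$. The combination $\tilde\lambda\mathcal{S}^j\phi_{\tau\tau}+\tilde\lambda_\tau\mathcal{S}^j\phi_\tau$ paired with $\mathcal{S}^j\phi_\tau$ rearranges via $\partial_\tau\!\left(\tfrac{\tilde\lambda}{2}\|\mathcal{S}^j\phi_\tau\|_{\delta,2j}^2\right) = \tilde\lambda(\mathcal{S}^j\phi_{\tau\tau},\mathcal{S}^j\phi_\tau)_{\delta,2j} + \tfrac{\tilde\lambda_\tau}{2}\|\mathcal{S}^j\phi_\tau\|_{\delta,2j}^2$ into an exact time derivative plus the friction term $\tfrac{\tilde\lambda_\tau}{2}\|\mathcal{S}^j\phi_\tau\|_{\delta,2j}^2$, which is exactly the $j$-th summand of $\tilde{\mathcal D}$. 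Using self-adjointness of $\mathcal{L}_{\delta,2j}$ in $L^2_{\delta,2j}$ together with $(\mathcal{L}_{\delta,2j}\varphi,\varphi)_{\delta,2j}=\tfrac43\|\partial_z\varphi\|_{\delta,2j+1}^2$, the elliptic and mass terms assemble into $\tfrac12 \partial_\tau\!\left(\tfrac43\|\partial_z\mathcal{S}^j\phi\|_{\delta,2j+1}^2+3\delta\|\mathcal{S}^j\phi\|_{\delta,2j}^2\right)$. Summing in $j$ with the weights $c_j>0$ from Lemma~\ref{L:POSDEF} and integrating from $0$ to $\tau$ produces an identity whose left-hand side, by~\eqref{E:co}, is bounded below by $\tilde C_1\tilde{\mathcal E}(\tau) + \int_0^\tau \tilde{\mathcal D}(\sigma)\,d\sigma - \tilde C_2\tilde{\mathcal E}(0) - \tilde C_2\sup_{\sigma\le\tau}|\tilde{\mathcal J}[\phi](\sigma)|^2$. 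Because $\tilde{\mathcal J}[\phi]$ is at least quadratic in $\phi,\phi_\tau$ (cf.~\eqref{E:TILDEJDEFINITION}), the last piece is controlled by $\tilde M\,\tilde{\mathcal E}$ and may be absorbed under~\eqref{E:APRIORI} for small $\tilde M$.

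For the nonlinear and commutator contributions on the right-hand side I use the explicit structure~\eqref{E:NBSTRUCTURE} of $N_\delta$ along with the commutator bounds anticipated from Lemmas~\ref{L:QJ} and~\ref{L:CJ}, combined with the Hardy inequalities and weighted Sobolev embeddings of Appendix~\ref{A:B}, to obtain $\|\mathcal{S}^j N_\delta[\phi]\|_{\delta,2j}^2 + \|\mathcal{R}_j[\phi]\|_{\delta,2j}^2 \lesssim \tilde{\mathcal E}(\tau)^2$ uniformly in $\tau$. Then Young's inequality with weight $\tilde\lambda_\tau$ gives
$$|(\mathcal{S}^j N_\delta[\phi]-\mathcal{R}_j[\phi],\mathcal{S}^j\phi_\tau)_{\delta,2j}| \le \tfrac{\tilde\lambda_\tau}{8}\|\mathcal{S}^j\phi_\tau\|_{\delta,2j}^2 + C\tilde\lambda_\tau^{-1}\tilde{\mathcal E}(\tau)^2.$$
The first term is absorbed into $\tilde{\mathcal D}$; by~\eqref{E:TILDELAMBDABOUND} we have $\tilde\lambda_\tau\gtrsim e^{\beta_2\tau}$, so the second term is bounded by $C\tilde M\,e^{-\beta_2\tau}\tilde{\mathcal E}(\tau)$, producing the right-hand side of~\eqref{E:MAINBOUND2}.

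The main obstacle is the delicate balancing of the exponentially growing friction $\tilde\lambda_\tau$ against the nonlinearities: only the $\tilde\lambda_\tau^{-1}$ factor produced by Young's inequality converts the nominally quadratic-in-$\tilde{\mathcal E}$ error into the integrable factor $e^{-\beta_2\tau}\tilde{\mathcal E}$ needed to close the estimate. A secondary subtlety is that $\mathcal{L}_\delta+3\delta$ is not positive definite for $\delta\le 0$, so coercivity of the left-hand side rests essentially on Lemma~\ref{L:POSDEF}, which in turn exploits the energy-conservation relation~\eqref{E:ENERGYCONS} to control the zero-mode $(\mathcal{S}^j\phi,1)_{\delta,2j}$ by neighboring dissipative quantities without ever being able to derive exponential decay of $\phi$ itself.
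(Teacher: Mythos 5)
Your overall architecture is the same as the paper's: commute with $\S^j$ via Lemma~\ref{L:SLCOMMUTATOR}, pair with $\S^j\phi_\tau$ in $(\cdot,\cdot)_{\delta,2j}$, extract the exact $\tau$-derivative and the friction term $\tfrac{\tilde\lambda_\tau}{2}\|\S^j\phi_\tau\|_{\delta,2j}^2$, invoke Lemma~\ref{L:POSDEF} for coercivity when $\delta\le0$, and convert error terms into $\nu\tilde\D + C_\nu e^{-\beta_2\tau}\tilde\E$ by Young's inequality with weight $\tilde\lambda_\tau$. The treatment of the linear commutator errors (the paper's $\mathcal Q_j$) along these lines is correct, modulo your mislabeling them as quadratic in the energy: they are linear in $\phi$, so $\|\mathcal R_j\|_{\delta,2j}^2\lesssim\tilde\E$ rather than $\tilde\E^2$, but Young's inequality still yields the admissible bound $\nu\tilde\D+C_\nu e^{-\beta_2\tau}\tilde\E$.

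The genuine gap is your claim that $\|\S^jN_\delta[\phi]\|_{\delta,2j}^2\lesssim\tilde\E(\tau)^2$, which you use to dispatch the nonlinearity by Cauchy--Schwarz and Young. This is false at top order and the Cauchy--Schwarz route cannot close. The nonlinearity contains the quasilinear term $p_2(\phi)M_{\delta,0}\phi$ (cf.~\eqref{E:NEWSTRUCTUREN}), and by Lemma~\ref{L:COMMUTATORSHIGH} the leading part of $\S^j\bigl(p_2(\phi)M_{\delta,0}\phi\bigr)$ is $p_2(\phi)M_{\delta,2j}\S^j\phi$, which involves $\g_z^2\S^j\phi$. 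For $j=4$ its $L^2_{\delta,8}$ norm is comparable to $\|\g_z^2\S^4\phi\|_{\delta,10}$, a ten-derivative quantity, whereas the energy~\eqref{E:ENERGYLINEAR} controls at most $\|\g_z\S^4\phi\|_{\delta,9}$. No Hardy or embedding inequality recovers the missing derivative. The paper's Lemma~\ref{L:CJ} resolves this precisely by \emph{not} putting $\S^jN_\delta[\phi]$ into $L^2$: one isolates $\bigl(p_2(\phi)M_{\delta,2j}\S^j\phi,\S^j\phi_\tau\bigr)_{\delta,2j}$, integrates by parts in $z$ using the divergence form of $M_{\delta,2j}$, and recognizes the result as
\[
-\tfrac12\,\pa_\tau\!\int_0^1 p_2(\phi)\,\frac{w_\delta^{4+2j}}{z^2}\,\bigl|\pa_z(z^3\S^j\phi)\bigr|^2\,dz
\]
plus errors in which the extra derivative has been traded for a $\phi_\tau$ or a harmless coefficient; the exact $\tau$-derivative is then integrated in time and absorbed using $\|p_2(\phi)\|_{L^\infty}\lesssim\sqrt{\tilde\E}$. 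The same device is needed for the $p_3(\phi)M_{\delta,0}(p_4(\phi))$ and $Q[\phi,\phi]$ terms. Without this integration by parts in $z$ and $\tau$, your argument does not produce a closed estimate, so this step must be supplied rather than asserted.
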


%%%%%%%%%%%%%%%%%%%%%%%%%%%%%%%%%%%%
 
\begin{lemma}[Algebraic properties of $\mathcal{S}$] 
Assuming that $\Phi,\Psi\in H^2_{\text{loc}}([0,1])$ the following identities hold: 
\begin{enumerate}
\item 
\be
\mathcal{S}(c_1\Phi+c_2\Psi) =c_1 \mathcal{S}\Phi + c_2 \mathcal{S}\Psi
\ \text{ for } \ c_1,c_2\in \mathbb{R}.
\ee
\item
\be\label{E:PRODUCTRULE}
\mathcal{S}(\Phi\Psi) =( \mathcal{S}\Phi) \Psi + \Phi (\mathcal{S}\Psi) + 2 \g_z\Phi \g_z\Psi.
\ee
\item 
\be\label{E:CHAINRULE}
\mathcal{S}(p(\Phi))= p'(\Phi) \mathcal{S}\Phi + p''(\Phi) |\g_z\Phi|^2.
\ee
\end{enumerate}
\end{lemma}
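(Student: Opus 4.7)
The proof is an immediate computation: the operator $\mathcal{S} = \partial_z^2 + \frac{4}{z}\partial_z$ is a linear combination of first and second order differential operators, so I would simply verify each identity by expanding with the standard Leibniz and chain rules from single-variable calculus. No functional analysis is needed beyond the hypothesis $\Phi,\Psi \in H^2_{\text{loc}}([0,1])$, which guarantees that the classical product and chain rules hold almost everywhere.

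For part (1), linearity is inherited from the linearity of $\partial_z$ and $\partial_z^2$, so nothing needs to be done. For part (2), the plan is to apply Leibniz to each of the two pieces of $\mathcal{S}$ separately:
\begin{align*}
\partial_z^2(\Phi\Psi) &= (\partial_z^2\Phi)\Psi + 2\,\partial_z\Phi\,\partial_z\Psi + \Phi\,\partial_z^2\Psi, \\
\tfrac{4}{z}\partial_z(\Phi\Psi) &= \tfrac{4}{z}(\partial_z\Phi)\Psi + \Phi\,\tfrac{4}{z}\partial_z\Psi,
\end{align*}
and add. The cross term $2\,\partial_z\Phi\,\partial_z\Psi$ has no counterpart from the first-order part of $\mathcal{S}$, which is exactly the extra term in \eqref{E:PRODUCTRULE}.

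For part (3), the chain rule gives $\partial_z p(\Phi) = p'(\Phi)\partial_z\Phi$ and therefore
\[
\partial_z^2 p(\Phi) = p''(\Phi)(\partial_z\Phi)^2 + p'(\Phi)\partial_z^2\Phi, \qquad \tfrac{4}{z}\partial_z p(\Phi) = p'(\Phi)\tfrac{4}{z}\partial_z\Phi.
\]
Adding these, the $p'(\Phi)$ terms combine into $p'(\Phi)\mathcal{S}\Phi$, leaving the quadratic remainder $p''(\Phi)|\partial_z\Phi|^2$.

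Since each identity is a direct elementary calculation, there is no real obstacle; the only mild subtlety is pointwise versus a.e. interpretation at $z=0$, but for functions in $H^2_{\text{loc}}$ the identities hold in the sense of distributions on $(0,1]$, which is what is needed for the applications in the energy estimates. I would therefore present the proof as three short lines of algebra, one for each part.
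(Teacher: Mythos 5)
Your proposal is correct and is exactly the paper's argument: the authors likewise dispose of the lemma by noting it is a straightforward application of the product and chain rules, which you have simply written out explicitly. Nothing further is needed.
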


\begin{proof}
%\begin{color}{red}
The proof of the lemma is a straightforward application of the product and the chain rule.
%\end{color}
\end{proof}

%%%%%%%%%%%%%%%%%%%%%%%%%%%%%%%%%%%%

%%%%%%%%%%%%%%%%%%%%%%%%%%%%%%%%%%%%

\begin{lemma}[Commutator of $\S$ and $\mathcal L_{\delta,k}$]\label{L:SLCOMMUTATOR}
For any $k\in\mathbb N$ the following commutation formula holds:
\begin{align}
\S \mathcal L_{\delta, 2k}\psi =  \mathcal L_{\delta, 2k+2} \mathcal{S} \psi + a_{k+1}(z)\mathcal{S}\psi + b_{k+1}(z) \g_z\psi,
\end{align}
where
\begin{align} 
a_{k}(z) &: = -\frac43 \left [(5+4k)w_\delta''(z) +\frac{4w_\delta'(z)}{z}\right ], \label{E:AK}\\
 b_k(z)&:=-\frac43\left( (2+2k)w_\delta'''(z) -\frac{4(2+2k)w_\delta''(z)}{z} + \frac{4(2+2k)w_\delta'(z)}{z^2} \right).\label{E:BK}
\end{align}
\end{lemma}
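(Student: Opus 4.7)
The plan is to verify the identity by direct computation, exploiting that both $\mathcal{S}$ and $\mathcal{L}_{\delta,2k}$ are linear second-order differential operators in $z$ whose principal symbols agree up to a smooth factor of $w_\delta$. Writing
\begin{align*}
\mathcal{L}_{\delta,k}\psi = -\tfrac{4}{3}\bigl(w_\delta\g_z^2\psi + A_k(z)\g_z\psi\bigr), \qquad A_k(z) := (4+k)w_\delta'(z) + \tfrac{4w_\delta(z)}{z},
\end{align*}
I would first expand $\mathcal{S}\mathcal{L}_{\delta,2k}\psi = (\g_z^2 + \tfrac{4}{z}\g_z)\mathcal{L}_{\delta,2k}\psi$ termwise by the product rule, obtaining a linear combination of $\g_z^j\psi$ for $j=1,2,3,4$ with coefficients built from $w_\delta, w_\delta', w_\delta'', w_\delta'''$ and powers of $\tfrac{1}{z}$. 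Using the elementary identities
\begin{align*}
\g_z\mathcal{S}\psi = \g_z^3\psi + \tfrac{4}{z}\g_z^2\psi - \tfrac{4}{z^2}\g_z\psi, \qquad \g_z^2\mathcal{S}\psi = \g_z^4\psi + \tfrac{4}{z}\g_z^3\psi - \tfrac{8}{z^2}\g_z^2\psi + \tfrac{8}{z^3}\g_z\psi,
\end{align*}
I would then expand $\mathcal{L}_{\delta,2k+2}\mathcal{S}\psi$ in the same basis and compare term-by-term.

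The $\g_z^4\psi$ coefficients cancel immediately, both equalling $-\tfrac{4}{3}w_\delta$. The $\g_z^3\psi$ coefficients also match: $\mathcal{S}\mathcal{L}_{\delta,2k}\psi$ contributes $-\tfrac{4}{3}\bigl[(2w_\delta'+\tfrac{4w_\delta}{z}) + A_{2k}\bigr] = -\tfrac{4}{3}\bigl[(6+2k)w_\delta' + \tfrac{8w_\delta}{z}\bigr]$, while $\mathcal{L}_{\delta,2k+2}\mathcal{S}\psi$ contributes $-\tfrac{4}{3}\bigl[\tfrac{4w_\delta}{z} + A_{2k+2}\bigr]$, which equals the same. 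The extra $2w_\delta'$ produced by the Leibniz rule on $\mathcal{S}(w_\delta\g_z^2\psi)$ is precisely absorbed by the index shift $k \mapsto k+2$ in $A_k$, which is the structural reason for the shift of weight in the statement.

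Once the top-order terms cancel, the remainder involves only $\g_z^2\psi$ and $\g_z\psi$. A direct bookkeeping shows that the surviving $\g_z^2\psi$-coefficient reduces to $-\tfrac{4}{3}\bigl[(9+4k)w_\delta''+\tfrac{4w_\delta'}{z}\bigr] = a_{k+1}(z)$. Writing $\g_z^2\psi = \mathcal{S}\psi - \tfrac{4}{z}\g_z\psi$ converts this into $a_{k+1}(z)\mathcal{S}\psi$ at the cost of an additional $-\tfrac{4a_{k+1}(z)}{z}\g_z\psi$ term, which I would add to the residual $\g_z\psi$-coefficient; a final simplification produces exactly the function $b_{k+1}(z)$ defined in~\eqref{E:BK}. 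The main obstacle is purely algebraic bookkeeping: the various negative powers of $z$ generated by $\mathcal{S}\bigl[\tfrac{4w_\delta}{z}\g_z\psi\bigr]$, $\mathcal{S}\bigl[(4+2k)w_\delta'\g_z\psi\bigr]$, and by $\g_z^2\mathcal{S}\psi$ must align exactly, so that no uncontrolled $\tfrac{1}{z^3}$ or $\tfrac{1}{z^2}$ term survives outside the explicit form of $b_{k+1}$; the regularity of the resulting coefficients across $z=0$ is guaranteed by Lemma~\ref{lem:w}(i), which ensures that all odd derivatives of $w_\delta$ vanish at the origin.
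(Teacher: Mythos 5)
Your proposal is correct and follows essentially the same route as the paper: a direct expansion of both $\S\mathcal L_{\delta,2k}\psi$ and $\mathcal L_{\delta,2k+2}\S\psi$ in the basis $\g_z^j\psi$, cancellation of the third- and fourth-order terms (with the index shift $k\mapsto k+2$ absorbing the extra $2w_\delta'$ from the Leibniz rule, exactly as you observe), and the substitution $\g_z^2\psi=\S\psi-\tfrac4z\g_z\psi$ to produce $a_{k+1}$ and $b_{k+1}$; your stated coefficients match the paper's. The smoothness of the coefficients at $z=0$ is handled in the paper in the separate Lemma~\ref{L:AKSMOOTH} rather than here, but your remark on it is consistent with that.
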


%%%%%%%%%%%%%%%%%%%%%%%%%%%%%%%%%%%%

%%%%%%%%%%%%%%%%%%%%%%%%%%%%%%%%%%%%

\begin{proof}
The proof relies on a direct calculation.
\be
\begin{split}
\g_z\mathcal L_{\delta, 2k}\psi=  -\frac43 (&w_\delta \g_z^3\psi + (5+2k)w_\delta' \g_z^2\psi +\frac{4w_\delta}{z}\g_z^2\psi 
+ [ (4+2k)w_\delta'' +\frac{4w_\delta'}{z} -\frac{4w_\delta}{z^2}] \g_z\psi )
\end{split}
\ee

\be
\begin{split}
\g_z^2\mathcal L_{\delta, 2k}\psi=  -\frac43 (&w_\delta \g_z^4\psi + (6+2k)w_\delta' \g_z^3\psi +\frac{4w_\delta}{z}\g_z^3\psi 
 + [ (9+4k)w_\delta'' +\frac{8w_\delta'}{z} -\frac{8w_\delta}{z^2}] \g_z^2\psi \\
&+ [ (4+2k)w_\delta''' +\frac{4w_\delta''}{z} - \frac{8w_\delta'}{z^2} + \frac{8w_\delta}{z^3}] \g_z\psi )
\end{split}
\ee

Hence we obtain 

\be
\begin{split}
\mathcal{S} \mathcal L_{\delta, 2k} \psi&=(\g_z^2+\frac{4}{z}\g_z)\mathcal L_{\delta, 2k}\psi\\
&=  -\frac43 (w_\delta \g_z^4\psi + (6+2k)w_\delta' \g_z^3\psi +\frac{8w_\delta}{z}\g_z^3\psi 
%\\
%&\quad
 + [ (9+4k)w_\delta'' +\frac{4(7+2k)w_\delta'}{z} +\frac{8w_\delta}{z^2}] \g_z^2\psi \\
&\quad+ [ (4+2k)w_\delta''' +\frac{4(5+2k)w_\delta''}{z} + \frac{8w_\delta'}{z^2} - \frac{8w_\delta}{z^3}] \g_z\psi )
\end{split}
\ee

On the other hand, 

\be
\begin{split}
\mathcal L_{\delta, 2k+2} \mathcal{S} \psi&=  - \frac{4}{3w_\delta^{5+2k}z^4}\g_z (w_\delta^{6+2k} z^4\g_z[ (\g_z^2+\frac{4}{z}\g_z)\psi ])\\
&=  - \frac43( w_\delta\g_z^4 \psi  + (6+2k)w_\delta' \g_z^3\psi +\frac{8w_\delta}{z}\g_z^3\psi \\
&\quad + \frac{4(6+2k)w_\delta'}{z}\g_z^2\psi + \frac{8 w_\delta}{z^2}\g_z^2\psi -\frac{4(6+2k)w_\delta'}{z^2}\g_z\psi -\frac{ 8w_\delta}{z^3}\g_z\psi)
\end{split}
\ee

Therefore, 

\be\label{SL}
\begin{split}
\mathcal{S} \mathcal L_{\delta, 2k} \psi&=\mathcal L_{\delta, 2k+2} \mathcal{S} \psi -\frac43 (  [(9+4k)w_\delta'' +\frac{4w_\delta'}{z}]\g_z^2\psi  \\
& \quad+[  (4+2k)w_\delta''' +\frac{4(5+2k)w_\delta''}{z} + \frac{4(8+2k)w_\delta'}{z^2}  ]\g_z\psi )\\
&= \mathcal L_{\delta, 2k+2} \mathcal{S} \psi -\frac43   [(9+4k)w_\delta'' +\frac{4w_\delta'}{z}]\mathcal{S}\psi  \\
& \quad-\frac43[  (4+2k)w_\delta''' -\frac{4(4+2k)w_\delta''}{z} + \frac{4(4+2k)w_\delta'}{z^2}  ]\g_z\psi \\
&= \mathcal L_{\delta, 2k+2} \mathcal{S} \psi + a_{k+1}(z)\mathcal{S}\psi + b_{k+1}(z) \g_z\psi,
\end{split}
\ee
where $a_{k+1}(z)$ and $b_{k+1}(z)$ are given by~\eqref{E:AK} and~\eqref{E:BK} respectively.
\end{proof}

%%%%%%%%%%%%%%%%%%%%%%%%%%%%%%%%%%%%%%%

%%%%%%%%%%%%%%%%%%%%%%%%%%%%%%%%%%%%%%%
\begin{lemma}\label{L:AKSMOOTH}
For any $k\in\mathbb N$ functions $a_k$ and $b_k$ are smooth and $b_k$ possess the following Taylor expansions
about $z=0:$ $a_{k}(z)=a_{k,0}+a_{k,2}z^2 + O(z^4)$, $\pa_z^{2\ell+1}a_k\big|_{z=0} = 0$,
$b_{k}(z)=b_{k,1} z + b_{k,3} z^3 + O(z^5)$, $\pa_z^{2\ell}b_k\big|_{z=0} = 0$, $\ell\in\mathbb N$.
Moreover, for any $i,j,k\in\mathbb N$  there exists a $c_{ijk}>0$ such that 
\[
\sum_{\ell=0}^j\left(\|\pa_z^{2\ell}\zeta\|_{L^\infty([0,1])} + \|\S^\ell \zeta\|_{L^\infty([0,1])} \right) \le c_{ijk}, \ \ \zeta= a_k, z^{2i+1}b_k.
\]
\end{lemma}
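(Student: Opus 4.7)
The foundation is Lemma~\ref{lem:w}, which gives $w_\delta\in C^\infty((0,1))$, analyticity near each of $z=0$ and $z=1$, and the fact that the Taylor expansion at the origin contains only \emph{even} powers of $z$:
\begin{equation*}
w_\delta(z)=\sum_{j\ge 0}c_{2j}\,z^{2j}\qquad\text{near }z=0.
\end{equation*}
Consequently $w_\delta'(z)$ and $w_\delta'''(z)$ have only odd powers of $z$ in their Taylor expansions, while $w_\delta''(z)$ has only even powers. My plan is to push these parity/regularity properties through the algebraic formulas~\eqref{E:AK}--\eqref{E:BK}, and then obtain the $\mathcal{S}$-bounds from the observation that $\mathcal{S}$ preserves the class of smooth functions on $[0,1]$ whose Taylor expansion at $0$ is even.

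\textit{Regularity and parity of $a_k$.} Because $w_\delta'(0)=0$ and $w_\delta'$ is odd in the above sense, the ratio $w_\delta'(z)/z=\sum_{j\ge 1}2jc_{2j}z^{2j-2}$ is smooth on $[0,1]$ with only even powers in its Taylor expansion at $0$. Combined with the smoothness and evenness of $w_\delta''$, formula~\eqref{E:AK} exhibits $a_k\in C^\infty([0,1])$, analytic near both endpoints, with even Taylor expansion at $0$. This immediately yields $a_k(z)=a_{k,0}+a_{k,2}z^2+O(z^4)$ and $\partial_z^{2\ell+1}a_k\big|_{z=0}=0$ for every $\ell\in\mathbb{N}$.

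\textit{Regularity and parity of $b_k$.} The only nontrivial point is the cancellation of the two potentially singular contributions $w_\delta''(z)/z$ and $w_\delta'(z)/z^2$ in~\eqref{E:BK}. Plugging the Taylor series of $w_\delta$ into the bracket yields
\begin{equation*}
w_\delta'''(z)-\frac{4w_\delta''(z)}{z}+\frac{4w_\delta'(z)}{z^2}
=\sum_{j\ge 1}\alpha_j c_{2j}\,z^{2j-3},
\qquad \alpha_j:=2j\bigl[(2j-1)(2j-6)+4\bigr].
\end{equation*}
A direct check shows $\alpha_1=2[(1)(-4)+4]=0$, so the formally leading $z^{-1}$ contribution vanishes; all surviving terms correspond to $j\ge 2$, contributing powers $z^{2j-3}$ that are odd and nonnegative. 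Hence $b_k\in C^\infty([0,1])$, analytic near both endpoints, with Taylor expansion $b_k(z)=b_{k,1}z+b_{k,3}z^3+O(z^5)$ and $\partial_z^{2\ell}b_k\big|_{z=0}=0$ for all $\ell\in\mathbb{N}$. This is the main step of the argument; everything else is parity bookkeeping.

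\textit{The $L^\infty$-bounds.} Smoothness of $a_k$ and of $z^{2i+1}b_k$ on the compact interval $[0,1]$ immediately yields finite $\|\partial_z^{2\ell}a_k\|_{L^\infty}$ and $\|\partial_z^{2\ell}(z^{2i+1}b_k)\|_{L^\infty}$ for each $\ell$. For the $\mathcal{S}$-bounds, the key algebraic observation is that if $f\in C^\infty([0,1])$ has only even powers in its Taylor expansion at $0$, then $f'(0)=0$ and $f'(z)/z$ is smooth with only even powers; combined with smoothness of $f''$, this shows that $\mathcal{S}f=f''+\frac{4}{z}f'$ is smooth on $[0,1]$ with again only even powers in its Taylor expansion at $0$. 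Therefore the subspace of smooth functions on $[0,1]$ whose Taylor series at $0$ contains only even powers is stabilized by $\mathcal{S}$. Both $\zeta=a_k$ (even, by the previous step) and $\zeta=z^{2i+1}b_k$ (odd $\times$ odd $=$ even) belong to this subspace, so $\mathcal{S}^\ell\zeta\in C^\infty([0,1])$ for every $\ell$ by induction. The resulting $L^\infty$-norms are controlled by a constant depending only on $i,j,k$ and the $C^N$-norm of $w_\delta$ for some $N=N(j)$. Summing over $0\le\ell\le j$ yields the stated constant $c_{ijk}$.
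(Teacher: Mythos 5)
Your proof is correct and follows essentially the same route as the paper's: smoothness away from $z=0$ is immediate from Lemma~\ref{lem:w}, and the only delicate point is the cancellation of the $O(1/z)$ contributions of $-4(2+2k)w_\delta''/z$ and $+4(2+2k)w_\delta'/z^2$ in $b_k$, which both you and the paper extract from the even Taylor expansion of $w_\delta$ at the origin. Your explicit computation of the coefficients $\alpha_j$ (with $\alpha_1=0$) and the observation that $\mathcal{S}$ preserves the class of smooth functions with even Taylor expansion at $z=0$ simply make precise what the paper dismisses as ``straightforward.''
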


\begin{proof}
Since $w_\delta\in C^\infty(0,1)$, it suffices to check the smoothness of $a_k$ and $b_k$ near $z=0$. From Lemma \ref{lem:w} we have $w_\delta(z)=A_1+A_2z^2+O(z^4)$, $z\sim 0$. Then $w_\delta'/z=2A_2+O(z^2)$ and hence $a_k$ is smooth. For $b_k$, first note that $w_\delta''/z=2A_2/z+O(z)$ and $w_\delta'/z^2=2A_2/z+O(z)$. The last two terms in \eqref{E:BK} are smooth: $-4(2+2k) w_\delta''/z + 4(2+2k) w_\delta'/z^2=O(z)$. It is straightforward to see that only odd powers of $z$ remain in the expansion of $b_k$ and only even powers in the expansion of $a_k$ around $z=0$. The claimed inequality follows easily from the already established Taylor expansions of $a_k$ and $b_k$.
\end{proof}

%%%%%%%%%%%%%%%%%%%%%%%%%%%%%%%%%%%%%%%

\subsection{The energy norm and weighted Sobolev norms}

%%%%%%%%%%%%%%%%%%%%%%%%%%%%%%
%%%%%%%%%%%%%%%%%%%%%%%%%%%%%%

The next lemma allows us to estimate the higher $\pa_z$-derivatives of $\varphi$ in terms of the high-order $\S$-derivatives of $\varphi$
in the weighted spaces $\mathfrak{H}_{\delta,k}^j$ (Definition~\ref{D:DELTAJSPACES}).

%%%%%%%%%%%%%%%%%%%%%%%%%%%%%%
\begin{lemma}[From the energy to the norm]\label{L:NORMENERGY}
%Let $\delta$ be as above and
 Let $k\in \mathbb N$ be given.
\begin{enumerate}
\item
For any $\varphi\in \mathfrak{H}_{\delta,k-1}^1$ the following bound holds:
\begin{align}\label{E:COERCIVITYFIRST}
 \|\g_z^2\varphi\|_{\delta,2k}^2 + \|\frac{\g_z \varphi}{z}\|_{\delta,2k-1}^2 \lesssim \|\S \varphi\|_{\delta, 2k}^2.
\end{align}
\item
Let $\varphi\in \mathfrak{H}_{\delta,k-1}^j,$ $j\in\mathbb \{1,2,3,4\}$. 
Then there exists a constant $C>0$ such that 
\be
\sum_{\ell =1}^{j}\left(\|\g_z^{2\ell }\varphi\|_{\delta,2k+2\ell-2 }^2 + \|\frac{\g_z^{2\ell -1}\varphi}{z}\|_{\delta,2k+2\ell-3}^2+\|\frac{\g_z^{2\ell -2}\varphi}{z^2}\|_{\delta,2k+2\ell -4}^2\right)
\le C \sum_{\ell =1}^j \|\S^\ell \varphi\|_{\delta,2k+2\ell-2}^2.
\ee
\end{enumerate}
\end{lemma}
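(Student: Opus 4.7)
The strategy is to prove part (1) directly by an integration by parts that exploits the sign of $w_\delta'$, and then bootstrap it to part (2) by induction on $j$, applying (1) to $\mathcal{S}^{j-1}\varphi$ and reconstructing the pure $\partial_z$-derivatives from the resulting $\mathcal{S}$-derivatives plus lower-order terms.

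For part (1) I expand
\[
(\mathcal{S}\varphi)^2 = (\partial_z^2\varphi)^2 + \frac{8}{z}\partial_z^2\varphi\,\partial_z\varphi + \frac{16}{z^2}(\partial_z\varphi)^2,
\]
integrate against $w_\delta^{3+2k}z^4$, rewrite the cross term as $4\,\partial_z\bigl((\partial_z\varphi)^2\bigr)\,w_\delta^{3+2k}z^3$ and integrate by parts once. The boundary contribution at $z=0$ is killed by $z^3$ and the one at $z=1$ by $w_\delta(1)=0$ (Proposition~\ref{FuLin}), giving the identity
\[
\|\mathcal{S}\varphi\|_{\delta,2k}^2 = \|\partial_z^2\varphi\|_{\delta,2k}^2 + 4\!\int_0^1(\partial_z\varphi)^2 w_\delta^{3+2k}z^2\,dz + 4(3+2k)\!\int_0^1(\partial_z\varphi)^2 w_\delta^{2+2k}(-w_\delta')z^3\,dz.
\]
Each term is non-negative because $w_\delta'\le 0$ on $[0,1]$. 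To extract $\|\tfrac{\partial_z\varphi}{z}\|_{\delta,2k-1}^2 = \int (\partial_z\varphi)^2 w_\delta^{2+2k}z^2\,dz$, I split at an intermediate radius $z_0\in(0,1)$. On $[0,z_0]$, $w_\delta\gtrsim 1$, so $w_\delta^{3+2k}\gtrsim w_\delta^{2+2k}$ and the second term suffices. On $[z_0,1]$, the physical-vacuum bound $-w_\delta'\gtrsim 1$ and $z^3\gtrsim 1$ (from Lemma~\ref{lem:w}(ii)) show that the third term already dominates $\int_{z_0}^1(\partial_z\varphi)^2 w_\delta^{2+2k}z^2\,dz$.

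For part (2) I proceed by induction on $j$. The base case $j=1$ combines part (1) with a Hardy-type inequality from Appendix~\ref{A:B} to upgrade $\|\tfrac{\partial_z\varphi}{z}\|_{\delta,2k-1}^2$ into $\|\tfrac{\varphi}{z^2}\|_{\delta,2k-2}^2$; the boundary contribution at $z=1$ vanishes because $w_\delta^{1+2k}(1)=0$, and the contribution at $z=0$ uses the smoothness of $\varphi$ (equivalently, the finiteness of $\mathcal{S}\varphi$ forces $\varphi$ to be sufficiently regular at the origin). For the inductive step I apply part (1) to $\mathcal{S}^{j-1}\varphi$ with the index shift $2k\mapsto 2k+2j-2$:
\[
\|\partial_z^2\mathcal{S}^{j-1}\varphi\|_{\delta,2k+2j-2}^2 + \Bigl\|\tfrac{\partial_z\mathcal{S}^{j-1}\varphi}{z}\Bigr\|_{\delta,2k+2j-3}^2 \lesssim \|\mathcal{S}^j\varphi\|_{\delta,2k+2j-2}^2.
\]
An auxiliary induction using $\mathcal{S}=\partial_z^2+\frac{4}{z}\partial_z$ yields the expansion
\[
\mathcal{S}^{j-1}\varphi = \partial_z^{2j-2}\varphi + \sum_{m=1}^{2j-3} c_{m,j-1}\,z^{-m}\,\partial_z^{2j-2-m}\varphi,
\]
from which $\partial_z^{2j}\varphi$ and $\tfrac{\partial_z^{2j-1}\varphi}{z}$ can be solved for in terms of $\partial_z^2\mathcal{S}^{j-1}\varphi$, $\tfrac{\partial_z\mathcal{S}^{j-1}\varphi}{z}$, and lower-order singular quantities $z^{-m}\partial_z^{p}\varphi$ with $p+m<2j$. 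These lower-order remainders are precisely controlled by the inductive hypothesis (with $\ell\le j-1$), possibly after invoking a Hardy inequality to handle $\frac{\partial_z^{2j-2}\varphi}{z^2}$ in terms of $\frac{\partial_z^{2j-3}\varphi}{z}$.

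The main obstacle will be the bookkeeping in the inductive step: each application of $\mathcal{S}$ generates singular inverse powers of $z$, and one must verify that every such term fits into a norm already present either on the LHS of the induction hypothesis (at index $\ell-1$) or on the RHS (via the bound on $\mathcal{S}^{j-1}\varphi$). The design of the spaces $\mathfrak{H}_{\delta,k}^j$ is tailored so that each factor of $1/z$ is exactly compensated by a downshift of the weight index; keeping track of this correspondence, together with the Hardy estimates from Appendix~\ref{A:B}, is where the main technical care is required.
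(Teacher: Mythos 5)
Your proposal is correct and follows essentially the same route as the paper: part (1) via the expansion of $\|\S\varphi\|_{\delta,2k}^2$, one integration by parts on the cross term, and the sign conditions $w_\delta'\le 0$, $-w_\delta'(1)>0$ to make all three resulting terms coercive; part (2) by applying part (1) to $\S^{\ell}\varphi$ with shifted weights, using a Hardy/Rellich step for the $z^{-2}$-terms, and inverting the relation between $\partial_z^m\S^{\ell}\varphi$ and pure $\partial_z$-derivatives, exactly as the paper does with its explicit identities for $\partial_z^m\S\varphi$. The only difference is presentational (you phrase the bootstrap as a formal induction with a general expansion of $\S^{j-1}\varphi$, whereas the paper writes out the finitely many cases $j\le 4$ explicitly), which does not constitute a different argument.
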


%%%%%%%%%%%%%%%%%%%%%%%%%%%%%%

\begin{proof}
%\begin{color}{red}
%We need to rewrite the proof, clean it up a little.
%\end{color}
We recall that $w_\delta$ satisfies $w_\delta'<0$, $w_\delta'\sim cz$ near $z=0$, and $w_\delta'=O(1)$ (see Lemma \ref{lem:w}). %Let $k\geq 1$ be fixed. 
We first examine the case $\ell=1$. Note that 
\be
\begin{split}
(\mathcal{S}\varphi,\mathcal{S}\varphi)_{\delta,2k} &= \int_0^1 w_\delta^{3+2k} z^4 ( \g_z^2\varphi +\frac{4}{z}\g_z\varphi )^2 dz\\
 &=  \int_0^1 w_\delta^{3+2k} z^4 ( (\g_z^2\varphi)^2 + \frac{16}{z^2} (\g_z\varphi )^2 )dz + 4\int_0^1 w_\delta^{3+2k} z^3\g_z(\g_z\varphi)^2    dz\\
 &=  \int_0^1 w_\delta^{3+2k} z^4 ( (\g_z^2\varphi)^2 + \frac{4}{z^2} (\g_z\varphi )^2 )dz - 4(3+2k)\int_0^1 w_\delta^{2+2k} w'z^3(\g_z\varphi)^2    dz. 
\end{split}
\ee
 Each term is positive and it contains the desired square norms for $\g_z^2\varphi$ and $\frac{\g_z\varphi}{z}$: 
\be
\begin{split}
(\mathcal{S}\varphi,\mathcal{S}\varphi)_{\delta,2k}& \gtrsim \|\g_z^2\varphi\|_{\delta,2k}^2 
+ \| \frac{w_\delta\g_z\varphi}{z} \|_{\delta,2k-2}^2+ \|\g_z \varphi\|_{\delta,2k-1}^2 \\
&\gtrsim  \|\g_z^2\varphi\|_{\delta,2k}^2 + \|\frac{\g_z \varphi}{z}\|_{\delta,2k-1}^2, 
\end{split}
\ee
which proves~\eqref{E:COERCIVITYFIRST}.
By Rellich inequality or Hardy inequality, we also obtain 
\[
(\mathcal{S}\varphi,\mathcal{S}\varphi)_{\delta,2k} + \|\varphi\|^2_{\delta,2k-2} \gtrsim   \|  \frac{\varphi}{z^2} \|_{\delta, 2k-2} 
\]

%Thus we have 
%\be
% \|  \frac{\varphi}{z^2} \|_{\delta, 2k-2}   + \|\frac{\g_z \varphi}{z}\|_{\delta,2k-1}^2 +\|\g_z^2\varphi\|_{\delta,2k}^2 \leq\| \mathcal{S}\varphi\|^2_{\delta,2k} + \|\varphi\|_{\delta,2k-2}
%\ee

By a similar argument, we obtain 
\be\label{S^2}
(\mathcal{S}^2\varphi,\mathcal{S}^2\varphi)_{\delta,2k+2} \gtrsim \|\g_z^2\mathcal{S}\varphi\|_{\delta,2k+2}^2 
 + \|\frac{\g_z \mathcal{S}\mathcal{\varphi}}{z}\|_{\delta,2k+1}^2 ,
\ee
as well as 
\be
(\mathcal{S}^2\varphi,\mathcal{S}^2\varphi)_{\delta,2k+2} + \|\mathcal{S}\varphi\|^2_{\delta,2k} \gtrsim \|  \frac{\mathcal{S}\varphi}{z^2} \|_{\delta,2k}^2. 
\ee

Next we will show how to recover higher order $\varphi$-norms from $\mathcal{S}\varphi$. Recall that 
\be
\|  \frac{\mathcal{S}\varphi}{z^2} \|_{\delta,2k}^2 = \int_0^1 w_\delta^{3+2k} ( \g_z^2\varphi +\frac{4}{z}\g_z\varphi )^2 dz
\ee
%Note that this integral is not finite for $\varphi=z$. 
The finiteness of the integral leads to $\g_z\phi|_{z=0}=0$ because letting $f=\g_z^2\varphi +\frac{4}{z}\g_z\varphi $, we have 
\[
|\g_z\varphi |= |\frac{1}{z^4}\int_0^z z^4f dz | \leq z^{\frac12} (\int_0^z f^2 dz)^{\frac12}
\]
which implies $\lim_{z\rightarrow 0^+}\g_z\phi=0$. We deduce that 
$
\lim_{z\rightarrow 0^+}\frac{(\g_z\phi)^2}{z} = 0. 
$
Therefore by integration-by-parts we obtain 
\be\label{bound1}
\|  \frac{\mathcal{S}\varphi}{z^2} \|_{\delta,2k}^2 \gtrsim \|\frac{\g_z^2\varphi }{z^2}\|^2_{\delta, 2k} +\|\frac{\g_z\varphi }{z^3}\|^2_{\delta, 2k-1} . 
\ee

Now we rewrite the second term in the right-hand side of \eqref{S^2} as
\[
\begin{split}
 \| \frac{\g_z\mathcal{S}\varphi}{z} \|_{\delta,2k+1}^2 &= \int_0^1 w_\delta^{4+2k} z^2 | \g_z(\g_z^2 \varphi +\frac{4}{z} \g_z \varphi) |^2 dz \\
 &= \int_0^1 w_\delta^{4+2k} z^2 | \g_z^3 \varphi +\frac{4}{z} \g_z^2 \varphi  - \frac{4}{z^2}\g_z\varphi |^2 dz . 
\end{split}
\]
Due to \eqref{bound1}, we deduce that 
\be
\int_0^1 w_\delta^{4+2k} z^2 |\g_z^3 \varphi|^2 dz \leq \| \frac{\g_z\mathcal{S}\varphi}{z} \|_{\delta,2k+1}^2 + \|  \frac{\mathcal{S}\varphi}{z^2} \|_{\delta,2k}^2
\ee
The first term of the right-hand side of \eqref{S^2} gives 
\[
\|\g_z^2\mathcal{S}\varphi\|_{\delta,2k+2}^2 = \int_0^1 w_\delta^{5+2k} z^4  | \g_z^2(\g_z^2 \varphi +\frac{4}{z} \g_z \varphi) |^2 dz 
\]
By previous bounds, we obtain 
\be
\begin{split}
 \int_0^1 w_\delta^{5+2k} z^4 |\g_z^4 \varphi|^2 dz& \leq \|\g_z^2\mathcal{S}\varphi\|_{\delta,2k+2}^2+ \| \frac{\g_z\mathcal{S}\varphi}{z} \|_{\delta,2k+1}^2 + \|  \frac{\mathcal{S}\varphi}{z^2} \|_{\delta,2k}^2  \\
 &\leq (\mathcal{S}^2\varphi,\mathcal{S}^2\varphi)_{\delta,2k+2} +(\mathcal{S}\varphi,\mathcal{S}\varphi)_{\delta,2k} 
\end{split}
\ee

To sum up, we have shown that 
\be\label{inequality1}
\begin{split}
&\|\frac{\mathcal{S}\varphi}{z^2} \|_{\delta, 2k}^2+\|\frac{\g_z \mathcal{S}\varphi}{z} \|_{\delta, 2k+1}^2   +  \|\g_z^2\mathcal{S}\varphi \|_{\delta, 2k+2}^2 \\
&+  \|\frac{\g_z^2 \varphi}{z^2}\|_{\delta, 2k}^2+ \|\frac{\g_z^3 \varphi}{z}\|_{\delta, 2k+1}^2  + \|\g_z^4 \varphi\|_{\delta, 2k+2}^2  \leq \|\mathcal{S}^2\varphi\|^2_{\delta,2k+2} +\|\mathcal{S}\varphi\|^2_{\delta,2k} 
 \end{split}
\ee

Inductively, we can recover all the spatial norms via $\|\mathcal{S}^\ell\varphi\|^2_{\delta, 2k+ 2\ell-2}$. To be more precise, we first apply \eqref{inequality1} to $\varphi \rightarrow \mathcal{S}\varphi$ and $k\rightarrow k+1$, 
\be\label{inequality2}
\begin{split}
&\|\frac{\mathcal{S}^2\varphi}{z^2} \|_{\delta, 2k+2}^2+\|\frac{\g_z \mathcal{S}^2\varphi}{z} \|_{\delta, 2k+3}^2   +  \|\g_z^2\mathcal{S}^2\varphi \|_{\delta, 2k+4}^2 \\
&+  \|\frac{\g_z^2 \mathcal{S}\varphi}{z^2}\|_{\delta, 2k+2}^2+ \|\frac{\g_z^3 \mathcal{S}\varphi}{z}\|_{\delta, 2k+3}^2  + \|\g_z^4\mathcal{S} \varphi\|_{\delta, 2k+4}^2  \leq \|\mathcal{S}^3\varphi\|^2_{\delta,2k+4} +\|\mathcal{S}^2\varphi\|^2_{\delta,2k+2} 
 \end{split}
\ee
We next note that
\be
\begin{split}
\g_z\mathcal{S}\varphi &= \g_z^3 \varphi +\frac{4}{z}\g_z^2\varphi -\frac{4}{z^2}\g_z\varphi \\
\g_z^2\mathcal{S}\varphi&=  \g_z^4 \varphi +\frac{4}{z}\g_z^3\varphi -\frac{8}{z^2}\g_z^2\varphi +\frac{8}{z^3}\g_z\varphi \\
 \g_z^3\mathcal{S}\varphi&=  \g_z^5 \varphi +\frac{4}{z}\g_z^4\varphi -\frac{12}{z^2}\g_z^3\varphi +\frac{24}{z^3}\g_z^2\varphi - \frac{24}{z^4}\g_z\varphi \\
  \g_z^4\mathcal{S}\varphi&=  \g_z^6 \varphi +\frac{4}{z}\g_z^5\varphi -\frac{16}{z^2}\g_z^4\varphi +\frac{48}{z^3}\g_z^3\varphi - \frac{96}{z^4}\g_z^2\varphi + \frac{96}{z^5}\g_z\varphi.   
\end{split}
\ee
It follows that 
\[
\g_z^3 \mathcal{S}\varphi = \g_z^5\varphi + \frac{7}{z}\g_z^4\varphi - \frac{3}{z}\g_z^2\mathcal{S}\varphi, 
\]
and 
\[
\g_z^4\mathcal{S}\varphi =  \g_z^6 \varphi +\frac{8}{z}\g_z^5\varphi -\frac{4}{z}\g_z^3\mathcal{S}\varphi. 
\]

Then from the boundedness of $\g_z^4\mathcal{S}\varphi$, $\frac{\g_z^3 \mathcal{S}\varphi }{z}$ and $\frac{\g_z^2 \mathcal{S}\varphi }{z^2}$ in \eqref{inequality2}, we deduce that 
\be\label{inequality3}
\| {\g_z^6 \varphi }\|_{\delta, 2k+4}^2+ \| \frac{\g_z^5 \varphi }{z}\|_{\delta, 2k+3}^2 +\| \frac{\g_z^4\varphi }{z^2}\|_{\delta, 2k+2}^2 \leq  \|\mathcal{S}^3\varphi\|^2_{\delta,2k+4} +\|\mathcal{S}^2\varphi\|^2_{\delta,2k+2} . 
\ee

Now applying \eqref{inequality3} to $\mathcal{S}\varphi$ and $k+1$, we also deduce 
\be\label{inequality4}
\| {\g_z^8 \varphi }\|_{\delta, 2k+6}^2+ \| \frac{\g_z^7 \varphi }{z}\|_{\delta, 2k+5}^2 +\| \frac{\g_z^6\varphi }{z^2}\|_{\delta, 2k+4}^2 \leq  \|\mathcal{S}^4\varphi\|^2_{\delta,2k+6} +\|\mathcal{S}^3\varphi\|^2_{\delta,2k+4} . 
\ee
This concludes the proof. 
\end{proof}

\subsection{Nonlinear energy estimates and proof of Theorem~\ref{T:HIGHORDER}}

In the estimates below we shall be freely using~\eqref{E:DELTA1}--\eqref{E:TILDELAMBDABOUND}.
 
We commute the equation~\eqref{E:PHIEQUATION2} with the operator $\S^j,$ $j=0,1,2,3,4.$
As a result we obtain the following equation satisfied by $\S^j\phi:$
\begin{align}
& \tilde\lambda \g_{\tau\tau}\S^j\phi + \tilde\lambda _\tau \g_\tau\S^j\phi + 3\delta\S^j\phi + \mathcal{L}_{\delta,2j}\S^j\phi \notag \\
& \ \ + \sum_{\ell=0}^{j-1}\S^\ell\left(a_{j-\ell}\S^{j-\ell}\phi+ b_{j-\ell}\g_z\S^{j-1-\ell}\phi\right) \label{E:COMMUTED}
= \S^j N_\delta[\phi].
\end{align}
In the derivation of~\eqref{E:COMMUTED} we successively used Lemma~\ref{L:SLCOMMUTATOR}.
Taking the  $(\cdot,\cdot)_{\delta,2j}$ inner product with $\mathcal{S}^j\phi_\tau$ we obtain the following identity:
\begin{align}
&\frac{1}{2}\frac{d}{d\tau} \left[ \tilde\lambda  (\mathcal{S}^j\phi_\tau,\mathcal{S}^j\phi_\tau)_{\delta,2j} + 3\delta(\mathcal{S}^j\phi,\mathcal{S}^j\phi)_{\delta,2j} +\frac43 (\g_z\mathcal{S}^j\phi,\g_z\mathcal{S}^j\phi)_{\delta,2j+1} \right] \notag \\
&+\frac{\tilde\lambda _\tau}{2}  (\mathcal{S}^j\phi_\tau,\mathcal{S}^j\phi_\tau)_{\delta,2j} 
 = \mathcal Q_j +\mathcal C_j, \label{E:ENERGYIDENTITYPART2}
\end{align}
where
\begin{align}
\mathcal Q_j : = 
& - \sum_{\ell=0}^{j-1}\left(\S^\ell\left(a_{j-\ell}\S^{j-\ell}\phi+ b_{j-1-\ell}\g_z\S^{j-1-\ell}\phi\right)\,,\,\S^j\phi_\tau\right)_{\delta,2j} 
\label{E:QJ}
\end{align}
and
\begin{align}\label{E:CJ}
\mathcal C_j : = (\mathcal{S}^j N_\delta[\phi],\mathcal{S}^j\phi_\tau )_{\delta,2j} 
\end{align}
 for any $j=0,1,2,3,4$. 
We recall that the functions $a_k(\cdot),b_k(\cdot)$, $k\in\mathbb N$, are defined by~\eqref{E:AK}--\eqref{E:BK}. 

For any $j\in\{0,1,2,3,4\}$ the quantities $\mathcal Q_j$ represent the quadratic error terms and they are therefore very dangerous as they are a priori of the same order of magnitude 
as the energy itself. The (at least) cubic error terms $\mathcal C_j$ are better behaved from the point of view of the order of magnitude, but they possess an intricate quasilinear structure and different ideas 
are needed to control them.

%%%%%%%%%%%%%%%%%%%%%%%%%%%%%%%%%%%%%%%

\subsubsection{Energy estimates for $\mathcal Q_j$}

We recall that for any given $\delta\ge0$ or $\delta^*<\delta<0$ with $\tilde e=\lambda_1^2+\frac{2\delta}{\lambda_0}>0$, the homogeneous solution $\tilde\lambda $ behaves like
\[
\tilde\lambda (\tau)\sim_{\tau\to\infty} e^{\beta \tau}, \ \ \beta = \sqrt{\tilde e}.
\]

%%%%%%%%%%%%%%%%%%%%%%%%%%%%%%%%%

\begin{lemma}[Estimate for $\mathcal Q_j$]\label{L:QJ}
%Let $\phi$ be a solution to~\eqref{E:PHIEQUATION2} with $\tilde\E(0)\le \varepsilon$. 
Assume the same as in Theorem~\ref{T:HIGHORDER}. 
Then the following energy bound holds:
\begin{align}\label{E:QJBOUND}
\Big|\int_0^\tau\mathcal Q_j\,d\sigma\Big|  \le \nu \int_0^\tau\tilde\D(\sigma)\,d\sigma + C_\nu  \int_0^\tau e^{-\beta_2\sigma}\tilde\E (\sigma)d\sigma , \ \ \tau\in[0,T]
%\sup_{0\le \sigma\le \tau}\tilde\E^2(\sigma)
\end{align}
for any $\nu>0$. Here $\beta_2>0$ is a constant defined in \eqref{E:TILDELAMBDABOUND}. 

\end{lemma}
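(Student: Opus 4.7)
The strategy is to split each summand in \eqref{E:QJ} via Cauchy--Schwarz so that the ``temporal'' factor $\S^j\phi_\tau$ can be absorbed into the dissipation $\tilde{\mathcal D}$ through an $\epsilon$-Young inequality, while the remaining ``spatial'' factor is controlled by the energy $\tilde{\mathcal E}$ together with the exponential decay coming from the lower bound $\tilde\lambda_\tau(\tau)\gtrsim e^{\beta_2\tau}$ implied by \eqref{E:DELTA1}--\eqref{E:TILDELAMBDABOUND}. Concretely, I would first estimate
\[
|\mathcal Q_j|\le\frac{\nu\tilde\lambda_\tau}{2}\|\S^j\phi_\tau\|_{\delta,2j}^2+\frac{C}{\nu\,\tilde\lambda_\tau}\sum_{\ell=0}^{j-1}\bigl\|\S^\ell\bigl(a_{j-\ell}\S^{j-\ell}\phi+b_{j-1-\ell}\partial_z\S^{j-1-\ell}\phi\bigr)\bigr\|_{\delta,2j}^2,
\]
and then integrate in $\tau$: the first term produces $\nu\int_0^\tau\tilde{\mathcal D}(\sigma)\,d\sigma$ by the definition \eqref{dissipation}, while in the second term one uses $1/\tilde\lambda_\tau(\sigma)\lesssim e^{-\beta_2\sigma}$.

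The heart of the matter is then the purely spatial bound
\[
\sum_{\ell=0}^{j-1}\bigl\|\S^\ell\bigl(a_{j-\ell}\S^{j-\ell}\phi+b_{j-1-\ell}\partial_z\S^{j-1-\ell}\phi\bigr)\bigr\|_{\delta,2j}^2\lesssim\tilde{\mathcal E}(\tau).
\]
I would prove it by iterating the Leibniz identity \eqref{E:PRODUCTRULE} to expand $\S^\ell(fg)$ into a finite sum of products (derivatives of $f$) $\cdot$ (derivatives of $g$) of total spatial order $2\ell$. With $f=a_{j-\ell}$ or $f=b_{j-1-\ell}$, the pointwise and $\S$-norm bounds from Lemma \ref{L:AKSMOOTH} dispose of the coefficient factors; the vanishing $b_k(z)=O(z)$ near the origin is exactly what is needed to absorb any inverse power of $z$ generated by the expansion, and this is already packaged into the $L^\infty$-bound for $z^{2i+1}b_k$ in Lemma \ref{L:AKSMOOTH}. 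The remaining factors carry at most $2j$ spatial derivatives of $\phi$ and, via Lemma \ref{L:NORMENERGY}, can be converted into the $\|\S^m\phi\|_{\delta,2m}$- and $\|\partial_z\S^m\phi\|_{\delta,2m+1}$-norms making up $\tilde{\mathcal E}$ in \eqref{E:ENERGYLINEAR}. Passing between weights $L^2_{\delta,2\ell'}$ for different $\ell'\le j$ costs only a harmless uniform constant since $w_\delta$ is bounded on $[0,1]$.

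The main obstacle I anticipate is the bookkeeping of this Leibniz expansion: the operator $\S$ does not commute with $\partial_z$, and a naive iteration of $[\S,\partial_z]\sim z^{-2}\partial_z$ threatens to produce unbounded inverse powers of $z$ acting on derivatives of $\phi$. The saving feature is that the coefficients $a_k,b_k$ computed in \eqref{E:AK}--\eqref{E:BK} possess precisely the parity and vanishing structure at $z=0$ summarised in Lemma \ref{L:AKSMOOTH}, so that each $1/z^m$-factor arising in the expansion can be paired with a matching factor of $z^m$ coming from $b_k$ (or, for the $a_k$-piece, handled directly since $a_k$ is smooth and even at $z=0$). Once this pairing is performed term-by-term, the estimate closes using Lemma \ref{L:NORMENERGY} together with the weighted Hardy and embedding inequalities of Appendix \ref{A:B}.
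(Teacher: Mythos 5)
Your proposal is correct and follows essentially the same route as the paper: a Young inequality with parameter $\nu$ absorbing the $\mathcal{S}^j\phi_\tau$ factor into $\tilde{\mathcal D}$, the lower bound $\tilde\lambda_\tau\gtrsim e^{\beta_2\tau}$ supplying the decaying prefactor, and the spatial factors controlled via the Leibniz expansion of $\mathcal{S}^\ell$, the coefficient bounds of Lemma~\ref{L:AKSMOOTH} (with the $O(z)$ vanishing of $b_k$ compensating the inverse powers of $z$), and Lemma~\ref{L:NORMENERGY} converting $\partial_z\mathcal{S}^{j-1}\phi/z$ into $\|\mathcal{S}^j\phi\|_{\delta,2j}$. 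The only difference is cosmetic: the paper applies the product rule first and then Young's inequality term by term, whereas you apply Cauchy--Schwarz globally and then estimate the whole spatial factor at once.
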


%%%%%%%%%%%%%%%%%%%%%%%%%%%%%%%%%%%%%%

\begin{proof}
Applying the product rule~\eqref{E:PRODUCTRULE}
it is easy to see that the top order error term appearing in the first sum on the right-hand side of~\eqref{E:QJ} is of the form
\be\label{E:TOPORDER}
-\sum_{\ell=1}^j\left(a_{j-\ell}\S^j\phi,\S^j\phi_\tau\right)_{\delta,2j}.
\ee
We apply the Young inequality to estimate~\eqref{E:TOPORDER}:
\begin{align*}
\sum_{\ell=0}^{j-1}\Big|(a_{j-\ell}(z)\mathcal{S}^j\phi,\mathcal{S}^j\phi_\tau)_{\delta,2j} \Big|
& \le   \nu \frac{\tilde\lambda _\tau}2 \|\S^j\phi_\tau\|_{\delta,2j}^2 + C\frac{\sum_{\ell=1}^j\|a_{j-\ell}\|_{L^\infty([0,1])}}{\nu \tilde\lambda _\tau }\|\S^{j}\phi\|_{\delta,2j}^2  \\
& \le  \nu\tilde{ \mathcal D} + C_\nu e^{-\beta_2 \tau} \|\S^{j}\phi\|_{\delta,2j}^2. 
\end{align*} 
The remaining terms appearing in the first sum on the right-hand side of~\eqref{E:QJ} are all below-top order. To illustrate this, consider the case $\ell=1$ in the first sum on the right-hand side of~\eqref{E:QJ}:
\begin{align}
\left(\mathcal S(a_{j-1}\mathcal S^{j-1}\phi)\, , \, \mathcal S^j\phi_\tau\right)_{\delta,2j} = & 
\left(\mathcal Sa_{j-1}\mathcal S^{j-1}\phi\, , \, \mathcal S^j\phi_\tau\right)_{\delta,2j} \notag \\
& + 2\left(\pa_za_{j-1}\pa_z\mathcal S^{j-1}\phi\,,\,\mathcal S^j\phi_\tau\right)_{\delta,2j} 
+\left(a_{j-1}\S^j\phi \, , \, \S^j\phi_\tau\right)_{\delta,2j}. \label{E:BELOWTOP}
\end{align} 
Observe that we used the product rule~\eqref{E:PRODUCTRULE} on the right-hand side above.
The last term on the right-hand side of~\eqref{E:BELOWTOP} has already been bounded above. 
The first term on the right-hand side of~\eqref{E:BELOWTOP} is bounded using the H\"older ineqaulity
\begin{align*}
\Big|\left(\mathcal Sa_{j-1}\mathcal S^{j-1}\phi\, , \, \mathcal S^j\phi_\tau\right)_{\delta,2j}\Big| 
& \lesssim \|\S a_{j-1} w_\delta \|_{\infty}\|\S^{j-1}\phi\|_{\delta, 2j-2}\|\S^j\phi_\tau\|_{\delta,2j} \\
&\le \nu\tilde{ \mathcal D} + C_\nu e^{-\beta_2 \tau}  \tilde\E,
\end{align*}
where we used Lemma~\ref{L:AKSMOOTH} to infer that $ \|\S a_{j-1} w_\delta \|_{\infty}\lesssim1$ and the definition of $\tilde\E$ and $\tilde\D$.
To bound the second term on right-hand side of~\eqref{E:BELOWTOP}, we use~\eqref{E:COERCIVITYFIRST} with $\varphi = \S^{j-1}\phi$. We thus obtain
\begin{align*}
\Big|\left(\pa_za_{j-1}\pa_z\mathcal S^{j-1}\phi\,,\,\mathcal S^j\phi_\tau\right)_{\delta,2j} \Big|
&\lesssim \|\pa_za_{j-1}z{w_\delta}^{1/2}\|_{\infty}\| \frac{\pa_z\S^{j-1}\phi}{z}\|_{\delta,2j-1}\|\S^j\phi_\tau\|_{\delta,2j}\\
&%\lesssim \| {\S^{j}\phi}\|_{\delta,2j}\|\S^j\phi_\tau\|_{\delta,2j} 
\le \nu\tilde{ \mathcal D} + C_\nu e^{-\beta_2 \tau}  \tilde\E,
\end{align*}
just like in the previous estimate. Combining the last three bounds we conclude that
 \begin{align}
\Big|\left(\mathcal S(a_{j-1}\mathcal S^{j-1}\phi)\, , \, \mathcal S^j\phi_\tau\right)_{\delta,2j} \Big|
\le \nu\tilde{ \mathcal D} + C_\nu e^{-\beta_2 \tau}  \tilde\E.
\end{align}
Analogously to the above estimate, using Lemmas~\ref{L:AKSMOOTH}, \ref{L:NORMENERGY}, the H\"older inequality and the Young inequality we finally infer
that 
\[
\begin{split}
\Big|\sum_{\ell=0}^{j-1}\left(\S^\ell\left(a_{j-\ell}\S^{j-\ell}\phi\right)\,,\,\S^j\phi_\tau\right)_{\delta,2j}\Big|
&\le  \nu\tilde{ \mathcal D} + C_\nu e^{-\beta_2 \tau}\sum_{\ell=0}^j\|\S^{\ell}\phi\|_{\delta,2\ell}^2 \\
&\le \nu\tilde{ \mathcal D} + C_\nu e^{-\beta_2 \tau}  \tilde\E . 
\end{split}
\]

Analogously, for any $\ell\in\{1,\dots,j\}$ 
we first estimate the top-order term 
\begin{align*}
\Big|(b_{j-1-\ell}(z)\g_z\S^{j-1}\phi , \mathcal{S}^j\phi_\tau)_{\delta,2j}\Big| & \le\nu \frac{\tilde\lambda _\tau}2\|\S^j\phi_\tau\|_{\delta,2j}^2 + C_\nu\frac{\|zb_{j-1-\ell}w_\delta^{1/2}\|^2_{L^\infty([0,1])}}{\tilde\lambda _\tau }
\|\frac{\pa_z\S^{j-1}\phi}{z}\|_{\delta,2j-1}^2 \\
%& \le \mu \mathcal D + C e^{-\beta_2 \tau}\|\pa_z\S^{j-1}\phi\|_{\delta,2j-1}^2. \\
& \le \nu \tilde{\mathcal D} + C_\nu e^{-\beta_2 \tau} \tilde\E  .
\end{align*}
Using the Young inequality and Lemmas~\ref{L:AKSMOOTH}, \ref{L:NORMENERGY}, it is straightforward to check that the remaining below-top-order terms in the first sum 
on the right-hand side of~\eqref{E:QJ} are also bounded by  $\nu \tilde{\mathcal D} + C_\nu e^{-\beta_2 \tau} \tilde\E $.
Therefore, using the above bounds and integrating with respect to $\tau$, we obtain the bound~\eqref{E:QJBOUND}.
\end{proof}

%%%%%%%%%%%%%%%%%%%%%%%%%%%%%%%%%%%%%%%

%%%%%%%%%%%%%%%%%%%%%%%%%%%%%%%%%%%%%%%
%%%%%%%%%%%%%%%%%%%%%%%%%%%%%%%%%%%%%%%

\subsubsection{Convenient representation of $N_\delta$ and commutator identities}

We shall rewrite the quadratic nonlinearity $N_\delta[\cdot]$ defined in~\eqref{E:NONLINEARITY} in a slightly different form, more conducive to our energy estimates.
To that end, for any $k\in\mathbb N\cup\{0\}$ we define the following linear operator:
\[
M_{\delta,k}\phi : =  \frac{1}{w_\delta^{3+k}z}\pa_z\left(\frac{w_\delta^{4+k}}{z^2}\pa_z\left(z^3\phi\right)\right).
\]
It is straightforward to check that the operator $M_{\delta,k}$ is symmetric with respect to the inner product $(\cdot,\cdot)_{\delta,k}$: % introduced in Definition~\ref{D:DELTAJSPACES}:
\begin{align*}
\left(M_{\delta,k} \phi_1\, , \, \phi_2\right)_{\delta, k} = - \int_0^1 \frac{w_\delta^{4+k}}{z^2}\pa_z(z^3 \phi_1)\pa_z(z^3\phi_2)\,dz, \ \ \phi_1,\phi_2 \in {H}_{\delta,k}^1. 
\end{align*}

With the aid of $M_{\delta,k}$ the nonlinearity $N_\delta[\phi]$~\eqref{E:NONLINEARITY} can be rewritten in the following form:
\begin{align}
N_\delta[\phi] = &p_0(\phi) + \frac{w_\delta'}{z} p_1(\phi) + p_2(\phi)M_{\delta,0}\phi + p_3(\phi) M_{\delta,0}(p_4(\phi)) + Q[\phi,\phi] \label{E:NEWSTRUCTUREN}.
%& + \frac{1}{w_\delta^3z}p_5(\phi)\pa_z\left[w_\delta^4\left(\frac1{z^2}\pa_z\left(z^3p_6(\phi)\right)\right)^2 \int_0^1\left(1-\theta +\frac\theta{z^2}\pa_z\left(z^3p_6(\phi)\right)\right)^{\frac{10}3}\,d\theta\right] , 
\end{align}
Here 
\[
Q[\phi,\phi]: =\frac{1}{w_\delta^3z}p_5(\phi)\pa_z\left[w_\delta^4\left(\frac1{z^2}\pa_z\left(z^3p_6(\phi)\right)\right)^2 \int_0^1\left(1-\theta +\frac\theta{z^2}\pa_z\left(z^3p_6(\phi)\right)\right)^{\frac{10}3}\,d\theta\right] 
\]
and rational polynomials $p_i,$ $i=0,\dots,6$ are  defined just before~\eqref{E:POLYNOMIALS}.

%%%%%%%%%%%%%%%%%%%%%%%%%%%%%%%%%%%%%%%%

\begin{lemma}[Commutator identities for $M_{k,\delta}$ and $\S$]
For any $k\in\mathbb N$ the following commutation property holds:
\begin{align}
\S M_{\delta,k} \phi = M_{\delta,k+2}\S \phi + \left(\alpha_k \S + \beta_k\pa_z + \gamma_k\right)\phi,
\end{align}
where the functions $\alpha_k, \beta_k, \gamma_k$ are given by
\begin{align}
\alpha_k &: = (9+2k)w_\delta'' + 4\frac{w_\delta'}{z} \label{E:AK2}\\
\beta_k& : = -3(4+k)\pi w_\delta^2 w_\delta' \label{E:BK2}\\
\gamma_k &: = -9(4+k)\pi w_\delta^2 w_\delta' \label{E:CK2}
\end{align}
\end{lemma}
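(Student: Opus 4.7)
The plan is to exploit the elementary identity
\[
M_{\delta,k}\phi \;=\; \frac{3(4+k)\,w_\delta'}{z}\,\phi \;-\; \frac{3}{4}\,\mathcal{L}_{\delta,k}\phi,
\]
which one verifies by substituting $\partial_z(z^3\phi)=3z^2\phi+z^3\phi_z$ inside the definition of $M_{\delta,k}$ and comparing with Definition~\ref{D:OPERATORS}. Applying this both to $M_{\delta,k}\phi$ and to $M_{\delta,k+2}\mathcal{S}\phi$, the identity we wish to establish becomes
\[
\mathcal{S} M_{\delta,k}\phi - M_{\delta,k+2}\mathcal{S}\phi
\;=\; \mathcal{S}\!\left[\frac{3(4+k)w_\delta'}{z}\phi\right] \;-\; \frac{3(6+k)w_\delta'}{z}\mathcal{S}\phi \;-\; \frac{3}{4}\bigl(\mathcal{S}\mathcal{L}_{\delta,k} - \mathcal{L}_{\delta,k+2}\mathcal{S}\bigr)\phi,
\]
so the problem is reduced to computing the two commutators on the right-hand side.

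The first commutator is handled directly by the product rule~\eqref{E:PRODUCTRULE} applied to $\mathcal{S}\!\left[\tfrac{3(4+k)w_\delta'}{z}\,\phi\right]$: this immediately produces a linear combination of $\phi$, $\phi_z$, and $\mathcal{S}\phi$, whose pure $\phi$-coefficient equals $\mathcal{S}\!\bigl(\tfrac{3(4+k)w_\delta'}{z}\bigr)$. For the second commutator one observes that the proof of Lemma~\ref{L:SLCOMMUTATOR} is purely algebraic and never uses the parity of the index; consequently that proof goes through verbatim for arbitrary $k\in\mathbb{N}$ in place of $2k$, giving a formula of exactly the same shape, with coefficients built out of $w_\delta, w_\delta', w_\delta''$ and $w_\delta'''$. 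Summing the two contributions produces a preliminary expression of the required form $\tilde\alpha_k\mathcal{S}\phi + \tilde\beta_k\partial_z\phi + \tilde\gamma_k\phi$ with explicit, though somewhat unwieldy, coefficients involving $w_\delta''',\ w_\delta''/z,\ w_\delta'/z^2,\ w_\delta'/z^3$.

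The final step is to clean up these coefficients using the generalized Lane--Emden equation~\eqref{E:LE} and its $z$-derivative
\[
w_\delta''' + \frac{2w_\delta''}{z} - \frac{2w_\delta'}{z^2} + 3\pi w_\delta^2 w_\delta' \;=\; 0.
\]
Substituting $w_\delta''' = -\tfrac{2w_\delta''}{z} + \tfrac{2w_\delta'}{z^2} - 3\pi w_\delta^2 w_\delta'$ every time it appears, all the terms in $\tilde\beta_k$ and $\tilde\gamma_k$ that involve negative powers of $z$ cancel exactly, leaving a clean polynomial expression proportional to $\pi w_\delta^2 w_\delta'$, while the coefficient of $\mathcal{S}\phi$ collapses to the asserted $\alpha_k$. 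The main obstacle, and the only subtle point, is this bookkeeping: one must track how the singular $z^{-1},z^{-2},z^{-3}$ contributions produced by differentiating $w_\delta'/z$ inside the first term conspire with the analogous contributions in $\tilde a_k,\tilde b_k$ coming from Lemma~\ref{L:SLCOMMUTATOR}, so that after the Lane--Emden substitution they telescope precisely. Apart from this purely computational check, the argument runs in strict parallel with the proof of Lemma~\ref{L:SLCOMMUTATOR}.
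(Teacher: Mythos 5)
Your decomposition $M_{\delta,k}=\frac{3(4+k)w_\delta'}{z}-\frac34\mathcal L_{\delta,k}$ is correct, and reducing the claim to the two commutators $[\S,\tfrac{3(4+k)w_\delta'}{z}]$ and $[\S,\mathcal L_{\delta,k}]$ is a genuinely different and cleaner route than the paper's, which simply redoes from scratch a direct calculation of the type in Lemma~\ref{L:SLCOMMUTATOR}. Your observation that the proof of Lemma~\ref{L:SLCOMMUTATOR} is blind to the parity of the index is also right, and the differentiated Lane--Emden identity you quote is exactly the correct cleanup tool. As a method, the argument is sound.

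The problem is the last step, which you yourself flag as ``the only subtle point'' and then do not carry out: the bookkeeping does \emph{not} collapse to the printed $\alpha_k$ and $\gamma_k$. Running your own scheme to the end: the $\S\phi$-coefficient produced by the $\mathcal L$-commutator is $(9+2k)w_\delta''+\frac{4w_\delta'}{z}$, but the multiplication-operator part contributes the extra term $\bigl(\frac{3(4+k)w_\delta'}{z}-\frac{3(6+k)w_\delta'}{z}\bigr)\S\phi=-\frac{6w_\delta'}{z}\S\phi$, so the true coefficient is $(9+2k)w_\delta''-\frac{2w_\delta'}{z}$ rather than $(9+2k)w_\delta''+\frac{4w_\delta'}{z}$. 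Likewise the pure $\phi$-coefficient is $\S\bigl(\tfrac{3(4+k)w_\delta'}{z}\bigr)=\frac{3(4+k)}{z}\bigl(w_\delta'''+\frac{2w_\delta''}{z}-\frac{2w_\delta'}{z^2}\bigr)=-\frac{9(4+k)\pi w_\delta^2w_\delta'}{z}$, i.e.\ the printed $\gamma_k$ is off by a factor $\frac1z$; only $\beta_k$ comes out as stated. (A sanity check with $\phi=z^2$, for which $\S\phi=10$ and $M_{\delta,k}\phi=10w_\delta+5(4+k)zw_\delta'$, confirms the corrected coefficients and rules out the printed ones.) Either you correct the target formulas --- the discrepancy is harmless downstream, since $w_\delta'/z$ is smooth up to $z=0$ by Lemma~\ref{lem:w}, so the corrected $\alpha_k,\gamma_k$ still satisfy the uniform boundedness used in all later estimates --- or, if the printed coefficients are taken as the statement to be proved, your argument has a genuine unclosed gap precisely at the cancellation you asserted without verification.
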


%%%%%%%%%%%%%%%%%%%%%%%%%%%%%%%%%%%%%%%%

\begin{proof}
The proof is a direct calculation analogous to the proof of Lemma~\ref{L:SLCOMMUTATOR} 
and the relationship~\eqref{E:LE}.
\end{proof}

%%%%%%%%%%%%%%%%%%%%%%%%%%%%%%%%%%%%%%%%

\begin{lemma}[Uniform boundedness of commutator coefficients]
For any $k\in\mathbb N$ functions $\alpha_k, \beta_k, \gamma_k$ from the previous lemma are smooth on $[0,1]$ and for any $j\in\mathbb N$ there exists a $C_{jk}>0$ such that 
\[
\sum_{\ell=0}^j\left(\|\pa_z^{2\ell}\zeta\|_{L^\infty([0,1])} + \|\S^\ell \zeta\|_{L^\infty([0,1])} \right) \le C_{jk}, \ \ \zeta= \alpha_k, \beta_k, \gamma_k.
\]
\end{lemma}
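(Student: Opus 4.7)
The plan is to follow the same strategy as in Lemma~\ref{L:AKSMOOTH}, using the analytic input on $w_\delta$ supplied by Lemma~\ref{lem:w}: $w_\delta\in C^\infty(0,1)$, $w_\delta$ is analytic near $z=0$ and $z=1$, and its Taylor expansion at the origin is purely even, $w_\delta(z)=A_1+A_2 z^2 + A_4 z^4+O(z^6)$. Termwise differentiation then gives $w_\delta'(z)=2A_2 z+O(z^3)$ (odd), $w_\delta''(z)=2A_2+O(z^2)$ (even), and, most importantly, $w_\delta'(z)/z=2A_2+O(z^2)$, which is analytic at $z=0$ with an even expansion.

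From this the smoothness of $\alpha_k,\beta_k,\gamma_k$ on $[0,1]$ is immediate. On $(0,1)$ each is a polynomial combination of $w_\delta$ and its first two derivatives; near $z=1$ smoothness is inherited from the analyticity of $w_\delta$ there; at $z=0$ the only delicate term is the quotient $w_\delta'/z$ appearing in $\alpha_k$, which is smooth by the expansion above. Thus $\alpha_k\in C^\infty([0,1])$ with an even Taylor expansion at $0$, while $\beta_k$ and $\gamma_k$, being scalar multiples of the product $w_\delta^2 w_\delta'$ of the even function $w_\delta^2$ and the odd function $w_\delta'$, are smooth on $[0,1]$ with an odd Taylor expansion at $0$. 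The bound $\|\partial_z^{2\ell}\zeta\|_{L^\infty([0,1])}\le C_{jk}$ for $\zeta\in\{\alpha_k,\beta_k,\gamma_k\}$ then follows by compactness of $[0,1]$, with $C_{jk}$ controlled by $\|w_\delta\|_{C^{2j+3}([0,1])}$, a quantity that is finite and, thanks to Proposition~\ref{FuLin} and the smooth dependence of $w_\delta$ on $\delta$, bounded uniformly on a small neighborhood of $\delta=0$.

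For the $\S^\ell$ bounds I would argue by induction on $\ell$, using $\S=\partial_z^2+\frac{4}{z}\partial_z$ together with the product and chain rules~\eqref{E:PRODUCTRULE}--\eqref{E:CHAINRULE}. The key observation is that on a smooth function that is even at the origin, $f(z)=\sum_{m\geq 0}a_m z^{2m}$, one computes $\S f(z)=\sum_{m\geq 1}(4m^2+6m)a_m z^{2m-2}$, which is again smooth and even; the iterates $\S^\ell\alpha_k$ therefore remain smooth on $[0,1]$ and are uniformly bounded. I expect the main obstacle to be the treatment of $\S^\ell\beta_k$ and $\S^\ell\gamma_k$: since their Taylor expansions at $z=0$ are odd with nonzero linear coefficient, a direct application of $\frac{4}{z}\partial_z$ produces a $1/z$ singularity at the origin---precisely the obstruction that forced the insertion of the factor $z^{2i+1}$ in front of $b_k$ in Lemma~\ref{L:AKSMOOTH}. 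My plan is to apply the same device, writing $\beta_k(z)=z\tilde\beta_k(z)$ and $\gamma_k(z)=z\tilde\gamma_k(z)$ with $\tilde\beta_k,\tilde\gamma_k$ smooth and even at $0$, and then to exploit the product rule~\eqref{E:PRODUCTRULE} iteratively; all terms that arise are smooth on $[0,1]$ and the induction closes, yielding the stated bound with $C_{jk}$ depending only on finitely many $C^{m}$-norms of $w_\delta$ on $[0,1]$.
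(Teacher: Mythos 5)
Your treatment of $\alpha_k$ and of the pure $\partial_z^{2\ell}$-bounds is correct and is precisely the content behind the paper's one-line proof: smoothness follows from the analyticity of $w_\delta$ at both endpoints and the evenness of its Taylor expansion at $z=0$ (Lemma~\ref{lem:w}), which makes $w_\delta'/z$ smooth and even, and $\S$ maps even analytic germs at $z=0$ to even analytic germs, so the bounds on $\S^\ell\alpha_k$ close exactly as you say.

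The final step, however, has a genuine gap, and the obstruction you correctly diagnose cannot be removed by the device you propose. Writing $\beta_k=z\tilde\beta_k$ with $\tilde\beta_k$ smooth and even and applying \eqref{E:PRODUCTRULE} gives
\[
\S\beta_k=(\S z)\,\tilde\beta_k+z\,\S\tilde\beta_k+2\,\partial_z z\,\partial_z\tilde\beta_k
=\frac{4\,\tilde\beta_k(z)}{z}+z\,\S\tilde\beta_k+2\,\partial_z\tilde\beta_k,
\]
and the first term is unbounded as $z\to0^+$ unless $\tilde\beta_k(0)=0$. But $\tilde\beta_k(0)=\beta_k'(0)=-6(4+k)\pi\,w_\delta(0)^2 A_2$ with $2A_2=w_\delta''(0)=-\tfrac{\delta}{4}-\tfrac{\pi}{3}w_\delta(0)^3\neq0$ for $|\delta|$ small, by \eqref{E:LE}. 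Hence $\S\beta_k\sim 4\beta_k'(0)/z$ near the origin, $\|\S\beta_k\|_{L^\infty([0,1])}=\infty$, and likewise for $\gamma_k$; your assertion that ``all terms that arise are smooth on $[0,1]$'' fails and the induction does not close. This is as much a defect of the statement as of your argument: the correct formulation is the one of Lemma~\ref{L:AKSMOOTH}, where the odd coefficient $b_k$ is only ever estimated after multiplication by $z^{2i+1}$, which restores evenness and hence $\S$-stability, and in the downstream estimates (compare $\|zb_{j-1-\ell}w_\delta^{1/2}\|_{L^\infty([0,1])}$ in the proof of Lemma~\ref{L:QJ}, and the $z^4$ weights in $(\cdot,\cdot)_{\delta,2j}$) the odd coefficients always appear with compensating powers of $z$. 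To repair your proof, prove the stated bound for $\zeta=\alpha_k$, $z^{2i+1}\beta_k$, $z^{2i+1}\gamma_k$ --- for which your parity induction does close --- and check that only such weighted quantities are invoked in the energy estimates.
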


%%%%%%%%%%%%%%%%%%%%%%%%%%%%%%%%%%%%%%%%

\begin{proof}
The proof is a direct consequence of the formulas~\eqref{E:AK2}--\eqref{E:CK2}, the generalized Lane-Emden equation~\eqref{E:LE} and the smoothness properties of $w_\delta$ near $0$ stated in
Lemma~\ref{lem:w}.
\end{proof}

%%%%%%%%%%%%%%%%%%%%%%%%%%%%%%%%%%%%%%%%

\begin{lemma}[High-order commutators]\label{L:COMMUTATORSHIGH}
For any $j\in\mathbb N$ the following commutation property holds:
\begin{align}
\S^j M_{\delta,0} \phi = M_{\delta,2j}\S^j \phi +\sum_{\ell=0}^{j-1}\S^\ell\left[\left(\alpha_{j-\ell}\S+ \beta_{j-\ell}\g_z + \gamma_{j-\ell}\right)\S^{j-1-\ell}\phi\right]
\end{align}
where the functions $\alpha_k, \beta_k, \gamma_k$ are given by~\eqref{E:AK2}--\eqref{E:CK2}.
\end{lemma}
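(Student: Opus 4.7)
The plan is to prove the identity by induction on $j$, using the single-step commutator identity of the preceding lemma as the engine. The base case $j=1$ is exactly that lemma specialized to $k=0$: the sum collapses to its single term $\ell=0$, yielding $\S M_{\delta,0}\phi = M_{\delta,2}\S\phi + (\alpha_1\S+\beta_1\pa_z+\gamma_1)\phi$, which matches (up to the relabelling convention discussed below) the content of the previous lemma.

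For the inductive step, assume the identity holds at level $j$. Applying $\S$ to both sides gives
\begin{equation*}
\S^{j+1}M_{\delta,0}\phi \;=\; \S\bigl(M_{\delta,2j}\S^j\phi\bigr) \;+\; \sum_{\ell=0}^{j-1}\S^{\ell+1}\bigl[(\alpha_{j-\ell}\S+\beta_{j-\ell}\pa_z+\gamma_{j-\ell})\S^{j-1-\ell}\phi\bigr].
\end{equation*}
The single-step commutator of the preceding lemma, applied with $k=2j$, produces
\begin{equation*}
\S M_{\delta,2j}\S^j\phi \;=\; M_{\delta,2(j+1)}\S^{j+1}\phi \;+\; \bigl(\alpha_{2j}\S+\beta_{2j}\pa_z+\gamma_{2j}\bigr)\S^j\phi,
\end{equation*}
which supplies the main term $M_{\delta,2(j+1)}\S^{j+1}\phi$ at level $j+1$ together with a new $\ell=0$ contribution to the remainder sum. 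Shifting the summation index $\ell\mapsto\ell-1$ in the remaining sum packages the rest into $\sum_{\ell=1}^{j}\S^\ell\bigl[(\alpha_{(j+1)-\ell}\S+\beta_{(j+1)-\ell}\pa_z+\gamma_{(j+1)-\ell})\S^{j-\ell}\phi\bigr]$, giving the claim at level $j+1$.

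The main book-keeping subtlety, which I see as the only real obstacle, is the indexing convention for the coefficients $\alpha_m,\beta_m,\gamma_m$. Strictly speaking, the single-step lemma produces coefficients whose explicit formulas \eqref{E:AK2}--\eqref{E:CK2} depend on the value of $k$ appearing in $M_{\delta,k}$, so the new $\ell=0$ term carries $\alpha_{2j}$ rather than $\alpha_{j+1}$. I will therefore interpret the symbols $\alpha_m,\beta_m,\gamma_m$ in the lemma's statement as a \emph{generic} notation for smooth bounded functions of $z$ built from $w_\delta$ and its derivatives, and simply redefine the coefficient triple at each stage of the induction. The preceding uniform-boundedness lemma guarantees that all such redefined coefficients, together with any of their $\S^\ell$- or $\pa_z^{2\ell}$-derivatives, remain in $L^\infty([0,1])$ uniformly in $\delta$, which is the only property that matters for the downstream energy estimates. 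With this understanding the induction closes cleanly, since $\S$ acting on a product of such a coefficient and a lower-order operator applied to $\phi$ produces, via the product rule \eqref{E:PRODUCTRULE}, only terms of the same structural form.
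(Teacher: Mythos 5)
Your induction on $j$, iterating the single-step commutator lemma with $k=2j$ at each stage, is exactly the argument the paper intends (it states Lemma~\ref{L:COMMUTATORSHIGH} without proof as a direct consequence of the preceding lemma), and your bookkeeping closes correctly. You are also right that the literal coefficient indices produced by the iteration are $\alpha_{2j},\beta_{2j},\gamma_{2j}$ rather than $\alpha_{j+1-\ell}$ as written in the statement; since the only property used downstream is the uniform boundedness of these coefficients and their $\S^\ell$- and $\pa_z$-derivatives, treating them as generic admissible coefficients, as you do, is the correct resolution.
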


%%%%%%%%%%%%%%%%%%%%%%%%%%%%%%%%%%%%%%%%

%%%%%%%%%%%%%%%%%%%%%%%%%%%%%%%%%%%%%%%%

%%%%%%%%%%%%%%%%%%%%%%%%%%%%%%%%%%%%%%%%%%%%%%%%%%%%%%%%%%
%%%%%%%%%%%%%%%%%%%%%%%%%%%%%%%%%%%%%%%%%%%%%%%%%%%%%%%%%%

\subsubsection{Energy estimates for $\mathcal C_j$}

%%%%%%%%%%%%%%%%%%%%%%%%%%%%%%%%%%%%%%%

\begin{lemma}[Estimate for $\mathcal C_j$]\label{L:CJ}
Assume the same as in Theorem~\ref{T:HIGHORDER}. Then the following energy bound holds:
\begin{align}\label{E:CJESTIMATE}
\Big|\int_0^\tau\mathcal C_j\,d\sigma\Big|  \lesssim \tilde\E(0)+\left(\nu+\sup_{0\le\sigma\le \tau}\sqrt{\tilde\E}(\sigma)\right)\int_0^\tau\tilde\D(\sigma)\,d\sigma + \sup_{0\le \sigma\le \tau}\left(  \tilde\E^\frac{3}{2}(\sigma)  + \tilde\E^2(\sigma) \right), \ \ \tau\in[0,T]
\end{align}
for any $\nu>0$. 
\end{lemma}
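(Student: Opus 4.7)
\medskip
\noindent\textbf{Proof plan for Lemma~\ref{L:CJ}.}
The strategy is to exploit the decomposition~\eqref{E:NEWSTRUCTUREN} of $N_\delta$ and treat each of the five groups of terms separately. For the pure polynomial pieces $p_0(\phi)$ and $(w_\delta'/z)p_1(\phi)$, for the strictly cubic piece $p_3(\phi)M_{\delta,0}(p_4(\phi))$, and for $Q[\phi,\phi]$, I expect to obtain a bound of the form $\sup_\sigma \tilde{\mathcal E}^{3/2}(\sigma)+\sup_\sigma\tilde{\mathcal E}^2(\sigma)$ by a routine application of the Leibniz rule, the H\"older inequality, the weighted Hardy and Sobolev embedding lemmas from Appendix~\ref{A:B}, and Lemma~\ref{L:NORMENERGY} to convert the resulting spatial $\partial_z$-norms back to $\mathcal S$-norms. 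The cubicity of these terms (guaranteed by $p_i(0)=p_i'(0)=0$ for $i\in\{0,1,4\}$ and the structure of $Q$) will produce a factor $\sqrt{\tilde{\mathcal E}}$ in front of the quadratic part, which combined with Cauchy--Schwarz in $z$ gives the claimed $\sup\tilde{\mathcal E}^{3/2}$ contribution.

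The delicate term is the quadratic quasilinear contribution $p_2(\phi)M_{\delta,0}\phi$, because $p_2$ is only at least linear so the term is genuinely of the same order as the energy. I would apply $\mathcal S^j$ and use Lemma~\ref{L:COMMUTATORSHIGH} together with the Leibniz rule to split
\begin{equation*}
\mathcal S^j\bigl(p_2(\phi)M_{\delta,0}\phi\bigr)=p_2(\phi)\,M_{\delta,2j}\mathcal S^j\phi+R_j,
\end{equation*}
where $R_j$ gathers all lower-order commutator terms and all terms in which at least one derivative has fallen on $p_2(\phi)$. For the top-order factor I pair with $\mathcal S^j\phi_\tau$, use the explicit formula for $M_{\delta,2j}$, and integrate by parts in $z$ to obtain
\begin{equation*}
\bigl(p_2(\phi)M_{\delta,2j}\mathcal S^j\phi,\mathcal S^j\phi_\tau\bigr)_{\delta,2j}=-\int_0^1 \partial_z\bigl(p_2(\phi)\,z^3\mathcal S^j\phi_\tau\bigr)\,\frac{w_\delta^{4+2j}}{z^2}\,\partial_z(z^3\mathcal S^j\phi)\,dz.
\end{equation*}
The piece where $\partial_z$ hits $p_2(\phi)$ costs $\|\partial_z p_2(\phi)\|_\infty\lesssim\sqrt{\tilde{\mathcal E}}$ by Sobolev embedding and, after Cauchy--Schwarz and Young, is absorbed into $\nu\tilde{\mathcal D}+C_\nu\sqrt{\tilde{\mathcal E}}\tilde{\mathcal D}$. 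The remaining piece symmetrizes to a perfect $\tau$-derivative,
\begin{equation*}
-\tfrac{1}{2}\partial_\tau\!\!\int_0^1\! p_2(\phi)\frac{w_\delta^{4+2j}}{z^2}\bigl(\partial_z(z^3\mathcal S^j\phi)\bigr)^2 dz\;+\;\tfrac{1}{2}\!\int_0^1\! \partial_\tau p_2(\phi)\frac{w_\delta^{4+2j}}{z^2}\bigl(\partial_z(z^3\mathcal S^j\phi)\bigr)^2 dz.
\end{equation*}
After integrating over $[0,\tau]$, the boundary contributions are controlled by $\|p_2(\phi)\|_\infty\tilde{\mathcal E}\lesssim\sup\tilde{\mathcal E}^{3/2}$ at $\sigma=\tau$ and by $\tilde{\mathcal E}(0)$ at $\sigma=0$.

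The interior $\partial_\tau p_2(\phi)$ term is handled via the bound $\|\phi_\tau\|_{L^\infty}\lesssim\sqrt{\tilde{\mathcal D}/\tilde\lambda_\tau}$ (Sobolev embedding applied to $\mathcal S^\ell\phi_\tau$ and the definition~\eqref{dissipation} of $\tilde{\mathcal D}$). Combined with $\|(\partial_z(z^3\mathcal S^j\phi)/z)\|_{\delta,2j-1}^2\lesssim\tilde{\mathcal E}$ (Lemma~\ref{L:NORMENERGY}), Cauchy--Schwarz in $\sigma$, and the elementary estimate $\int_0^\tau\tilde{\mathcal E}/\tilde\lambda\,d\sigma\lesssim\sup_\sigma\tilde{\mathcal E}(\sigma)$ coming from~\eqref{E:TILDELAMBDABOUND}, this yields a contribution bounded by $\sup\tilde{\mathcal E}\cdot(\int_0^\tau\tilde{\mathcal D})^{1/2}$, which Young's inequality converts into $\nu\int_0^\tau\tilde{\mathcal D}+C_\nu\sup\tilde{\mathcal E}^2$. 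The remainder $R_j$ is estimated by the same Leibniz/H\"older/Hardy/embedding routine used for the cubic terms, using Lemma~\ref{L:NORMENERGY} to recover $\partial_z^m\phi$ norms from $\mathcal S$-norms; every term in $R_j$ carries at least one extra factor of $\sqrt{\tilde{\mathcal E}}$ against $\|\mathcal S^j\phi_\tau\|_{\delta,2j}$ and is therefore absorbed, after Young, into $\nu\tilde{\mathcal D}+C_\nu\sqrt{\tilde{\mathcal E}}\tilde{\mathcal D}+\sup\tilde{\mathcal E}^{3/2}$. Summing over $j\in\{0,1,2,3,4\}$ produces the target bound~\eqref{E:CJESTIMATE}.

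\medskip
\noindent\textbf{Main obstacle.} The hardest point is the top-order $p_2(\phi)M_{\delta,0}\phi$ piece: it is the only part of $N_\delta[\phi]$ that is genuinely of the same differential order as the energy, so one cannot afford to lose any regularity. Extracting the perfect $\partial_\tau$-structure through the chain of integration by parts in $z$ and $\tau$, while simultaneously verifying that the interior time-derivative term recombines with the dissipation via the crucial factor $\tilde\lambda_\tau^{-1}$ in $\|\phi_\tau\|_\infty\lesssim\sqrt{\tilde{\mathcal D}/\tilde\lambda_\tau}$, is where the argument is tightest. Careful bookkeeping using Lemmas~\ref{L:NORMENERGY} and~\ref{L:COMMUTATORSHIGH} is essential to ensure that every remainder piece of $\mathcal S^j(p_2(\phi)M_{\delta,0}\phi)$ is below top order and therefore absorbable.
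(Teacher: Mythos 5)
Your treatment of the genuinely quasilinear term $p_2(\phi)M_{\delta,0}\phi$ is exactly the paper's argument: commute $\mathcal S^j$ through $M_{\delta,0}$ via Lemma~\ref{L:COMMUTATORSHIGH}, isolate $p_2(\phi)M_{\delta,2j}\mathcal S^j\phi$, integrate by parts in $z$, symmetrize into a perfect $\partial_\tau$-derivative plus an interior term controlled through $\|\phi_\tau\|_{L^\infty}\lesssim\sqrt{\tilde{\mathcal D}/\tilde\lambda_\tau}$, and absorb the remainder with H\"older, Hardy, the embeddings of Appendix~\ref{A:B} and Lemma~\ref{L:NORMENERGY}. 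The resulting contributions $\tilde{\mathcal E}(0)+\sup\tilde{\mathcal E}^{3/2}+\nu\int\tilde{\mathcal D}+C_\nu\sup\tilde{\mathcal E}^2$ match~\eqref{E:CJESTIMATE}.

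There is, however, a gap in your classification of the remaining terms. You propose to handle $p_3(\phi)M_{\delta,0}(p_4(\phi))$ and $Q[\phi,\phi]$ by a ``routine'' Leibniz/H\"older/Hardy/embedding estimate, on the grounds that they are at least cubic. Cubicity buys smallness but not regularity: both terms carry two spatial derivatives of $\phi$ at top order, just like $p_2(\phi)M_{\delta,0}\phi$. After applying $\mathcal S^j$ their leading parts are of the schematic form $p_3(\phi)p_4'(\phi)\,M_{\delta,2j}\mathcal S^j\phi+\dots$ and (for $Q$) a linear-in-$\phi$ prefactor times $\partial_z^2\mathcal S^j(p_6(\phi))$. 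For $j=4$ these contain $\partial_z^2\mathcal S^4\phi$, i.e.\ ten spatial derivatives, whereas $\tilde{\mathcal E}$ controls at most $\partial_z\mathcal S^4\phi$ (via $\|\partial_z\mathcal S^4\phi\|_{\delta,9}$); Lemma~\ref{L:NORMENERGY} only converts $\mathcal S$-norms into $\partial_z$-norms and cannot manufacture the missing derivative. A direct Cauchy--Schwarz against $\mathcal S^4\phi_\tau$ therefore does not close at the top order of the energy. The fix is simply to run the same integration-by-parts-in-$z$-then-in-$\tau$ argument you developed for $p_2(\phi)M_{\delta,0}\phi$ on these two terms as well (the extra factors of $\phi$ and $\partial_z\phi$ are estimated in $L^\infty$ and only improve the bounds); this is precisely what the paper does, estimating the fourth and fifth terms ``in an analogous manner'' to the third. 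With that correction your proof is complete and coincides with the paper's.
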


%%%%%%%%%%%%%%%%%%%%%%%%%%%%%%%%%%%%%%%

\begin{proof}
Recall the definition~\eqref{E:CJ} of the cubic error term $\mathcal C_j=
 (\mathcal{S}^j N_\delta[\phi],\mathcal{S}^j\phi_\tau )_{\delta,2j} 
% c\frac1\tilde\lambda (\S^j N_\delta[\phi],\S^j\phi )_{\delta,2j} .
$. 
We can rewrite $(\mathcal{S}^j N_\delta[\phi],\mathcal{S}^j\phi_\tau )_{\delta,2j}$ using~\eqref{E:NEWSTRUCTUREN} in the form
\begin{align}
&(\mathcal{S}^j N_\delta[\phi],\mathcal{S}^j\phi_\tau )_{\delta,2j}
 = \left(\mathcal{S}^j\left(p_0(\phi)\right)\, , \, \S^j\phi_\tau\right)_{\delta,2j} + \left(\S^j\left(\frac{w_\delta'}{z} p_1(\phi)\right)\,,\S^j\phi_\tau \right)_{\delta,2j} \notag \\
& \ \  + \left(\mathcal{S}^j\left(p_2(\phi)M_{\delta,0}\phi \right)\,,\S^j\phi_\tau \right)_{\delta,2j}
+ \left(\mathcal{S}^j\left( p_3(\phi) M_{\delta,0}(p_4(\phi))\right)\,,\S^j\phi_\tau \right)_{\delta,2j} 
%\notag \\
%& \ \  
+ \left(\mathcal{S}^jQ[\phi,\phi] \,, \, \S^j\phi_\tau \right)_{\delta,2j} \label{E:NONLINEARSIMPLER}
\end{align}

\noindent
{\em Step 1: Estimates for the first and second term on the right-hand side of~\eqref{E:NONLINEARSIMPLER}.}
Using~\eqref{E:CHAINRULE} and~\eqref{E:PRODUCTRULE} for any $j\ge1$ we may rewrite
\begin{align*}
\S^j(p_0(\phi)) =& \sum_{\ell=0}^{j-1}{j-1\choose \ell}\left(\S^\ell(p_0'(\phi))\S^{j-\ell}\phi + \S^{\ell}\left(p_0''(\phi)\right)\S^{j-1-\ell}\left(|\pa_z\phi|^2\right)\right)+ B_j[\phi] %\text{\begin{color}{red} I am missing terms here from the product rule\end{color}}
\end{align*}
where $B_j[\phi]$ consists of remaining terms after applying the product rule \eqref{E:PRODUCTRULE}. Each factor in $B_j[\phi]$ contains at least one $\g_z$ derivative and in general a combination of $\g_z$ and $\S$. For instance, 
\begin{align*}
B_1[\phi]&=0, \quad B_2[\phi]= 2 \g_z(p_0'(\phi)) \g_z \S\phi +2 \g_z (p_0''(\phi)) \g_z|\g_z\phi|^2, \\
B_3[\phi]&= \S (B_2[\phi]) + 2 \g_z(p_0'(\phi)) \g_z \S^2\phi   +2 \g_z\S( p_0'(\phi)) \g_z \S\phi \\
& \quad+ 2 \g_z(p_0''(\phi)) \g_z \S (|\g_z\phi|^2) + 2 \g_z \S (p_0''(\phi)) \g_z(|\g_z\phi|^2)
\end{align*}
 and so on.    
Since $p_0'(0) = 0$ the above expression is at least  quadratic in $\phi.$ A routine application of Lemma~\ref{L:NORMENERGY}, the Hardy and Sobolev embedding inequalities of Lemmas~\ref{hard}--\ref{L:LINFINITYBOUNDS}
yields the estimate
$$
\|\S^j(p_0(\phi))\|_{\delta,2j} \lesssim \tilde\E.
$$
Together with $\|\S^j\phi_\tau\|_{\delta,2j}^2\lesssim e^{-\beta_2 \tau} \tilde\D$ we conclude that 
\begin{align}\label{E:ESTIMATE1}
\Big|\left(\mathcal{S}^j\left(p_0(\phi)\right)\, , \, \S^j\phi_\tau\right)_{\delta,2j}\Big| \lesssim e^{-\beta_2 \tau/2}\tilde\E \sqrt{\tilde\D} \lesssim \nu \tilde\D + \frac{e^{-\beta_2 \tau}\tilde\E}{\nu} \tilde\E.
\end{align}

From~\eqref{E:LE} and Lemma~\ref{lem:w} it follows that $\frac{w_\delta'}{z}$ is in fact a smooth function  and all of its $\S$-derivatives are uniformly bounded on the interval $[0,1].$
By the same argument as above we conclude that
\begin{align}\label{E:ESTIMATE2}
\Big|\left(\S^j\left(\frac{w_\delta'}{z} p_1(\phi)\right)\,,\S^j\phi_\tau \right)_{\delta,2j}\Big| \lesssim e^{-\beta_2 \tau/2}\tilde\E \sqrt{\tilde\D} \lesssim \nu \tilde\D + \frac{e^{-\beta_2 \tau}\tilde\E}{\nu} \tilde\E.
\end{align}

\noindent
{\em Step 2: Estimates for the third, fourth,  and the fifth term on the right-hand side of~\eqref{E:NONLINEARSIMPLER}.}
Applying the product rule~\eqref{E:PRODUCTRULE} and the chain rule~\eqref{E:CHAINRULE} we obtain
\begin{align}
&\mathcal{S}^j\left(p_2(\phi)M_{\delta,0}\phi \right)
= \sum_{k=0}^j{j \choose k} \S^{j-k}p_2(\phi)\S^{k}M_{\delta,0}\phi + A_j[\phi]
\end{align}
where $ A_j[\phi]$ consists of remaining nonlinear terms after applying the product rule \eqref{E:PRODUCTRULE} and each factor in $ A_j[\phi]$ contains at least one $\g_z$ derivative. We focus on the first summation. We can rewrite it as 
 \begin{align}
 & \sum_{k=0}^j{j \choose k} \S^{j-k}p_2(\phi)\S^{k}M_{\delta,0}\phi \notag \\
&=\sum_{k=0}^j{j \choose k} \S^{j-k}p_2(\phi) \left(M_{\delta,2k}\S^k\phi  +\sum_{\ell=0}^{k-1}\S^\ell\left[\left(\alpha_{k-\ell}\S+ \beta_{k-\ell}\g_z + \gamma_{k-\ell}\right)\S^{k-1-\ell}\phi\right]\right) \notag \\
& = p_2(\phi)M_{\delta,2j}\S^j\phi  + \mathcal R_j \label{E:THIRD},
\end{align}
where the remainder term $\mathcal R_j$ is given by
\begin{align}
\mathcal R_j & =  p_2(\phi)\sum_{\ell=0}^{j-1}\S^\ell\left[\left(\alpha_{k-\ell}\S+ \beta_{k-\ell}\g_z + \gamma_{k-\ell}\right)\S^{k-1-\ell}\phi\right] \notag \\
& \ \ + \sum_{k=0}^{j-1}{j \choose k} \S^{j-k}p_2(\phi) \left(M_{\delta,2k}\S^k\phi  +\sum_{\ell=0}^{k-1}\S^\ell\left[\left(\alpha_{k-\ell}\S+ \beta_{k-\ell}\g_z + \gamma_{k-\ell}\right)\S^{k-1-\ell}\phi\right]\right). \label{E:REMAINDER}
\end{align}
We isolated the top order term $p_2(\phi)M_{\delta,2j}\S^j\phi $ on the right-most side of~\eqref{E:THIRD} while the lower-order remainder term $\mathcal R_j$ is estimated by the Sobolev and Hardy inequalities on Lemmas~\ref{hard}--\ref{L:LINFINITYBOUNDS} and Lemma~\ref{L:NORMENERGY}.
To handle the top-order term, we note that 
\begin{align}
&(p_2(\phi)M_{\delta,2j}\S^j\phi   ,\mathcal{S}^j\phi_\tau )_{\delta,2j} 
= \int_0^1p_2(\phi) \pa_z\left(w_\delta^{4+2j}\frac1{z^2}\pa_z(z^3\S^j\phi)\right) z^3\S^j\phi_\tau\,dz \notag \\
& =-\frac12\pa_\tau\int_0^1p_2(\phi)\frac{1}{z^2}w_\delta^{4+2j}\big|\pa_z(z^3\S^j\phi)\big|^2\,dz + \frac12\int_0^1p_2'(\phi)\phi_\tau\frac1{z^2}\big|\pa_z(z^3\S^j\phi)\big|^2w_\delta^{4+j}\,dz \label{E:TOPORDER1}
\end{align}
Just like in~\eqref{E:TOPORDERONE} using the Hardy inequality we have
\begin{align*}
\int_0^1\frac{1}{z^2}w_\delta^{4+2j}\big|\pa_z(z^3\S^j\phi)\big|^2\,dz 
\lesssim \int_0^1 w_\delta^{4+2j} z^2 (\S^j\phi)^2\,dz + \int_0^1 w_\delta^{4+2j} z^4 (\pa_z\S^j\phi)^2\,dz \lesssim \tilde\E.
\end{align*}
Using~\eqref{E:POLYNOMIALS} and Lemma~\ref{L:LINFINITYBOUNDS} it follows that $\|p_2(\phi)\|_{\infty}\lesssim \sqrt{\tilde\E}$.
Therefore
\begin{align}
\Big|\int_0^\tau-\frac12\pa_\tau\int_0^1p_2(\phi)\frac{1}{z^2}w_\delta^{4+2j}\big|\pa_z(z^3\S^j\phi)\big|^2\,dz\,d\sigma\Big|
\lesssim \tilde\E(0) + \tilde\E^{\frac32}(\tau).
\end{align}
The second term on the right-most side of~\eqref{E:TOPORDER1} is estimated similarly:
\begin{align}
&\Big|\int_0^\tau\int_0^1p_2'(\phi)\phi_\tau\frac1{z^2}\big|\pa_z(z^3\S^j\phi)\big|^2w_\delta^{4+j}\,dz\,d\sigma\Big| \notag \\
&\lesssim % \sup_{0\le \sigma\le \tau}
\int_0^\tau \|p_2'(\phi)\phi_\tau\|_{L^\infty([0,1])}\int_0^1\frac{1}{z^2}w_\delta^{4+2j}\big|\pa_z(z^3\S^j\phi)\big|^2\,dz d\sigma \notag \\
& \lesssim \int_0^\tau e^{-\beta_2 \sigma/2}\sqrt{\tilde\D}(\sigma)\tilde\E(\sigma)\,d\sigma \lesssim \nu\int_0^\tau\tilde\D(\sigma)\,d\sigma + \sup_{0\le \sigma\le \tau}\tilde\E^2(\sigma).
\end{align}
Using Lemmas~\ref{hard}--\ref{L:LINFINITYBOUNDS} and Lemma~\ref{L:NORMENERGY} and the $L^\infty-L^2-L^2$ H\"older inequality the remainder term~\eqref{E:REMAINDER} is easily shown 
to satisfy the energy bound
\begin{align}\label{E:REMAINDERBOUND}
\Big|\int_0^\tau\left(\mathcal R_j\,,\S^j\phi_\tau \right)_{\delta,2j}\,d\sigma\Big| \lesssim \left(\nu+\sup_{0\le\sigma\le \tau}\sqrt{\tilde\E}(\sigma)\right)\int_0^\tau\tilde\D(\sigma)\,d\sigma + \sup_{0\le \sigma\le \tau}\tilde\E^2(\sigma).
\end{align}
From~\eqref{E:TOPORDER1}--\eqref{E:REMAINDERBOUND} we obtain the inequality
\begin{align}\label{E:THIRDBOUND}
&\Big|\int_0^\tau\left(\mathcal{S}^j\left(p_2(\phi)M_{\delta,0}\phi \right)\,,\S^j\phi_\tau \right)_{\delta,2j}\,d\sigma\Big| \notag \\
& \ \ \lesssim \tilde\E(0)+\left(\nu+\sup_{0\le\sigma\le \tau}\sqrt{\tilde\E}(\sigma)\right)\int_0^\tau\tilde\D(\sigma)\,d\sigma + \sup_{0\le \sigma\le \tau}\tilde\E^2(\sigma).
\end{align}
The fourth and the fifth term on the right-hand side of~\eqref{E:NONLINEARSIMPLER} are estimated in an analogous manner:
\begin{align}\label{E:FOURTHFIFTHBOUND}
&\Big|\int_0^\tau\left[\left(\mathcal{S}^j\left( p_3(\phi) M_{\delta,0}(p_4(\phi))\right)\,,\S^j\phi_\tau \right)_{\delta,2j} 
 + \left(\mathcal{S}^jQ[\phi,\phi] \,, \, \S^j\phi_\tau \right)_{\delta,2j}\right] \,d\sigma\Big| \notag \\
 & \ \  \lesssim \tilde\E(0)+\left(\nu+\sup_{0\le\sigma\le \tau}\sqrt{\tilde\E}(\sigma)\right)\int_0^\tau\tilde\D(\sigma)\,d\sigma + \sup_{0\le \sigma\le \tau}\E^2(\sigma).
\end{align}
Summing~\eqref{E:ESTIMATE1}--\eqref{E:ESTIMATE2} and~\eqref{E:THIRDBOUND}--\eqref{E:FOURTHFIFTHBOUND} we obtain~\eqref{E:CJESTIMATE}.
\end{proof}

\subsubsection{Proof of Theorem~\ref{T:HIGHORDER}}

Integrating  the energy identity \eqref{E:ENERGYIDENTITYPART2} with respect to $\tau$ we obtain 
\begin{align}
&\frac{1}{2} \left( \tilde\lambda  \| \mathcal{S}^j\phi_\tau\|^2_{\delta,2j} + 3\delta\|\mathcal{S}^j\phi \|^2_{\delta,2j} +\frac43 \|\g_z\mathcal{S}^j\phi\|^2_{\delta,2j+1} \right) \Big|^\tau_0\notag  
+\int_0^\tau \frac{\tilde\lambda _\tau}{2}  \| \mathcal{S}^j\phi_\tau\|^2_{\delta,2j} d\sigma  \\
%+ (a_j(z)\mathcal{S}^j\phi,\mathcal{S}^j\phi_\tau)_{\delta,2j} + (b_j(z)\g_z\S^{j-1}\phi , \mathcal{S}^j\phi_\tau)_{\delta,2j} 
&\ \ \ \  = \int_0^\tau \mathcal Q_jd\sigma  +\int_0^\tau \mathcal C_j d\sigma \label{energytilde}
\end{align}
for $0\le j\le 4$. If $-\tilde\varepsilon<\delta\leq 0$, we apply Lemma \ref{L:POSDEF} to the first term of the linear combination of \eqref{energytilde} and use the estimates for $\mathcal{Q}_j$ \eqref{E:QJBOUND} and $\mathcal{C}_j$ \eqref{E:CJESTIMATE}  to obtain 
\begin{align*}
\sup_{0\le\tau'\le \tau} \tilde{\E} (\tau') + C_1 \int_0^\tau \tilde{\mathcal{D}} d\sigma \le C_2\tilde{\E} (0) +C_3 |\tilde{\mathcal{J}} [\phi]|^2 +C_4 \sup_{0\le \sigma\le \tau}\left(  \tilde\E^\frac{3}{2}(\sigma)  + \tilde\E^2(\sigma) \right) \\
 +C_5\left(\nu+\sup_{0\le\sigma\le \tau}\sqrt{\tilde\E}(\sigma)\right) 
  \int_0^\tau \tilde{\mathcal{D}} d\sigma 
+C_\nu  \int_0^\tau e^{-\beta_2\sigma}\tilde\E (\sigma)d\sigma  . 
\end{align*}
From \eqref{E:TILDEJDEFINITION}, it is straightforward to check $|\tilde{\mathcal{J}}[\phi]|^2 \lesssim  \tilde\E^2$. Therefore, by choosing sufficiently small $\nu>0$ and $\tilde M>0$ if necessary, we deduce  the energy-dissipation bound  \eqref{E:MAINBOUND2}. 
If $\delta>0$, by using the energy positivity \eqref{Epos}, the estimates for $\mathcal{Q}_j$ \eqref{E:QJBOUND} and $\mathcal{C}_j$ \eqref{E:CJESTIMATE}, and the smallness of $\tilde M$, one can easily deduce \eqref{E:MAINBOUND2}.

\subsection{Proof of Theorem~\ref{T:LINEAREXPANSION}}

By Remark~\ref{R:ALTERNATIVE} the local-in-time well-posedness theory implies that the unique solution to the initial value problem~\eqref{E:PHIEQUATION2}--\eqref{E:PHIINITIAL2} exists on a time interval $[0,T]$
where $T\sim\frac1{\tilde{\mathcal E}(0)}\gtrsim\frac1\varepsilon$. Choose $\varepsilon>0$ so small that the time of existence $T$ satisfies 
\be\label{E:LOCALTIME}
e^{-\beta_2T/2}\le\frac{\beta_2}{2\tilde C_3}, \ \ \sup_{\tau\le T}\tilde\E(\tau)\le C\tilde\E(0).
\ee
Define 
\[
\mathcal T : = \sup_{\tau\ge0}\{\text{ solution to~\eqref{E:PHIEQUATION2}--\eqref{E:PHIINITIAL2} exists on $[0,\tau)$ and} \ \sup_{0\le\tau'\le \tau}\tilde\E(\tau') \le 4\tilde C_2C\E(0)\} . 
\]
Observe that $\mathcal T\geq T$. 
From~\eqref{E:MAINBOUND2} we obtain 
\be\label{E:ENERGYBOUNDONE}
\sup_{\tau'\in[\frac T2,\tau]} \tilde\Energy(\tau') + \tilde C_1\int_{\frac T2}^\tau\tilde\D(\tau')\,d\tau' \le \tilde C_2\tilde\Energy(\frac T2) 
+ \tilde C_3\int_{\frac T2}^\tau e^{-\beta_2\tau'}\tilde\Energy(\tau')\,d\tau', \ \ \tau\in[\frac T2,\mathcal T].
%\sup_{s\geq 0}\left[e^{2\kappa s} \left(\|\sigma(s)\|_\ast + \sqrt{  \Energy(s)}\right) \right] \le 
%C\left(|\sigma(0)|+ \sqrt{\Energy(0)}\right)
\ee
Therefore, using~\eqref{E:LOCALTIME} we conclude that
\be
\sup_{\tau'\in[\frac T2,\tau]} \tilde\Energy(\tau') + \tilde C_1\int_{\frac T2}^\tau\tilde\D(\tau')\,d\tau' \le \tilde C_2\tilde\Energy(\frac T2) + \frac{\tilde C_3}{\beta_2}e^{-\beta_2 T/2}\sup_{\tau'\in[\frac T2,\tau]} \tilde\Energy(\tau')
\le \tilde C_2\tilde\Energy(\frac T2 ) +\frac12 \sup_{\tau'\in[\frac T2,\tau]} \tilde\Energy(\tau').
\ee 
From this inequality we conclude that 
\be
\sup_{\tau'\in[\frac T2,\tau]} \tilde\Energy(\tau') \le 2\tilde C_2\tilde\Energy(\frac T2)\le 2\tilde C_2C\tilde\E(0), \ \ \tau\in[\frac T2,\mathcal T].
\ee
From the continuity of the map $\tau\mapsto \sup_{\tau'\in[T,\tau]} \tilde\Energy(\tau')$ and the definition of $\mathcal T$ we conclude that 
$\mathcal T = \infty$ and the solution to~\eqref{E:PHIEQUATION2}--\eqref{E:PHIINITIAL2} exists globally-in-time. From the proved global-in-time boundedness of 
$\tilde\E$, the estimate $\sum_{j=0}^4\tilde\lambda  \|\S^j\phi_\tau\|_{\delta,2j}^2\lesssim \tilde\E$, and  the bound~\eqref{E:TILDELAMBDABOUND}, the second claim of~\eqref{E:ESTIMATESFINAL}
follows.  

It now remains to prove the first part of~\eqref{E:ESTIMATESFINAL}. From the global-in-time boundedness of $\tilde \E$, there exists a weak limit $\phi_\infty$ independent of $\tau$ such that $\tilde\E(\phi_\infty,0)= \sum_{j=0}^4 \|\S^j \phi_\infty\|_{\delta,2j}^2 + 
\|\g_z\S^j \phi_\infty\|_{\delta,2j+1}^2 \lesssim \tilde\E \le C\varepsilon$, which in particular implies $\S^j \phi_\infty \in H^1_{\delta, 2j}$ for $0\le j\le 4$. To show the strong convergence of $\phi(\tau)$  as $\tau \rightarrow\infty$ in  $\mathfrak{H}^4_{\delta,0}=\{\phi : \S^j \phi_\infty \in L^2_{\delta, 2j}, \text{ for }0\le j\le 4\}$ (low-regularity space), we observe that for any $0<\tau_2<\tau_1$, 
\[
\begin{split}
\|  \S^j \phi (\tau_1) -  \S^j \phi(\tau_2) \|^2_{L^2_{\delta, 2j}}&  = \int_0^1z^4w^{3+2j} \left| \int_{\tau_2}^{\tau_1} \S^j\phi_\tau d\tau 
\right|^2dz\\
&\le \left( \int_{\tau_2}^{\tau_1} \frac{1}{\tilde\lambda _\tau} d\tau \right) \left( \int_{\tau_2}^{\tau_1}  \tilde \lambda_\tau \int_0^1z^4w^{3+2j} |\S^j\phi_\tau |^2dz d\tau \right)\\
&  \lesssim ( e^{- \beta_2\tau_2} -e^{- \beta_2\tau_1}  )  \int_{\tau_2}^{\tau_1} \tilde \D d\tau    \lesssim ( e^{- \beta_2\tau_2} -e^{- \beta_2\tau_1}  ) \, \varepsilon.
\end{split}
\]
Therefore, given a strictly increasing sequence $\tau_n \rightarrow \infty$, the sequence $\{\S^j \phi (\tau_n)\}_{n=1}^\infty$  is Cauchy in $\mathfrak{H}^4_{\delta,0}$. This completes the proof. 
%\begin{color}{red}
%Need to prove the first part of~\eqref{E:ESTIMATESFINAL}
%\end{color}

%%%%%%%%%%%%%%%%%%%%%%%%%%%%%%%%%%%%%%%

\section*{Acknowledgments}

The authors thank Dr. Y. Zwols for his kind help with the drawing of Figure~\ref{F:BIFURCATION}. JJ is supported in part by NSF grants DMS-1212142, DMS-1608494, and a von Neumann fellowship of the Institute for Advanced Study through the NSF grant DMS-1128155. MH acknowledges the support of the EPSRC Grant EP/N016777/1.

\section*{Appendix}

\appendix

\section{Spectral-theoretic properties of $\mathcal L_\delta$ and $\mathcal L_{\delta,k}$}\label{A:A}

 In this section, we discuss the spectral theory of the linearized operators $\mathcal L_\delta$ and $\mathcal L_{\delta,k}$ defined in \eqref{E:LDELTA} and Definition~\ref{D:OPERATORS} respectively  for $-\varepsilon<\delta\leq 0$ where $\varepsilon>0$ is sufficiently small. Recall that for any $k\in\mathbb N \cup \{0\}$, 
\begin{align}
\mathcal L_{\delta,k}\psi&= - \frac{4}{3w_\delta^{3+k}z^4}\g_z (w_\delta^{4+k} z^4\g_z\psi)  . 
\end{align}
Here we identify $\mathcal{L}_{\delta,0}=\mathcal L_\delta$. Notice that $\mathcal L_{\delta,k}$ is a singular Sturm-Liouville operator, and it is nonnegative and symmetric with respect to $(,)_{\delta,k}$: 
\[
(\mathcal L_{\delta,k}\psi,\psi)_{\delta,k}= (\psi, \mathcal L_{\delta,k}\psi)=\frac43\|\g_z\psi\|_{\delta,k}^2
\]
 for any $\psi\in C^\infty_c([0,1])$. Since $C^\infty_c([0,1])$ is dense in the weighted Sobolev spaces, it is natural to consider the Friedrichs extension in $L^2_{\delta,k}$. 
 
 When $w_\delta$ is given by the Lane-Emden solution, namely when $\delta=0$, it was shown in \cite{Be1995, Makino2015} that $\mathcal{L}_{0,0}$ has the Friedrichs extension in $L^2_{0,0}$, which is a well-defined self-adjoint operator whose spectrum is purely discrete with $0$ as the smallest isolated eigenvalue. The proof relies on the qualitative boundary behavior of $w$: $w'(0)=0$, $w(1)=0$ and $w(z)\sim 1-z$ for $z<1$ sufficiently close to 1. Since $w_\delta$ behaves in the same way near the boundaries, the same conclusion holds for $\mathcal L_\delta$ for any $\delta<0$ sufficiently close to 0. The same argument works for $\mathcal L_{\delta,k}$ for any $k\in \mathbb{N}$. In order to state the result, let us introduce appropriate function spaces: 
 \be\label{Hkj}
 H_{\delta,k}^j:=\{\psi\in L^2_{\delta,k}| \g_z^l \psi \in L^2_{\delta,k+l}, \ \  \text{ for all } \ \ 0\leq l\leq j\}. 
 \ee
When $k=0$, $H_{\delta,k}^j = H_{\delta}^j$ where $ H_{\delta}^j=H_{w_\delta}^j$ and $L^2_{\delta,k}$ were defined in Definition \ref{Def:H^j}.

\begin{proposition} $\mathcal L_{\delta,k}$ has the Friedrichs extension in $L^2_{\delta,k}$ with the domain $H_{\delta,k}^2$. It is a self-adjoint operator  with respect to $(\cdot,\cdot)_{\delta,k}$ whose spectrum consists of simple eigenvalues $\mu_0<\mu_1<  \dots< \mu_n <\mu_{n+1} <\cdots \rightarrow \infty$. The smallest eigenvalue is $\mu_0=0$ and the corresponding eigenspace consists of constant functions. 
\end{proposition}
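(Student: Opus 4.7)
\medskip\noindent\textbf{Proof proposal.} The plan is to follow the Friedrichs construction outlined in \cite{Be1995,Makino2015} for the classical Lane--Emden operator ($\delta=0$, $k=0$) and verify that the same scheme goes through uniformly for all parameters $(\delta,k)$ satisfying $-\varepsilon<\delta\le 0$ and $k\in\mathbb N\cup\{0\}$. The key qualitative inputs from Lemma~\ref{lem:w} that make everything work are: $w_\delta$ is smooth and strictly positive on $[0,1)$, $w_\delta(0)>0$ with $w_\delta'(0)=0$, and $w_\delta$ satisfies the physical vacuum condition at $z=1$ so that $w_\delta(z)\sim c(1-z)$ with $c>0$ as $z\to 1^-$. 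Consequently the weight $w_\delta^{3+k}z^4$ is a standard Muckenhoupt-type weight, degenerate in the expected way only at the endpoints.

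First I would introduce the non-negative, densely defined quadratic form
\[
q_{\delta,k}(\psi,\psi):=\frac{4}{3}\|\partial_z\psi\|_{\delta,k+1}^2,\qquad \psi\in C_c^\infty([0,1]),
\]
associated via integration by parts with $(\mathcal L_{\delta,k}\psi,\psi)_{\delta,k}$. Using the physical vacuum behavior of $w_\delta$ near $z=1$ and the Hardy-type inequalities collected in Appendix~\ref{A:B}, one verifies that $q_{\delta,k}$ is closable on $L^2_{\delta,k}$ and its closure has form domain equal to $H^1_{\delta,k}$ (defined as in \eqref{Hkj}). The Friedrichs extension theorem then yields a self-adjoint operator, which I still denote $\mathcal L_{\delta,k}$, with the domain characterization
\[
\mathrm{Dom}(\mathcal L_{\delta,k})=\{\psi\in H^1_{\delta,k}:\mathcal L_{\delta,k}\psi\in L^2_{\delta,k}\text{ in the distributional sense}\}.
\]
A local elliptic regularity argument on $(0,1)$ together with a careful analysis of the boundary behavior at the regular singular points $z=0$ (where $z^4$ degenerates but $w_\delta$ does not) and $z=1$ (where $w_\delta$ vanishes linearly) identifies this domain with $H^2_{\delta,k}$ in the sense of \eqref{Hkj}. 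This boundary analysis is where most of the technical work lies, and is exactly where the physical vacuum condition $w_\delta'(1)<0$ is used to rule out singular solutions.

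Next I would establish that the embedding $H^1_{\delta,k}\hookrightarrow L^2_{\delta,k}$ is compact. Away from the boundary $z=1$ the weights are bounded above and below by positive constants and the usual Rellich--Kondrachov theorem applies; near $z=1$ the weight degenerates like $(1-z)^{3+k}$ which is integrable and the compactness follows from a direct truncation argument combined with the weighted Hardy inequality. Having established compactness, the resolvent $(\mathcal L_{\delta,k}+1)^{-1}$ is a compact self-adjoint operator on $L^2_{\delta,k}$. The spectral theorem for compact self-adjoint operators then produces a discrete set of eigenvalues $\mu_0\le\mu_1\le\dots\to\infty$ with an associated orthonormal basis of eigenfunctions.

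Finally I would establish the bottom of the spectrum and simplicity. Since $\mathcal L_{\delta,k}(1)=0$ by direct substitution, $0$ is an eigenvalue; and since $q_{\delta,k}(\psi,\psi)=\frac{4}{3}\|\partial_z\psi\|_{\delta,k+1}^2\ge 0$ with equality forcing $\partial_z\psi\equiv 0$ on $(0,1)$, the eigenspace of $\mu_0=0$ consists precisely of constants. Simplicity of all eigenvalues follows from a classical Sturm--Liouville argument: the ODE $\mathcal L_{\delta,k}\psi=\mu\psi$ is a second order ODE on $(0,1)$ with regular singular points at both endpoints, and the boundary conditions imposed by membership in $\mathrm{Dom}(\mathcal L_{\delta,k})=H^2_{\delta,k}$ select a one-dimensional Frobenius branch at each endpoint, forcing eigenspaces to be one-dimensional. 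The main obstacle is verifying this endpoint selection rigorously; the degeneracy of the coefficient $w_\delta^{4+k}z^4$ at both $z=0$ and $z=1$ means one must carry out a Frobenius analysis at each endpoint to see which Frobenius solution is compatible with the weighted $H^2$-condition, but given Lemma~\ref{lem:w} this is a standard (if delicate) computation.
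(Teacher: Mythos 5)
Your argument is correct in outline, but it takes a genuinely different route from the paper. The paper first performs a Liouville transformation $y=\int_0^z(w_\delta^{3+k}/\tfrac43 w_\delta^{4+k})^{1/2}d\tilde z$, $\eta=z^2(\tfrac43 w_\delta^{7+2k})^{1/4}\psi$, converting $\mathcal L_{\delta,k}$ into an unweighted Schr\"odinger operator $-\partial_y^2+q$ on a finite interval, computes that $q\sim 2/y^2$ at $y=0$ and $q\sim \tfrac{(7+2k)(5+2k)}{4}(y_+-y)^{-2}$ at $y=y_+$, concludes that both endpoints are of limit point type, and then invokes Theorem X.10 of Reed--Simon to obtain in one stroke a self-adjoint Friedrichs extension with purely discrete \emph{simple} spectrum; the domain is identified with $H^2_{\delta,k}$ by citing the elliptic regularity lemma of \cite{JaMa2015}. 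You instead stay in the weighted space throughout: Friedrichs extension of the closable form $\tfrac43\|\partial_z\psi\|_{\delta,k+1}^2$, discreteness via compactness of $H^1_{\delta,k}\hookrightarrow L^2_{\delta,k}$, and simplicity via a Frobenius analysis at the two singular endpoints. Your Frobenius step is in substance equivalent to the paper's limit-point verification (at $z=0$ the rejected branch is $z^{-3}$, at $z=1$ it is $(1-z)^{-(3+k)}$, neither of which is in the relevant weighted $L^2$ space), so both arguments establish that at most one solution of the eigenvalue ODE is admissible near each endpoint, forcing one-dimensional eigenspaces. What the paper's route buys is that self-adjointness, discreteness and simplicity all follow from classical one-dimensional Weyl theory after a single computation of the inverse-square coefficients; what your route buys is that it avoids the change of variables and makes the weighted functional framework (which is used throughout the rest of the paper) do the work directly, at the cost of having to prove the compact embedding and the endpoint exclusion separately. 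Be aware that simplicity does not follow from compact resolvent alone, so your separate Frobenius step is genuinely needed and must include the verification that the second Frobenius branch fails the weighted integrability condition at each endpoint; with that carried out, your proof is complete.
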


\begin{proof} The case of $\mathcal{L}_0$ directly follows from Proposition 1 of \cite{Makino2015} and other cases can be treated similarly. For completeness, we will describe the argument in \cite{Makino2015} for other cases. 

In order to apply the classical theory, we first perform the so-called Liouville transformation: 
\[
y:=\int_0^z \sqrt{\frac{w_\delta^{3+k}}{\tfrac43 w_\delta^{4+k}}}d\tilde z, \quad \eta:= z^2(\tfrac43w_\delta^{7+2k})^\frac14 \psi. 
\]
Here $y\in [0,y_+]$ where $y_+=\int_0^1 \sqrt{\frac{w_\delta^{3+k}}{\tfrac43 w_\delta^{4+k}}}d z<\infty$. 
Then 
\[
\mathcal L_{\delta,k}\psi =  \frac{1}{z^2(\tfrac43w_\delta^{7+2k})^\frac14}\left(- \g_y^2 \eta + q \eta \right)
\]
where 
\[
q = q(y) = \frac{\g_y^2(w_\delta^{\frac{7+2k}{4}})}{w_\delta^{\frac{7+2k}{4}}} + 4  \frac{\g_y z}{z} \frac{\g_y(w_\delta^{\frac{7+2k}{4}})}{w_\delta^{\frac{7+2k}{4}} } + 2 \frac{z\g_y^2 z + (\g_y z)^2}{z^2} . 
\]

Let us consider the operator $\mathcal{T}_0\eta= - \g_y^2 \eta + q \eta$ for $\eta\in C_c^\infty(0,y_+)$. 
For $y$ sufficiently small, $y\sim \sqrt{\frac{3}{4w_\delta(0)}} z$ and the last term in $q$ dominates. Using $\g_y z= \sqrt{\frac{4w_\delta}{3}}$, we thus obtain 
\[
q \sim 2 \frac{\frac{4w_\delta}{3}}{z^2} \sim \frac{2}{y^2} \ \  \text{ as } \ \ y\rightarrow 0. 
\]
For $y$ close to $y_+$, we have $\frac{c}{3}(y_+-y)^2\sim 1-z$ where $c=-\g_zw_\delta(1)$. The biggest contribution of $q$ in this regime comes from the first term. It is easier to compare in $z$ variable. Using $\g_y=\sqrt{\frac{4w_\delta}{3}}\g_z$, we get 
\[
q\sim \frac{(7+2k)(5+2k)}{12}\frac{(\g_zw_\delta)^2}{w_\delta} \sim \frac{(7+2k)(5+2k)}{4}\frac{1}{(y_+-y)^2}. 
\]
Hence, the boundary points $y=0, \; y_+$ are of limit point type for all $k\geq 0$ and for all $\delta$. Therefore, by Theorem X.10 of  \cite{ReSi}, the operator $\mathcal{T}_0$ with the domain $C_c^\infty(0,y_+)$ has the Friedrichs extension $\mathcal{T}$ in $L^2(0,y_+)$, a self-adjoint operator with simple eigenvalues $\mu_0<\mu_1<  \dots< \mu_n <\mu_{n+1} <\cdots \rightarrow \infty$. The domain of $\mathcal{T}$ is characterized by $\{\eta\in C[0,y_+] : \eta(0)=\eta(y_+)=0, \;\; -\g_y^2\eta+ q \eta \in L^2(0,y_+) \}$. By unwinding these results in terms of $\mathcal L_{\delta,k}$ and $\psi$ in $z$ variable, we obtain the desired result on the spectral theory of $\mathcal L_{\delta,k}$ in the weighted spaces $L^2_{\delta,k}$. Notice that $\eta\in L^2(0,y_+)$ corresponds to $\psi\in L^2_{\delta,k}$. Furthermore, we deduce that the domain of $L^2_{\delta,k}$ is $H_{\delta,k}^2$ from the elliptic regularity (for instance, see Lemma 5.2 in \cite{JaMa2015}): for each $f\in L^2_{\delta,k}$, there exists a unique solution  $\psi\in H^2_{\delta,k}$ solving $\mathcal L_{\delta,k}\psi =f$. 

To finish the proof of the proposition, it suffices to show that $0$ is an eigenvalue of $\mathcal L_{\delta,k}$. It is easy to see that the set of constant functions belongs to kernel of $\mathcal L_{\delta,k}$. Since $\mathcal L_{\delta,k}$ is non-negative and by the spectral theory, the last assertion follows. 
\end{proof}

We are now ready to prove Lemma  \ref{lem:core} and Lemma \ref{lem:core2}. Since the proofs of two lemmas are almost identical, we only provide the proof of Lemma \ref{lem:core}. 
 
\begin{proof}[Proof of Lemma \ref{lem:core}]
  
From the spectral theory of $\mathcal L_\delta$, the operator
\[
\mathcal L_\delta\phi = -\frac43 \partial_z\left(w_\delta^4z^4\partial_z\phi\right)
\]
is completely diagonalizable and the smallest eigenvalue is precisely 0, i..e:
\[
\mathcal{L} _\delta\phi_k = \mu_k w_\delta^3z^4\phi_k, \ \ k\in\mathbb N\cup\{0\}, 
\]
and
\[%\label{E:SPECTRALGAP}
0=\mu_0<\mu_1< \mu_2 < \dots
\]
This implies the first statement. For the second claim, note that 
\[
\left((\mathcal L_\delta +3\delta)\varphi,\varphi\right)_\delta = \frac43 \|\partial_z\varphi\|_{\delta,1}^2 - \frac{3}{2}b^2\|\varphi\|_\delta^2 \ge (\mu_1-\frac32 b^2)\|\varphi\|_\delta^2
\]
Hence for sufficiently small $b$ satisfying $b^2 <\frac23\mu_1$, we deduce that 
\[
\left((\mathcal L_\delta +3\delta)\varphi,\varphi\right)_\delta \gtrsim \|\partial_z\varphi\|_{\delta,1}^2+\|\varphi\|_\delta^2. 
\]
\end{proof}

\section{Hardy inequalities and embeddings of weighted Sobolev spaces}\label{A:B}

The $L^\infty$ bounds on $\phi$ and its derivatives with suitable weights can be obtained by using the Hardy inequalities and embedding inequalities. 
Since our energy norms involve different weights near the origin and near the boundary, we will utilize localized Hardy inequalities as in \cite{Jang2014, Jang2015}. In order to state the results, we introduce 
suitable functions spaces. Let  $Z_0$ be a completion of $C_c^\infty([0,1])$ with respect to the norm $\|\cdot\|_{Z_0}$ generated by the inner product $(u,v)_{Z_0}:=\int_0^{1}z^4 u v  dz$. 
For any $i\in \mathbb{N}$ we define $Z_i$   by 
\[
Z_i:=\{u\in Z_0 : \g_z^l u \in Z_0 \ \text{ for all } \ 0\leq  l\leq i\}
\]
with $\|u\|^2_{Z_i}=\sum_{l=0}^i \|\g_z^l u \|^2_{Z_0}$. 
Similarly, we introduce $X_a$ as a completion of  $C_c^\infty([0,1])$ with respect to the norm $\|\cdot\|_{X_a}$ generated by the inner product $(u,v)_{X_a}:=\int_0^{1}w_\delta^a u v dz$ for $a>1$. We let 
\[
X_a^i:= \{ v\in X_a : \g_z^l v \in X_{a} \  \text{ for all }\ 0\leq  l\leq i\}
\]
with $\|v\|^2_{X_a^1} =\|v\|^2_{X_a} + \|\g_z v\|^2_{X_a}$.

\begin{lemma}[Hardy inequalities]\label{hard}  

\begin{itemize}
\item[(1)] For any $u\in Z_1$, we have 
\begin{equation}\label{hardy0}
\int_0^{\frac{1}{2}} z^2 |u|^2 dz \lesssim \int_0^{\frac{3}{4}} z^4 |u_z|^2  dz +\int_{0}^{\frac{3}{4}} z^4 | u|^2 dz. %\lesssim \|u \|^2_{Z_1}. 
\end{equation}

\item[(2)] For any $u\in Z_2$, we have 
\begin{equation}\label{bts}
\int_0^{\frac{1}{2}} |u|^2 dz \lesssim \int_{0}^{\frac{3}{4}} z^4 | u_{zz}|^2 dz + \int_0^{\frac{3}{4}} z^4 |u_z|^2  dz+\int_{0}^{\frac{3}{4}} z^4 | u|^2 dz. % \lesssim \|u \|^2_{Z_2}.
\end{equation}
\item[(3)]   Let $a>1$ be given. For any $v\in X_a^1$, we have 
\begin{equation}\label{Hardy-gw}
\int_{\frac{1}{2}}^1 w_\delta^{a-2}| v|^2 dz \lesssim  \int_{\frac{1}{4}}^1 w_\delta^a | v_z|^2 dz +\int_{\frac{1}{4}}^1 w_\delta^a | v|^2 dz  . 
%\lesssim \|v\|^2_{X_a}.
\end{equation}
\end{itemize}
\end{lemma}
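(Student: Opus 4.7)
The proof of all three inequalities rests on the standard Hardy-inequality technique: multiplication by a smooth cutoff, integration by parts to trade an integral with a weaker weight for one with a derivative and a stronger weight, and Cauchy--Schwarz with a small parameter to absorb cross terms back into the left-hand side.

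For (1), I would fix a cutoff $\zeta\in C_c^\infty([0,3/4))$ with $\zeta\equiv 1$ on $[0,1/2]$, and integrate by parts in the identity
$\int_0^{3/4}\partial_z(z^3\zeta)\,u^2\,dz = -2\int_0^{3/4} z^3 \zeta\, u\, u_z\,dz$,
whose boundary terms vanish because $z^3$ kills the left endpoint and $\zeta$ kills the right endpoint. Expanding $\partial_z(z^3\zeta)=3z^2\zeta+z^3\zeta'$, using $\zeta\equiv 1$ on $[0,1/2]$ to extract $3\int_0^{1/2}z^2u^2\,dz$, and applying Young's inequality to the right-hand side with a small parameter (distributing the weight as $z^3 = z\cdot z^2$) yields the desired bound; on $\mathrm{supp}(\zeta')\subset[1/2,3/4]$ we use $z^3|\zeta'|\lesssim z^4$ to convert the remainder into the stated form.

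For (2), I would iterate the above trick. Using another cutoff $\zeta_1\in C_c^\infty([0,3/4))$ with $\zeta_1\equiv 1$ on $[0,1/2]$ and integration by parts against $z\zeta_1$, I get $\int_0^{1/2}u^2\,dz\lesssim \int_0^{3/4}z^2|u_z|^2\,dz + \int_{1/2}^{3/4}z|\zeta_1'|u^2\,dz$. The last term is trivially bounded by $\int_0^{3/4}z^4u^2$ since $z$ is bounded below on the support of $\zeta_1'$. For the first term, I apply inequality (1) with $u_z\in Z_1$ in place of $u$, which turns $\int_0^{1/2}z^2|u_z|^2$ into $\int_0^{3/4}(z^4|u_{zz}|^2 + z^4|u_z|^2)\,dz$, giving (2) after combining.

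Inequality (3) is the most delicate because the singularity is now at the vacuum boundary $z=1$, where $w_\delta^{a-2}$ is more singular than $w_\delta^a$, and the Hardy gain must come from the physical-vacuum property $w_\delta'(1)<0$. I would split $[1/2,1]=[1/2,1-\eta]\cup[1-\eta,1]$, choosing $\eta>0$ so small that $-w_\delta'\ge c_0>0$ on $[1-\eta,1]$ (possible by continuity since $-w_\delta'(1)>0$). On $[1/2,1-\eta]$ the weight $w_\delta$ is bounded below so $w_\delta^{a-2}\lesssim w_\delta^a$ and the bound is immediate. On $[1-\eta,1]$ I would pick a cutoff $\zeta\in C_c^\infty((1/4,1])$ with $\zeta\equiv 1$ on $[1-\eta/2,1]$ and integrate by parts against $w_\delta^{a-1}\zeta$; the identity
$\int (a-1)w_\delta^{a-2}(-w_\delta')\zeta v^2\,dz = -\int w_\delta^{a-1}\zeta' v^2\,dz - 2\int w_\delta^{a-1}\zeta\, v v_z\,dz$,
combined with $-w_\delta'\ge c_0$, produces a coercive $\int w_\delta^{a-2}v^2$ on the left. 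The cross term is handled by Young's inequality with \emph{unequal} weights, $2|w_\delta^{a-1}vv_z|\le \varepsilon w_\delta^{a-2}v^2 + \varepsilon^{-1}w_\delta^a v_z^2$, allowing $\varepsilon$-absorption; the $\zeta'$ term is supported on $[1/4,1-\eta/2]$ where $w_\delta$ is bounded below, so it contributes at most a constant times $\int_{1/4}^1 w_\delta^a v^2$.

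The main obstacle is ensuring that the cutoff-support remainders in (3) land inside a region where $w_\delta$ is bounded below (so we may freely trade $w_\delta^{a-1}$ for $w_\delta^a$), and that $\eta$ is chosen before the cutoff so the coercivity $-w_\delta'\ge c_0$ is genuinely available throughout $\mathrm{supp}(\zeta)\cap[1-\eta,1]$; all other steps are routine Hardy-type manipulations.
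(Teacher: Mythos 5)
Your proof is correct and follows the standard localized Hardy argument (cutoff, integration by parts to trade the weaker weight for a derivative with the stronger weight, Young's inequality with absorption, and coercivity from $-w_\delta'(1)>0$ near the vacuum boundary); the paper itself gives no proof of this lemma, deferring to \cite{Jang2014,Jang2015}, which use exactly this technique. The only blemish is a sign slip in your integration-by-parts identity in part (3) (the correct identity is $(a-1)\int w_\delta^{a-2}(-w_\delta')\zeta v^2\,dz = +\int w_\delta^{a-1}\zeta' v^2\,dz + 2\int w_\delta^{a-1}\zeta v v_z\,dz$), which is harmless since you bound those terms in absolute value anyway.
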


%For the proof of Lemma \ref{hard}, we refer to \cite{Jang2014, Jang2015}.   
%\begin{proof} For the proof, we refer to \cite{Jang2014}. 
%\end{proof}
As a consequence of the previous lemma, we have Hardy embedding inequalities. 

\begin{lemma}[Embedding inequalities] Let $m$ be any nonnegative integer. 
\begin{itemize}
\item[(1)] For any $u\in Z_2\cap X_{2m}^m$, we have 
\begin{equation}\label{L1}
\|u\|_{L^1}^2 \lesssim \sum_{k=0}^2\int_0^{\frac{3}{4}} z^4 |\partial_z^k u|^2 dz +  \sum_{k=0}^{m}\int_{\frac{1}{4}}^1w_\delta^{2m}   |\partial_z^k  u|^2 dz . 
\end{equation}

\item[(2)] For any $u\in Z_3\cap X_{2m}^{m+1}$, we have 
\begin{equation}\label{Linfty}
\|u\|_{\infty}^2\lesssim \sum_{k=0}^3\int_0^{\frac{3}{4}} z^4 |\partial_z^k u|^2 dz +  \sum_{k=0}^{m+1}\int_{\frac{1}{4}}^1w_\delta^{2m}  |\partial_z^k  u|^2 dz. 
\end{equation}
\end{itemize}
\end{lemma}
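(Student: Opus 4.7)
The plan is to prove both embeddings by splitting the interval $[0,1]$ into a neighborhood of the origin $[0,1/2]$, where the weight $z^4$ captures the coordinate singularity of spherical symmetry, and a neighborhood of the vacuum boundary $[1/2,1]$, where the enthalpy weight $w_\delta$ degenerates linearly. In both regions the essential tool is the one-dimensional Sobolev embedding $H^1([a,b]) \hookrightarrow L^\infty([a,b]) \hookrightarrow L^1([a,b])$ combined with the three Hardy-type inequalities from Lemma~\ref{hard}. A smooth partition of unity $1=\chi_1+\chi_2$ with $\text{supp}\,\chi_1\subset[0,3/4]$ and $\text{supp}\,\chi_2\subset[1/4,1]$ produces only lower-order cutoff derivatives supported on the overlap $[1/4,3/4]$, where both $z^4$ and $w_\delta^{2m}$ are bounded above and below by positive constants, so these cutoff contributions are dominated by the right-hand sides of~\eqref{L1} and~\eqref{Linfty}.

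For the piece near the origin, Cauchy--Schwarz yields $\|u\|_{L^1(0,1/2)}^2\lesssim\int_0^{1/2}|u|^2\,dz$, and applying~\eqref{bts} directly bounds this by $\sum_{k=0}^2 \int_0^{3/4}z^4|\partial_z^k u|^2\,dz$, which is the first group of terms in~\eqref{L1}. For the $L^\infty$ bound near the origin, the standard 1D embedding gives $\|u\|_{L^\infty(0,1/2)}^2\lesssim\|u\|_{L^2(0,1/2)}^2+\|u_z\|_{L^2(0,1/2)}^2$; the first summand is handled as above, and for the second I would apply~\eqref{bts} to $u_z$ in place of $u$, producing the full three-derivative $z^4$-weighted sum in~\eqref{Linfty}.

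Near the vacuum boundary, the plan is an iterative use of~\eqref{Hardy-gw}. For the $L^1$ bound, Cauchy--Schwarz reduces matters to controlling $\int_{1/2}^1|u|^2\,dz$; applying~\eqref{Hardy-gw} with $a=2$ yields a bound by $\int_{1/4}^1 w_\delta^2(|u_z|^2+|u|^2)\,dz$, and then reapplying~\eqref{Hardy-gw} with $a=4$ to each of $u$ and $u_z$ on this right-hand side increases the weight to $w_\delta^4$ at the cost of one additional derivative. Iterating $m$ times gives exactly $\sum_{k=0}^m\int_{1/4}^1 w_\delta^{2m}|\partial_z^k u|^2\,dz$, which is the second group of terms in~\eqref{L1}. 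The corresponding $L^\infty$ bound follows by the same iteration applied one level deeper, since the 1D embedding costs one additional derivative in $L^2$, explaining why the sum in~\eqref{Linfty} runs up to $k=m+1$. The main obstacle is purely combinatorial bookkeeping: one must organize the iteration so that all intermediate terms fall into the same bulk structure $w_\delta^{2m}|\partial_z^k u|^2$, and verify at each step that the Hardy exponent remains in the admissible range $a>1$ required by Lemma~\ref{hard}(3), which is automatic here since we start at $a=2$ and the exponent only increases.
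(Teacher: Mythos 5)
Your argument is correct and is essentially the standard proof of these embeddings (the paper itself defers the proof to \cite{Jang2014,Jang2015}): split $[0,1]$ at $z=1/2$, use the one-dimensional Sobolev embedding together with~\eqref{hardy0}--\eqref{bts} near the origin and an iterated application of~\eqref{Hardy-gw} near the vacuum boundary, with the overlap $[1/4,3/4]$ absorbed because both weights are bounded above and below there. The only remaining details --- treating the piece of $\int_{1/4}^{1}w_\delta^{2j}|\partial_z^k u|^2$ supported on $[1/4,1/2]$ at each iteration step, and justifying finiteness of the intermediate integrals by first arguing for $C_c^\infty$ functions and passing to the completion --- are routine.
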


A direct consequence of the above lemmas are the $L^\infty$ bounds presented in the lemma below.  
We shall explain how to control the $L^\infty$ norms by two different notions of energy $\E$ and $\tilde{\mathcal E}$ introduced in~\eqref{energy1} and~\eqref{E:ENERGYLINEAR} respectively.
%present two results: the first one for $\phi=\phi(s,z)$ bounded  by $\E(s)$ and the second one for $\phi=\phi(\tau,z)$ bounded by $\tilde \E$. 

\begin{lemma}\label{L:LINFINITYBOUNDS} 
\begin{enumerate} 
\item 
Let $\phi=\phi(s,z)\in H^8_\delta$ for each fixed $s\in [0,S]$ be given so that $\g_s^i\phi\in H^{8-i}_\delta$ for each $0\leq i\leq 8$ and that the corresponding energy $\E(s) <\infty$ for all $s\in [0,S]$. We recall $H^j_\delta$ is defined in Definition \ref{Def:H^j} and $\E$ is given in \eqref{energy1}. Then the following estimates hold: 
\begin{enumerate}
\item[(a)] 
\[
\big|\phi\big|+\big|\g_s\phi\big|   +\big|\g_s^2\phi \big|+\sum_{q=1}^{5} \big|z^{\delta(q)}w^{\frac{q-1}{2}} \partial_s^q\partial_s^2\phi  \big|
 \lesssim \sqrt{{ \Energy}} 
\]
where  ${\delta(q)}=0$ for $q\leq 3$, $\delta(q)=1$ for $q=4$ and $\delta(q)=2$ for $q=5$. 
\item [(b)]
\[
\big|\g_z\phi\big| +\big|\g_{zs}\phi\big|+ \sum_{q=1}^{5}\big| z^{\delta(q)} w^{\frac{q}{2}}\partial_s^q\g_{zs}\phi\big|  \lesssim \sqrt{{ \Energy}} 
\]
where  ${\delta(q)}=0$ for $q\leq 3$, $\delta(q)=1$ for $q=4$ and $\delta(q)=2$ for $q=5$. 
\item [(c)]
\begin{equation}\label{rrbound}
\sum_{q=0}^{5}\left|  w^{\frac{q+1}{2}} z^{\delta(q)} \partial_s^q \partial_z^2 \phi \right|\lesssim \sqrt{{ \Energy}}  
\end{equation}
where ${\delta(q)}=0$ for $q\leq 3$, $\delta(q)=1$ for $q=4$ and $\delta(q)=2$ for $q=5$. 
\end{enumerate}

\item Let $\phi=\phi(\tau,z)\in \mathfrak{H}^4_{\delta,0}$ for each fixed $\tau\in [0,T]$ be given so that $\g_\tau \phi \in \mathfrak{H}^4_{\delta,0}$, $\S^4 \phi \in H^1_{\delta, 8}$, and that the corresponding energy $\tilde\E<\infty$ and dissipation $\tilde\D<\infty$ for all $\tau\in [0,T]$. We recall $\mathfrak{H}^j_{\delta,0}$ is defined in Definition \ref{D:DELTAJSPACES}, $H^j_{\delta,k}$ is introduced in \eqref{Hkj}, and $\tilde\E$, $\tilde\D$ in \eqref{E:ENERGYLINEAR} and \eqref{dissipation} respectively. Then the following estimates hold:

\begin{enumerate}
\item[(a)] 
\[
\big| \phi \big|  + \big|\g_z \phi \big| + \big|\g_z^2 \phi \big|  + \big| \S\phi  \big|  \lesssim \sqrt{{\tilde\Energy}} 
\]
\item [(c)]
\[
\big|z^{\delta(q)}  w^q \g_z^{2+q} \phi  \big|+  \big| z^{\delta(q)}  w^q \g_z^{q} \S\phi  \big|  \lesssim \sqrt{{\tilde\Energy}} 
\]
for $1\le q\le 5$. ${\delta(q)}=0$ for $q\leq 3$, $\delta(q)=1$ for $q=4$ and $\delta(q)=2$ for $q=5$. We also have 
\[
\big| w^2 \S^2 \phi \big| + \big| z w^4 \S^3 \phi \big|  \lesssim \sqrt{{\tilde\Energy}}
\]
\item [(c)]
\[
\sqrt{\tilde\lambda }\big| \g_\tau \phi \big|  \lesssim \sqrt{{ \tilde\D}} 
\]
\end{enumerate}

\end{enumerate}
\end{lemma}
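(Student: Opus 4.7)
The plan is to obtain every displayed estimate by combining the two localized Hardy embedding inequalities \eqref{L1} and \eqref{Linfty} with a careful bookkeeping of the two distinct degeneracies in the problem: the coordinate singularity $z=0$ (where the weight is $z^4$ and $w_\delta$ is bounded above and below) and the physical vacuum singularity $z=1$ (where $w_\delta(z)\sim c(1-z)$ with $-\infty<w_\delta'(1)<0$, while the $z$-weight is comparable to $1$). Accordingly, I would split $[0,1] = [0,\tfrac34]\cup[\tfrac14,1]$ and estimate each of the displayed norms on each piece separately, then use continuity to match at the overlap.

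For Part (1), write the target $L^\infty$-controlled quantity as $u = z^{\delta(q)}w_\delta^{(q-1)/2}\,\pa_s^q\pa_z^\alpha\phi$ (and similarly for the mixed and pure-space versions). On the inner region $[0,\tfrac34]$ the factor $w_\delta^{(q-1)/2}$ is harmless, so feeding $u$ and its first three $\pa_z$-derivatives into \eqref{Linfty} reduces matters to the unweighted integrals $\int_0^{3/4}z^4|\pa_z^k u|^2\,dz$, each of which is controlled by the $L^2_{\delta,k}$-terms that make up $\E$ because $w_\delta\gtrsim 1$ on this interval. On the outer region $[\tfrac14,1]$ the weight $w_\delta^{3+k}$ is the only degenerate one, and the explicit power $\delta(q)$ of $z$ is benign; here I would apply \eqref{Hardy-gw} inductively, trading each factor of $w_\delta^{-2}$ for one extra $\pa_z$-derivative, until the weight matches that of the natural norm in $\E$. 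This chain terminates after at most $m+1$ applications (with $m$ chosen according to $q$), and the precise exponents $\delta(q)=0,0,0,1,2$ for $q=0,\dots,5$ are exactly those that make the induction balance: each extra $\pa_s$ drops the available weight power by one in $w_\delta$, and from $q=4$ onwards one needs a compensating factor of $z$ to absorb the coordinate singularity when $\pa_z$-differentiating. The estimate \eqref{rrbound} for $\pa_z^2\phi$ requires the extra $w_\delta^{(q+1)/2}$ because the top-order spatial term in $\E$ carries weight $w_\delta^{5+2k}$ rather than $w_\delta^{3+2k}$; applying \eqref{Linfty} to $w_\delta^{(q+1)/2}\pa_s^q\pa_z^2\phi$ and again invoking \eqref{Hardy-gw} near $z=1$ closes this case as well.

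For Part (2), I would first use Lemma \ref{L:NORMENERGY} to convert $\S$-derivatives into genuine spatial derivatives: for $0\le j\le 4$, $\|\S^j\phi\|_{\delta,2j}+\|\g_z\S^j\phi\|_{\delta,2j+1}$ bounds $\|\pa_z^{2\ell}\phi\|_{\delta,2\ell}$, $\|\pa_z^{2\ell-1}\phi/z\|_{\delta,2\ell-1}$, and $\|\pa_z^{2\ell-2}\phi/z^2\|_{\delta,2\ell-2}$ for $\ell\le j$. Plugging these into \eqref{Linfty} exactly as above yields $|\phi|,|\pa_z\phi|,|\pa_z^2\phi|,|\S\phi|\lesssim\sqrt{\tilde\E}$, while applying the embedding to $z^{\delta(q)}w_\delta^q\pa_z^{2+q}\phi$ and $z^{\delta(q)}w_\delta^q\pa_z^q\S\phi$ produces the higher-order estimates. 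The statement $\sqrt{\tilde\lambda}|\phi_\tau|\lesssim\sqrt{\tilde{\mathcal D}}$ is simplest: the definition \eqref{dissipation} gives $\tilde{\mathcal D}\gtrsim\tilde\lambda_\tau\sum_{j\le 4}\|\S^j\phi_\tau\|_{\delta,2j}^2\gtrsim\tilde\lambda\sum_{j\le 4}\|\S^j\phi_\tau\|_{\delta,2j}^2$ by \eqref{E:TILDELAMBDABOUND}, so the same Hardy/embedding argument applied to $\sqrt{\tilde\lambda}\phi_\tau$ closes the estimate.

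The main technical obstacle, and the only non-routine aspect, is the bookkeeping near $z=1$: each pair $(q,\alpha)$ of $(s,z)$-derivative indices corresponds to a specific weighted $L^2$ norm inside $\E$ (or inside the $\S$-norm via Lemma \ref{L:NORMENERGY}), and one must verify that repeatedly applying \eqref{Hardy-gw} does not require more $\pa_z$-derivatives than are contained in the energy $\E$ of top order $8$, respectively in $\tilde\E$ of top order $4$ $\S$-derivatives. The exponents $\delta(q)$ are chosen precisely so this bookkeeping terminates; once they are set, all bounds reduce to a finite chain of Hardy applications and a single Sobolev embedding, with constants independent of $\delta$ in the small-$|\delta|$ regime where $w_\delta$ and its derivatives depend smoothly on $\delta$ by Lemma \ref{lem:w}.
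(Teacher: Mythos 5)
Your proposal is correct and follows exactly the route the paper intends: the paper itself gives no proof beyond citing \cite{Jang2014,Jang2015}, but the tools it supplies for this purpose are precisely the localized Hardy inequalities of Lemma~\ref{hard} and the embedding inequalities \eqref{L1}--\eqref{Linfty}, applied to the weighted quantities $z^{\delta(q)}w_\delta^{\cdot}\,\g_s^q\g_z^\alpha\phi$ with the weight exponents chosen so that the number of spatial derivatives demanded near $z=0$ and $z=1$ matches what $\E$ (resp.\ $\tilde\E$ via Lemma~\ref{L:NORMENERGY}) actually contains. Your bookkeeping of why $\delta(q)$ jumps at $q=4,5$ (fewer spatial derivatives of $\g_s^{q+2}\phi$ survive in $\E$, so heavier $z$- and $w_\delta$-weights must compensate in the fundamental-theorem-of-calculus/Hardy step) is the essential point and is accurately identified.
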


 For the proofs of the above lemmas, we refer to \cite{Jang2014,Jang2015}.


\begin{thebibliography}{99}

\bibitem{Be1995}
\textsc{Beyer, H. R.:}
The spectrum of radial adiabatic stellar oscillations.
{\em J. Math. Phys.}
{\bf 36}, 4815--4825 (1995)

\bibitem{BiTr}
\textsc{J. Binney, S. Tremaine}
{\em Galactic Dynamics}
Second Edition
Princeton University Press, Princeton, NJ USA, 2008

\bibitem{BlBoCh}
\textsc{Blottiau, P., Bouquet, S., Chi\`eze, J. P.}
An asymptotic self-similar solution for the gravitational collapse.
{\em Astron. Astrophys.}
{\bf 207}, 24--36 (1988)

\bibitem{BoFeFiMu}
\textsc{Bouquet, S., Feix, M. R., Fijalkow, E., Munier, A.}:
Density bifurcation in a homogeneous isotropic collapsing star.
{\em The Astrophysical Journal}
{\bf 293}, 494--503 (1985)


\bibitem{Ch} \textsc{S. Chandrasekhar}: \textit{An Introduction to the Study of Stellar Structures.} University of Chicago Press, Chicago, 1938.

\bibitem{CoSh2007}
\textsc{Coutand, D., Shkoller, S.:}
Well-posedness of the free-surface incompressible Euler equations with or without surface tension.
{\em J. Amer. Math. Soc.} 
{\bf 20} (3), 829--930 (2007)

\bibitem{CoSh2011} \textsc{Coutand, D., Shkoller, S.:} 
Well-posedness in smooth function spaces for the moving-boundary 1-D compressible Euler equations in physical vacuum, {\em Comm. Pure Appl. Math.} {\bf 64}, no. 3, 328--366 (2011)

\bibitem{CoSh2012}
\textsc{Coutand, D., Shkoller, S.:}
Well-posedness in smooth function spaces for the moving boundary three-dimensional compressible Euler equations in physical vacuum.
{\em Arch. Ration. Mech. Anal.}
{\bf 206}, no. 2, 515--616 (2012)

\bibitem{CLS} \textsc{Coutand, D., Lindblad, H., Shkoller, S.:}
A priori estimates for the free--boundary 3D compressible Euler equations in physical vacuum,
{\em Comm. Math. Phys.} {\bf 296}, 559--587 (2010).


\bibitem{DM} \textsc{Dacorogna B., Moser, J.}: 
On a partial differential equation involving the Jacobian determinant, 
\emph{Ann. Inst. H. Poincar\'{e} Anal. Non Lin\'{e}aire} \textbf{7} (1990), no. 1, 1-26

\bibitem{DLYY} 
\textsc{Deng, Y.,  Liu, T.P., Yang, T.,  Yao Z.}: Solutions of Euler-Poisson equations for gaseous stars.  
\emph{Arch. Ration. Mech. Anal.} 
\textbf{164}, no. 3, 261--285 (2002)


\bibitem{DengXiangYang}
\textsc{Deng, Y., Xiang, J., Yang, T.:}
Blowup phenomena of solutions to Euler-Poisson equations. \emph{J. Math. Anal. Appl.} \textbf{286} 295-306 (2003) 


\bibitem{FuLin} \textsc{Fu, C.-C.; Lin, S.-S.}: 
On the critical mass of the collapse of a gaseous star in spherically symmetric and isentropic motion. \emph{Japan J. Indust. Appl. Math.} \textbf{15}, no. 3, 461-469 (1998)

\bibitem{GoWe} \textsc{Goldreich, P.,  Weber,  S.:} Homologously collapsing stellar cores, \emph{Astrophys. J.} \textbf{238}  991 (1980)


\bibitem{HaSp}
\textsc{Had\v zi\'c, M., Speck, J.}:
The Global Future Stability of the FLRW Solutions to the Dust-Einstein System with a Positive Cosmological Constant. 
\emph{Jour. Hyp. Diff. Eqns.,} \textbf{12}, no.1, 87--188  (2015)

\bibitem{J0} \textsc{Jang, J.}: Nonlinear Instability in Gravitational Euler-Poisson system for $\gamma=6/5$,
\textit{ Arch. Ration. Mech. Anal.} \textbf{188}, 265-307 (2008)

\bibitem{Jang2014}
\textsc{Jang, J.}:
Nonlinear Instability Theory of Lane-Emden stars.  {\em Comm. Pure Appl. Math.} {\bf 67}, no. 9, 1418--1465  (2014)

\bibitem{Jang2015}
\textsc{Jang, J.}: Time periodic approximations of the Euler-Poisson system near Lane-Emden stars. to appear in \emph{Anal. PDE}

\bibitem{JaMa2009} \textsc{Jang, J., Masmoudi, N.}
Well-posedness for compressible Euler equations with physical vacuum singularity, 
{\em Comm. Pure Appl. Math.} {\bf 62} (2009), 1327--1385

 \bibitem{JM1} \textsc{Jang, J., Masmoudi, N.}:
Vacuum in Gas and Fluid dynamics, \textit{Proceedings of the IMA summer school on Nonlinear Conservation Laws and Applications}, Springer (2011), 315-329

\bibitem{JM2012}\textsc{Jang, J., Masmoudi, N.}: Well and ill-posedness for compressible Euler equations with vacuum. \emph{J. Math. Phys.} \textbf{53} (11) (2012) 115625


\bibitem{JaMa2015}
\textsc{Jang, J., Masmoudi, N.}
Well-posedness of compressible Euler equations in a physical vacuum.
{\em Communications on Pure and Applied Mathematics}
{\bf 68}, no. 1, 61--111 (2015)


\bibitem{La}
\textsc{Larson, R. B.}:
Numerical calculations of the dynamics of a collapsing proto-star.
{\em Monthly Notices Roy. Astron. Soc.}
{\bf 145}, 271--295 (1969)

\bibitem{lin} 
\textsc{Lin, S.-S.}:  Stability of gaseous stars in spherically symmetric motions. \emph{SIAM J. Math. Anal.} \textbf{28} (1997), no. 3, 539--569.

\bibitem{LuSm}
\textsc{Luo, T., Smoller, J.:}
Existence and Nonlinear Stability of Rotating Star Solutions of the Compressible Euler-Poisson Equations.
\emph{Arch. Ration. Mech. Anal.} 
{\bf 191}, 3,  447--496 (2009)


\bibitem{LXZ} \textsc{T. Luo, Z. Xin, H. Zeng}:  
Well-posedness for the motion of physical vacuum of the three-dimensional compressible Euler equations with or without self-gravitation. 
\emph{Arch. Ration. Mech. Anal.} \textbf{213}, no. 3, 763-831  (2014)


\bibitem{cLjVK}
\textsc{L\"ubbe, C.,  Valiente-Kroon, J. A.}:
A conformal approach for the analysis of the nonlinear stability of pure radiation cosmologies.
\newblock {\em Ann. Phys.} \textbf{328}, 1--25, (2013)


\bibitem{Makino92}
\textsc{Makino, T.}:
\newblock Blowing up solutions of the {E}uler-{P}oisson equation for the evolution of gaseous stars.
\newblock {\em Transport Theory Statist. Phys.}, \textbf{21}, 615-624 (1992)

\bibitem{MaPe1990}
\textsc{Makino, T., Perthame, B.:}
Sur les Solution \'a  Sym\'etrie Sph\'erique de l'Equation d'Euler-Poisson
pour l'Evolution d'Etoiles Gazeuses.
{\em Japan J. Appl. Math.}
{\bf 7}
165--170 (1990)


\bibitem{Makino2015} 
\textsc{Makino, T.}: 
On spherically symmetric motions of a gaseous star governed by the Euler-Poisson equations. 
{\em Osaka J. Math.} 
{\bf 52}, no. 2, 545--580 (2015)


\bibitem{MaMeRa}
\textsc{Martel, Y., Merle, F., Rapha\"el, P.:} 
Blow up for the critical gKdV equation I: dynamics near the solitary wave. 
{\em Acta Math.} 
{\bf 212}, no. 1, 59--140 (2014)

\bibitem{Pe}
\textsc{Penston, M. V.}
Dynamics of self-gravitating gaseous spheres-III. Analytical results in the free-fall of isothermal cases.
{\em Monthly Notices Roy. Astron. Soc.}
{\bf 144}, 425--448 (1969)

\bibitem{Rein} 
\textsc{Rein, G.}: 
Non-linear stability of gaseous stars. \emph{Arch. Ration. Mech. Anal.} \textbf{168}, no. 2, 115--130 (2003)

\bibitem{ReSi}\textsc{M. Reed, B. Simon}: Methods of Modern Mathematical Physics, II, Fourier Analysis, Self-Adjointness, Academic Press, New York, 1975


\bibitem{RB}
\textsc{Ringeval, C., Bouquet, S.}
Dynamical stability for the gravitational evolution of a homogeneous polytrope. 
{\em Astron. Astrophys.}
{\bf 355}, 564--572 (2000)



\bibitem{iRjS2012}
\textsc{Rodnianski, I., Speck, J.}:
\newblock The Nonlinear Future Stability of the FLRW
Family of Solutions to the Irrotational
Euler-Einstein System with a Positive
Cosmological Constant.
\newblock 
{\em Journal of European Math. Soc.}
\textbf{15} (6), 2369--2462 (2013)


\bibitem{jS2012}
\textsc{Speck, J.}:
\newblock
The nonlinear future stability of the FLRW family of solutions to the Euler-Einstein system with
a positive cosmological constant.
\newblock {\em Selecta Mathematica}, \textbf{18}, no. 3, 633--715 (2012)

\bibitem{Yahil1983}
\textsc{Yahil, A.}:
Self-similar stellar collapse.
{\em The Astrophysical Journal}
{\bf 265}, 1047--1055 (1983)

\bibitem{ZeNo}
\textsc{Ya. B. Zel'dovich, I. D. Novikov:}
Relativistic Astrophysics Vol. 1: Stars and Relativity. 
Chicago University Press (1971)



\end{thebibliography}
\end{document}